\theoremstyle{plain}
\newtheorem{theorem}{Theorem}[section]
\newtheorem{proposition}[theorem]{Proposition}
\newtheorem{lemma}[theorem]{Lemma}
\newtheorem{corollary}[theorem]{Corollary}
\newtheorem{conjecture}[theorem]{Conjecture}
\theoremstyle{definition}
\newtheorem{definition}[theorem]{Definition}
\newtheorem{example}[theorem]{Example}
\newtheorem{remark}[theorem]{Remark}
\newtheorem{assumption}[theorem]{Assumption}
\newtheorem{numberedparagraph}{}
\theoremstyle{definition}
\newcommand{\nc}{\newcommand}
\nc{\on}{\operatorname}
\nc{\Q}{\mathbb{Q}}
\nc{\Z}{\mathbb{Z}}
\nc{\cl}{\mathrm{cl}}
\nc{\fraka}{{\mathfrak a}} \nc{\bba}{{\mathbf a}}
\nc{\frakb}{{\mathfrak b}}
\nc{\frakc}{{\mathfrak c}}
\nc{\frakd}{{\mathfrak d}}
\nc{\frake}{{\mathfrak e}}
\nc{\frakf}{{\mathfrak f}}
\nc{\frakg}{{\mathfrak g}}
\nc{\frakh}{{\mathfrak h}}
\nc{\fraki}{{\mathfrak i}}
\nc{\frakj}{{\mathfrak j}}
\nc{\frakk}{{\mathfrak k}}
\nc{\frakl}{{\mathfrak l}}
\nc{\frakm}{{\mathfrak m}}
\nc{\frakn}{{\mathfrak n}}
\nc{\frako}{{\mathfrak o}}
\nc{\frakp}{{\mathfrak p}}
\nc{\frakq}{{\mathfrak q}}
\nc{\frakr}{{\mathfrak r}}
\nc{\fraks}{{\mathfrak s}}
\nc{\frakt}{{\mathfrak t}}
\nc{\fraku}{{\mathfrak u}}
\nc{\frakv}{{\mathfrak v}}
\nc{\frakw}{{\mathfrak w}}
\nc{\frakx}{{\mathfrak x}}
\nc{\fraky}{{\mathfrak y}}
\nc{\frakz}{{\mathfrak z}}
\nc{\frakA}{{\mathfrak A}}
\nc{\frakB}{{\mathfrak B}}
\nc{\frakC}{{\mathfrak C}}
\nc{\frakD}{{\mathfrak D}}
\nc{\frakE}{{\mathfrak E}}
\nc{\frakF}{{\mathfrak F}}
\nc{\frakG}{{\mathfrak G}}
\nc{\frakH}{{\mathfrak H}}
\nc{\frakI}{{\mathfrak I}}
\nc{\frakJ}{{\mathfrak J}}
\nc{\frakK}{{\mathfrak K}}
\nc{\frakL}{{\mathfrak L}}
\nc{\frakM}{{\mathfrak M}}
\nc{\frakN}{{\mathfrak N}}
\nc{\frakO}{{\mathfrak O}}
\nc{\frakP}{{\mathfrak P}}
\nc{\frakQ}{{\mathfrak Q}}
\nc{\frakR}{{\mathfrak R}}
\nc{\frakS}{{\mathfrak S}}
\nc{\frakT}{{\mathfrak T}}
\nc{\frakU}{{\mathfrak U}}
\nc{\frakV}{{\mathfrak V}}
\nc{\frakW}{{\mathfrak W}}
\nc{\frakX}{{\mathfrak X}}
\nc{\frakY}{{\mathfrak Y}}
\nc{\frakZ}{{\mathfrak Z}}
\nc{\bbA}{{\mathbb A}}
\nc{\bbB}{{\mathbb B}}
\nc{\bbC}{{\mathbb C}}
\nc{\bbD}{{\mathbb D}}
\nc{\bbE}{{\mathbb E}}
\nc{\bbF}{{\mathbb F}} 
\nc{\bbf}{{\mathbf f}}
\nc{\bbG}{{\mathbb G}}
\nc{\bbH}{{\mathbb H}}
\nc{\bbI}{{\mathbb I}}
\nc{\bbJ}{{\mathbb J}}
\nc{\bbK}{{\mathbb K}}
\nc{\bbL}{{\mathbb L}}
\nc{\bbM}{{\mathbb M}}
\nc{\bbN}{{\mathbb N}}
\nc{\bbO}{{\mathbb O}}
\nc{\bbP}{{\mathbb P}}
\nc{\bbQ}{{\mathbb Q}}
\nc{\bbR}{{\mathbb R}}
\nc{\bbS}{{\mathbb S}}
\nc{\bbT}{{\mathbb T}}
\nc{\bbU}{{\mathbb U}}
\nc{\bbV}{{\mathbb V}}
\nc{\bbW}{{\mathbb W}}
\nc{\bbX}{{\mathbb X}}
\nc{\bbY}{{\mathbb Y}}
\nc{\bbZ}{{\mathbb Z}}
\nc{\calA}{{\mathcal A}}
\nc{\calB}{{\mathcal B}}
\nc{\calC}{{\mathcal C}}
\nc{\calD}{{\mathcal D}}
\nc{\calE}{{\mathcal E}}
\nc{\calF}{{\mathcal F}}
\nc{\calG}{{\mathcal G}}
\nc{\calH}{{\mathcal H}}
\nc{\calI}{{\mathcal I}}
\nc{\calJ}{{\mathcal J}}
\nc{\calK}{{\mathcal K}}
\nc{\calL}{{\mathcal L}}
\nc{\calM}{{\mathcal M}}
\nc{\calN}{{\mathcal N}}
\nc{\calO}{{\mathcal O}}
\nc{\calP}{{\mathcal P}}
\nc{\calQ}{{\mathcal Q}}
\nc{\calR}{{\mathcal R}}
\nc{\calS}{{\mathcal S}}
\nc{\calT}{{\mathcal T}}
\nc{\calU}{{\mathcal U}}
\nc{\calV}{{\mathcal V}}
\nc{\calW}{{\mathcal W}}
\nc{\calX}{{\mathcal X}}
\nc{\calY}{{\mathcal Y}}
\nc{\calZ}{{\mathcal Z}}
\nc{\scrA}{{\mathscr A}}
\nc{\scrB}{{\mathscr B}}
\nc{\scrC}{{\mathscr C}}
\nc{\scrD}{{\mathscr D}}
\nc{\scrE}{{\mathscr E}}
\nc{\scrF}{{\mathscr F}}
\nc{\scrG}{{\mathscr G}}
\nc{\scrH}{{\mathscr H}}
\nc{\scrI}{{\mathscr J}}
\nc{\scrJ}{{\mathscr I}}
\nc{\scrK}{{\mathscr K}}
\nc{\scrL}{{\mathscr L}}
\nc{\scrM}{{\mathscr M}}
\nc{\scrN}{{\mathscr N}}
\nc{\scrO}{{\mathscr O}}
\nc{\scrP}{{\mathscr P}}
\nc{\scrQ}{{\mathscr Q}}
\nc{\scrR}{{\mathscr R}}
\nc{\scrS}{{\mathscr S}}
\nc{\scrT}{{\mathscr T}}
\nc{\bfG}{{\mathbf G}}
\nc{\bfK}{{\mathbf K}}
\nc{\D}{{\on{D}}}
\nc{\Div}{{\on{Div}}}
\nc{\Perv}{{\on{Perv}}}
\nc{\bnu}{{\bar{ \nu}}}
\nc{\olO}{\bar{\calO}}
\nc{\al}{{\alpha}} 
\nc{\be}{{\beta}}
\nc{\ga}{{\gamma}} \nc{\Ga}{{\Gamma}}
\nc{\hGa}{\hat{\Gamma}}
\nc{\ve}{{\varepsilon}} 
\nc{\la}{{\lambda}} \nc{\La}{{\Lambda}}
\nc{\om}{\omega} \nc{\Om}{\Omega} 
\nc{\sig}{{\sigma}} \nc{\Sig}{{\Sigma}}
\nc{\dR}{{\mathrm{dR}}}
\nc{\Perf}{{\mathrm{Perf}}}
\nc{\PSch}{{\mathrm{PSch}}}
\nc{\Gm}{{\mathbb{G}_m}}
\nc{\colim}{{\on{colim}}}
\nc{\brevI}{{\breve{\mathcal I}}}
\nc{\dg}{{\on{deg}}}
\nc{\Isoc}{{\on{Isoc}}}
\nc{\Inf}{{\on{inf}}}
\nc{\FilI}{\on{IsocFil}}
\nc{\Fil}{\on{Fil}}
\nc{\gFil}{\overline{{\on{Fil}}}}
\nc{\Irrep}{{{\on{Irrep}}}}
\nc{\wa}{{\on{wa}}}
\nc{\cris}{{\on{cris}}}
\nc{\MT}{{\on{MT}}}
\nc{\br}[1]{{\breve{#1}}}
\DeclareMathAlphabet{\rhomalpha}{LS1}{stixscr}{m}{n}
\nc{\Spa}{\on{{Spa}}}
\nc{\Spd}{\on{{Spd}}}
\nc{\tnb}{\psi_{\rm tame}}
\nc{\oM}{\overline{{M}}}
\nc{\op}{{\on{op}}}
\nc{\ad}{{\on{ad}}}
\nc{\ab}{{\on{ab}}}
\nc{\alg}{{\on{alg}}}
\nc{\Ad}{{\on{Ad}}}
\nc{\Adm}{{\on{Adm}}} \nc{\aff}{{\on{af}}}
\nc{\Aut}{{\on{Aut}}}
\nc{\Bun}{{\on{Bun}}}
\nc{\cha}{{\on{char}}}
\nc{\der}{{\on{der}}}
\nc{\Det}{{\on{det}}}
\nc{\Der}{{\on{Der}}}
\nc{\diag}{{\on{diag}}}
\nc{\End}{{\on{End}}}
\nc{\Fl}{{\calF\!\ell}}
\nc{\Tr}{{\on{Transp}}}
\nc{\TR}{{\calT\!\calR}}
\nc{\Gal}{{\on{Gal}}}
\nc{\Gr}{{\on{Gr}}}
\nc{\Hk}{{\on{Hk}}}
\nc{\rH}{{\on{H}}}
\nc{\Hom}{{\on{Hom}}}
\nc{\IC}{{\on{IC}}}
\nc{\id}{{\on{id}}}
\nc{\Id}{{\on{Id}}}
\nc{\ind}{{\on{ind}}}
\nc{\Ind}{{\on{Ind}}}
\nc{\Lie}{{\on{Lie}}}
\nc{\Pic}{{\on{Pic}}}
\nc{\pr}{{\on{pr}}}
\nc{\Res}{{\on{Res}}}
\nc{\res}{{\on{res}}} \nc{\Sat}{{\on{Sat}}}
\nc{\spc}{{\on{sc}}}
\nc{\drv}{{\on{der}}}
\nc{\sgn}{{\on{sgn}}}
\nc{\Spec}{{\on{Spec} \hspace{0,5mm}}}
\nc{\Spf}{\on{Spf}} 
\nc{\Sph}{\on{Sph}}
\nc{\St}{{\on{St}}}
\nc{\tr}{{\on{tr}}}
\nc{\Mod}{{\mathrm{-Mod}}}
\nc{\Hilb}{{\on{Hilb}}} 
\nc{\Ext}{{\on{Ext}}} 
\nc{\vs}{{\on{Vec}}}
\nc{\ev}{{\on{ev}}}
\nc{\nO}{{\breve{\calO}}}
\nc{\tS}{{\tilde{S}}}
\nc{\spe}{{\on{sp}}}
\nc{\loc}{{\on{loc}}}
\nc{\pre}{{\on{pre}}}
\nc{\bfmu}{{\boldsymbol{\mu}}}
\nc{\bfb}{{\mathbf{b}}}
\nc{\bfnu}{{\boldsymbol{\nu}_\bfb}}
\nc{\bfnuH}{{\boldsymbol{\nu}_{\bfb_H}}}
\nc{\co}{\colon}
\nc{\dia}{{\diamondsuit}}
\nc{\nscrR}{{\mathscr{R}^{\on{nr}}}}
\nc{\GL}{{\on{GL}}}
\nc{\Gl}{\on{Gl}} 
\nc{\GSp}{{\on{GSp}}}
\nc{\gl}{{\frakg\frakl}}
\nc{\SL}{{\on{SL}}} 
\nc{\SU}{{\on{SU}}} 
\nc{\SO}{{\on{SO}}}
\nc{\PGL}{{\on{PGL}}}
\nc{\Conv}{{\on{Conv}}}
\nc{\Rep}{\on{Rep}}
\nc{\Dom}{{\on{Dom}}}
\nc{\red}{{\on{red}}}
\nc{\act}{{\on{act}}}
\nc{\nr}{{\on{nr}}}
\nc{\ctf}{{\on{ctf}}}
\nc{\str}{{\on{-}}} 
\nc{\os}{{\bar{s}}}
\nc{\oeta}{{\bar{\eta}}}
\nc{\et}{\textup{\'et}}
\nc{\hookto}{\hookrightarrow}
\nc{\longto}{\longrightarrow}
\nc{\leftto}{\leftarrow}
\nc{\onto}{\twoheadrightarrow}
\nc{\lonto}{\twoheadleftarrow}
\nc{\pot}[1]{ [\hspace{-0,5mm}[ {#1} ]\hspace{-0,5mm}] }
\nc{\rpot}[1]{ (\hspace{-0,7mm}( {#1} )\hspace{-0,7mm}) }
\nc{\Sht}{{\on{Sht}}}
\nc{\Gsc}{{G^{\on{sc}}}}
\nc{\Gab}{{G^{\on{ab}}}}
\nc{\Gder}{{G^{\on{der}}}}
\nc{\ov}[1]{\overline{#1}}
\numberwithin{equation}{section}
\begin{document}
	
	\title{The connected components of affine Deligne--Lusztig varieties}
	
	\author[I. Gleason, D.G. Lim, Y. Xu]{Ian Gleason, Dong Gyu Lim, Yujie Xu}

\address{Mathematisches Institut der Universit\"at Bonn, Endenicher Allee 60, 53115 Bonn, Germany}
\email{ianandre@math.uni-bonn.de}

\address{University of California, Evans Hall, Berkeley, CA 94720, USA}
\email{limath@math.berkeley.edu}

\address{Columbia University, Department of Mathematics, 2990 Broadway, New York, NY 10027, USA}
\email{xu.yujie@columbia.edu}

	\begin{abstract}
		We compute the connected components of arbitrary parahoric level affine Deligne--Lusztig varieties and local Shimura varieties, thus resolving a folklore conjecture raised in \cite{HeTalk2018, Zhou20} in full generality (even for non-quasisplit groups). We achieve this by relating them to the connected components of infinite level moduli spaces of $p$-adic shtukas, where we use v-sheaf-theoretic techniques such as the specialization map of \textit{kimberlites}. Along the way, we give a $p$-adic Hodge-theoretic characterization of HN-irreducibility. 
		
		As applications, we obtain many results on the geometry of integral models of Shimura varieties of Hodge type at arbitrary stabilizer-parahoric levels. In particular, we deduce new CM lifting results on integral models of Shimura varieties for quasisplit groups at parahoric levels that arise as stabilizer Bruhat--Tits group schemes. 
	\end{abstract}
	\date{}
	
	\maketitle
	\tableofcontents

	\section{Introduction}
	\subsection{Background}
	In \cite{Rap}, Rapoport introduced certain geometric objects called affine Deligne--Lusztig varieties (ADLVs), to study mod $p$ reduction of Shimura varieties. Since then, ADLVs have played a prominent role in the geometric study of Shimura varieties, Rapoport--Zink spaces, local Shimura varieties and moduli spaces of local shtukas. 
	Moreover, results on connected components of affine Deligne--Lusztig varieties have found remarkable applications to Kottwitz' conjecture and Langlands--Rapoport conjecture, which describe mod $p$ points of Shimura varieties in relation to $L$-functions, as part of the Langlands program (for more background on this, see for example \cite{Kisin-mod-p-points}). 

	Although there have been many successful approaches \cite{Vie08,CKV,Nie,He-Zhou,Ham20,Nie21} to computing connected components of ADLVs in the past decade, as far as the authors know, the current article is the first one that approaches the problem using $p$-adic analytic geometry \textit{\`a la Scholze}. As it turns out, the $p$-adic approach proves the most general case of Conjecture \ref{mainconj} and gives a new and uniform proof to all previously known cases. 
	More precisely, we use a combination of Scholze's theory of diamonds \cite{Et}, the theory of \textit{kimberlites} due to the first author \cite{Specializ}, the connectedness of $p$-adic period domains \cite{GL22Conn}, and the normality of the local models \cite{AGLR22,GL22} to compute the connected components of ADLVs. Just as diamonds are generalizations of rigid analytic spaces, kimberlites and prekimberlites are the v-sheaf-theoretic generalizations of formal schemes. Roughly speaking, they are diamondifications of formal schemes.

	As is well-known to experts, affine Deligne--Lusztig varieties parametrize (at-$p$) isogeny classes on integral models of Shimura varieties. As an application of our main theorems, we deduce the isogeny lifting property for integral models for Shimura varieties at parahoric levels constructed in \cite{Kisin-Pappas} (see also \cite{KPZ24}). 
	Moreover, using work of Zhou \cite{Zhou20}, we give a new CM lifting result on integral models for Shimura varieties--which is a generalization of the classical Honda-Tate theory--for quasisplit groups at $p$ at stabilizer-parahoric levels. This improves on previous CM lifting results, which were proved either assuming (1) $G_{\bbQ_p}$ residually split, or assuming (2) $G$ unramified, or assuming that (3) the parahoric level is very special. 

As a further application, we prove that the Newton strata of the integral models for Shimura varieties at parahoric level constructed in \cite{pappas2021padic} satisfy $p$-adic uniformization, and that the Rapoport--Zink spaces considered in \textit{loc.cit.} agree with the moduli spaces of $p$-adic shtukas of \cite{Ber} associated to the same data. 

\subsection{Notations}
	\label{introduction-notation-section}
	We use numbered paragraphs and sections. When referring to them, we use \P\, for numbered paragraphs and \S\, for sections. 
	To avoid overloading the introduction, we use common terminology whose rigorous definitions we postpone until later (in $\mathsection$\ref{preliminaries-section}). 
	
	We denote by $\varphi$ the lift of arithmetic Frobenius to $\br{\bbQ}_p$. 
	Let $\calI$ and $\calK_p$ be $\bbZ_p$-parahoric group schemes with common generic fiber a reductive group $G$.
	We let 
	$K_p=\calK_p(\bbZ_p)$, $\brevI=\calI(\br{\bbZ}_p)$ and $\br{K}_p:=\calK_p(\br{\bbZ}_p)$. 
	We require that ${\calI(\bbZ_p)}\subseteq K_p$ and that $\calI$ is an Iwahori subgroup of $G$.

	Fix $S\subseteq G$, a $\bbQ_p$-torus that is maximally split over $\br{\bbQ}_p$. 
	Let $T=Z_G(S)$ be the centralizer of $S$; by Steinberg's theorem it is a maximal $\bbQ_p$-torus. 
	Let $B\subseteq G_{\br{\bbQ}_p}$ be a Borel, defined over $\br{\bbQ}_p$, containing $T_{\br{\bbQ}_p}$. 
	Let $\mu\in X^+_*(T)$ be a $B$-dominant cocharacter, and let $b\in G(\br{\bbQ}_p)$. 
	Let $\widetilde{W}$ be the Iwahori--Weyl group of $G$ over $\br{\bbQ}_p$. 
	Let $\on{Adm}(\mu)\subseteq \widetilde{W}=\brevI\backslash G(\br{\bbQ}_p)/\brevI$ denote the $\mu$-admissible set of Kottwitz--Rapoport (see \cite{KR00} for the case of split groups and \cite[Definition 4.23]{PRS13} for the general case). 
	
	We call the triple $(G,b,\mu)$ a \textit{$p$-adic shtuka datum} (compare with \cite[Definition 5.1]{Towards}). 
	The (closed) affine Deligne--Lusztig variety\footnote{In the literature, the closed affine Deligne--Lusztig varieties attached to $b$ and $\mu$ are more commonly denoted by $X(\{\mu\},b)$.} associated to $(G,b,\mu)$, which we will denote as $X_\mu(b)$, is a locally perfectly finitely presented $\bar{\bbF}_p$-scheme (see \cite{Zhu}), with $\bar{\bbF}_p$-valued points given by:   
	\begin{equation}X_\mu(b)=\{g\brevI\mid g^{-1}b\varphi(g)\in \brevI \on{Adm}(\mu) \brevI\}. \end{equation}
	By definition, $X_\mu(b)$ embeds into the Witt vector affine flag variety $\Fl_\brevI$, whose $\bar{\bbF}_p$-valued points are the cosets $G(\br{\bbQ}_p)/\brevI$. 
	We also consider the $\calK_p$-version $X^{\calK_p}_\mu(b)$ with $\bar{\bbF}_p$-points: 
	\begin{equation}X^{\calK_p}_\mu(b)=\{g \breve{K}_p\mid g^{-1}b\varphi(g)\in \breve{K}_p \on{Adm}(\mu) \breve{K}_p\}. \end{equation}
	Let $\bfb\in B(G)$ be the $\varphi$-conjugacy class of $b$, and let $\boldsymbol{\mu}$ be the conjugacy class of $\mu$. 
	Assume $\bfb$ lies in the Kottwitz set $B(G,\bfmu)$. 
	Let $\mu^\diamond\in X_*(T)^+_\bbQ$ denote the ``twisted Galois average" of $\mu$ (see \eqref{defn-mu-diamond}), and let $\bfnu\in X_*(T)^+_\bbQ$ denote the dominant Newton point.
	Recall that $\bfb\in B(G,\bfmu)$ implies that $\mu^\diamond-\bfnu$ is a non-negative linear combination of simple positive coroots with rational coefficients. We say that $(\bfb,\bfmu)$ with $\bfb\in B(G,\bfmu)$ is \textit{Hodge--Newton irreducible} (HN-irreducible) if all simple positive coroots have non-zero coefficient in $\mu^\diamond-\bfnu$.

	Let $\Gamma$ and $I$ denote the Galois groups of $\bbQ_p$ and $\br{\bbQ}_p$ respectively. 
	Recall that the Kottwitz map \cite[7.4]{KottwitzII}  
	\begin{equation}
		\kappa_G:G(\br{\bbQ}_p)\to \pi_1(G)_I
	\end{equation}
	induces bijections $\pi_0(\Fl_\brevI)\cong \pi_0(\Fl_{\br{K}_p}) \cong \pi_1(G)_I$ (see \cite[Theorem 0.1]{PR08} for the function field case and \cite[Proposition 1.21]{Zhu} for the mixed-characteristic case). 
	Moreover, it is known that the map induced by $\kappa_G$ on connected components of ADLV, 
	\begin{equation}\label{Hamacher-map}
	\omega_G:\pi_0(X^{\calK_p}_\mu(b))\to \pi_1(G)_I,
	\end{equation}
	factors surjectively onto $c_{b,\mu}\pi_1(G)_I^\varphi\subseteq \pi_1(G)_I$ for a unique coset element $c_{b,\mu}\in \pi_1(G)_I/\pi_1(G)_I^\varphi$ (see for example \cite[Lemma 6.1]{He-Zhou}).

\subsection{Main Results}
In his ICM talk \cite{He}, X.~He highlights the study of connected components as an important open problem in the study of the geometric properties of ADLVs. 
Inspired by results of \cite{Vie08} and \cite{CKV}, He and Zhou point to the following folklore conjecture; see \cite{HeTalk2018} and \cite[Conjecture 5.4]{Zhou20}.
	\begin{conjecture}
		\label{mainconj}
		If $(\bfb,\bfmu)$ is HN-irreducible, the following map is bijective
		\begin{align*}
		\omega_G:\pi_0(X^{\calK_p}_\mu(b))\to c_{b,\mu}\pi_1(G)_I^\varphi 
		\end{align*}  
	\end{conjecture}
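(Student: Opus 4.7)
The strategy is to transport the question from the Witt vector affine flag variety, where $X^{\calK_p}_\mu(b)$ lives, to the $p$-adic analytic side, where the theory of diamonds and kimberlites applies. The key geometric input is that $X^{\calK_p}_\mu(b)$ is (after perfection) the reduced special fiber of a kimberlite whose $\dia$-generic fiber is the moduli of $p$-adic shtukas $\Sht_{G,b,\mu,\calK_p}$ over $\Spd(\br{\bbQ}_p)$. By the specialization map of \cite{Specializ}, combined with the normality of the local models established in \cite{AGLR22,GL22}, the induced map on connected components is a bijection compatible with the Kottwitz map. This reduces the conjecture to the analogous statement for $\pi_0(\Sht_{G,b,\mu,\calK_p})$.

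Next, I would pass to infinite level via $\Sht_{G,b,\mu,\calK_p} = \Sht_{G,b,\mu,\infty}/K_p$. On the infinite-level moduli, the de Rham period morphism
\begin{equation*}
\pi_{\dR}\co \Sht_{G,b,\mu,\infty} \longrightarrow \Gr^{B^+_{\dR}}_{G,\mu^{-1}}
\end{equation*}
realizes $\Sht_\infty$ as a pro-\'etale $G(\bbQ_p)$-torsor over the admissible locus. Computing $\pi_0(\Sht_{G,b,\mu,\calK_p})$ therefore decomposes into understanding the $\pi_0$ of the admissible locus, together with the monodromy of $\pi_{\dR}$ over each component, and then taking the $K_p$-quotient.

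For the first, \cite{GL22Conn} shows that under HN-irreducibility the admissible locus is geometrically connected. For the second, the monodromy on the Drinfeld tower factors through a subgroup of $G(\bbQ_p)$ whose image under the Kottwitz map recovers exactly $\pi_1(G)_I^\varphi$; this is the role of the $p$-adic Hodge-theoretic characterization of HN-irreducibility advertised in the abstract. Taking the quotient by $K_p$ and tracking the coset $c_{b,\mu}$ (which records the image of $b$ under $\kappa_G$ modulo the image of $\calK_p$) then produces the desired bijection.

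The main obstacle is the joint compatibility of these three ingredients --- the specialization bijection, the period fibration, and the connectedness of the admissible locus --- with the Kottwitz map $\omega_G$, so that HN-irreducibility feeds into the monodromy computation in precisely the form needed to cut out the Frobenius-fixed part $\pi_1(G)_I^\varphi$ rather than a coarser invariant. A secondary but substantive point is handling an arbitrary parahoric $\calK_p$ (including non-quasisplit $G$): here normality of the local model is used both to transfer information from the Iwahori level $\calI$, where the combinatorics of $\Adm(\mu)$ are cleaner, back to $\calK_p$, and to guarantee that the specialization map is a bijection on $\pi_0$ rather than merely a surjection.
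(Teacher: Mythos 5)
Your outline follows the same route as the paper: specialization map of the kimberlite $\Sht^{\calK_p}_\mu(b)$ plus unibranchness of the local model to get $\pi_0(\Sht_{(G,b,\mu,\calK_p)})\cong\pi_0(X^{\calK_p}_\mu(b))$, then passage to infinite level, the Grothendieck--Messing period map realizing $\Sht_{(G,b,\mu,\infty)}$ as a $\underline{G(\bbQ_p)}$-torsor over the admissible locus, connectedness of that locus from \cite{GL22Conn}, and a monodromy computation via generic crystalline representations. However, the step you describe as ``the monodromy factors through a subgroup whose image under the Kottwitz map recovers exactly $\pi_1(G)_I^\varphi$'' hides the hardest part, and as stated it has a genuine gap. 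The $p$-adic Hodge-theoretic input (the generalization of Chen's theorem on Mumford--Tate groups of generic crystalline points) only produces \emph{one} classical point of $\Gr^b_\mu$ whose associated Galois representation has open image in $G^\der(\bbQ_p)$; this shows that the stabilizer $G_x$ of a connected component of $\Sht_{(G,b,\mu,\infty)}\times\Spd\bbC_p$ is \emph{open} in $G^\der(\bbQ_p)$, but an open subgroup need not be everything. The paper closes this by a second, independent argument: the finiteness of the set of $J_b(\bbQ_p)$-orbits on $\pi_0$ (Hamacher--Viehmann), combined with the commutation of the $J_b(\bbQ_p)$- and $G(\bbQ_p)$-actions and compactness of $\calI(\bbZ_p)$, shows the normalizer of $G_x$ has finite index in $G(\bbQ_p)$; one then first reduces via z-extensions to $G^\der=G^{\on{sc}}$ so that Margulis' theorem (no finite-index subgroups and no open normal subgroups of $G^{\on{sc}}(\bbQ_p)$ for isotropic simple factors) forces $G_x=G^\der(\bbQ_p)$. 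Without the normalizer step and the Margulis input, openness of the monodromy does not yield the torsor structure on $\pi_0$.

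A second substantive omission: the entire mechanism above fails when $G^{\ad}$ has anisotropic factors, since Margulis' theorem is precisely about isotropic groups, and indeed for anisotropic $G^{\ad}$ the equivalence with HN-irreducibility in the paper's Theorem 1.7 is stated only in the isotropic case. The paper handles the anisotropic case by a separate direct argument: there $\calI(\bbZ_p)$ is normal in $G(\bbQ_p)$ and contains the image of $G^{\on{sc}}(\bbQ_p)$, so $\Sht_{(G,b,\mu,\calI(\bbZ_p))}$ is a trivial $\pi_1(G)_I^\varphi$-torsor over the (connected) admissible locus, and one reads off $\pi_0$ directly. Your proposal should either incorporate this case split or explain why your monodromy argument survives anisotropy; as written it does not.
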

	Our main theorem is the following (see \Cref{mainthmmain}). 
	\begin{theorem}
		\label{mainthm} 
		For all $p$-adic shtuka datum $(G,b,\mu)$ and all parahoric subgroups $\calK_p\subseteq G(\bbQ_p)$, \Cref{mainconj} holds.
	\end{theorem}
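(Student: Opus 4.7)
The strategy is to translate the computation of $\pi_0\bigl(X^{\calK_p}_\mu(b)\bigr)$ into $p$-adic analytic geometry by realizing the affine Deligne--Lusztig variety as the reduced special fiber of a v-sheaf-theoretic integral model of a moduli space of $p$-adic shtukas, and then to exploit the de Rham period map out of the associated infinite-level shtuka space. Since surjectivity of $\omega_G$ onto $c_{b,\mu}\pi_1(G)_I^\varphi$ is already known (e.g.~\cite[Lemma 6.1]{He-Zhou}), the content of the theorem lies entirely in injectivity.

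The first step is to identify $X^{\calK_p}_\mu(b)$ with (the scheme underlying) the reduced special fiber of a v-sheaf $\Sht_{G,b,\mu,\calK_p}$ whose diamond generic fiber $\Sht^{\eta}_{G,b,\mu,\calK_p}$ is the finite parahoric level moduli of $p$-adic shtukas. Using the normality of local models at arbitrary parahoric level from \cite{AGLR22,GL22}, one verifies that $\Sht_{G,b,\mu,\calK_p}$ is a topologically normal (smelted) kimberlite in the sense of \cite{Specializ}, so the associated specialization map induces a bijection
\[
\on{sp}\co \pi_0\bigl(\Sht^{\eta}_{G,b,\mu,\calK_p}\bigr) \xrightarrow{\ \sim\ } \pi_0\bigl(X^{\calK_p}_\mu(b)\bigr).
\]
This reduces the problem to a computation of connected components on the analytic generic fiber.

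Next I would pass to the infinite-level shtuka space $\Sht_{G,b,\mu,\infty}$, which is a pro-\'etale $\underline{K_p}$-torsor over $\Sht^\eta_{G,b,\mu,\calK_p}$ and carries commuting actions of $G(\bbQ_p)$ and $J_b(\bbQ_p)$. The de Rham period map
\[
\pi_{\dR}\co \Sht_{G,b,\mu,\infty} \longrightarrow \Gr^{B_{\dR}^+}_{G,\mu}
\]
factors through the admissible locus $\Gr^{a}_{G,b,\mu}$ and presents $\Sht_{G,b,\mu,\infty}$ as a pro-\'etale $\underline{G(\bbQ_p)}$-torsor over it. Under HN-irreducibility of $(\bfb,\bfmu)$, the main theorem of \cite{GL22Conn} guarantees that $\Gr^{a}_{G,b,\mu}$ is geometrically connected. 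The torsor bookkeeping then determines $\pi_0\bigl(\Sht_{G,b,\mu,\infty}\bigr)$ as a $G(\bbQ_p) \times J_b(\bbQ_p)$-set, and descent modulo $K_p$, combined with the Kottwitz map, identifies the outcome with $c_{b,\mu}\pi_1(G)_I^\varphi$, yielding the sought injectivity.

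The hardest part is weaving these ingredients together uniformly enough to cover all parahoric levels $\calK_p$ and, crucially, the non-quasisplit case. The normality of local models from \cite{AGLR22,GL22} is what forces the specialization map to be a $\pi_0$-bijection at arbitrary parahoric level; the HN-irreducibility hypothesis is precisely what \cite{GL22Conn} requires for the admissible locus to be connected; and the careful accounting of the commuting $G(\bbQ_p)$- and $J_b(\bbQ_p)$-actions against the Kottwitz invariant is what produces the group-theoretic answer at the end. I expect the technical heart of the argument to be the verification of the kimberlite structure at arbitrary parahoric level together with the correct descent bookkeeping, rather than the individual geometric inputs taken separately.
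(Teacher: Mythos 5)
Your first step---realizing $X^{\calK_p}_\mu(b)$ as the reduced special fiber of the v-sheaf integral model of the shtuka space and using the unibranchness of local models to get a $\pi_0$-bijection via the specialization map---is exactly what the paper does (\Cref{Thmshtprekimb}, \Cref{adlvtosht}). But from that point on your proposal has a genuine gap, concentrated in the phrase ``the torsor bookkeeping then determines $\pi_0(\Sht_{G,b,\mu,\infty})$ as a $G(\bbQ_p)\times J_b(\bbQ_p)$-set.'' Connectedness of the admissible locus (which, note, holds for \emph{every} $\bfb\in B(G,\bfmu)$ by \Cref{badmconn}, not only under HN-irreducibility---you have misplaced where that hypothesis enters) only gives \emph{transitivity} of the $G(\bbQ_p)$-action on $\pi_0(\Sht_{(G,b,\mu,\infty)}\times\Spd\bbC_p)$. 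A pro-\'etale $\underline{G(\bbQ_p)}$-torsor over a connected base can have $\pi_0$ equal to $G(\bbQ_p)/H$ for essentially any open subgroup $H$; no amount of bookkeeping determines the stabilizer without a further input, and identifying that stabilizer is the technical heart of the paper.

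The paper pins down the stabilizer $G_x$ of a component as follows: HN-irreducibility is used to construct a \emph{generic} crystalline representation $\Gamma_K\to G(\bbQ_p)$ whose image meets $G^{\der}(\bbQ_p)$ in an open subgroup (\Cref{enhancedChen-maintxt}, \Cref{cor:MainChen}, generalizing Chen's Mumford--Tate group computation to ramified groups); this shows $G_x\cap G^{\der}(\bbQ_p)$ is open. Then the finiteness of the set of $J_b(\bbQ_p)$-orbits on $\pi_0$ from \cite{HV20}, combined with the commutation of the $G(\bbQ_p)$- and $J_b(\bbQ_p)$-actions, shows the normalizer of $G_x$ has finite index; Margulis' theorem for simply connected isotropic groups then forces $G_x\supseteq G^{\der}(\bbQ_p)$. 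None of this appears in your outline, and without it the argument does not close. Two further omissions: the reduction via z-extensions and ad-isomorphisms to the case $G^{\der}=G^{\on{sc}}$ (needed to apply Margulis and to make $G^\circ=G^{\ab}(\bbQ_p)$), and the case where $G^{\ad}$ has anisotropic factors, where the Margulis argument fails and the paper gives a separate direct argument using that $\calI(\bbZ_p)$ is then normal in $G(\bbQ_p)$.
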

	\begin{remark}
		\Cref{mainconj} can be formulated for arbitrary non-archimedean local fields $F$ of residue characteristic $p$. Our proof method for \Cref{mainthm} generalizes to the case in which $F$ is a finite extension of $\bbQ_p$ (see \Cref{generalmixedchar}). 
	\end{remark}

	To state the applications to the geometry of Shimura varieties, we shall also need the following notations. Let $(\bfG, X)$ be a Shimura datum of Hodge type. Suppose $\bfG$ splits over a tamely ramified extension. 
	We shall always assume $p>2$ and $p\nmid |\pi_1(\bfG^{\on{der}})|$. 
	Recall that to any facet in the (extended) Bruhat--Tits building of $\bfG_{\bbQ_p}$ one can attach three subgroups of $\bfG(\bbQ_p)$, namely the parahoric subgroup associated with the facet, the point-wise stabilizer of the facet, and the stabilizer (or fixer) of the facet.
	We say that a parahoric subgroup $K_p\subseteq \bfG(\bbQ_p)$, is a \textit{stabilizer-parahoric} if it can be obtained as the $\bbZ_p$-points of a parahoric group scheme that arises as the stabilizer group scheme of a point in the (extended) Bruhat--Tits building of $\bfG_{\bbQ_p}$. Equivalently, $K_p$ is a stabilizer-parahoric if it coincides with the point-wise stabilizer of a facet in the (extended) Bruhat--Tits building (see \cite{haines2009corrigendum} for a related discussion). 
	By \cite[\S 4.2]{Kisin-Pappas}\footnote{In \cite{Kisin-Pappas}, the authors construct parahoric integral models assuming that $\bfG_{\bbQ_p}$ splits over a tamely ramified extension. We expect that some of the technical conditions of our corollaries can be relaxed using the constructions in \cite{Kisin-Zhou} or in \cite{pappas2021padic}.} (see also \cite{KPZ24}), 
there is a normal integral model $\scrS_{\calK_p}(\bfG,X)$, for the Shimura variety $\mathrm{Sh}_{\calK_p}(\bfG,X)$. 
\begin{assumption}
	\label{global-on-going-assump}
	For the convenience of the reader we collect our assumptions in the following list.
	\begin{itemize}
		\item $(\bfG,X)$ is of Hodge type.
		\item $p\neq 2$. 
		\item $\bfG$ splits over a tamely ramified extension.
		\item $p\nmid |\pi_1(\bfG^{\on{der}}|$.
		\item $K_p\subseteq \bfG(\bbQ_p)$ is a stabilizer-parahoric.\footnote{As explained by the authors of \cite{DHKZ}, it is natural to expect that their methods combined with the present article could allow one to remove this condition.}
	\end{itemize}
	
\end{assumption}

\Cref{isog-lifting} below is the parahoric analogue of \cite[Proposition 1.4.4]{Kisin-mod-p-points} and can be obtained by combining our \Cref{mainthm} with \cite[Propositions 6.5, 7.8]{Zhou20}. 
For the convenience of the reader, we recall the setup \textit{loc.cit.}
Recall that by construction (and after fixing an embedding of Shimura data $(\bfG,X)\to (\mathbf{GSp},S^{\pm})$ compatible with maps of parahoric group schemes $\calK_p\to \calG\calS p$ as in \cite[\S 6.2, \S 7.3]{Zhou20}), the scheme $\scrS_{\calK_p}(\bfG,X)$ comes equipped with a finite morphism\footnote{In certain circumstances this map is a closed immersion (see \Cref{corollary-xu} below).} $\epsilon:\scrS_{\calK_p}(\bfG,X)\to \scrS_{\calG\calS p}(\mathbf{GSp},S^{\pm})$ to an integral model of the Siegel modular variety (or Siegel modular scheme).
Let $\scrS^{perf}_{\calK_p,\bar{\bbF}_p}(\bfG,X)$ and $\scrS^{perf}_{\calG\calS p,\bar{\bbF}_p}(\mathbf{GSp},S^{\pm})$ denote the perfection of the $\bar{\bbF}_p$-base change of $\scrS_{\calK_p}(\bfG,X)$ and $\scrS_{\calG\calS p}(\mathbf{GSp},S^{\pm})$ respectively.
Recall that to any $\bar{x}\in \scrS_{\calK_p}(\bfG,X)(\bar{\bbF}_p)$, one can associate an element $b\in G(\br{\bbQ}_p)$ as in \cite[Lemma 1.1.12]{Kisin-mod-p-points}. 
As in \cite[\S 6.7]{Zhou20} (see also \cite[\S 6]{RZ96}) for any such $\bar{x}\in \scrS_{\calK_p}(\bfG,X)(\bar{\bbF}_p)$ one has a map obtained from $p$-adic uniformization\footnote{The twist $\varphi\mu$ by Frobenius on \Cref{first-map-Zhou} arises from the conventions in Dieudonn\'e theory adopted in \cite{Zhou20}.} 
\begin{equation} \label{first-map-Zhou} \iota'_{\bar{x}}:X^{\calK_p}_{\varphi{\mu}}(b)\to \scrS^{perf}_{\calG\calS p,\bar{\bbF}_p}(\mathbf{GSp},S^{\pm}). \end{equation}
Further, fix $\bar{x}\in \scrS_{\calK_p}(\bfG,X)(\bar{\bbF}_p)$ and $y\in \scrS_{\calK_p}(\bfG,X)(O_K)$ an arbitrary lift of $\bar{x}$ defined over $O_K$ for $K$ some finite extension of $\bbQ_p$. 
Consider $\calA_y$, $\scrG_y$ and $\scrG_y^\vee$, which are respectively the abelian variety over $K$ attached to $y$, the $p$-divisible group attached to $\calA_y$ and the Cartier dual of $\scrG_y$.
The Tate module $T_p\scrG_y^\vee$ (which identifies canonically with $\on{H}^1_{\et}(\calA_y,\bbZ_p)$) comes equipped with \'etale tensors, and one can use the $p$-adic comparison theorem to equip $\bbD(\scrG_{\bar{x}})$ (the contravariant Dieudonn\'e module of the associated $p$-divisible group at $\epsilon(\bar{x})$) with crystalline tensors $s_{\alpha,0,\bar{x}}\in \bbD(\scrG_{\bar{x}})^\otimes$. Moreover, these tensors do not depend on the choice of $y$ (see \cite[\S 6.6]{Zhou20} for details, specifically the paragraph above \cite[Corollary 6.3]{Zhou20}).
Finally, as in \cite[\S 6.7]{Zhou20} $X^{\calK_p}_{\varphi\mu}(b)$ has an $r$-Frobenius action $\Phi$ with formula $\Phi(g):=(b\varphi)^r(g)$ for $g\in X^{\calK_p}_{\varphi\mu}(b)(\bar{\bbF}_p)$, where $r$ denotes the residue degree of the field of definition of $\mu$.

	\begin{corollary}\label{isog-lifting}
		Under our assumptions (\Cref{global-on-going-assump}), for any $\bar{x}\in \scrS_{\calK_p}(\bfG,X)(\bar{\bbF}_p)$, there exists a unique map of perfect schemes 
		\begin{equation}\label{isogeny-class-map-at-x}
		\iota_{\bar{x}}: X^{\calK_p}_{\varphi\mu}(b)\to \scrS^{perf}_{\calK_p,\bar{\bbF}_p}(\bfG,X)		\end{equation}
		lifting $\iota'_{\bar{x}}$ (as in \eqref{first-map-Zhou}) and such that for all $g\in X^{\calK_p}_{\varphi\mu}(b)(\bar{\bbF}_p)$ we have $s_{\alpha,0,\iota_{\bar{x}}(g)}=s_{\alpha,0,{\bar{x}}}$. Moreover, this map is $\Phi$-equivariant for the usual geometric $r$-Frobenius action on $\scrS^{perf}_{\calK_p,\bar{\bbF}_p}(\bfG,X)$.
	\end{corollary}
	The following \Cref{isog-inject}(1) (resp.~\Cref{isog-inject}(2)) is a parahoric analogue to \cite[Proposition 2.1.3]{Kisin-mod-p-points} (resp.~\cite[Theorem 2.2.3]{Kisin-mod-p-points}) when $\bfG_{\bbQ_p}$ is quasisplit, and can be obtained by combining our \Cref{mainthm} with \cite[Proposition 9.1]{Zhou20} (resp.~\cite[Theorem 9.4]{Zhou20}). We recall the setup. 
	Given $\bar{x}\in  \scrS_{\calK_p}(\bfG,X)(\bar{\bbF}_p)$ we obtain an abelian variety $\calA_{\bar{x}}$ with associated $p$-divisible group $\calG_{\bar{x}}$. These are equipped with $\ell$-adic tensors $s_{\alpha, \ell, \bar{x}}\in \on{H}^1_{\acute{e}t}(\calA_{\bar{x}}, \bbQ_\ell)^\otimes$ for all primes $\ell\neq p$ and with crystalline tensors $s_{\alpha,0,\bar{x}}\in \bbD(\calG_{\bar{x}})^\otimes$ as in \cite[\S 9.2]{Zhou20}. 
	Recall that associated to such an abelian variety we can consider an algebraic group $\on{Aut}_\bbQ(\calA_{\bar{x}})$, defined over $\bbQ$, parametrizing automorphisms of $\calA_{\bar{x}}$ treated as an abelian variety up-to-isogeny.
	This has $R$-points given by $\on{Aut}_\bbQ(\calA_{\bar{x}})(R)=(\on{End}(\calA_{\bar{x}})\otimes_{\bbZ} R)^\times$. 
	We consider the algebraic subgroup $I_{\bar{x}}\subseteq \on{Aut}_{\bbQ}(\calA_{\bar{x}})$ of those automorphisms of the abelian variety $\calA_{\bar{x}}$ up-to-isogeny that preserve the tensors $s_{\alpha,\ell,\bar{x}}$ for all $\ell\neq p$ and $s_{\alpha,0,\bar{x}}$. 
	\begin{corollary}\label{isog-inject}
		Let the assumptions be as in \Cref{global-on-going-assump}.
		Assume further that $\bfG$ is quasisplit at $p$. 
	\begin{enumerate}
		\item the map $\iota_{\bar{x}}$ in \eqref{isogeny-class-map-at-x} induces an injective map 
		\begin{equation}\label{injective-iota-x-map}
			\iota_{\bar{x}}: I_{\bar{x}}(\Q)\backslash (X_{\varphi\mu}^{\calK_p}(b)(\bar{\bbF}_p)\times \bfG(\bbA_f^p))\to\mathscr{S}_{\calK_p}(G,X)(\bar{\bbF}_p),
		\end{equation}
	\item  The isogeny class $\iota_{\bar{x}}(X_{\varphi\mu}^{\calK_p}(b)(\bar{\bbF}_p))\times \bfG(\bbA_f^p))$ contains a point which lifts to a special point on $\mathscr{S}_{\calK_p}(G,X)$.
	\end{enumerate}
	\end{corollary}

	\Cref{mainthm} together with \cite[Theorem 8.1(2)]{Zhou20} finishes the verification of the He--Rapoport axioms \cite{HR17} for integral models of Shimura varieties. 
	\begin{corollary}\label{HR-axiom-coro-nonempty}\
		Under our assumptions (\Cref{global-on-going-assump}).			The He--Rapoport axioms hold for $\scrS_{\calK_p}(\bfG,X)$. 
	\end{corollary}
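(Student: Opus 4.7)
The plan is immediate: invoke \Cref{mainthm} as the conditional input in Zhou's reduction. In \cite[Theorem 8.1(2)]{Zhou20}, Zhou shows that, granted \Cref{mainconj} for the local datum $(G,b,\mu)$ attached (as in \cite[Lemma 1.1.12]{Kisin-mod-p-points}) to every $\bar{\bbF}_p$-point of $\scrS_{\calK_p}(\bfG,X)$, all of the He--Rapoport axioms of \cite{HR17} hold for the Kisin--Pappas integral model. Since \Cref{mainthm} resolves \Cref{mainconj} unconditionally, the corollary follows.

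The concrete steps I would carry out are: first, check that the standing hypotheses of \cite[Theorem 8.1(2)]{Zhou20} match our setup ($p>2$, $p \nmid |\pi_1(\bfG^{\on{der}})|$, $\bfG_{\bbQ_p}$ tame, and $\calK_p$ a connected parahoric)---all in force in this subsection; second, for each $\bar{\bbF}_p$-point $x$, extract the local shtuka datum $(G,b,\mu)$ and apply \Cref{mainthm} to obtain the bijection $\omega_G$ on $\pi_0(X^{\calK_p}_\mu(b))$; third, feed this bijection into Zhou's argument to conclude the remaining He--Rapoport axioms, namely the description of isogeny classes and the surjectivity/structure statements for Newton strata that were left conditional in \cite[\S 8]{Zhou20}.

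The one nuance worth flagging is the treatment of points whose local shtuka datum is HN-decomposable, for which \Cref{mainconj} as stated does not directly apply. Following the Chen--Kisin--Viehmann philosophy (see \cite[\S 5]{Zhou20}), one reduces the relevant assertions to Levi sub-data which are HN-irreducible, so that the conclusion of \Cref{mainthm} still suffices throughout. I do not expect any separate obstacle beyond \Cref{mainthm} itself; once that input is in hand, \Cref{HR-axiom-coro-nonempty} is a formal consequence of Zhou's work.
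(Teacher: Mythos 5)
Your proposal matches the paper's argument exactly: the corollary is obtained by feeding \Cref{mainthm} into \cite[Theorem 8.1(2)]{Zhou20}, which is precisely what the paper states just above the corollary. The additional remarks on hypotheses and HN-decomposable points are sensible but not beyond what the cited reduction already handles.
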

Moreover, we also obtain the following corollary by combining \cite[Theorem 2]{HamKim} with \Cref{isog-lifting}, which allow us to verify Axiom A \textit{loc.cit.} 
	\begin{corollary}
Under our assumptions (\Cref{global-on-going-assump}).
		The ``almost product structure" of the Newton strata in $\mathscr{S}_{\calK_p,\bar{\bbF}_p}(G,X)$ holds.  
	\end{corollary}
We refer the reader to \cite[Theorem 2]{HamKim} for the precise formulation of the almost product structure of Newton strata. 

As yet another corollary, we remove a technical assumption from the following theorem originally due to the third author \cite[Main Theorem]{xu-normalization}\footnote{The original version of this theorem is stated assuming $\bfG_{\bbQ_p}$ residually split for integral models at parahoric levels; at hyperspecial levels, this assumption is not necessary. We are now able to relax ``$\bfG_{\bbQ_p}$ residually split'' to ``$\bfG_{\bbQ_p}$ quasisplit'' thanks to our main \cref{mainthm}. }.
Recall that the finite map $\epsilon:\scrS_{\calK_p}(\bfG,X)\to \scrS_{\calG\calS p}(\mathbf{GSp},S^{\pm})$ factors through a closed immersion commonly denoted $\scrS^{-}_{\calK_p}(\bfG,X)\subseteq \scrS_{\calG\calS p}(\mathbf{GSp},S^{\pm})$.
\begin{corollary}
	\label{corollary-xu}
Under our assumptions (\Cref{global-on-going-assump}) and assuming that $\bfG$ is quasisplit at $p$. 
    The normalization step in the construction of $\mathscr{S}_{\calK_p}(\bfG,X)$ is unnecessary, and the closure model $\mathscr{S}^-_{\calK_p}(\bfG,X)$ is already normal. 
    In particular, 
    \[\epsilon:\scrS_{\calK_p}(\bfG,X)\to \scrS_{\calG\calS p}(\mathbf{GSp},S^{\pm})\]
    is a closed immersion.
\end{corollary}
Also, we obtain the following corollary by combining \Cref{HR-axiom-coro-nonempty} with \cite[Theorem C]{SCZ21}. See \textit{loc.cit.} for the definition of EKOR strata. 
\begin{corollary}
	Every EKOR stratum in $\scrS_{\calK_p}(\bfG,X)_{\overline{\bbF}_p}$ is quasiaffine.	
\end{corollary}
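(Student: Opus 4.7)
My plan is to derive the statement by directly combining two inputs: the quasi-affineness criterion of \cite[Theorem C]{SCZ21}, and the verification of the He--Rapoport axioms provided by \Cref{HR-axiom-coro-nonempty} above.

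The first ingredient, \cite[Theorem C]{SCZ21}, establishes that on any Hodge-type integral model at connected parahoric level satisfying the axioms of \cite{HR17}, every EKOR stratum in the special fibre is locally closed and quasi-affine. This is proved there by combining the Ekedahl--Oort stratification on individual Newton strata with the almost product structure of Newton strata and the central leaf description coming from Rapoport--Zink uniformization. All of this is independent of our work and is applied here as a black box.

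The second ingredient is the one to which our main theorem contributes. Of the He--Rapoport axioms, all had been verified in \cite[Theorem 8.1(2)]{Zhou20} except the non-emptiness built into the map \eqref{Hamacher-map}, which amounts to the surjectivity of $\omega_G\co\pi_0(X^{\calK_p}_\mu(b))\to c_{b,\mu}\pi_1(G)_I^\varphi$. \Cref{mainthm} establishes this surjectivity (and in fact bijectivity in the HN-irreducible case), completing the axiom list and yielding \Cref{HR-axiom-coro-nonempty}.

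With both pieces in hand, the corollary becomes a direct invocation: plug \Cref{HR-axiom-coro-nonempty} into the hypothesis of \cite[Theorem C]{SCZ21}. The main obstacle therefore lies entirely upstream, in the proof of \Cref{mainthm} itself, which uses the $v$-sheaf-theoretic specialisation map for kimberlites and the connectedness results for $p$-adic period domains. No further geometric analysis of the integral model is required at the level of the present corollary.
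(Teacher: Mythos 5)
Your proposal matches the paper's argument exactly: the corollary is obtained by feeding \Cref{HR-axiom-coro-nonempty} (the verification of the He--Rapoport axioms, which rests on \Cref{mainthm} together with \cite[Theorem 8.1(2)]{Zhou20}) into \cite[Theorem C]{SCZ21}. Nothing further is needed.
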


\begin{remark}
	Our main \cref{mainthm} is independent of the integral models of Shimura varieties that one works with. 
	For this reason, we expect that our methods will be equally useful to study the integral models of Shimura varieties considered by Pappas--Rapoport \cite{pappas2021padic}. 
\end{remark}
As a final application, we deduce that the Newton strata of the integral models of Shimura varieties considered by Pappas--Rapoport \cite[Theorem 4.10.6]{pappas2021padic} satisfy $p$-adic uniformization with respect to the local Shimura varieties $\calM^{\on{int}}_{\calG,b,\mu}$ of \cite[Definition 25.1.1]{Ber}. 
Let $(p,\mathbf{G},X,\mathbf{K})$ be a tuple of global Hodge type \cite[\S 1.3]{pappas2021padic}, let $\scrS_{\bfK}$ denote the integral model of \cite[Theorem 1.3.2]{pappas2021padic}, let $k$ be an algebraically closed field in characteristic $p$ and let $x_0\in \scrS_{\bfK}(k)$.
Pappas and Rapoport consider a map of v-sheaves $c:\on{RZ}_{\calG,\mu,x_0}^\diamondsuit\to \calM^{\on{int}}_{\calG,b,\mu}$ \cite[Lemma 4.10.2]{pappas2021padic}, where the source is a Rapoport--Zink space as constructed in \cite[\S 4.10.1]{pappas2021padic}.
Let the notations be as in \cite[Theorem 4.10.6, \S 4.10.2]{pappas2021padic}. We verify Conjecture $(\mathrm{U}_{x_0})$ in \cite[\S 4.10.2]{pappas2021padic} and obtain \Cref{Pappas-Rapoport-uniformization-corollary} below. Let us first recall some of the notation.
Fix $x_0\in \scrS_{\bfK}(k)$, and recall that by construction $\on{RZ}_{\calG,\mu,x_0}$ is a formal scheme equipped with a uniformization map 
\[\Theta^{\on{RZ}}_{\calG,x_0}:\on{RZ}_{\calG,\mu,x_0}\to \hat{\scrS}_{\bfK}\]
of formal schemes over $\on{Spf} O_{\breve{E}}$ (see \cite[Equation 4.10.3]{pappas2021padic}).
Using the action of $\bfG(\bbA^f_p)$, we obtain a morphism 
\begin{equation}
	\label{unifo-equation}
	\Theta^{\on{RZ}}_{\calG,x_0}:\on{RZ}_{\calG,\mu,x_0}\times \bfG(\bbA_f^p)\to \hat{\scrS}_{\bfK}
\end{equation}
The image under this map $\calI(x_0)$ after taking $k$-points is called the isogeny class of $x_0$. We also consider the algebraic group $I_{x_0}\subseteq \on{Aut}_\bbQ(\calA_{x_0})$ discussed above.
\begin{corollary} (\Cref{Pappas-Rapoport-uniformization-corollary})
	The map $c:\on{RZ}_{\calG,\mu,x_0}^\diamondsuit\to \calM^{\on{int}}_{\calG,b,\mu}$ is an isomorphism. 
	Thus, $\calM^{\on{int}}_{\calG,b,\mu}$ is representable by a formal scheme $\scrM_{\calG,b,\mu}$,
	and composing this isomorphism with \ref{unifo-equation} we obtain a $p$-adic uniformization isomorphism of $O_{\br{E}}$-formal schemes 
	\begin{equation}
		\label{PR-equation}
		I_{x_0}(\bbQ)\backslash (\scrM_{\calG,b,\mu}\times \bfG(\bbA^p_f)/\bfK^p) \to (\widehat{\scrS_{\bfK}\otimes_{O_E}O_{\br{E}}})_{/\calI(x_0)}.
	\end{equation}
\end{corollary}
\begin{remark}
	As emphasized in \cite[Theorem 4.10.6]{pappas2021padic}, the isomorphism in \Cref{PR-equation} ought to be correctly interpreted (see \cite[Theorem 6.23]{RZ96} for the precise formulation in the PEL-type case).
\end{remark}

\subsection{Rough Sketch of the argument}
Many cases of \Cref{mainconj} have been proved in literature under various additional assumptions\footnote{When $G$ is split and $\calK_p$ is hyperspecial, \cite[Theorem 2]{Vie08} applies. When $G$ is unramified, $\calK_p$ is hyperspecial and $\bfmu$ is minuscule, \cite[Theorem 1.1]{CKV} applies. When $G$ is unramified, $\calK_p$ is hyperspecial and $\bfmu$ is general, \cite[Theorem 1.1]{Nie} applies. When $G$ is residually split or when $\bfb$ is basic \cite[Theorem 0.1]{He-Zhou} applies. When $G$ is quasisplit and $\calK_p$ is very special, \cite[Theorem 1.1(3)]{Ham20} applies. When $G$ is unramified and $\calK_p$ is arbitrary, \cite[Theorem 0.2]{Nie21} applies.}, see for example \cite[Theorem 2]{Vie08}, \cite[Theorem 1.1]{CKV}, \cite[Theorem 1.1]{Nie}, \cite[Theorem 0.1]{He-Zhou}, \cite[Theorem 1.1(3)]{Ham20}, \cite[Theorem 0.2]{Nie21}.

	Previous attempts in literature used characteristic $p$ perfect geometry and 
	combinatorial arguments to construct enough ``curves" connecting the components of the ADLV. 
	In our approach, we use the theory of \textit{kimberlites} and their specialization maps \cite{Specializ}, and the general kimberlite-theoretic unibranchness result for the local models considered by Scholze--Weinstein (see \cite[$\mathsection$ 21.4]{Ber}) recently established in \cite[Theorem 1.3]{GL22}, to turn the problem of computing $\pi_0(X^{\calK_p}_\mu(b))$ into the characteristic-zero question of computing $\pi_0(\Sht_{(G,b,\mu,\calK_p)})$, where $\Sht_{(G,b,\mu,K_p)}$ denotes the moduli spaces of $p$-adic shtukas of level $K_p=\calK_p(\bbZ_p)$ as considered in \cite[Lecture 23]{Ber}. 
	We remark that when $\mu$ is non-minuscule, diamond-theoretic considerations are necessary, since the spaces $\Sht_{(G,b,\mu,K_p)}$ are not rigid-analytic spaces. 
	Moreover, even in some cases in which $\mu$ is minuscule, the theory of kimberlites is necessary here because: although $\Sht_{(G,b,\mu,K_p)}$ is representable by a rigid-analytic space, its canonical integral model is not known to be representable by a formal scheme in full generality.

	Once in characteristic zero, we are now able to exploit Fontaine's classical $p$-adic Hodge theory. 
	In our approach, the role of ``connecting curves'' is played by ``\textit{generic} crystalline representations", inspired by the ideas in \cite{Chen} (see $\mathsection$\ref{generic-MTgp-section}). 
	Intuitively speaking, the action of Galois groups can be interpreted as ``analytic paths" in the moduli spaces of $p$-adic shtukas.

	More precisely, recall the infinite level moduli space $\Sht_{G,b,\mu,\infty}$ of $p$-adic shtukas \cite[\S 23.3]{Ber}.
	Recall that it comes equipped with a Grothendieck--Messing period morphism $\pi_{\on{GM}}:\Sht_{G,b,\mu,\infty}\to \Gr^b_{\mu}$ where we let $\Gr^b_{\mu}$ denote the so-called $b$-admissible locus in Scholze's $B_{\on{dR}}$-Grassmannian (see \cite[Proposition 23.3.3]{Ber} where $\Gr^b_{\mu}$ is denoted $\Gr^a_{G,\Spd \breve{E},\mu}$). 
	Recall that $\pi_{\on{GM}}$ may be realized as the moduli space of trivializations of the universal crystalline $G(\bbQ_p)$-torsor\footnote{For local Shimura varieties coming from Rapoport--Zink spaces, this torsor corresponds to the local system defined by the $p$-adic Tate module of the universal $p$-divisible group.} over $\Gr^b_{\mu}$ \cite[\S 23.5]{Ber}. 
	Intuitively speaking, rational points of $\Gr^b_\mu$ give rise to loops in $\Gr^b_\mu$, which produce ``connecting paths'' inside any covering space over $\Gr^b_\mu$ (in particular the covering space $\Sht_{G,b,\mu,\infty}$). 
	Thus it suffices to prove that the universal crystalline representation has enough monodromy to ``connect'' $\Sht_{G,b,\mu,\infty}$. 
	We can then 
	deduce our main \cref{mainthm} at finite level $\Sht_{G,b,\mu,\calK_p}$ from the analogous result at infinite level.

\subsection{More on the arguments}
We now dig in a bit deeper into the strategy for our main \cref{mainthm}, and sketch a few more results that led to our main theorem.

To each $(G,b,\mu,{\calI(\bbZ_p)})$, one can associate a diamond $\Sht_{(G,b,\mu,{\calI(\bbZ_p)})}$, which is the moduli space of $p$-adic shtukas at level ${\calI(\bbZ_p)}$ defined in \cite{Ber}. In \cite{Gle22a}, the first author constructed a specialization map  
	\begin{equation}
		\label{specializationmap}
		\on{sp}:|\Sht_{(G,b,\mu,{\calI(\bbZ_p)})}| \to |X_\mu(b)|.
	\end{equation}
	By the unibranchness result of the first author joint with Louren\c{c}o \cite[Theorem 1.3]{GL22}, and the construction of certain v-sheaf local model correspondence due to the first author \cite[Theorem 3]{Gle22a}, the specialization map induces an isomorphism of sets
	\begin{equation}
		\label{bijectivityofspec}
		\on{sp}:\pi_0(\Sht_{(G,b,\mu,{\calI(\bbZ_p)})}\times \Spd \bbC_p) \cong \pi_0(X_\mu(b)).
	\end{equation}
Therefore we have now turned the question on $\pi_0(X_{\mu}(b))$ into a characteristic zero question on the connected components $\pi_0(\Sht_{(G,b,\mu,{\calI(\bbZ_p)})})$ of the diamond $\Sht_{(G,b,\mu,{\calI(\bbZ_p)})}$, which we now compute. 

For this purpose, we make use of the infinite level moduli space $\Sht_{(G,b,\mu,\infty)}$ of $p$-adic shtukas. 
Recall that we have a constant group v-sheaf $\underline{\calI(\bbZ_p)}$ (see \Cref{v-sheaf-theory}). 
Since $\Sht_{(G,b,\mu,{\calI(\bbZ_p)})}=\Sht_{(G,b,\mu,\infty)}/\underline{\calI(\bbZ_p)}$, and passing to connected components commutes with colimits (see \Cref{general-lemma-profinite-qt}) we have  
	\begin{equation}
	\pi_0(\Sht_{(G,b,\mu,{\calI(\bbZ_p)})})=\pi_0(\Sht_{(G,b,\mu,\infty)})/{\calI(\bbZ_p)}.
	\end{equation}
	Let $G^\ad$ denote the adjoint group of $G$. 
	Our main \cref{mainthm} follows directly from the following \Cref{secondthm}.(b) whenever $G^\ad$ does not have anisotropic factors. When $G^\ad$ is anisotropic, we give a separate argument (see the proof of \Cref{mainthmmain}). 
	Let 
	$G^\circ:=G(\bbQ_p)/\on{Im}(G^{\on{sc}}(\bbQ_p))$ denote the maximal abelian quotient of $G(\bbQ_p)$.

\begin{theorem}
		\label{secondthm} (\Cref{secondthmmain})
		Suppose that $\bfb\in B(G,\bfmu)$.   
		\begin{enumerate}[label=\alph*)]
			\item If we assume that $G$ is quasisplit, then the following statements are equivalent: 
				\begin{enumerate}[label=(\arabic*)]
			\item The map $\omega_G:\pi_0(X_\mu(b))\to c_{b,\mu}\pi_1(G)_I^\varphi$ is bijective.  
			\item The map $\omega_G:\pi_0(X^{\calK_p}_\mu(b))\to c_{b,\mu}\pi_1(G)_I^\varphi$ is bijective.  
			\item The pair $(\bfb,\bfmu)$ is HN-irreducible.
			\item There exists a field extension $K$ of finite index over $\br{\bbQ}_p$, and a crystalline representation $\xi:\Gamma_K\to G(\bbQ_p)$ with invariants $(\bfb,\bfmu)$ for which $G^{\der}(\bbQ_p)\cap \xi(\Gamma_K)\subseteq G^{\der}(\bbQ_p)$ is open. 
			\item The action of $G(\bbQ_p)$ on $\Sht_{(G,b,\mu,\infty)}$ makes $\pi_0(\Sht_{(G,b,\mu,\infty)}\times \Spd \bbC_p)$ into a $G^\circ$-torsor.
\end{enumerate}
\item If we assume that $G^\ad$ does not have anisotropic factors, then the implications 
	\[(3)\implies (4)\implies (5) \implies (1) \implies (2)\]
	still hold.
	In particular, $(3)\implies (2)$ implies \Cref{mainthm} for the case that $G^\ad$ does not have anisotropic factors. 
\item If we assume that $G$ is arbitrary, then $(5)\implies (1) \implies (2)$ and $(3)\implies (4)$ still hold.
		\end{enumerate}

\end{theorem}
	\begin{remark}
		The implication $(3)\implies (5)$ of \Cref{secondthm}.(b) confirms almost all cases (excluding the anisotropic groups) of a conjecture of Rapoport--Viehmann \cite[Conjecture 4.30]{Towards}. Moreover, we generalize the statement to moduli spaces of $p$-adic shtukas, instead of only for local Shimura varieties as \textit{loc.cit}.
	\end{remark}
\begin{remark}
	Whenever $G$ is a $\bbQ_p$-simple anisotropic group, the implication $(2)\implies (3)$ of \Cref{secondthm} can be shown to fail.  	
In this degenerate case $\omega_G$ is always bijective, even when the pair $(\bfb,\bfmu)$ is not HN-irreducible. 
Indeed, this is what equation \eqref{equation-aniso} shows. 
More concretely, if $D$ is a division algebra over $\bbQ_p$, we can let $G=\on{GL}(D)$ so that $G(\bbQ_p)=D^\times$. 
This group has a unique Iwahori subgroup $\calI$, and $\calI(\bbZ_p)=O_D^\times$ (i.e. the group of the units in the maximal order $O_D$). 
Let $\bfb$ and $\bfmu$ be trivial (which corresponds to $1\in D^\times$).   
The trivial pair $(\bfb,\bfmu)$ is not HN-irreducible, but we will see $\omega_G$ is still bijective.
We note that $X_\mu(b)$ is $0$-dimensional and that the inclusion $X_\mu(b)(\bar{\bbF}_p)\subseteq \Fl_\brevI(\bar{\bbF}_p)$ identifies with the inclusion $D^\times/O_D^\times\subseteq G(\breve{\bbQ}_p)/\calI(\breve{\bbZ}_p)$. 
Moreover, $\pi_1(G)_I^\varphi= \pi_1(G)\simeq \bbZ$ and the map 
\[\omega_G:X_\mu(b)\to \pi_1(G)_I^\varphi \]
identifies with the $p$-adic value of the norm map  
\[D^\times/O_D^\times \xrightarrow{\on{Nm}} \bbQ_p^\times/\bbZ_p^\times \xrightarrow{{\on{val}_p}} \bbZ,\]
which is bijective.

It is reasonable to expect that $(3)$ and $(5)$ should still be equivalent for a general $G$, but we do not know how to show this general case.
\end{remark}

	\begin{remark}
		The implication $(3)\implies (5)$ of \Cref{secondthm} is a more general version of the main theorem of \cite{Gle22a}, where the first author proved the statement for unramified $G$, and computed the Weil group and $J_b(\bbQ_p)$-actions on $\pi_0(\Sht_{(G,b,\mu,\infty)}\times \Spd \bbC_p)$. One should be able to combine the methods of our current paper with those \textit{loc.cit.}~to compute the Weil group and $J_b(\bbQ_p)$-actions in the more general setup of \Cref{secondthm}.
	\end{remark}
\subsubsection{Loop of the argument for \Cref{secondthm}}
		\label{proof-2-t-3}
	We now discuss the proof of \Cref{secondthm}.
	Using ad-isomorphisms and z-extensions (see $\mathsection$\ref{ad-isom-z-extn-section}), we reduce all statements of \Cref{secondthm} to the case where $G^\der$--the derived subgroup of $G$--is simply connected (see \Cref{reductionsofsecondthm}). 
	In this case, $G^\circ=G^{\on{ab}}(\bbQ_p)$ and, using the appropriate hypothesis on $G$, we prove the implications 
	\begin{center}
		$(1)\implies (2)\overset{\on{q.split}}{\implies} (3)\implies (4)\overset{\on{iso. fact.}}{\implies} (5) \implies (1)$.
	\end{center}
	Let us explain the chain of implications. 
	The implication $(1)\implies (2)$ follows from \cite[Theorem 1.1]{He16}, which says that the map $X_\mu(b)\to X^{\calK_p}_\mu(b)$ is surjective. We give a new and simple proof of this result in \Cref{thmHe}, by observing that $\Sht_{(G,b,\mu,{\calI(\bbZ_p)})}\to \Sht_{(G,b,\mu,\calK_p)}$ is automatically surjective. This again exemplifies the advantage of working on the generic fiber (of the v-sheaf $\Sht^{\calK_p}_{\mu}(b)$). For more details, see $\mathsection$ \ref{specialization-map-Sht-section}. 
	The implication $(2)\implies (3)$ follows from the HN-decomposition (\Cref{GHNThmA}) and group-theoretic manipulations (\Cref{kappamapisoHNirrep}). 
	It is only in this step that one has to assume that $G$ is quasisplit to close the circle.

		The implication $(3)\implies (4)$ follows from an explicit construction that goes back to \cite[Th\'eor\`eme 5.0.6]{Chen} when $G$ is unramified (\Cref{cor:MainChen}).
	In $\mathsection$\ref{generic-MTgp-section}, we push the methods \textit{loc.cit.} and generalize the result to arbitrary reductive groups $G$ (see also $\mathsection$\ref{generic-MT-intro} in this introduction).

	The implication $(5)\implies (1)$ follows from \eqref{bijectivityofspec} (see also \Cref{adlvtosht}) and the identification 
	$\pi_0(\Sht_{(G,b,\mu,{\calI(\bbZ_p)})})=\pi_0(\Sht_{(G,b,\mu,\infty)})/{\calI(\bbZ_p)}$. Indeed, using the map $\det: G\to G^{\ab}:=G/G^{\on{der}}$, combined with Lang's theorem, we reduce $(5)\implies(1)$ to the tori case which can be handled directly (see $\mathsection$\ref{tori-case-section}). For more details, see  $\mathsection$\ref{secondthm-4-to-1}.

\subsubsection{Proof for $(4)\implies(5)$ in \Cref{secondthm}}\label{proof-3-t-4}	
The core of the argument lies in $(4)\implies (5)$, for our argument to work we assume that $G^{\on{ad}}$ only has isotropic $\bbQ_p$-simple factors. 
For simplicity, we only discuss the case where $G$ is semisimple and simply connected in the introduction (see $\mathsection$\ref{subsection3implies4} for the general argument). 
In this (simplified) case, $G^{\circ}$ is trivial, thus it suffices to show that $\Sht_{(G,b,\mu,\infty)}\times \Spd \bbC_p$ is connected. 
The first step is to prove that $G(\bbQ_p)$ acts transitively on $\pi_0(\Sht_{(G,b,\mu,\infty)}\times \Spd \bbC_p)$ (\Cref{corollary-badmconn}). This follows from the main theorem of \cite{GL22Conn} (see \Cref{badmconn}).\footnote{Before \cite{GL22Conn} was available, the argument for \Cref{secondthm} relied on the results of \cite{Ham20} which are only available when $G$ is quasisplit.}
Let $G_x$ denote the stabilizer of $x\in \pi_0(\Sht_{(G,b,\mu,\infty)}\times \Spd \bbC_p)$. 
	Since $G(\bbQ_p)$ acts transitively, it suffices to prove $G_x=G(\bbQ_p)$. 
	Further, one can write $G_x=\bigcap_{K} G_{x_K}$, where $G_{x_K}$ denotes the stabilizer of $x_K\in \pi_0(\Sht_{(G,b,\mu,\infty)}\times \Spd K)$ as $K$ ranges over finite extensions of $\bbQ_p$ contained in $\bbC_p$. 
	We show that each $G_{x_K}=G(\bbQ_p)$ and consequently $G_x=G(\bbQ_p)$.
	
	To show that $G_{x_K}=G(\bbQ_p)$ it suffices to show that: (i) $G_{x_K}$ is open (see \Cref{open-stab-lem}), this is where hypothesis $(4)$ enters into the argument, since one can relate $G^\der(\bbQ_p)\cap \xi(\Gamma_K)$ to $G_{x_K}$; (ii) the normalizer $N_{x_K}$ of $G_{x_K}$ in $G(\bbQ_p)$ is of finite index in $G(\bbQ_p)$ (see \Cref{normalizer-fin-index}). 
	Indeed, since we assumed that $G$ is semisimple and simply connected with only isotropic factors, a standard fact from \cite[Chapter II, Theorem 5.1]{Marg} shows that $G(\bbQ_p)$ does not have finite-index subgroups. 
	Thus (ii) allows us to conclude that $G_{x_K}$ is normal in $G(\bbQ_p)$.
	Moreover, the same standard fact \textit{loc.cit.} shows that $G(\bbQ_p)$ does not have non-trivial open normal subgroups, therefore (i) implies $G_{x_K}=G(\bbQ_p)$. 

	To relate $G^\der(\bbQ_p)\cap \xi(\Gamma_K)$ and $G_{x_K}$ in order to prove (i), we use the Fargues--Fontaine reinterpretation of crystalline $\Gamma_K$-representations in terms of modifications of vector bundles on the Fargues--Fontaine curve endowed with $\Gamma_K$-action.
	These modifications give rise to $K$-points in Scholze's $B_{\on{dR}}$-Grassmannian, which relates to $\Sht_{(G,b,\mu,\infty)}$ through the Grothendieck--Messing period morphism (see \S \ref{grothendieck-section} for details).
	Now, for (ii) we exploit that the actions of $J_b(\bbQ_p)$ and $G(\bbQ_p)$ commute. 
	In particular, for all $j\in J_b(\bbQ_p)$ we have that $G_{x_K}=G_{j\cdot x_K}$.
	We can then use elements in $J_b(\bbQ_p)$ to construct elements in $N_{x_K}$.
	This, together with the key bijection of \eqref{bijectivityofspec}, allow us to translate the general finiteness results of \cite{HV20} into the finiteness of $[G(\bbQ_p):N_{x_K}]$ (see \Cref{normalizer-fin-index} for details). 
\subsubsection{The Mumford-Tate group of ``generic crystalline representations''}\label{generic-MT-intro}
Let us give more detail on the construction used to prove the implication $(3)\implies (4)$ from \S\ref{proof-2-t-3}. Fix a finite extension $K/\br{\bbQ}_p$ with Galois group $\Gamma_K:=\on{Gal}(\ov{K}/K)$, and let $\xi:\Gamma_K\to G(\bbQ_p)$ denote a conjugacy class of $p$-adic Hodge--Tate representations.
\begin{definition}
	Let $\on{MT}_{\xi}$ denote the connected component of the Zariski closure of the image of $\xi$ in $G(\bbQ_p)$.
	This is the $p$-adic Mumford--Tate group attached to $\xi$ which is well-defined up to conjugation. 
\end{definition}
It follows from results of Serre \cite[Th\'eor\`eme 1]{Serre79} and Sen \cite[\S 4, Th\'eor\`eme 1]{Sen73} (see also \cite[Proposition 3.2.1]{Chen}) that $\xi(\Gamma_K)\cap \on{MT}_{\xi}(\bbQ_p)$ is open in $\on{MT}_{\xi}(\bbQ_p)$. 
Let $\mu^\eta:\bbG_m\to G_K$ be a cocharacter conjugate to $\mu$. 
Suppose that $(b,\mu^\eta)$ defines an admissible pair in the sense of \cite[Definition 1.18]{RZ96}. 
Since $\bfb\in B(G,\bfmu)$, it induces a conjugacy class of crystalline representations $\xi_{(b,\mu^\eta)}:\Gamma_K\to G(\bbQ_p)$, and a $p$-adic Mumford--Tate group $\on{MT}_{(b,\mu^\eta)}$ attached to $\xi_{(b,\mu^\eta)}$ (See \Cref{MT-defn}).

Let $\on{Fl}_\mu:=G/P_\mu$ denote the generalized flag variety. 
We say that $\mu^\eta$ is \textit{generic} if the map $\Spec(K)\to \on{Fl}_\mu$ induced by $\mu^\eta$ lies over the generic point.\footnote{this is possible since $\br{\bbQ}_p$ has infinite transcendence degree over $\bbQ_p$.}
Our third main theorem is the following generalization of 
\cite[Th\'eor\`eme 5.0.6]{Chen} to arbitrary reductive groups. 
\begin{theorem}
	\label{enhancedchen}(\Cref{enhancedChen-maintxt})
	Let $G$ be a reductive group over $\bbQ_p$. Let $b\in G(\br{\bbQ}_p)$ and $\mu^\eta:\bbG_m\to G_K$ as above.
	Suppose that $b$ is decent, that $\mu^\eta$ is generic and that $\bfb\in B(G,\bfmu)$.
	The following hold: 
	\begin{enumerate}
		\item $(b,\mu^\eta)$ is admissible.
		\item If $(\bfb,\bfmu)$ is HN-irreducible, then $\on{MT}_{(b,\mu^\eta)}$ contains $G^\der$. 
	\end{enumerate}
\end{theorem}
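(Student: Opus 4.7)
The plan for (1) is to combine the Colmez--Fontaine theorem \cite{ColmFont} with the non-emptiness of the weakly admissible locus. Since ``admissible'' and ``weakly admissible'' coincide, it suffices to establish weak admissibility. The weakly admissible locus $\on{Fl}_\mu^{b,\wa} \subseteq \on{Fl}_\mu$ is a Zariski-open subset, and the hypothesis $\bfb \in B(G,\bfmu)$ ensures it is non-empty (this is essentially the characterisation of $B(G,\bfmu)$ via weakly admissible filtrations). Since $\mu^\eta$ is generic, the classifying map $\Spec K \to \on{Fl}_\mu$ hits the generic point and therefore lands in every non-empty Zariski-open, giving (1) at once.

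For (2), set $M := \on{MT}_{(b,\mu^\eta)}$. The first step is to verify that $M$ is a connected reductive subgroup of $G_{\bbQ_p}$: connectedness is by definition, and reductivity follows from the crystallinity of $\xi_{(b,\mu^\eta)}$ together with the results of Serre and Sen, after enlarging $K$ so that $\xi_{(b,\mu^\eta)}(\Gamma_K) \subseteq M(\bbQ_p)$. A Tannakian argument then shows that the filtered isocrystal with $G$-structure $(b,\mu^\eta)$ reduces canonically to $M$; in particular, some $G(K)$-conjugate of $\mu^\eta$ factors through $M_K$, and $b$ is $\sigma$-conjugate in $G(\br{\bbQ}_p)$ to an element of $M(\br{\bbQ}_p)$. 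Consequently, both $\mu$ and the Newton cocharacter $\bfnu$ factor through $M$.

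The key geometric input is to use the genericity of $\mu^\eta$ to prevent $M_{\br{\bbQ}_p}$ from lying inside any proper Levi of $G_{\br{\bbQ}_p}$. Concretely, if $M$ were (after conjugation) contained in a proper Levi $L \subsetneq G_{\br{\bbQ}_p}$, then the classifying point of $\mu^\eta$ in $\on{Fl}_\mu$ would factor through the closed immersion $L/(L \cap P_\mu) \hookrightarrow G/P_\mu$. Since $L$ is a proper Levi, $L/(L \cap P_\mu)$ has strictly smaller dimension than $G/P_\mu$ and is a proper closed subvariety, contradicting that the image of $\mu^\eta$ is the generic point of $\on{Fl}_\mu$.

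Finally, I would use HN-irreducibility to upgrade ``$M$ lies inside no proper Levi of $G$'' to ``$M \supseteq G^\der$''. Contrapositively: assume $M^\der \subsetneq G^\der$; then one must extract a proper Levi $L \subsetneq G$ (defined over $\br{\bbQ}_p$) together with a genuine Hodge--Newton reduction of $(b,\mu)$ to $L$, which contradicts HN-irreducibility via \Cref{GHNThmA}. I expect this last Levi-extraction step to be the main obstacle: translating ``$M$ reductive and proper in $G$ carrying $\bfnu$ and a conjugate of $\mu$'' into an actual HN-reduction requires a careful combinatorial analysis of the $\varphi$-twisted Galois action on the Dynkin diagram of $G_{\br{\bbQ}_p}$, and constitutes the natural generalisation to ramified $G$ of Chen's argument in \cite[Th\'eor\`eme 5.0.6]{Chen}.
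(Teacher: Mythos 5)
Part (1) of your proposal is essentially the paper's argument: the authors observe that Chen's proof goes through once one knows $\on{Fl}_\mu^{\wa}\neq\emptyset$ for $\bfb\in B(G,\bfmu)$ (which holds in general by \cite[Theorem 9.5.10]{DOR10}). One caveat: the weakly admissible locus is an \emph{adic} open, not a Zariski-open subscheme of $\on{Fl}_\mu$, so ``the generic point lies in every non-empty Zariski open'' is not quite the right mechanism. The correct version, which is what \Cref{genericfilisgenericpoint} supplies, is that the generic filtration computes the \emph{infimum} of the degree functions over all rational points, so the weak-admissibility inequalities for $\mu^\eta$ follow from their validity at any single weakly admissible point.

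For part (2) there is a genuine gap, and it sits exactly where you locate it. Your plan is: show $\on{MT}_{(b,\mu^\eta)}$ is connected reductive, show via genericity that it lies in no proper Levi, and then ``extract a proper Levi $L$ together with a Hodge--Newton reduction'' from the assumption $\on{MT}^{\der}\subsetneq G^\der$. That last extraction is not merely hard --- it fails as stated: a proper connected reductive subgroup of $G$ through which (conjugates of) $\mu$ and $\nu_b$ factor need not be contained in any proper parabolic or Levi (think of an irreducibly embedded $\SL_2\subseteq \SL_n$ via $\on{Sym}^{n-1}$). So ``$M$ avoids all proper Levis'' does not upgrade to ``$M\supseteq G^\der$'', and no HN-reduction can be manufactured from such an $M$. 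The paper avoids this entirely: it never argues with the subgroup $M$ as an abstract reductive group. Instead it uses Chen's concrete description of $\MT^{\cris,s}_{(b,\mu^\eta)}$ as the stabilizer (up to scalars) of the vectors $v\in V^{s,k}_{(b,\mu^\eta)}$ cut out by the Newton equation \eqref{Newton-eqn} and the Hodge equation \eqref{Hodge-eqn}, proves a closed formula for the generic filtration of a quasisplit group (\Cref{genericfiltration}), and then computes: for every highest weight $\lambda$ occurring in such a $v$ one gets $\tfrac{k}{s}\leq \langle w_0\lambda,\mu^\diamond\rangle$ from the Hodge side and $\langle w_0\lambda,\bfnu\rangle\leq\tfrac{k}{s}$ from the Newton side, hence $\langle w_0\lambda,\mu^\diamond-\bfnu\rangle\leq 0$. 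HN-irreducibility (all $c_\alpha>0$) then forces $\langle w_0\lambda,\alpha^\vee\rangle=0$ for every simple $\alpha$, i.e.\ $G^\der$ acts trivially on $V^\lambda$, which is precisely the containment $G^\der\subseteq \MT^{\cris,s}_{(b,\mu^\eta)}$. To repair your proof you would need to replace the Levi-extraction step by this (or an equivalent) weight-by-weight argument; the genericity hypothesis is consumed not by a dimension count on $\on{Fl}_\mu$ but by the identity $\dg_{\mu^\eta}=\overline{\dg}_{\bfmu}$ and its explicit evaluation on isotypic components.
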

Now, \Cref{secondthm} has as corollary a partial converse to \Cref{enhancedchen}. 
The following gives a $p$-adic Hodge-theoretic characterization of HN-irreducibility. 
\begin{corollary}
	\label{conversechenintro}(\Cref{chenconverse})
		Assume that $G$ is quasisplit. If $\on{MT}_{(b,\mu^\eta)}$ contains $G^\der$, then $(\bfb,\bfmu)$ is HN-irreducible. 
\end{corollary}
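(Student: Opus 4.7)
The plan is to deduce this corollary directly from the equivalence $(3) \Leftrightarrow (4)$ in \Cref{secondthm}, by converting the Mumford--Tate hypothesis into the openness statement of condition $(4)$. Since the hypothesis ``$G^\ad$ has only isotropic factors'' is precisely the standing assumption of \Cref{secondthm}, we may invoke that theorem throughout.

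Set $\xi := \xi_{(b,\mu^\eta)} \colon \Gamma_K \to G(\bbQ_p)$, which is by construction a crystalline representation with invariants $(\bfb,\bfmu)$, so it is already a candidate for condition $(4)$. It therefore suffices to verify that $\xi(\Gamma_K) \cap G^\der(\bbQ_p)$ is open in $G^\der(\bbQ_p)$. The Serre--Sen theorem (\cite[Th\'eor\`eme 1]{Serre79} combined with \cite[\S 4, Th\'eor\`eme 1]{Sen73}), recalled in the introduction, says that
\begin{equation*}
U \; := \; \xi(\Gamma_K) \cap \on{MT}_{(b,\mu^\eta)}(\bbQ_p)
\end{equation*}
is an open subgroup of $\on{MT}_{(b,\mu^\eta)}(\bbQ_p)$. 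By our hypothesis $G^\der \subseteq \on{MT}_{(b,\mu^\eta)}$ as Zariski-closed $\bbQ_p$-subgroups of $G$, so $G^\der(\bbQ_p)$ is a closed $p$-adic subgroup of $\on{MT}_{(b,\mu^\eta)}(\bbQ_p)$ with the induced topology. Intersecting the open subset $U$ with $G^\der(\bbQ_p)$ yields an open subgroup of $G^\der(\bbQ_p)$; since $G^\der \subseteq \on{MT}_{(b,\mu^\eta)}$, this intersection equals $\xi(\Gamma_K) \cap G^\der(\bbQ_p)$, establishing the desired openness.

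This verifies condition $(4)$ of \Cref{secondthm}, and the implication $(4) \Rightarrow (3)$ of that theorem then delivers HN-irreducibility of $(\bfb, \bfmu)$. There is no real obstacle: all of the geometric and Hodge-theoretic work has already been absorbed into \Cref{secondthm}, and the only point requiring even momentary care is the observation that ``$G^\der \subseteq \on{MT}_\xi$'' combined with Serre--Sen automatically upgrades to the openness statement required in $(4)$.
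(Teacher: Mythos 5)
Your proof is correct and takes essentially the same route as the paper's: apply the Serre--Sen openness result (\Cref{remarkserresen}) to $\xi_{(b,\mu^\eta)}$, use the hypothesis $G^\der\subseteq \on{MT}_{(b,\mu^\eta)}$ to transfer openness to $\xi(\Gamma_K)\cap G^\der(\bbQ_p)$ inside $G^\der(\bbQ_p)$, and conclude by the implication $(4)\Rightarrow(3)$ of \Cref{secondthmmain}. Your spelled-out justification of the openness upgrade is a slightly more careful version of the one-line assertion in the paper.
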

\begin{remark}
	Our \Cref{conversechenintro} confirms the expectation in \cite[Remarque 5.0.5]{Chen} that, at least when $G$ is quasisplit, monodromy representations attached to data $(\bfb,\bfmu)$ that are HN-reducible must factor through a proper parabolic.
\end{remark}
\subsection{Organization}
Finally, let us describe the organization of the paper. 

\S\ref{preliminaries-section} is a preliminary section. We start by collecting general notation and standard definitions that we omitted in this introduction. 
We recall the group-theoretic setup of \cite{GHN19} necessary to discuss the Hodge-Newton decomposition for general reductive groups, and its relation to the connected components of affine Deligne--Lusztig varieties. 

In \S\ref{geometric-preps} we give a brief intuitive account of the theory of kimberlites. 
We also review the geometry of affine Deligne--Lusztig varieties and their relation to moduli spaces of $p$-adic shtukas. 
Moreover, we discuss ad-isomorphisms, z-extensions and compatibility with products (which will be used in \S\ref{proof-main-thm-section} to reduce the proofs of \Cref{mainthm} and \Cref{secondthm} to the key cases). 

In \S\ref{section-HN-decomposition} we discuss the Hodge-Newton decomposition and use it to prove the implication $(2)\implies (3)$ in \Cref{secondthm}.

In \S\ref{generic-MT-section}, we discuss Mumford--Tate groups. We review \cite{Chen} and discuss the modifications needed to prove \Cref{enhancedchen}. We deduce the implication $(3)\implies (4)$ in \Cref{secondthm}.

In \S\ref{proof-main-thm-section}, we give a new proof of \cite[Theorem 7.1]{He} (see \Cref{thmHe}), and complete proofs of our main results such as \Cref{mainthm} and \Cref{secondthm}. 

\subsection*{Acknowledgements}
This project started while I.G.~and D.L.~were PhD students at UC Berkeley and Y.X.~at Harvard. Part of this project was finished while I.G. was at Mathematisches Institut der Universit\"at Bonn. We would like to thank these institutions for their support.

I.G.~would like to thank Jo\~ao Louren\c{c}o heartily for the collaborative works \cite{GL22} and \cite{GL22Conn} which are key for the present article. 
Y.X.~would like to thank Rong Zhou for various clarifications on \cite{Kisin-Zhou}.

The authors would like to thank Miaofen Chen, David Hansen, Xuhua He, Pol van Hoften, Alexander Ivanov, Mark Kisin, Michael Rapoport, Peter Scholze, Sug Woo Shin, Mingjia Zhang, and Eva Viehmann for interesting and helpful discussions. We would also like to thank Peter Scholze and Rong Zhou for helpful comments on an earlier version of this paper. 
We thank the anonymous referee for very helpful feedback on the work.

Part of this project was done while I.G.~was supported by DFG via the Leibniz-Preis of Peter Scholze. 
Part of this project was done while Y.X.~was supported by an NSF postdoctoral research fellowship. 

\section{Group-theoretic setup}\label{preliminaries-section}
We fix a reductive group $G$ defined over $\bbQ_p$. We let $G^{\on{der}}$ denote its derived subgroup, $G^{\on{sc}}$ the simply connected cover of $G^{\on{der}}$, and $G^{\on{ab}}:=G/G^{\on{der}}$.
Since $G^{\on{ab}}$ is a torus, it admits a unique parahoric model denoted by $\calG^{\on{ab}}$ defined over $\bbZ_p$.

We continue the notation from $\mathsection$\ref{introduction-notation-section}. 
Recall that $S$ is a torus of $G$ defined over $\bbQ_p$ that is maximally split over $\breve{\bbQ}_p$. 
Let $\calN=N_G(S)$ be the normalizer of $S$ in $G$.
Let $W_0:=\calN(\br{\bbQ}_p)/T(\br{\bbQ}_p)$ be the relative Weyl group. Recall that $T=Z_G(S)$ is the centralizer of $S$. Let $\calT$ denote its unique parahoric model\footnote{This is the identity component of the locally of finite type N\'eron model of $T$.}. 
Denote by $\widetilde{W}$ the Iwahori-Weyl group $\calN(\br{\bbQ}_p)/\calT(\br{\bbZ}_p)$. 
There is a $\varphi$-equivariant exact sequence (\cite{HaiRap}):
\begin{equation}\label{Iwahori-Weyl-gp-sequence}
	0 \to X_*(T)_I\to \widetilde{W}\to W_0 \to 1
\end{equation}
Let $\calA$ denote the apartment in the Bruhat--Tits building of $G_{\br{\bbQ}_p}$ corresponding to $S$. 
Let $\mathbf{a}\subseteq \calA$ denote the $\varphi$-invariant alcove determined by ${\calI(\bbZ_p)}$.
We choose a special vertex $\mathbf{o}\in \mathbf{a}$, and identify $\calA$ with $X_*(T)^I\otimes \bbR=X_*(T)_I\otimes \bbR$ by sending the origin to $\mathbf{o}$. 
Let $B$ be the Borel subgroup attached to $\mathbf{a}$ and our choice of $\mathbf{o}$. Observe that the natural linear action of $\varphi$ on $X_*(T)^I$ is the gradient of the affine action of $\varphi$ on $\calA$.
Let $\Delta\subseteq \Phi^+\subseteq \Phi\subseteq X^*(T)$ denote the set of simple positive roots, positive roots and roots attached to $B$, respectively.

The choice of $\mathbf{o}$ defines a splitting $W_0\to\widetilde{W}$, which may not be $\varphi$-equivariant.
Let $\bar{\mu}$ denote the image of $\mu$ in $X_*(T)_I$. 
For every element $\lambda \in X_*(T)_I$, let $t_\lambda$ be its image in $\widetilde{W}$ under \eqref{Iwahori-Weyl-gp-sequence}. 
Let $\bbS$ be the set of reflections along the walls of $\mathbf{a}$. 
Let $W^{\on{a}}$ be the affine Weyl group generated by $\bbS$. It is a Coxeter group. 
There is a $\varphi$-equivariant exact sequence (\cite[Lemma 14]{HaiRap}):
\begin{equation}
	1 \to W^{\on{a}}\to \widetilde{W}\to \pi_1(G)_I\to 0
\end{equation}
This sequence splits and we can write $\widetilde{W}=W^{\on{a}}\rtimes \pi_1(G)_I$.
We can extend the Bruhat order $\preceq$ given on $W^{\on{a}}$ to the one on $\widetilde{W}$ as follows:
for elements $(w_i,\tau_i)\in \widetilde{W}$ with $i=1,2$, where $w_i\in W^{\on{a}}$ and $\tau_i\in \pi_1(G)_I$, we say 
\begin{equation}
    (w_1,\tau_1)\preceq (w_2,\tau_2)
\end{equation}
if $w_1\preceq w_2$ in $W^{\on{a}}$ and $\tau_1=\tau_2\in \pi_1(G)_I$.
By \cite[Theorem 4.2]{MR3772644}, we can define the Kottwitz--Rapoport admissible set as 
\begin{equation}\on{Adm}(\mu)=\{\tilde{w}\in \widetilde{W} \mid \tilde{w}\preceq t_\la \on{with } t_\la=t_{w(\bar{\mu})} \text{ for some } w\in W_0 \}.
\end{equation}

\begin{numberedparagraph}
\label{phi0action}
Let $\widetilde{W}^{\ad}$ denote the Iwahori--Weyl group of $G^{\ad}$. By \cite[Lemma 15]{HaiRap}\footnote{More precisely, $P^\vee$ \textit{loc.cit.} acts transitively on the set of special vertices and $\sigma$ sends a special vertex $\mathbf{o}$ to a special vertex. Thus $P^\vee$ and $W_0$ together make it possible to find this element $w^{\ad}$.}, there exists an element $w^\ad\in \widetilde{W}^\ad$ such that $w^\ad\cdot \varphi(\mathbf{o})=\mathbf{o}$ and $w^\ad\cdot\varphi(\mathbf{a})=\mathbf{a}$. 
Conjugation by a lift of $w^{\ad}$ to $G^{\on{ad}}(\br{\bbQ}_p)$ gives the quasisplit inner form of $G$, which we denote by $G^\ast$. 
This defines a second action $\varphi_0$ on $G(\br{\bbQ}_p)$ (called the $L$-action), whose fixed points are $G^\ast(\bbQ_p)$ and that satisfies $\varphi_0(\calA)=\calA$, $\varphi_0(\mathbf{o})=\mathbf{o}$, $\varphi_0(B)=B$. 
\end{numberedparagraph}
\begin{numberedparagraph}\label{notation-mu-sharp}
Let $\mu\in X_*(T)^+$ be a dominant cocharacter. Denote by $\mu^\natural\in \pi_1(G)_\Gamma$ the image of $\mu$ under the natural projection $X_*(T)\to \pi_1(G)_\Gamma$. 
As in \cite[(6.1.1)]{KottwitzII}, we define 
\begin{equation}
	\mu^\diamond \coloneqq \frac{1}{[\Gamma:\Gamma_\mu]}\sum_{\gamma\in \Gamma/\Gamma_\mu} \gamma(\mu)\in (X_*(T)^+_\bbQ)^\Gamma,
\end{equation}
where the Galois action on $X_*(T)$ is the one coming from $G^\ast$. 
Via the isomorphism $X_*(T)_I\otimes\mathbb{Q}\simeq (X_*(T)\otimes\mathbb{Q})^I$ given by $[\mu]\mapsto\frac{1}{[I:I_\mu]}\sum_{\gamma\in I/I_\mu}\gamma(\mu)$, we may write $\mu^\diamond$ as follows (see \cite[A.4]{HeNie}):
\begin{align}
	&\underline{\mu}\coloneqq\frac{1}{[I:I_\mu]}\sum_{\gamma\in I/I_\mu} \gamma(\mu) \\
\label{defn-mu-diamond}
	&\mu^\diamond=\frac{1}{N}\sum_{i=0}^{N-1}\varphi^i_0(\underline{\mu})
\end{align}
Here $N$ is any integer such that $\varphi_0^N(\underline{\mu})=\underline{\mu}$, and $I_\mu$ is the stabilizer of $\mu$ associated to the action by the inertia group.
Alternatively, \begin{equation}
    \mu^\diamond=\frac{1}{N}\sum_{i=0}^{N-1}\varphi^i(\mu)^{\on{dom}}.
\end{equation} 
Here $\lambda^{\on{dom}}$ denotes the unique $B$-dominant conjugate of $\lambda$ for $\lambda \in X_*(T)\otimes \bbQ$. 
\end{numberedparagraph}

\begin{numberedparagraph}\label{newtonpointdefi}
Recall that attached to $b$, there is a slope decomposition map 
\begin{equation}\label{slope-decomp-map}
\nu_b:\bbD \to G_{\br{\bbQ}_p},
\end{equation}
where $\bbD$ is the pro-torus with $X^*(\bbD)=\bbQ$. 
We let the \textit{Newton point}, denoted as $\bfnu$, be the unique conjugate in $X_*(T)_\bbQ^+$ of \eqref{slope-decomp-map}.  
Note that by \cite[Theorem 1.8.(c)]{RR96} $\bfnu$ is invariant with respect to the $L$-action and defines an element $\bfnu\in (X_*(T)_\bbQ^+)^\Gamma$.
Recall that there is a Kottwitz map $\kappa_G:B(G)\to \pi_1(G)_\Gamma$ \cite{Kottwitz, KottwitzII}, \cite[Theorem 1.15]{RR96}. 
\begin{definition}
	\label{HN-irred-Defn}
	Let $\bfb\in B(G)$. 
\begin{enumerate}
	\item We write $\bfb\in B(G,\bfmu)$ if $\mu^\natural=\kappa_G(\bfb)$ and $\mu^\diamond-\bfnu=\sum_{\alpha\in \Delta} c_\alpha \alpha^\vee$ with $c_\alpha\in \bbQ$ and $c_\alpha\geq 0$ (see \cite[\S 6.2]{KottwitzII}.)
	\item We say $(\bfb,\bfmu)$ is \textit{HN-irreducible (Hodge--Newton irreducible)} if $\bfb\in B(G,\bfmu)$ and $c_\alpha\neq 0$ for all $\alpha\in \Delta$. 
\end{enumerate}	
\end{definition}

\begin{remark}
	In the literature, one finds formulations of \Cref{HN-irred-Defn} in terms of coefficients $c_{\alpha^{\on{rel}}}$ as $\alpha^{\on{rel}}$ ranges over the relative coroots (see for example \cite[D\'efinition 5.0.4]{Chen}). 
	As explained in \cite[\S 6.2]{KottwitzII}, this leads to the same notion.   
	Indeed, $\mu^\diamond-\bfnu$ is already $\Gamma$-invariant for the $L$-action.
\end{remark}

\begin{definition}\label{decent-definition}
\cite[Definition 1.8]{RZ96}
	Let $s\in \bbN$. We say that $b\in G(\br{\bbQ}_p)$ is \textit{$s$-decent} if $s\cdot \nu_{b}$ factors through a map $\bbG_m\to G_{\br{\bbQ}_p}$, and the decency equation 
	$(b\varphi)^s=s\cdot \nu_b(p)\varphi^s$ 
	is satisfied in $G(\br{\bbQ}_p)\rtimes \langle \varphi \rangle$. If the context is clear, we say that $b$ is \textit{decent} if it is $s$-decent for some $s$. 
\end{definition}
\begin{numberedparagraph}
	\label{decencydominant}
If $b$ is $s$-decent, then $b\in G(\bbQ_{p^s})$ and $\nu_b$ is also defined over $\bbQ_{p^s}$, where $\bbQ_{p^s}$ is the degree $s$ unramified extension of $\bbQ_p$. 
Moreover, for all $\bfb\in B(G)$, there exists an $s\in \bbN$ and an $s$-decent representative $b\in G(\bbQ_{p^s})$ of $\bfb$, such that $\nu_b=\bfnu$. 
Indeed, by \cite[1.11]{RZ96}, every $\bfb$ has a decent representative. 
Moreover, we can choose $s$ large enough such that $G$ is quasisplit over $\bbQ_{p^s}$, and then take an arbitrary $s$-decent element. Now, replacing $b$ by a $\varphi$-conjugate in $G(\bbQ_{p^s})$ preserves decency and conjugates the map $\nu_b$, thus we can assume without loss of generality that $\nu_b$ is dominant. 
\end{numberedparagraph}
\end{numberedparagraph}

	\begin{remark}
		\label{generalmixedchar}
		One can define affine Deligne--Lusztig varieties 
		over any local field $F$, and the statement of \Cref{mainthm} is conjectured to hold in this generality. Our \Cref{mainthm} holds when $F$ is a finite extension of $\bbQ_p$, via a standard restriction of scalars argument (see for example \cite[\S 5\&\S 8]{DOR10}). It is not clear if our method goes through in the equal characteristic case.
	\end{remark}

\section{Geometric background}
\label{geometric-preps}
\subsection{v-sheaf-theoretic setup}
\label{v-sheaf-theory}
We work within Scholze's framework of diamonds and v-sheaves \cite{Et}. More precisely, we consider geometric objects that are functors 
\begin{equation}
	\calF:\on{Perf}_{\bbF_p}\to \on{Sets},	
\end{equation}
where $\on{Perf}_{\bbF_p}$ is the site of affinoid perfectoid spaces in characteristic $p$, endowed with the v-topology (see \cite[Definition 8.1]{Et}). 
Recall that given a topological space $T$, we can define a v-sheaf $\underline{T}$ whose value on $(R,R^+)$-points is the set of continuous maps $|\Spa(R,R^+)|\to T$.
We will mostly use this notation $\underline{T}$ for topological groups $T$.
\begin{example}
\label{exampleofvsheaf}
$\underline{{\calI(\bbZ_p)}}$ and $\underline{G(\bbQ_p)}$ are the v-sheaf group objects attached to the topological groups ${\calI(\bbZ_p)}$ and $G(\bbQ_p)$.
\end{example}
Conversely, to any diamond or small v-sheaf $\calF$, by \cite[Proposition 12.7]{Et}, one can attach an underlying topological space that we denote by $|\calF|$. 
We note that the pair $(\underline{(\cdot)}, |\cdot|)$ behaves as if it was an adjoint pair.\footnote{A subtlety of the theory is that, in general, $\underline{T}$ is not a small v-sheaf while $|\calF|$ can only be constructed if $\calF$ is small.}
Given $\calF$ a small v-sheaf, we define $\pi_0(\calF)$ to be the set of connected components of $|\calF|$ topologized with the quotient topology with respect to the surjection $|\calF|\to \pi_0(\calF)$.
We will constantly use the following lemma. 
\begin{lemma}
	\label{general-lemma-profinite-qt}
	Let $K$ be a locally pro-finite group. 
	Let $\calF$ and $\calG$ be small v-sheaves. 
	Suppose that $\underline{K}$ acts on $\calF$ and that $\calG=\calF/{\underline{K}}$.
	Then $|\calG|=|\calF|/K$ and $\pi_0(\calG)=\pi_0(\calF)/K$.
\end{lemma}
\begin{proof}
	We claim that $|\cdot|$ commutes with colimits.
	Indeed, it is not hard to show that it commutes with arbitrary coproducts and one can use \cite[Proposition 12.10]{Et} to show that it commutes with coequalizers.
	Recall that $\pi_0$ is the left-adjoint to the inclusion of totally disconnected spaces into the category of topological spaces. 
	In particular, $\pi_0$ also commutes with colimits. 

	Recall that $\calF/K$ is an abbreviation for the coequalizer 
\[
\text{coEqual.}\left(
  \underline{K} \times \mathcal{F}
  \rightrightarrows
  \mathcal{F}
\right).
\]
	So it suffices to show that $|\underline{K}\times \calF|=K\times |\calF|$ and that $\pi_0(\underline{K}\times \calF)=K\times \pi_0(\calF)$.
	Let $S\subseteq K$ be an open profinite subgroup. 
	Writing $K=\coprod_{K/S} S$ and using that $|\cdot|$ and $\pi_0$ commute with colimits we can reduce to the case where $K$ is profinite. 
	If $K$ is finite, one can easily show that $|\underline{K}\times \calF|=K\times |\calF|$ and that $\pi_0(\underline{K}\times \calF)=K\times \pi_0(\calF)$. 
	In the general profinite case, we can write $\underline{K}=\varprojlim_{i\in I} \underline{K_i}$ where $K_i$ ranges over finite quotients of $K$.
	The desired formulas follow from \cite[Lemma 12.17]{Et}, and the fact that limits commute with products.
\end{proof}

\begin{numberedparagraph}
Recall that in the more classical setup of Rapoport--Zink spaces \cite{RZ96}, affine Deligne--Lusztig varieties arise, via Dieudonne theory, as the perfection of special fibers of Rapoport-Zink spaces. 
Moreover, the rigid generic fiber of such a Rapoport-Zink space is a special case of the so called \textit{local Shimura varieties} \cite{Towards}. 
In this way, Rapoport-Zink spaces (formal schemes) interpolate between local Shimura varieties and their corresponding affine Deligne--Lusztig varieties. 
Or in other words, Rapoport-Zink spaces serve as \textit{integral models} of local Shimura varieties whose perfected special fibers are ADLVs. Moreover, by \cite{Ber}, the diamondification functor
\begin{align*}
\dia:\{\text{Adic Spaces}/\Spa \bbZ_p\} &\longrightarrow \{\text{v-sheaves}/\Spd \bbZ_p\} \\
X&\longmapsto X^\dia
\end{align*}
applied to a local Shimura variety is a locally spatial diamond that can be identified with a moduli space of $p$-adic shtukas (see $\mathsection$\ref{specialization-map-Sht-section}). 

Alternatively, one could consider the diamondification functor applied to the entire formal schemes (such as Rapoport-Zink spaces), rather than only their rigid generic fibres. The diamondification functor naturally takes values in v-sheaves, but contrary to the rigid-analytic case, these v-sheaves are no longer diamonds. Nevertheless, the v-sheaf associated to a formal scheme still has a lot of structure. Indeed, they are what the first author calls \textit{kimberlites} \cite[Definition 4.35]{Specializ}, i.e.~we have a commutative diagram  
\begin{equation}
\begin{tikzcd}
  \{\text{Adic Spaces}/\Spa \bbZ_p\}\arrow{r}{\dia}{}  & \{\text{v-sheaves}/\Spd \bbZ_p\}  \\
  \{\text{Formal Schemes}/\Spf \bbZ_p\} \arrow[hook]{u}{}{} \ar{r}{\dia} & \{\text{Kimberlites}/\Spd \bbZ_p\}\arrow[hook]{u}{}{}
\end{tikzcd}
\end{equation}
Kimberlites share with formal schemes many pleasant properties that general v-sheaves do not. Let us list the main ones. Let $\frakX$ be a kimberlite.
\begin{enumerate}
    \item Each kimberlite has an open analytic locus $\frakX^{\on{an}}$ (which is a locally spatial diamond by definition), and a reduced locus $\frakX^{\on{red}}$ (which is by definition a perfect scheme). 
    \item Each kimberlite has a continuous ``specialization map'' whose source is $|\frakX^{\on{an}}|$ and whose target is $|\frakX^{\on{red}}|$ (see \P\ref{specializationmapparagraph} for details).
    \item Kimberlites have a formal \'etale site and a formal nearby-cycles functor $R\Psi^{\on{for}}:D_{\et}(\frakX^{\on{an}},\Lambda)\to D_{\et}(\frakX^{\on{red}},\Lambda)$ \cite{GL22}.  
\end{enumerate}

Although we expect that every local Shimura variety admits a formal scheme ``integral model'' (see \cite{PapRap22} for the strongest result in this direction), this is not known in full generality. 
Nevertheless, as the first author proved, every local Shimura variety (even the more general moduli spaces of $p$-adic shtukas) is modeled by a \textit{prekimberlite}\footnote{In fact, we expect moduli spaces of $p$-adic shtukas to be modeled by kimberlites, but for our purposes this difference is minor, as the specialization map is defined for both kimberlites and prekimberlites.} whose perfected special fiber is the corresponding ADLV (see \Cref{Thmshtprekimb}). We shall return to this discussion in $\mathsection$\ref{specialization-map-Sht-section}.

\end{numberedparagraph}

\begin{numberedparagraph}\label{specializationmapparagraph}
Recall that given a formal scheme $\calX$, one can attach a specialization triple $(\calX_\eta,\calX^{\on{red}},\on{sp})$, where $\calX_{\eta}$ is a rigid analytic space (the Raynaud generic fiber), $\calX^{\on{red}}$ is a reduced scheme (the reduced special fiber) and 
\begin{equation}
	\label{specializationmapkimb}
	\on{sp}: | \calX_\eta |\to |\calX^{\on{red}}|	
\end{equation}
is a continuous map.

Analogously, to a prekimberlite $\frakX$ \cite[Definition 4.15]{Specializ} over $\Spd(\bbZ_p)$, one can attach a specialization triple $(\frakX_\eta,\frakX^{\on{red}},\on{sp})$ where
\begin{itemize}
    \item $\frakX_\eta$ is the generic fiber (which is an open subset of the analytic locus $\frakX^{\on{an}}$ \cite[Definition 4.15]{Specializ} of $\frakX$). 
    \item $\frakX^{\on{red}}$ is a perfect scheme over $\bbF_p$ (obtained via the reduction functor \cite[\S 3.2]{Specializ}) and 
\item $\on{sp}$ is a continuous map \cite[Proposition 4.14]{Specializ} analogous to \eqref{specializationmapkimb}.

\end{itemize}
For example, if $\frakX=\calX^\dia$ for a formal scheme $\calX$, then $\frakX$ is a kimberlite, and we have $\frakX_\eta=\calX_\eta^\dia$, $\frakX^{\on{red}}=(\calX^{\on{red}})^{\on{perf}}$ and the specialization maps attached to $\calX$ and $\frakX$ agree, i.e. ~we have the following commutative diagram:
\begin{equation}
    \begin{tikzcd}
\mid\calX_\eta\mid \ar{r}{\cong}\arrow[]{d}[swap]{\on{sp}} & \mid\frakX_\eta\mid \arrow[]{d}{\on{sp}} \\
    \mid\calX^{\on{red}}\mid \arrow[]{r}{\cong} & \mid\frakX^{\on{red}}\mid
    \end{tikzcd}
\end{equation}
\end{numberedparagraph}
\begin{numberedparagraph}
	A \textit{smelted kimberlite} is a pair $(\frakX,X)$ where $\frakX$ is a prekimberlite and $X\subseteq \frakX^{\on{an}}$ is an open subsheaf of the analytic locus, subject to some technical conditions (see \cite[Definitions 4.35, 4.30]{Specializ}). 
This is mainly used when $X=\frakX^{\on{an}}$ or when $X$ is the generic fiber of a map to $\Spd \bbZ_p$ that is not $p$-adic.  

Given a smelted kimberlite $(\frakX,X)$ and a closed point $x\in |\frakX^{\on{red}}|$, one can define the tubular neighborhood $X^\circledcirc_x$ (\cite[Definition 4.38]{Specializ}). It is an open subsheaf of $X$ which, roughly speaking, is given as the locus in $X$ of points that specialize to $x$.
\end{numberedparagraph}
\subsection{$B^+_{dR}$-Grassmannians and local models}\ 

Let $\Gr_G$ be the $B^+_{dR}$-Grassmannian attached to $G$ \cite[\S 19, 20]{Ber}. This is an ind-diamond over $\Spd \br{\bbQ}_p$. 
We omit $G$ from the notation from now on, and denote by $\Gr_\mu$ the Schubert variety \cite[Definition 20.1.3]{Ber} attached to $G$ and $\mu$. 
This is a spatial diamond over $\Spd \br{E}$ where $\br{E}=E\cdot \br{\bbQ}_p$ and $E$ is the field of definition of $\bfmu$. 
Now, $\Gr_\mu$ contains the Schubert cell attached to $\mu$, which we denote by $\Gr_\mu^\circ$. 
This is an open dense subdiamond of $\Gr_\mu$.

Let $\Gr_{\calK_p}$ be the Beilinson--Drinfeld Grassmannian attached to $\calK_p$. This is a v-sheaf that is ind-representable in diamonds over $\Spd \br{\bbZ}_p$, whose generic fiber is $\Gr_G$, and whose reduced special fiber is $\Fl_{\br{\calK}_p}$. 
Let $\calM_{\calK_p,\mu}$ be the local models first introduced in \cite[Definition 25.1.1]{Ber} for minuscule $\mu$ and later extended to non-minuscule $\mu$ in \cite[Definition 4.11]{AGLR22}.

A priori, these local models are defined only as v-sheaves over $\Spd O_{\br{E}}$, but when $\mu$ is minuscule, $\calM_{\calK_p,\mu}$ is representable by a normal scheme flat over $\Spec O_{\br{E}}$ by \cite[Theorem 1.1]{AGLR22} and \cite[Corollary 1.4]{GL22}\footnote{Representability is proved in full generality in \cite{AGLR22} and normality is proven when $p\geq 5$. In \cite{GL22} normality is proved even when $p<5$.}. Moreover, in the general case, i.e.~$\mu$ not necessarily minuscule,  $\calM_{\calK_p,\mu}$ is a kimberlite by \cite[Proposition 4.14]{AGLR22}, and it is unibranch by \cite[Theorem 1.3]{GL22}.
Let $\calA_{\calK_p,\mu}$ denote the $\mu$-admissible locus inside $\Fl_{\br{\calK}_p}$ (see for example \cite[Definition 3.11]{AGLR22}). This is the unique perfect closed subscheme of $\Fl_{\br{\calK}_p}$ whose $\bar{\bbF}_p$-valued points agree with $\br{K}_p\on{Adm}(\mu) \br{K}_p/\br{K}_p$. 
The generic fiber of $\calM_{\calK_p,\mu}$ is $\Gr_\mu$ and the reduced special fiber is $\calA_{\calK_p,\mu}$ by \cite[Theorem 1.5]{AGLR22}.

\subsection{Functoriality of affine Deligne--Lusztig varieties} 
The formation of affine Deligne--Lusztig varieties is functorial with respect to morphisms of tuples $(G_1,b_1,\mu_1,\calK_{1,p})\to (G_2,b_2,\mu_2,\calK_{2,p})$. 
More precisely, we have the following lemma.
\begin{lemma}\label{functoriality-Xmub}
	Let $f:\calK_{1,p}\to \calK_{2,p}$ be a group homomorphism such that $b_2=f(b_1)$, $\mu_2=f\circ \mu_1$. Then we have a map $X^{\calK_{1,p}}_{\mu_1}(b_1)\to X^{\calK_{2,p}}_{\mu_2}(b_2)$ that fits in the following commutative diagram:
\begin{equation}
	\label{flags-adlv-comm-diag}
	\begin{tikzcd}
		X^{\calK_{1,p}}_{\mu_1}(b_1)  \ar[r] \ar[d] & X^{\calK_{2,p}}_{\mu_2}(b_2) \ar[d]\\
		\Fl_{\br{\calK}_{1,p}} \ar[r]  &  \Fl_{\br{\calK}_{2,p}},
	\end{tikzcd}
\end{equation}
where the vertical maps are the canonical inclusions.
\end{lemma}
\begin{proof}
	Recall that we have closed immersions $X^{\calK_{i,p}}_{\mu_i}(b_i)\subseteq \Fl_{\br{K}_{i,p}}$ for $i\in \{1,2\}$ which on $\bar{\bbF}_p$-points induces the evident inclusion of sets
	\[\{g\cdot \breve{K}_{i,p}\mid g^{-1}b_i\varphi(g)\in \breve{K}_{i,p}\on{Adm}(\mu_1) \br{K}_{1,p}\}\subseteq G(\breve{\bbQ}_p)/\breve{K}_{i,p}.\]
	To show that the induced map $X^{\calK_{1,p}}_{\mu_1}(b_1)\to \Fl_{\br{K}_{2,p}}$ factors through $X^{\calK_{2,p}}_{\mu_2}(b_2)$ it suffices to argue on $\bar{\bbF}_p$-points.
This follows from \Cref{functorialityadlv} below.  
\end{proof}

\begin{lemma}
	\label{functorialityadlv}
Let the setup be as in \Cref{functoriality-Xmub}. Then 
\[f(\br{K}_{1,p} \on{Adm}(\mu_1) \br{K}_{1,p}) \subseteq  \br{K}_{2,p} \on{Adm}(\mu_2) \br{K}_{2,p}.\]
\end{lemma}
\begin{proof}
We give a geometric argument. Let $\calM_{\calK_{1,p},\mu_1}$ and $\calM_{\calK_{2,p},\mu_2}$ denote the v-sheaf local models in \cite[Definition 4.11]{AGLR22}.
By the functoriality result of v-sheaf local models \cite[Proposition 4.16]{AGLR22} with respect to the map $f:\calK_{1,p}\to \calK_{2,p}$, we obtain a morphism $\calM_{\calK_{1,p},\mu_1}\to \calM_{\calK_{2,p},\mu_2}$ of v-sheaves. 
Moreover, by \cite[Theorem 6.16]{AGLR22}, we know that $\calM_{\calK_{i,p},\mu_i,\bar{\bbF}_p}\subseteq \Fl_{\br{\calK}_{i,p}}$ consists of Schubert cells parametrized by $\on{Adm}(\mu_i)$.
More precisely,  
\[\calM_{\calK_{i,p},\mu_i,\bar{\bbF}_p}(\bar{\bbF}_p)=\br{K}_{i,p} \on{Adm}(\mu_i) \br{K}_{i,p}/\br{K}_{i,p}.\] Therefore, the existence of the map of perfect schemes  
\[\calM_{\calK_{1,p},\mu_1,\bar{\bbF}_p}\to \calM_{\calK_{2,p},\mu_2,\bar{\bbF}_p}\] immediately implies that 
$f(\br{K}_{1,p} \on{Adm}(\mu_1) \br{K}_{1,p}) \subseteq  \br{K}_{2,p} \on{Adm}(\mu_2) \br{K}_{2,p}$.
\end{proof}

\begin{remark}
	\label{maps-of-points-vs-groups}
	Given a morphism $f:G_1\to G_2$, it follows from \cite[Corollary 2.10.10]{KP23} that $f$ comes via base change from a map $f:\calK_{1,p}\to \calK_{2,p}$ if and only if $f(\br{K}_{1,p})\subseteq \br{K}_{2,p}$.
	We will use these two perspectives interchangeably without further notice.
\end{remark}

\Cref{functoriality-Xmub} is most relevant in the following situations:
\begin{enumerate}
	\item When $G_1=G_2$, $f_{|_{G_1}}=\on{id}$, and $\breve{K}_{1,p}\subseteq \breve{K}_{2,p}$. 
	\item When $G_2=G_1^{\on{ab}}$ and $\calK_{2,p}$ is the only parahoric of the torus $G_1^{\on{ab}}$.
	\item When $G_2=G_1/Z$, where $Z$ a central subgroup of $G_1$ and $\breve{K}_{2,p}=f(\breve{K}_{1,p})$.
\end{enumerate}

To simplify certain proofs, we will also need the following statement.
\begin{lemma}
	\label{productsopen}
	Suppose $G=G_1\times G_2$, $b=(b_1,b_2)$, $\mu=(\mu_1,\mu_2)$ and $\calK_p=\calK_p^1\times \calK_p^2$. Then 
		$X^{\calK_p}_\mu(b)=X^{\calK^1_p}_{\mu_1}(b_1)\times X^{\calK^2_p}_{\mu_2}(b_2)$.
\end{lemma}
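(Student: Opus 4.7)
The plan is to reduce the statement to the combinatorial identity
\[
\br{\calK}_p \on{Adm}(\mu) \br{\calK}_p \;=\; \bigl(\br{\calK}^1_p \on{Adm}(\mu_1) \br{\calK}^1_p\bigr)\,\times\, \bigl(\br{\calK}^2_p \on{Adm}(\mu_2) \br{\calK}^2_p\bigr)
\]
inside $G(\br{\bbQ}_p)=G_1(\br{\bbQ}_p)\times G_2(\br{\bbQ}_p)$. Once this is in hand, the lemma follows formally: a point $g\br{\calK}_p$ of $\Fl_{\br{\calK}_p}$ is the same as a pair $(g_1\br{\calK}^1_p, g_2\br{\calK}^2_p)$ of points of $\Fl_{\br{\calK}^1_p}\times \Fl_{\br{\calK}^2_p}$, and since $\varphi$ acts componentwise we have $g^{-1}b\varphi(g)=(g_1^{-1}b_1\varphi(g_1),\,g_2^{-1}b_2\varphi(g_2))$, so the defining condition for $X_\mu^{\calK_p}(b)$ splits into the two defining conditions for the factors.

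First, I would observe that the inclusion ``$\subseteq$'' in the admissible set identity is exactly the content of \Cref{functorialityadlv} applied to the two projections $\pi_i\co G\to G_i$, since these satisfy $\pi_i(b)=b_i$, $\pi_i\circ\mu=\mu_i$ and $\pi_i(\calK_p)=\calK_p^i$. Equivalently, the two projections give maps $X_\mu^{\calK_p}(b)\to X_{\mu_i}^{\calK_p^i}(b_i)$ via \Cref{functoriality-Xmub}, and together a morphism
\[
X_\mu^{\calK_p}(b)\longrightarrow X_{\mu_1}^{\calK^1_p}(b_1)\times X_{\mu_2}^{\calK^2_p}(b_2).
\]
This morphism is a closed immersion with the ambient $\Fl_{\br{\calK}_p}=\Fl_{\br{\calK}^1_p}\times \Fl_{\br{\calK}^2_p}$, so it suffices to check that it is surjective on $\bar{\bbF}_p$-points, i.e.\ to prove the reverse inclusion ``$\supseteq$''.

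For the reverse inclusion I would proceed combinatorially. The Iwahori--Weyl group of a product is the product of the Iwahori--Weyl groups, and under this identification the length function and Bruhat order are the product length and product Bruhat order (because the affine Weyl group and the quotient $\pi_1(G)_I$ both decompose as products, and one chooses the alcove as a product of alcoves). Hence
\[
\on{Adm}(\mu)=\on{Adm}(\mu_1)\times \on{Adm}(\mu_2),
\]
from which the reverse inclusion is immediate by writing a general element as a product of elements in each factor.

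Alternatively, and more in the spirit of the preceding lemma, I would give a geometric argument via local models: the functor $\calK_p\mapsto \calM_{\calK_p,\mu}$ is compatible with products in the obvious sense, so $\calM_{\calK_p,\mu}=\calM_{\calK^1_p,\mu_1}\times \calM_{\calK^2_p,\mu_2}$; taking reduced special fibers and using \cite[Theorem 1.5]{AGLR22} identifies the product of admissible loci on the right with the admissible locus on the left, which gives the desired equality on $\bar{\bbF}_p$-points. The only mildly nontrivial point is the product decomposition of $\on{Adm}(\mu)$, which is the main (very small) obstacle; everything else is formal from the definitions.
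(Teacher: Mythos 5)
Your proposal is correct and matches the paper's approach: the paper simply states that the lemma ``follows directly from the definition,'' and your argument is a careful unpacking of exactly that definition-chase. You have correctly identified the one small point that needs checking, namely $\on{Adm}(\mu)=\on{Adm}(\mu_1)\times\on{Adm}(\mu_2)$, which holds because the Iwahori--Weyl group, affine Weyl group, and Bruhat order all decompose as products.
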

\begin{proof}
This follows directly from the definition.
\end{proof}

\subsection{Moduli spaces of $p$-adic shtukas}\label{specialization-map-Sht-section}
\begin{numberedparagraph}
	Recall from \cite[$\mathsection$23]{Ber} that to each $(G,b,\mu)$ and an open compact subgroup $K\subseteq G(\bbQ_p)$, one can attach a locally spatial diamond $\Sht_{(G,b,\mu,K)}$ over $\Spd \breve{E}$, where $\br{E}=\br{\bbQ}_p\cdot E$ and $E$ is the reflex field of $\bfmu$, i.e.~$\Sht_{(G,b,\mu,K)}$ is the moduli space of $p$-adic shtukas with level $K$.\footnote{It is possible to extend the definition of $\Sht_{(G,b,\mu,K)}$ to subgroups $K\subseteq G(\bbQ_p)$ that are only assumed to be closed via the formula $\Sht_{(G,b,\mu,K)}:=\Sht_{(G,b,\mu,\infty)}/\underline{K}$. These still give rise to well-behaved locally spatial diamonds.}

This association is functorial in the tuple $(G,b,\mu,K)$, i.e.~if $f:G\to H$ is a morphism of groups, we let $b_H:=f(b)$, $\mu_H:=f\circ \mu$ and we assume $f(K)\subseteq K_H$, then we have a morphism of diamonds
\begin{equation}
	\label{functorialityshtukas}
	\Sht_{(G,b,\mu,K)}\to \Sht_{(H,b_H,\mu_H,K_H)}.	
\end{equation}
\begin{enumerate}
    \item When $H=\Gab$, $f=\on{det}:G\to \Gab$ is the natural quotient map, and $K_H=\on{det}(K)=:K^{\on{ab}}$, we let $b^{\on{ab}}:=\on{det}(b)$, $\mu^{\on{ab}}:=\on{det}\circ \mu$. In this case the morphism \eqref{functorialityshtukas} is called the ``determinant map''
\begin{equation}
	\label{thedeterminantmap}
	\on{det}:\Sht_{(G,b,\mu,K)}\to \Sht_{(\Gab,b^{\on{ab}},\mu^{\on{ab}},K^{\on{ab}})}.	
\end{equation}
\item When $H=G$, $f=\on{id}$, and the inclusion $K_1\subseteq K_2$ is proper, we have a change-of-level-structures map:
\begin{equation}
	\label{changeoflevelstructureshtukas}
	\Sht_{(G,b,\mu,K_1)}\to \Sht_{(G,b,\mu,K_2)}.	
\end{equation}
\end{enumerate}
\end{numberedparagraph}

\begin{numberedparagraph}
	For parahoric levels $K_p$, $\Sht_{(G,b,\mu,K_p)}$ is the generic fiber of a canonical\footnote{canonical in the sense that $\Sht_\mu^{\calK_p}(b)$ represents a moduli problem.} integral model, which is a v-sheaf $\Sht_\mu^{\calK_p}(b)$ over $\Spd {O}_{\breve{E}}$ defined in \cite[\S 25]{Ber} (see also \cite[Defintions 2.45, 2.51]{Gle22a}). 
In \cite{Gle22a}, the first author proved that $\Sht_\mu^{\calK_p}(b)$ is a well-behaved prekimberlite (see \cite[Definition 4.15]{Specializ}). 
Moreover, by  \cite[Proposition 2.61]{Gle22a}, its reduction (or its reduced special fiber in the sense of \cite[\S 3.2]{Specializ}) can be identified with $X^{\calK_p}_\mu(b)$.
Furthermore, the formalism of kimberlites developed in \cite{Specializ} gives a continuous specialization map 
which turns out to be surjective (on the underlying topological spaces). 
\begin{theorem}\cite[Theorem 2.76]{Gle22a}\label{Thmshtprekimb}
The pair $(\Sht_\mu^{\calK_p}(b),\Sht_{(G,b,\mu,K_p)})$ is a rich smelted kimberlite\footnote{The term ``rich'' refers to some technical finiteness assumption that ensures that the specialization map can be controlled by understanding the preimage of the closed points in the reduced special fiber.}. Moreover,  $\Sht_\mu^{\calK_p}(b)^{\on{red}}=X^{\calK_p}_\mu(b)$. In particular, we have a surjective and continuous specialization map.  
\begin{equation}\label{specialization-map-thm}
\on{sp}:|\Sht_{(G,b,\mu,K_p)}|\to |X^{\calK_p}_\mu(b)|.
\end{equation}
\end{theorem}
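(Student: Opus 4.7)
The plan is to establish the three assertions --- prekimberlite/smelted structure, identification of reduction with $X^{\calK_p}_\mu(b)$, and surjectivity of specialization --- by bootstrapping from the v-sheaf local model $\calM_{\calK_p,\mu}$ whose structure was reviewed in $\mathsection$3.2. The basic tool is a local model diagram in the style of Scholze--Weinstein \cite[\S 25]{Ber}: the moduli interpretation of $\Sht_\mu^{\calK_p}(b)$ (parametrizing $G$-shtukas with one paw bounded by $\mu$, $\calK_p$-level structure, and Frobenius type $b$) provides a canonical map
\begin{equation*}
q : \Sht_\mu^{\calK_p}(b) \longrightarrow [\calM_{\calK_p,\mu}/\underline{\calK_p(\bbZ_p)}]
\end{equation*}
obtained by Beauville--Laszlo localization at the paw. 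I would verify that $q$ is formally smooth, using that a trivialization of the underlying $G$-bundle plus the $\calK_p$-structure is exactly the data needed to upgrade a modification to a point of $\calM_{\calK_p,\mu}$; formal smoothness transports the prekimberlite axioms of \cite[Definition 4.15]{Specializ} from $\calM_{\calK_p,\mu}$ (a prekimberlite by \cite[Proposition 4.14]{AGLR22}) to $\Sht_\mu^{\calK_p}(b)$. One must also check the technical rigidifying conditions (valuative criterion of partial properness, formal adicness) which reduce, after forming $q$, to the corresponding properties of $\calM_{\calK_p,\mu}$ and to the fact that $\underline{\calK_p(\bbZ_p)}$ acts on $\calM_{\calK_p,\mu}$ through its pro-unipotent radical on infinitesimal neighborhoods.

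Next, to identify $\Sht_\mu^{\calK_p}(b)^{\on{red}}$ with $X^{\calK_p}_\mu(b)$, I would compute the reduction functor of \cite[\S 3.2]{Specializ} directly from the moduli problem. On a perfect $\bar{\bbF}_p$-algebra $R$, a $G$-shtuka with $\calK_p$-level degenerates via Dieudonn\'e theory to a coset $g\br{\calK}_p$, and the boundedness by $\on{Adm}(\mu)$ translates into the condition $g^{-1}b\varphi(g)\in \br{\calK}_p\on{Adm}(\mu)\br{\calK}_p$. This matches the definition of $X^{\calK_p}_\mu(b)$ on geometric points; to upgrade to an isomorphism of v-sheaves I would invoke the comparison of reduced loci under the map $q$ with the $\on{Adm}(\mu)$-stratification of $\Fl_{\br{\calK}_p}$ (identified with $\calM_{\calK_p,\mu,\bar{\bbF}_p}$ in \cite[Theorem 1.5]{AGLR22}) and then intersect with the $b$-Frobenius constraint, giving the ADLV fiber.

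Once $\Sht_\mu^{\calK_p}(b)$ is a prekimberlite with reduction $X^{\calK_p}_\mu(b)$, the existence and continuity of $\on{sp}$ is automatic by \cite[Proposition 4.14]{Specializ}. To promote this to a \emph{smelted kimberlite} one must furthermore produce the analytic locus $\Sht_{(G,b,\mu,K_p)}$ as the generic fiber; this is immediate since $\Sht_\mu^{\calK_p}(b)$ is by design an integral model with generic fiber $\Sht_{(G,b,\mu,K_p)}$. The hardest clause is \emph{richness}, i.e. non-emptiness of every tubular neighborhood $\Sht_{(G,b,\mu,K_p)}^{\circledcirc}{}_{x}$ for $x\in |X^{\calK_p}_\mu(b)|$, which in turn is what yields surjectivity of $\on{sp}$. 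I would handle this by constructing, for each closed point $g\br{\calK}_p\in X^{\calK_p}_\mu(b)(\bar{\bbF}_p)$, an explicit untilt-plus-crystalline lift of the corresponding Dieudonn\'e datum to produce a characteristic-zero point of $\Sht_{(G,b,\mu,K_p)}$ specializing to it. Concretely, one lifts the Frobenius of $(b,g)$ through the formally smooth map $q$ by choosing a deformation to $\calM_{\calK_p,\mu}$ over some $\Spd \calO_C$; the lift is unobstructed since $q$ is formally smooth and $\calM_{\calK_p,\mu}$ has non-empty generic fiber above every closed point by the flatness and unibranchness results of \cite{GL22}.

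The main obstacle will be the richness/surjectivity step: it requires both a careful formal-geometric argument that tubular neighborhoods of $\Sht_\mu^{\calK_p}(b)$ above a closed ADLV point match those of $\calM_{\calK_p,\mu}$ above the image Schubert cell, and a non-emptiness input from the local model side. The unibranchness of $\calM_{\calK_p,\mu}$ established in \cite[Theorem 1.2]{GL22} is precisely what makes this tractable, and it is the same input that will drive the bijection \eqref{bijectivityofspec} used throughout the rest of the paper.
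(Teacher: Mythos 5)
The paper does not prove this statement at all: it is imported verbatim from the first author's earlier work, namely \cite[Theorem 2]{Gle22a} for the rich smelted kimberlite structure and \cite[Proposition 2.30]{Gle22a} for the identification $\Sht_\mu^{\calK_p}(b)^{\on{red}}=X^{\calK_p}_\mu(b)$, with continuity and surjectivity of $\on{sp}$ then following from the general formalism of \cite{Specializ}. So what you have written is a reconstruction of the proof in \cite{Gle22a}, and its overall architecture (relate $\Sht_\mu^{\calK_p}(b)$ to the v-sheaf local model, compute the reduction via Dieudonn\'e theory, get surjectivity of $\on{sp}$ from non-emptiness of tubular neighborhoods of $\calM_{\calK_p,\mu}$) is indeed the right one.

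Two points in your sketch are off as written, however. First, the group in your local model diagram is wrong: the ambiguity in trivializing the $G$-torsor near the paw is an infinite-dimensional Witt-vector positive loop group, not the profinite group $\underline{\calK_p(\bbZ_p)}$, which acts only on level structures. This is not cosmetic --- the correspondence \eqref{local-model-diagrams} recalled in the paper is a roof whose two legs are torsors under a completed positive loop group $\widehat{L^+_WG}$, and the \emph{connectedness} of that group v-sheaf is precisely what allows one to transport (non-)emptiness and connectedness of tubular neighborhoods between $\Sht^{\calK_p}_\mu(b)$ and $\calM_{\calK_p,\mu}$; the paper explicitly warns that this correspondence does not agree with the classical local model diagrams you seem to have in mind. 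Relatedly, ``formal smoothness'' is not an available tool here: there is no established permanence of the prekimberlite axioms along formally smooth maps of v-sheaves, and the actual mechanism is the torsor structure of the roof together with a direct verification of the axioms of \cite[Definition 4.15]{Specializ}. Second, ``rich'' is a finiteness condition on the smelted kimberlite (ensuring the specialization map is controlled by preimages of closed points), not the non-emptiness of tubular neighborhoods; non-emptiness is what yields surjectivity of $\on{sp}$, and it is obtained from the density of the generic fiber of $\calM_{\calK_p,\mu}$ (\cite[Theorem 1.5]{AGLR22}) transported through the roof, exactly as in \cite[Theorem 2 b)]{Gle22a}.
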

\begin{numberedparagraph}
	Now we recall the local model correspondence of \cite[Theorem 3]{Gle22a}\footnote{We warn the reader that this local model correspondence, although related to, does not agree with the more classical local model diagrams considered in the literature of Shimura varieties and Rapoport--Zink spaces. 
	Indeed, the classical local model diagram is deduced from the finite-dimensional map $\Sht^{\calK_p}_\mu(b) \to [\calK_p\backslash \calM_{\calK_p,\mu}]$. 
On the other hand, the correspondence in \eqref{local-model-correspondence} is more closely related to the map $\Sht^{\calK_p}_\mu(b)\to \Hk_{\calK_p,\mu}$ towards the local Hecke stack, and this map has infinite-dimensional fibers.}. 
	Fix an arbitrary extension $\br{\bbQ}_p\subseteq \breve{E}\subseteq  F$ of non-archimedean fields. Let $O_F\subseteq F$ the ring of integers with residue field $k_F$.
	The diagram has the form 
\begin{equation}
	\label{local-model-correspondence}
\begin{tikzcd}
   & X \ar{dr}{f} \ar{dl}[swap]{g} &  \\
  (\Sht^{\calK_p}_\mu(b)\times \Spd {O}_F)_x^\circledcirc & & (\calM_{\calK_p,\mu}\times \Spd O_F)^\circledcirc_y 
\end{tikzcd}
\end{equation}
where $x\in X^{\calK_p}_{\mu,k_F}(b)(k_F)$, $y\in \calA_{\calK_p,\mu,k_F}(k_F)$ are closed points and the maps $f$ and $g$ are $\widehat{L^+_{\bbW}G}$-torsors for a certain connected small v-sheaf in groups $\widehat{L^+_{\bbW}G}$ (see \cite[Definition 8.1]{Et} and \cite[Proposition 3.4.7]{pappas2021padic}). 
This correspondence induces a bijection
\begin{equation}\label{local-model-diagrams} 
	\pi_0((\Sht^{\calK_p}_\mu(b)\times \Spd {O}_F)_x^\circledcirc)\simeq \pi_0((\calM_{\calK_p,\mu}\times \Spd O_F)^\circledcirc_y)
\end{equation}
See \cite[\S 2.5]{Gle22a} for details.
Finally, recall the following theorem due to the first author in joint work with Louren\c{c}o.

\begin{theorem}\label{tubular-nbhd-conn-prop}\cite[Theorem 1.3]{GL22}
 For any parahoric $K_p\subseteq G(\bbQ_p)$ and any field extension $\br{E}\subseteq F \subseteq \bbC_p$, the tubular neighborhoods of $(\calM_{\calK_p,\mu}\times \Spd {O}_F, \Gr_\mu\times \Spd F)$ are connected.
\end{theorem}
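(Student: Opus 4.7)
The strategy is to combine the unibranchness of local models \cite[Theorem 1.2]{GL22} with the general dictionary in the kimberlite formalism between unibranchness at a closed point and connectedness of the tube at that point. By the smelted-kimberlite definitions of \cite[Definition 4.38]{Specializ}, it suffices to show, for each closed point $y \in |\calA_{\calK_p,\mu}|$, that the individual tube $(\Gr_\mu \times \Spd F)^\circledcirc_y$ is connected. Since $\Spd \calO_{\bbC_p} \to \Spd \calO_F$ is a v-cover and tubular neighborhoods base-change functorially, I may further assume $F = \bbC_p$.

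In the minuscule case, $\calM_{\calK_p,\mu}$ is representable by a flat normal scheme over $O_{\br{E}}$ by \cite[Theorem 1.1]{AGLR22} and \cite[Corollary 1.4]{GL22}. Normality implies that the completed local ring at $y$ is a normal Noetherian domain, so its formal spectrum is connected and its Raynaud generic fiber, which coincides with the tube under diamondification, is connected as well. For non-minuscule $\mu$, the local model is only a kimberlite and classical commutative-algebra arguments are unavailable. Instead, the plan is to invoke the following general principle inside the kimberlite formalism: if $(\frakX, X)$ is a rich smelted kimberlite that is unibranch at a closed point $y$, then the tube $X^\circledcirc_y$ is connected, because any disconnection $X^\circledcirc_y = U_1 \sqcup U_2$ would, via the specialization map $\on{sp}$, produce two distinct formal branches at $y$. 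Combined with the unibranchness of $\calM_{\calK_p,\mu}$ from \cite[Theorem 1.2]{GL22}, this completes the proof.

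The main obstacle is formulating and proving the implication ``unibranch at $y$ implies connected tube at $y$'' within the kimberlite setting, since for a v-sheaf the notion of formal branch must be defined via the formal \'etale site and the formal nearby-cycles functor $R\Psi^{\on{for}}$ rather than classical local rings. Making this dictionary precise---relating v-sheaf-theoretic connected components of the tube to formal irreducible components at $y$---is where the finer diamond-theoretic tools must be brought to bear, in particular the infinite-dimensional local model correspondence \eqref{local-model-diagrams}, which allows one to transfer the problem between $\calM_{\calK_p,\mu}$ and moduli spaces of shtukas where additional group-theoretic symmetries can be exploited.
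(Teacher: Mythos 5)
The paper does not actually prove this statement: it is imported verbatim as \cite[Theorem 1.3]{GL22}, so there is no internal proof to compare against. Judged on its own terms, your proposal has a genuine gap at its central step. In the formalism of \cite{Specializ} used here, ``unibranch'' (equivalently ``topologically normal'') for a smelted kimberlite is \emph{defined} by the condition that tubular neighborhoods are connected (see \cite[Definition 4.52]{Specializ}, cf.\ the proof of \Cref{adlvtosht}). Your ``general principle'' that unibranchness at $y$ implies connectedness of the tube at $y$ is therefore either a tautology or, if you intend a different notion of branch (via $R\Psi^{\on{for}}$ or via the reduced special fiber), an unproved claim that carries the entire content of the theorem. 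In particular, citing \cite[Theorem 1.2]{GL22} to deduce \cite[Theorem 1.3]{GL22} begs the question: the passage from unibranchness of $\calM_{\calK_p,\mu}$ over $\Spd O_{\br{E}}$ to connectedness of tubes after base change to an arbitrary $\calO_F$ with $\br{E}\subseteq F\subseteq \bbC_p$ is precisely the nontrivial part, and it is not formal.

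Your own reduction to $F=\bbC_p$ also undercuts the minuscule-case argument: over $\calO_{\bbC_p}$ the scheme $\calM_{\calK_p,\mu}\times_{O_{\br{E}}}\calO_{\bbC_p}$ is no longer Noetherian or excellent, so ``normality passes to the completed local ring, which is therefore a domain'' is unavailable, and this base-change problem is one of the main points the cited theorem must address. Finally, the appeal to the local model diagram \eqref{local-model-diagrams} runs in the wrong direction: in this paper that diagram is used to transfer connectedness of tubes \emph{from} $\calM_{\calK_p,\mu}$ \emph{to} $\Sht^{\calK_p}_\mu(b)$ (see the proof of \Cref{adlvtosht}), so invoking it to establish connectedness for the local model itself would be circular. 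The actual proof in \cite{GL22} rests on concrete geometric input --- Demazure-type resolutions, normality of Schubert varieties in Witt vector affine flag varieties, and a careful analysis of how specialization maps and tubes behave under the base changes above --- none of which appears in your outline.
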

Using \eqref{local-model-diagrams} and \Cref{tubular-nbhd-conn-prop}, one can show that the specialization map \eqref{specialization-map-thm} induces a bijection $\pi_0(\on{sp})$ on connected components.
\begin{theorem}\cite[Theorem 2]{Gle22a}
	\label{adlvtosht}
	For any parahoric $K_p\subseteq G(\bbQ_p)$ and any field extension $\br{E}\subseteq F \subseteq \bbC_p$, the map 
	
	\begin{equation}\pi_0(\on{sp}):\pi_0(\Sht_{(G,b,\mu,K_p)}\times \Spd F )\xrightarrow{\sim}  \pi_0(X^{\calK_p}_\mu(b))
	\end{equation}
	is bijective. 	
\end{theorem}
\begin{proof}
Recall that by \cite[Lemma 4.55]{Specializ}, whenever $(\frakX,X)$ is a rich smelted kimberlite, to prove that 
\begin{equation}
\pi_0(\on{sp}):\pi_0(X)\to \pi_0(\frakX^{\on{red}})
\end{equation}
is bijective, it suffices to prove that $(\frakX,X)$ is unibranch\footnote{The definition of unibranchness, or alternatively \textit{topological normality}, for smelted kimberlites is inspired by a useful criterion for the unibranchness of a scheme (see \cite[Proposition 2.38]{AGLR22}).} (in the sense of \cite[Definition 4.52]{Specializ}), i.e.~tubular neighborhoods are connected. 
By \Cref{Thmshtprekimb}, $(\Sht_\mu^{\calK_p}(b),\Sht_{(G,b,\mu,K_p)})$ is a rich smelted kimberlite, and thus it suffices to prove that $(\Sht_\mu^{\calK_p}(b) \times \Spd {O}_F, \Sht_{(G,b,\mu,K_p)}\times \Spd F)$ is unibranch, i.e. their tubular neighborhoods are connected. 

By \eqref{local-model-diagrams}, it suffices to prove that the tubular neighborhoods of $(\calM_{\calK_p,\mu}\times \Spd {O}_F, \Gr_\mu\times \Spd F)$ are connected, which follows from \Cref{tubular-nbhd-conn-prop}.   
\end{proof}
\end{numberedparagraph}

	Recall the reduction functor \cite[Definition 3.12]{Specializ} (see also \cite[Definition 2.19]{AGLR22} and \cite[\S 2.2.1]{Gle22a}).
	This functor takes as input a small v-sheaf $X$ and attaches to it the unique scheme-theoretic v-sheaf $X^\red$ whose value on characteristic $p$ perfect affine schemes is 
	\[X^\red(\Spec R)=X(\Spd(R,R)).\]

The following proves that the formation of $\Sht_\mu^{\calK_p}(b)$ is also functorial in tuples $(G,b,\mu,\breve\calK_p)$ and generalizes \Cref{functoriality-Xmub}.
\begin{lemma}
	Let $f:\calK_{1,p}\to \calK_{2,p}$ be a group homomorphism such that $b_2=f(b_1)$, $\mu_2=f\circ \mu_1$ and $f(\calK_{1,p})\subseteq \calK_{2,p}$. 
Then we have a map 
\begin{equation}\label{map-on-integral-Sht}
\Sht^{\calK_{1,p}}_{\mu_1}(b_1)\to \Sht^{\calK_{2,p}}_{\mu_2}(b_2)
\end{equation}
of v-sheaves.
Moreover, taking the reduction functor of the map \eqref{map-on-integral-Sht} induces the map $X^{\calK_{1,p}}_{\mu_1}(b_1)\to X^{\calK_{2,p}}_{\mu_2}(b_2)$ of \Cref{functoriality-Xmub}.
\end{lemma}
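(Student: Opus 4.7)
The plan is to exploit the moduli-theoretic description of $\Sht^{\calK_p}_\mu(b)$ given in \cite[\S 25]{Ber}. Over an affinoid perfectoid test object $S$, a point of $\Sht^{\calK_{1,p}}_{\mu_1}(b_1)(S)$ packages a $\calK_{1,p}$-shtuka with a single leg bounded by $\mu_1$, together with a framing by the isocrystal associated to $b_1$. The assumption $f(\calK_{1,p})\subseteq \calK_{2,p}$ lets me push the underlying $\calK_{1,p}$-torsor out along $f$ to a $\calK_{2,p}$-torsor, and $b_2=f(b_1)$ ensures that postcomposing the framing with $f$ produces a framing for $b_2$. Thus the content of the construction is entirely in verifying that the pushed-out modification remains bounded by $\mu_2=f\circ\mu_1$.

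For that boundedness check, I would appeal to the functoriality of v-sheaf local models \cite[Proposition 4.16]{AGLR22}, which supplies a morphism $\calM_{\calK_{1,p},\mu_1}\to \calM_{\calK_{2,p},\mu_2}$. Since the bound defining $\Sht^{\calK_p}_\mu(b)$ is imposed through its local model, this is precisely the needed compatibility. The main obstacle is therefore already resolved in \cite[Proposition 4.16]{AGLR22}; at the level of underlying Schubert strata it reduces to the combinatorial inclusion $f(\br{\calK}_{1,p}\on{Adm}(\mu_1)\br{\calK}_{1,p})\subseteq \br{\calK}_{2,p}\on{Adm}(\mu_2)\br{\calK}_{2,p}$ of \Cref{functorialityadlv}.

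For the second assertion, the plan is to combine \Cref{Thmshtprekimb}---which identifies $\Sht^{\calK_{i,p}}_{\mu_i}(b_i)^{\on{red}}$ with $X^{\calK_{i,p}}_{\mu_i}(b_i)$---with the fact that the reduction functor of \cite[\S 3.2]{Specializ} is itself a functor on v-sheaves over $\Spd \calO_{\br E}$, and so automatically sends \eqref{map-on-integral-Sht} to a map on reduced special fibers. A direct check on $\br{\bbF}_p$-points, using the coset description
\[
X^{\calK_{i,p}}_{\mu_i}(b_i)(\br{\bbF}_p)=\{g\br{\calK}_{i,p}\mid g^{-1}b_i\varphi(g)\in \br{\calK}_{i,p}\on{Adm}(\mu_i)\br{\calK}_{i,p}\},
\]
will then identify the resulting map as $g\br{\calK}_{1,p}\mapsto f(g)\br{\calK}_{2,p}$, matching the map of \Cref{functoriality-Xmub}. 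The well-definedness of this formula on cosets is exactly what \Cref{functorialityadlv} records, so no further combinatorial input is required.
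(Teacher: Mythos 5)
Your proposal is correct and follows essentially the same route as the paper: the map is constructed from the moduli description by pushing out torsors and framings along $f$, with boundedness supplied by the local-model functoriality underlying \Cref{functorialityadlv}, and the second assertion follows from the identification $\Sht^{\calK_{i,p}}_{\mu_i}(b_i)^{\on{red}}=X^{\calK_{i,p}}_{\mu_i}(b_i)$ of \Cref{Thmshtprekimb} together with a check on $\bar{\bbF}_p$-points. You merely spell out details the paper leaves implicit.
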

\begin{proof}
	Recall that for $i\in \{1,2\}$, $\Sht^{\calK_{i,p}}_{\mu_i}(b_i)$ is defined as a closed subsheaf of another v-sheaf $\Sht_{\calK_{i,p}}(b_i)\times \Spd O_{\breve{E}}$ (\cite[Definition 2.45, 2.51, Proposition 2.54]{Gle22a}).
	Explicitly, $\Sht_{\calK_{i,p}}(b_i)$ parametrizes triples $(\calT,\Phi,\lambda)$ where $(\calT,\Phi)$ is a $\calK_{i,p}$-shtuka and $\lambda$ is an isogeny towards $\calE_{b_i}$, where $\calE_{b_i}$ denotes the vector bundle on the Fargues--Fontaine curve determined by $b_i$ (see \cite[Definition 2.13, Remark 2.16, Definition 2.17]{Gle22a}). 

	Using the formula 
	\[\calT\mapsto \calT\overset{\calK_{1,p}}{\times}\calK_{2,p},\]
	and functoriality, a map of groups $f:\calK_{1,p}\to \calK_{2,p}$ induces a map of v-sheaves $\Sht_{\calK_{1,p}}(b_1)\to \Sht_{\calK_{2,p}}(b_2)$.
	
	We must argue that the composition 
	\[\Sht^{\calK_{1,p}}_{\mu_1}(b_1)\to \Sht_{\calK_{2,p}}(b_2)\times \Spd O_{\breve{E}} \to \Sht_{\calK_{1,p}}(b_1)\times \Spd O_{\breve{E}}\] factors through the closed immersion $\Sht^{\calK_{2,p}}_{\mu_2}(b_2)\subseteq \Sht_{\calK_{2,p}}(b_2)\times \Spd O_{\breve{E}}$.
	This follows from \Cref{functorialityadlv} (see \cite[Remark 2.52]{Gle22a}).

	For the second statement we argue as follows.
	Consider the following commutative diagram 
\begin{equation}
	\label{some-comm-diagr}
\begin{tikzcd}
 \Sht^{\calK_{1,p}}_{\mu_1}(b_1)\arrow{r} \arrow{d}  &  \arrow{d} \Sht^{\calK_{1,p}}_{\mu_2}(b_2) \\
 \Sht_{\calK_{1,p}}(b_1)\times \Spd O_{\breve{E}} \arrow{r} & \Sht_{\calK_{2,p}}(b_2) \times \Spd O_{\breve{E}} 
\end{tikzcd}
\end{equation}

	By \cite[Propositions 2.61]{Gle22a}, we have the identity $X^{\calK_{i,p}}_{\mu_i}(b_i)=\Sht^{\calK_{i,p}}_{\mu_i}(b_i)^{\on{red}}$, with $i\in \{1,2\}$. 
	It follows from \cite[Propositions 2.58, 2.60]{Gle22a}, that after applying the reduction functor to \Cref{some-comm-diagr}, we obtain the following commutative diagram:
\begin{center}
\begin{tikzcd}
	X^{\calK_{1,p}}_{\mu_1}(b_1)\arrow{r} \arrow{d}  & X^{\calK_{2,p}}_{\mu_2}(b_2)   \arrow{d}  \\
  \Fl_{\br{\calK}_{1,p}} \arrow{r} & \Fl_{\br{\calK}_{2,p}} 
\end{tikzcd}
\end{center}
where the vertical arrows are the natural inclusions and the bottoms arrow is the canonical map attached to $f$.
This finishes the proof.
\end{proof}
As a special case, if we fix a datum $(G,b,\mu)$ and two parahorics ${K}_1\subseteq {K}_2$ of $G(\bbQ_p)$, we have a map  
\begin{equation}\label{map-integral-Sht}
\Sht^{\calK_1}_\mu(b)\to \Sht^{\calK_2}_\mu(b)
\end{equation}
of v-sheaves. On the generic fiber, the map \eqref{map-integral-Sht} gives the change-of-level-structures map of \eqref{changeoflevelstructureshtukas}.
After applying the reduction functor  to the map \eqref{map-integral-Sht}, we recover the map $X^{\calK_1}_\mu(b)\to X^{\calK_2}_\mu(b)$ from \Cref{functoriality-Xmub} applied to scenario (1).
\end{numberedparagraph}

\subsection{The Grothendieck--Messing period map}\label{grothendieck-section}
Recall that given a triple $(G,b,\mu)$, there is a quasi-pro-\'etale \textit{Grothendieck--Messing period morphism} (see for example \cite[\S 23]{Ber}): 
\begin{equation}
	\label{Grothendieckmessing}
\pi_{\on{GM}}:\Sht_{(G,b,\mu,\infty)}\times \Spd \bbC_p\to \Gr_\mu \times \Spd \bbC_p.
\end{equation}
Now, the $b$-admissible locus $\Gr^b_\mu\subseteq \Gr_\mu$, can be defined as the image of $\pi_{\on{GM}}$, which is an open subset by \cite[Proposition 23.3.3]{Ber}.
Moreover, there is a (universal)
$\underline{G(\bbQ_p)}$-torsor $\bbL_b$ over $\Gr^b_\mu$, such that for each finite extension $K$ over $\br{E}$ and $x\in \Gr_{\mu}^b(K)$,  $x^*\bbL_{b}$ is a 
crystalline representation associated to the isocrystal with $G$-structure defined by $b$ (for more details see for example \cite[\S 2.2-2.4]{Gle21a}). 
The map in \eqref{Grothendieckmessing} can be then constructed as the geometric $\underline{G(\bbQ_p)}$-torsor attached to $\bbL_b$, i.e.~$\Sht_{(G,b,\mu,\infty)}$ is the moduli space of trivializations of $\bbL_b$. The first author together with Louren\c{c}o prove the following theorem using diamond-theoretic techniques.
\begin{theorem}[\cite{GL22Conn}]
	\label{badmconn}
	Let $(G,b,\mu)$ be a $p$-adic shtuka datum with $\bfb\in B(G,\bfmu)$.
	The $b$-admissible locus $\Gr^b_\mu\times \Spd \bbC_p$ is connected and dense within $\Gr_\mu\times \Spd \bbC_p$.
\end{theorem}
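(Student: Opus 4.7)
The plan is to establish density and connectedness as separate assertions, with connectedness being the main technical point.

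For density, I would exploit the Newton stratification of $\Gr_\mu \times \Spd \bbC_p$ induced by the Grothendieck--Messing modification $x \mapsto \calE_b(x)$, which sends a geometric point $x$ to the isomorphism class of the $G$-bundle obtained by modifying $\calE_b$ at the point cut out by $x$. By construction $\Gr^b_\mu$ is the fiber over the trivial $G$-bundle. Semicontinuity of Newton polygons for $G$-bundles on the relative Fargues--Fontaine curve (Kedlaya--Liu, Fargues--Fontaine) makes this fiber open in $\Gr_\mu$, while its non-emptiness is precisely the hypothesis $\bfb \in B(G,\bfmu)$ (via Colmez--Fontaine in the minuscule case, and via a reduction through Demazure-type resolutions otherwise). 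Density then follows because the open Schubert cell $\Gr_\mu^\circ$ is irreducible and dense in $\Gr_\mu$, so the non-empty open subset $\Gr^b_\mu \cap \Gr_\mu^\circ$ is dense in $\Gr_\mu^\circ$ and hence in $\Gr_\mu$.

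For connectedness, I would first reduce to the case where $G$ is quasi-simple and adjoint. The determinant map to $G^{\on{ab}}$ takes care of the torus quotient, where $\Gr_\mu$ is essentially a disjoint union of points and the assertion is trivial, while compatibility with products together with ad-isomorphisms and z-extensions splits off the simple factors. For $G$ quasi-simple adjoint, the strategy is to show that $\Gr_\mu \times \Spd \bbC_p$ is topologically irreducible as a diamond, so that any non-empty open subset (in particular $\Gr^b_\mu \times \Spd \bbC_p$) is automatically connected. The key input is the v-sheaf local model $\calM_{\calI,\mu}$: its reduced special fiber $\calA_{\calI,\mu}$ is the admissible locus in the affine flag variety, which is connected because the admissible set $\on{Adm}(\mu)$ is connected under the Bruhat order and the individual Schubert varieties are irreducible. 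Combining \Cref{tubular-nbhd-conn-prop} on connectedness of tubular neighborhoods with the unibranchness of local models from \cite{GL22} and the specialization formalism for rich smelted kimberlites (notably \cite[Lemma 4.55]{Specializ}), connectedness propagates from the special fiber to the generic fiber.

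The main obstacle is passing from connectedness of the ambient $\Gr_\mu \times \Spd \bbC_p$ to connectedness of its open dense subset $\Gr^b_\mu \times \Spd \bbC_p$. Classically this is automatic for non-empty opens of an irreducible scheme, but in the v-sheaf category one must verify that the complement---a union of lower Newton strata---does not disconnect the diamond. This requires a careful topological analysis combining the cell decomposition of $\Gr_\mu$ with the specialization map, where the kimberlite-theoretic unibranchness of the local models serves as the essential geometric input that rules out pathological splittings that have no analogue in the classical algebro-geometric setting.
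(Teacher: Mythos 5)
First, a point of comparison: the paper does not prove \Cref{badmconn} at all — it is imported verbatim from \cite{GL22Conn} (``The first author together with Louren\c{c}o prove the following theorem using diamond-theoretic techniques''). So any genuine proof here would have to reproduce the main theorem of that separate paper; there is no internal argument to match your proposal against.

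Your proposal has a fatal gap at its core: the claim that $\Gr_\mu\times \Spd \bbC_p$ (or $\Gr_\mu^\circ$) is \emph{topologically irreducible}, so that a non-empty open subset is automatically dense and connected. This is false for essentially any positive-dimensional adic space or diamond. Already for $\mu$ minuscule, $\Gr_\mu\times\Spd\bbC_p$ is the diamond of a smooth projective rigid variety, and such spaces are connected but never irreducible: in the adic space $(\bbP^1_{\bbC_p})^{\mathrm{ad}}$ the loci $\{|T|<1\}$ and $\{|T|>1\}$ are disjoint non-empty opens. Hence neither ``non-empty open $\Rightarrow$ dense'' nor ``non-empty open $\Rightarrow$ connected'' is available, and both your density argument and your connectedness argument collapse at exactly this step. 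This is not a technicality to be patched by ``careful topological analysis'': the entire difficulty of the theorem is that a dense open subset of a connected analytic space can be disconnected (remove the closed annulus $\{|T|=1\}$ from $\bbP^1$ and two disks remain), and the complement of $\Gr^b_\mu$ is a union of Newton strata that are not Zariski-closed subvarieties, so the classical fact that removing a positive-codimension Zariski-closed subset preserves connectedness does not apply either.

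Your fallback via the kimberlite machinery does not close the gap. \Cref{tubular-nbhd-conn-prop} together with unibranchness of $\calM_{\calK_p,\mu}$ yields $\pi_0(\Gr_\mu\times\Spd F)\cong\pi_0(\calA_{\calK_p,\mu})$, i.e.\ connectedness of the \emph{ambient} $\Gr_\mu$; it says nothing about the open sub-diamond $\Gr^b_\mu$, which is not the generic fiber of any local model. The only smelted kimberlite in this paper whose generic fiber sees $\Gr^b_\mu$ is $\Sht_\mu^{\calK_p}(b)$, whose reduced special fiber is $X^{\calK_p}_\mu(b)$; arguing for connectedness of $\Gr^b_\mu$ through that object would require already knowing $\pi_0(X_\mu(b))$, i.e.\ it is circular with the main theorem the present paper is trying to prove. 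What you correctly flag as ``the main obstacle'' is in fact the entire content of \cite{GL22Conn}, and your proposal contains no argument for it. The sound parts are peripheral: openness of $\Gr^b_\mu$ by semicontinuity of the Newton point, non-emptiness from $\bfb\in B(G,\bfmu)$, and the reductions via $\det$, products, ad-isomorphisms and z-extensions (cf.\ \Cref{pro:centralpreservecool}).
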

From this we deduce the following. 
\begin{proposition}
	\label{corollary-badmconn}
	The $G(\bbQ_p)$-action on $\pi_0(\Sht_{(G,b,\mu,\infty)}\times \Spd \bbC_p)$ is transitive.
\end{proposition}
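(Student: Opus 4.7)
The plan is to deduce this as an essentially formal consequence of \Cref{badmconn} combined with the torsor structure of the Grothendieck--Messing period map: by construction, $\pi_{\on{GM}}$ of \eqref{Grothendieckmessing} is the geometric $\underline{G(\bbQ_p)}$-torsor associated to $\bbL_b$, and in particular it is a surjective quasi-pro-\'etale $\underline{G(\bbQ_p)}$-torsor onto $\Gr^b_\mu\times\Spd\bbC_p$.

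The main step I would carry out is to establish the natural bijection
\begin{equation*}
    \pi_0(\Sht_{(G,b,\mu,\infty)} \times \Spd \bbC_p)/G(\bbQ_p) \;\xrightarrow{\sim}\; \pi_0(\Gr^b_\mu \times \Spd \bbC_p).
\end{equation*}
Surjectivity is immediate from the surjectivity of $\pi_{\on{GM}}$. For injectivity, given two components $C_1, C_2$ of the source mapping into a common component $D$ of the base, I would first argue that each restriction $\pi_{\on{GM}}|_{C_i}: C_i \to D$ is itself surjective---this is the covering-space-like statement that the image of $C_i$ in $D$ is both open (as $\pi_{\on{GM}}$ is open, being a torsor) and closed (its complement being the union of the images of the remaining components, themselves open), hence all of $D$. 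Picking geometric points $\bar x_i \in C_i$ above a common $\bar y \in D$, the simply transitive action of $G(\bbQ_p)$ on the fiber $\pi_{\on{GM}}^{-1}(\bar y)$ then yields $g \in G(\bbQ_p)$ with $g \bar x_1 = \bar x_2$, whence $g \cdot C_1 = C_2$.

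The proposition then follows immediately: by \Cref{badmconn} the right-hand side of the above bijection is a singleton, so $\pi_0(\Sht_{(G,b,\mu,\infty)} \times \Spd \bbC_p)$ is a single $G(\bbQ_p)$-orbit. The main technical obstacle is the surjectivity of $\pi_{\on{GM}}|_{C_i}$, which rests on verifying basic topological features (openness of connected components, openness of $\underline{G(\bbQ_p)}$-torsor maps, and behaviour of quotients on $\pi_0$) in the locally spatial diamond setting; these should all be routine consequences of the quasi-pro-\'etale local triviality of $\pi_{\on{GM}}$, but would need to be invoked carefully.
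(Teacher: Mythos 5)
Your overall route is the same as the paper's: both arguments reduce the proposition to the identification $\pi_0(\Sht_{(G,b,\mu,\infty)}\times \Spd \bbC_p)/G(\bbQ_p)\cong \pi_0(\Gr^b_\mu\times \Spd \bbC_p)$ and then quote \Cref{badmconn}. The paper obtains the identification in one line, by observing that $\Gr^b_\mu$ is the quotient v-sheaf $\Sht_{(G,b,\mu,\infty)}/\underline{G(\bbQ_p)}$, i.e.\ a colimit, and that $\pi_0$ commutes with colimits.

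Your hands-on justification of injectivity, however, contains a genuine gap. You argue that $\pi_{\on{GM}}(C_i)$ is open because $\pi_{\on{GM}}$ is open and $C_i$ is a component, and closed because its complement in $D$ is a union of images of the remaining components, ``themselves open.'' But connected components of a (locally) spatial diamond are closed and in general \emph{not} open: $\pi_0$ of such a space is profinite, not discrete. This is not a degenerate corner case here --- the whole content of \Cref{secondthm}(5) is that $\pi_0(\Sht_{(G,b,\mu,\infty)}\times\Spd\bbC_p)$ is a $G^\circ$-torsor; already for a torus it is $T(\bbQ_p)$ with its natural (non-discrete) topology. So neither the openness nor the closedness of $\pi_{\on{GM}}(C_i)$ follows as stated, and the surjectivity of $\pi_{\on{GM}}|_{C_i}$ is not available by this route. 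The argument can be repaired without it: since $|\pi_{\on{GM}}|$ is a surjective quotient map, $\pi_0$ of the base is the quotient of $\pi_0$ of the source by the equivalence relation \emph{generated by} ``the images of two components meet''; whenever the images of $C_1$ and $C_2$ meet, the (simply transitive) $G(\bbQ_p)$-action on a common geometric fiber of the torsor produces $g$ with $gC_1=C_2$; and since ``lying in the same $G(\bbQ_p)$-orbit'' is itself an equivalence relation, it contains the generated one. But the cleanest fix is the paper's: apply $\pi_0$ to the coequalizer presentation of $\Gr^b_\mu$ as $\Sht_{(G,b,\mu,\infty)}/\underline{G(\bbQ_p)}$, which gives the bijection formally and sidesteps all of these point-set issues.
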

\begin{proof}
	This follows from \Cref{general-lemma-profinite-qt} and the identity of v-sheaves 
	\[\Gr^b_\mu\times \Spd \bbC_p\simeq \Sht_{(G,b,\mu,\infty)}/\underline{G(\bbQ_p)}.\] 
	Indeed, on connected components we get a sequence of identifications 
	\begin{align*}
		\pi_0(\Gr^b_\mu\times \Spd \bbC_p) & \simeq  \pi_0(\Sht_{(G,b,\mu,\infty)}/\underline{G(\bbQ_p)}) \\
						   &  \simeq  \pi_0(\Sht_{(G,b,\mu,\infty)})/G(\bbQ_p).
	\end{align*}		
From \Cref{badmconn}, it follows that $\pi_0(\Sht_{(G,b,\mu,\infty)}\times \Spd \bbC_p)$ is acted on transitively by $G(\bbQ_p)$.  
\end{proof}
We also get the following result which from a different perspective is much harder to obtain.
\begin{corollary}
	\label{kisins-surjectivity-argument}
	For any point $x\in \Sht_{(G,b,\mu,\calK_p)}(\bbC_p)$ the map:
	\[G(\bbQ_p)/\calK_p \xrightarrow{g\mapsto g\cdot x} \Sht_{(G,b,\mu,\calK_p)}(\bbC_p) \xrightarrow{\on{sp}}X_\mu^{\calK_p}(b)(\bar{\bbF}_p)\] 
	induces a surjection $G(\bbQ_p)/\calK_p\to \pi_0(X_\mu^{\calK_p}(b))$.
\end{corollary}
\begin{proof}
	Let $\tilde{x}\in \Sht_{(G,b,\mu,\infty)}(\bbC_p)$ be any lift of $x$ and consider the following diagram:	
\begin{center}
\begin{tikzcd}
	G(\bbQ_p) \arrow{r}{g\mapsto g\cdot\tilde{x}} \arrow{d} & \Sht_{(G,b,\mu,\infty)}(\bbC_p) \ar{r} \ar{d} & \pi_0(\Sht_{(G,b,\mu,\infty)} \times \Spd \bbC_p) \arrow{d} \\
	G(\bbQ_p)/\calK_p \arrow{r}{g\cdot \calK_p \mapsto g\cdot {x}}& \Sht_{(G,b,\mu,\calK_p}(\bbC_p) \ar{r} &\pi_0(\Sht_{(G,b,\mu,\calK_p}\times \Spd \bbC_p) 
\end{tikzcd}
\end{center}
the map $G(\bbQ_p)\to \pi_0(\Sht_{(G,b,\mu,\infty)} \times \Spd \bbC_p)$ is $G(\bbQ_p)$-equivariant and by \Cref{corollary-badmconn} surjective. The right arrow is also surjective since the map of spaces $\Sht_{(G,b,\mu,\infty)} \times \Spd \bbC_p\to \Sht_{(G,b,\mu,\calK_p)} \times \Spd \bbC_p$ is a $\underline{\calK_p}$-torsor.
Finally, by \Cref{adlvtosht} the map $G(\bbQ_p)/\calK_p\to \pi_0(X_\mu^{\calK_p}(b))$ is surjective since it is the composition of two surjective maps.
\end{proof}

\subsection{The Bialynicki-Birula map}
Recall the Bialynicki-Birula map (see \cite[Proposition 19.4.2]{Ber}) from the Schubert cell $\Gr_{\mu}^{\circ}$ to the generalized flag variety $\on{Fl}_\mu:=G/P_\mu$ 
\begin{equation}\label{Bia-Bir-map}
	\on{BB}:\Gr^\circ_\mu \to \on{Fl}_\mu.
\end{equation}
In general, the map \eqref{Bia-Bir-map} is not an isomorphism (it is an isomorphism only when $\mu$ is minuscule), but it always induces a bijection on classical points, i.e.~finite extensions $F$ of $\br{E}$ (see for example \cite[Theorem 5.2]{viehmannBdr}). 

Let $\on{Fl}_\mu^{\on{adm}}\subseteq \on{Fl}_\mu$ denote the weakly admissible (or equivalently, semistable) locus inside the flag variety \cite[\S 5]{DOR10}, and let $\Gr_\mu^{\circ,b}:=\Gr_\mu^\circ \cap \Gr_\mu^b$. 
By \cite{ColmFont}, we have a bijection $\on{BB}:\Gr^{\circ,b}_\mu(F)\cong {\on{Fl}}^{\on{adm}}_\mu(F)$ for all finite extensions $F$ of $\br{E}$. 
Moreover, \eqref{Bia-Bir-map} fits in the following commutative diagram:
\begin{equation}\label{Bialynickiadmissible}
	\begin{tikzcd}
		\Gr^{\circ,b}_\mu	\arrow{r} \ar{d}[swap]{\on{BB}} & \Gr^\circ_\mu  \ar[d, "\on{BB}"] \\
		{\on{Fl}}^{\on{adm}}_\mu \arrow[r]  & {\on{Fl}}_\mu.   \\
	\end{tikzcd}
\end{equation}
\subsection{Ad-isomorphisms and z-extensions}\label{ad-isom-z-extn-section}
\begin{definition}\label{defi-ad-isomorphism}\cite[$\mathsection$4.8]{Kottwitz}
A morphism $f:G\to H$ is called an \textit{ad-isomorphism} if $f$ sends the center of $G$ to the center of $H$ and induces an isomorphism of adjoint groups. 
\end{definition}
An important example of ad-isomorphisms are z-extensions. 
\begin{definition}\label{z-extensions}
\cite[$\mathsection 1$]{RatConjug}
A map of connected reductive groups $f:G'\to G$ is a \textit{z-extension} if: $f$ is surjective, $Z=\on{Ker}(f)$ is central in $G'$, $Z$ is isomorphic to a product of tori of the form $\on{Res}_{F_i/\bbQ_p}\bbG_m$ for some finite extensions $F_i\subseteq \ov{\bbQ}_p$, and $G'$ has simply connected derived subgroup.

\end{definition}

\begin{lemma}
	\label{lifting-b-and-mu}
Let $f:\tilde{G}\to G$ be a z-extension and $\bfb\in B(G,\bfmu)$. \\
	(1) There exist a conjugacy class of cocharacters $\tilde{\bfmu}$ and an element $\tilde{\bfb}\in B(\tilde{G},\tilde{\bfmu})$ which, under the map $B(\tilde{G},\tilde{\bfmu})\to B(G,\bfmu)$, map to $\bfmu$ and $\bfb$, respectively.\\
	(2) $c_{\tilde{b},\tilde{\mu}}\pi_1(\tilde{G})^\varphi_I \to c_{b,\mu}\pi_1(G)^\varphi_I$ is surjective. 
\end{lemma}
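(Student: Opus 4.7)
The plan exploits the two defining features of a z-extension: $Z$ is an induced torus, so has vanishing Galois $H^1$ by Shapiro's lemma combined with Hilbert 90; and $\tilde{G}^{\on{der}}$ is simply connected, hence its coroot lattice coincides with that of $G^{\on{der}}$ under the induced isogeny. Choose compatible maximal tori $\tilde{T}\subseteq \tilde{G}$ and $T\subseteq G$. One obtains a split short exact sequence of Galois-equivariant free $\bbZ$-modules $0\to X_*(Z)\to X_*(\tilde{T})\to X_*(T)\to 0$, which descends to $0\to X_*(Z)\to \pi_1(\tilde{G})\to \pi_1(G)\to 0$ because the coroot lattices match.

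For (1), first split the cocharacter sequence to pick $\tilde{\mu}\in X_*(\tilde{T})$ lifting $\mu$, and replace it by a Weyl conjugate to arrange dominance. To lift $\bfb$, invoke the well-known surjectivity of $B(\tilde{G})\to B(G)$ for z-extensions, which follows from the vanishing of $H^1(\bbQ_p,Z)$ via the usual long exact sequence. Given any lift $\tilde{\bfb}_0$ of $\bfb$, the defect $\kappa_{\tilde{G}}(\tilde{\bfb}_0)-\tilde{\mu}^\natural\in \pi_1(\tilde{G})_\Gamma$ maps to $\kappa_G(\bfb)-\mu^\natural=0$ in $\pi_1(G)_\Gamma$, hence lies in the image of $\kappa_Z\colon B(Z)=X_*(Z)_\Gamma \to \pi_1(\tilde{G})_\Gamma$; translating $\tilde{\bfb}_0$ by the corresponding element of $B(Z)$ produces the required $\tilde{\bfb}$. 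The Newton condition $\tilde{\mu}^\diamond-\tilde{\bfnu}\geq 0$ then transfers from $(\bfb,\bfmu)$ since $\tilde{G}$ and $G$ share the same adjoint quotient, and the differences $\tilde{\mu}^\diamond-\mu^\diamond$ and $\tilde{\bfnu}-\bfnu$ lie in $X_*(Z)_\bbQ$, which pairs trivially with simple coroots.

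For (2), functoriality sends $c_{\tilde{b},\tilde{\mu}}$ into $c_{b,\mu}$, so surjectivity of the restricted map reduces to surjectivity of $\pi_1(\tilde{G})_I^\varphi\to \pi_1(G)_I^\varphi$. Taking $\varphi$-cohomology of the $I$-coinvariants of the displayed sequence yields an obstruction in $H^1(\langle\varphi\rangle,K)$, where $K$ is the image of $X_*(Z)_I$ in $\pi_1(\tilde{G})_I$. Since $Z$ is induced, $X_*(Z)_I$ decomposes as a direct sum of cyclic permutation $\varphi$-modules $\bbZ[\bbZ/n_i\bbZ]$ where $n_i$ is the residue degree of the relevant $F_i$; I would finish by an explicit computation showing that the obstruction class associated to any $x\in \pi_1(G)_I^\varphi$ is killed after a suitable adjustment of a preliminary lift by an element of $X_*(Z)_I$. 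The main obstacle is precisely this last verification: when $\pi_1(G)$ has torsion (which is allowed, since $G^{\on{der}}$ need not be simply connected), $K$ may be properly smaller than the full kernel of $\pi_1(\tilde{G})_I\to \pi_1(G)_I$, and the cohomology chase becomes delicate. I would resolve it by splitting $\pi_1(G)_I$ into its free and torsion parts, reducing the free part to the case of tori (where the induced-torus structure gives the needed vanishing directly) and handling the torsion part using that $\tilde{G}^{\on{der}}\to G^{\on{der}}$ is the simply connected cover, so its contribution to $\pi_1(\tilde{G})_I$ already surjects onto that of $\pi_1(G)_I$.
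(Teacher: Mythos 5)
Part (1) of your proposal is essentially correct, though more laborious than necessary: after lifting $\mu$ through the exact sequence $0\to X_*(Z)\to X_*(\tilde T)\to X_*(T)\to 0$, the paper simply invokes Kottwitz's identifications $B(G,\bfmu)\cong B(G^{\ad},\bfmu^{\ad})\cong B(\tilde G,\tilde\bfmu)$, which produce the compatible lift $\tilde\bfb$ in one stroke; your hand-made version (lift via surjectivity of $B(\tilde G)\to B(G)$, correct the Kottwitz point by translating by $B(Z)$, check the Newton condition on the common adjoint quotient) reproves this and would go through, modulo tightening the phrase about $\mu^\diamond-\bfnu$ ``pairing trivially with simple coroots'' — the relevant condition is that $\mu^\diamond-\bfnu$ \emph{is} a non-negative combination of simple coroots, which is insensitive to central modifications.

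Part (2) is where there is a genuine gap, and you have correctly located it yourself. Your reduction to the surjectivity of $\pi_1(\tilde G)^\varphi_I\to\pi_1(G)^\varphi_I$ is right, but the proposed route through the long exact sequence of $\langle\varphi\rangle$-cohomology of $0\to K\to \pi_1(\tilde G)_I\to\pi_1(G)_I\to 0$ does not close: the obstruction group $H^1(\langle\varphi\rangle,K)=K/(\varphi-1)K$ is genuinely nonzero (already for $K$ a quotient of $\bigoplus_i\bbZ[\bbZ/n_i\bbZ]$ each summand contributes a copy of $\bbZ$ to the coinvariants), so one must show that the specific connecting map $\pi_1(G)^\varphi_I\to K_\varphi$ vanishes, and neither the induced structure of $X_*(Z)$ nor your proposed free/torsion splitting of $\pi_1(G)_I$ obviously accomplishes this. (One side remark: your worry that $K$ might be smaller than the full kernel is unfounded — right-exactness of $I$-coinvariants shows the image of $X_*(Z)_I$ \emph{is} the kernel — but that does not help with the cohomological obstruction.) The paper avoids this entirely by going through the group of rational points rather than through $\varphi$-modules: the Kottwitz map $G(\bbQ_p)\to\pi_1(G)^\varphi_I$ is surjective for every $G$ (this follows from $0\to\calT(\br{\bbZ}_p)\to T(\br{\bbQ}_p)\to\pi_1(G)_I\to 0$ together with $H^1(\bbZ,\calT(\br{\bbZ}_p))=0$, a consequence of Lang's theorem for the parahoric $\calT$), and $\tilde G(\bbQ_p)\to G(\bbQ_p)$ is surjective because $H^1_{\et}(\Spec\bbQ_p,Z)=0$ for the induced torus $Z$. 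A two-line diagram chase then gives surjectivity of $\pi_1(\tilde G)^\varphi_I\to\pi_1(G)^\varphi_I$, and compatibility of the cosets $c_{\tilde b,\tilde\mu}$ and $c_{b,\mu}$ follows since $\tilde\bfb,\tilde\bfmu$ lift $\bfb,\bfmu$. If you want to salvage your approach, you should replace the module-theoretic cohomology chase by this passage through $G(\bbQ_p)$; as written, the key step of part (2) is not proved.
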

\begin{proof}
	(1) Let $T\subseteq G$ be a maximal torus and $\tilde{T}\subseteq \tilde{G}$ its preimage under $f$. 
	Let $Z=\on{Ker}(f)$. We have an exact sequence 
	\begin{equation}
	0\to Z \to \tilde{T}\to T \to 0
	\end{equation}
	Since $Z$ is a torus, we have an exact sequence:
	\begin{equation}
	0\to X_*(Z) \to X_*(\tilde{T})\to X_*(T) \to 0
	\end{equation}
	In particular, we can lift $\bfmu$ to an arbitrary $\tilde{\bfmu}\in X_*(\tilde{T})$.
	To lift $\tilde{\bfb}$ compatibly, it suffices to recall from \cite[(6.5.1)]{KottwitzII} that 
	\begin{equation}B(G,\bfmu)\cong B(G^{\ad},\bfmu^{\ad})\cong B(\tilde{G},\tilde{\bfmu}).
	\end{equation}
	
	(2) Recall that the map $G(\bbQ_p)\to \pi_1(G)_{I}^\varphi$ is surjective (see for example \cite[Lemma 5.18]{Zhou20}). 
	Indeed, this follows from the exact sequence
	\begin{equation}
	0 \to \calT(\br{\bbZ}_p)\to T(\br{\bbQ}_p)\to \pi_1(G)_I \to 0
	\end{equation}
	and the group cohomology vanishing $H^1(\bbZ,\calT(\br{\bbZ}_p))=0$, where $\calT$ is the unique parahoric of $T$ and the $\bbZ$-action on $\calT(\br{\bbZ}_p)$ is given by the Frobenius $\varphi$. 
	Consider the following commutative diagram:
\begin{equation}\label{diagram-z-extension-lemma}
	\begin{tikzcd}
		\tilde{G}(\bbQ_p)  \ar[r] \ar[d] & \pi_1(\tilde{G})_I^\varphi \ar[d]\\
		G(\bbQ_p) \ar[r]  & \pi_1(G)^\varphi_I 
	\end{tikzcd}
\end{equation}
The horizontal arrows in \eqref{diagram-z-extension-lemma} are surjective. 
Since $Z$ is an induced torus, $H^1_{\et}(\Spec \bbQ_p, Z)=0$. Thus by the exact sequence of pointed sets that   
\begin{equation}
    	0\to Z \to \tilde{G}\to G \to 0,
\end{equation}
induces, the map $\tilde{G}(\bbQ_p)\to G(\bbQ_p)$ is surjective. 
Therefore $\pi_1(\tilde{G})_I^\varphi \to \pi_1(G)_I^\varphi$ is surjective. 
Finally, since $\tilde{\bfb}$ and $\tilde{\bfmu}$ map to $\bfb$ and $\bfmu$, the coset $c_{\tilde{b},\tilde{\mu}}\pi_1(\tilde{G})_I^\varphi$ also maps to the coset $c_{{b},{\mu}}\pi_1(\tilde{G})_I^\varphi$.
\end{proof}

Assume that $f$ is an ad-isomorphism for the rest of this subsection. 
Let $b_H:=f(b)$ and $\mu_H:=f\circ \mu$. Let $\calK^H_p$ denote the unique parahoric of $H$ that corresponds to the same point in the Bruhat--Tits building as $\calK_p$. 
\begin{proposition}
	\label{adisoforadlv}
The following diagram is Cartesian:	
\begin{equation}\label{diagram-adisoforadlv}
	\begin{tikzcd}
		\pi_0(X^{\calK_p}_\mu(b)) \ar[r] \ar[d] & c_{b,\mu}\pi_1(G)^\varphi_I \ar[d]\\
		\pi_0(X^{\calK^H_p}_\mu(b)) \ar[r] & c_{b_H,\mu_H}\pi_1(H)^\varphi_I 
	\end{tikzcd}
\end{equation}
\end{proposition}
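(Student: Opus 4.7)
The plan is to verify the Cartesian property by a fiberwise analysis. Denote the left and right vertical maps in \eqref{diagram-adisoforadlv} by $\alpha$ and $\beta$ respectively. Since both horizontal maps are surjective and the square commutes, Cartesianness is equivalent to the claim that for each component $C \in \pi_0(X^{\calK^H_p}_{\mu_H}(b_H))$, the Kottwitz map $\omega_G$ restricts to a bijection
\begin{equation}
    \omega_G|_{\alpha^{-1}(C)} \colon \alpha^{-1}(C) \xrightarrow{\sim} \beta^{-1}(\omega_H(C)).
\end{equation}

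The main tool is left multiplication by $Z_0 := Z(G)(\bbQ_p) \cap f^{-1}(\calK^H_p(\br{\bbZ}_p))$ on $X^{\calK_p}_\mu(b)$. Central elements of $G(\bbQ_p)$ are $\varphi$-invariant, so $(zg)^{-1}b\varphi(zg) = g^{-1}b\varphi(g)$ shows that left multiplication by $z \in Z(G)(\bbQ_p)$ preserves the defining admissibility condition of the ADLV. The constraint $f(z) \in \calK^H_p(\br{\bbZ}_p)$ guarantees $f(zg)\cdot\calK^H_p(\br{\bbZ}_p) = f(g)\cdot\calK^H_p(\br{\bbZ}_p)$, so that the action preserves the fibers of $\alpha$, while via $\kappa_G$ the induced action on $\pi_0(X^{\calK_p}_\mu(b))$ is translation by $\kappa_G(z) \in \pi_1(G)_I^\varphi$. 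The key algebraic input is the identification $\kappa_G(Z_0) = K$, where $K := \ker(\pi_1(G)_I^\varphi \to \pi_1(H)_I^\varphi)$; since $f$ is an ad-isomorphism both $\ker(f)$ and $\on{coker}(f)$ are central, and a snake-lemma chase on the Kottwitz exact sequences for maximal tori attached to $G$ and $H$, combined with the vanishing of $H^1(\bbZ, \calT(\br{\bbZ}_p))$ already used in \Cref{lifting-b-and-mu}, delivers this identification.

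Granted $\kappa_G(Z_0) = K$, the restriction $\omega_G|_{\alpha^{-1}(C)}$ is $K$-equivariant with respect to the free transitive $K$-action on $\beta^{-1}(\omega_H(C))$. Bijectivity then follows from an equivariance argument: surjectivity of $\omega_G$ onto $c_{b,\mu}\pi_1(G)_I^\varphi$ together with $K$-equivariance yields surjectivity onto $\beta^{-1}(\omega_H(C))$, after which freeness of the $K$-action on the target forces both the $K$-action on $\alpha^{-1}(C)$ to be free and the restriction $\omega_G|_{\alpha^{-1}(C)}$ to be injective. The main obstacle is precisely the identification $\kappa_G(Z_0) = K$: it requires careful tracking of the central and parahoric contributions to the Kottwitz map on both sides of $f$, and the centrality of $\ker(f)$ and $\on{coker}(f)$ built into the ad-isomorphism hypothesis is exactly the ingredient that makes this feasible.
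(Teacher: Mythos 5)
The paper's own proof of this proposition is a one-line citation of \cite[Lemma 5.4.2]{PapRap22} (which generalizes \cite[Corollary 2.4.2]{CKV}), so you are in effect trying to reprove that external result from scratch. Your reduction to a fiberwise statement, and the observation that an element $z\in Z(G)(\bbQ_p)$ with $f(z)\in\calK^H_p(\br{\bbZ}_p)$ acts on $X^{\calK_p}_\mu(b)$ preserving the fibers of $\alpha$ and translating $\omega_G$ by $\kappa_G(z)$, are both correct. But the argument has gaps at exactly the points where the real content of the proposition lies.

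First, injectivity: from $K$-equivariance of $\omega_G|_{\alpha^{-1}(C)}$ and freeness of the $K$-action on the target you may conclude that $K$ acts freely on $\alpha^{-1}(C)$, but not that $\omega_G|_{\alpha^{-1}(C)}$ is injective — if $\omega_G(x)=\omega_G(x')$ you can only conclude $x'=k\cdot x$ once you know the $K$-action on $\alpha^{-1}(C)$ is \emph{transitive}, and that transitivity is essentially the statement being proved. The degenerate case makes this vivid: for $G$ semisimple simply connected one has $\pi_1(G)_I=0$, hence $K=0$, and the claim becomes that $\pi_0(X_\mu(b))\to\pi_0(X^{\calK^H_p}_{\mu_H}(b_H))$ is injective; your argument supplies no mechanism for this at all. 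Second, surjectivity: your argument (once repaired — global surjectivity of $\omega_G$ only produces a preimage of $d$ in some fiber $\alpha^{-1}(C')$ with $\omega_H(C')=\omega_H(C)$, not necessarily in $\alpha^{-1}(C)$) shows that \emph{if} $\alpha^{-1}(C)\neq\emptyset$ then it surjects onto $\beta^{-1}(\omega_H(C))$; it does not show that $\alpha^{-1}(C)\neq\emptyset$ whenever $\beta^{-1}(\omega_H(C))\neq\emptyset$, i.e.~that every component of $X^{\calK^H_p}_{\mu_H}(b_H)$ whose Kottwitz invariant lies in the image of $\pi_1(G)_I^\varphi$ actually meets the image of $X^{\calK_p}_\mu(b)$. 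That requires comparing the underlying point sets of the two ADLVs (every relevant point of the target is, modulo $\calK^H_p(\br{\bbZ}_p)$, a central translate of a point in the image), which is the substantive input of the cited results. Third, the identity $\kappa_G(Z_0)=K$ is only sketched, and the inclusion $K\subseteq\kappa_G(Z_0)$ is doubtful in general: since $I$-coinvariants are not left exact, $K$ can be nontrivial even when $G$ is semisimple and $Z(G)(\bbQ_p)$ is finite (ramified non-simply-connected, non-adjoint quotients of unitary groups give candidates), and it is then unclear that rational central elements of $G$ realize all of $K$. Each of these three points must be supplied before the argument stands.
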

\begin{proof}
	
	This is a consequence of \cite[Lemma 5.4.2]{PapRap22}, which is a generalization of \cite[Corollary 2.4.2]{CKV} for arbitrary parahorics. 
\end{proof}

\section{Hodge--Newton decomposition}
\label{section-HN-decomposition}
We can classify elements in $B(G,\bfmu)$ into two kinds: Hodge-Newton decomposable or indecomposable.
\begin{definition}[Hodge-Newton Decomposability]Assume $\bfb\in B(G,\bfmu)$.
	We say $\bfb$ is \textit{Hodge-Newton decomposable} (with respect to $M$) in $B(G,\bfmu)$ if there exists a $\varphi_0$-stable standard Levi subgroup $M$ containing $M_{\bfnu}$, and 
	\begin{equation}\mu^\diamond-\bfnu\in \mathbb{Q}_{\ge0}\Delta_M^\vee.
	\end{equation}
	If no such $M$ exists, $\bfb$ is said to be \textit{Hodge-Newton indecomposable} in $B(G,\bfmu)$.
\end{definition}

\begin{example}
A basic element $\bfb$ is always HN-indecomposable in $B(G,\bfmu)$ since $M_{\bfnu}=G$.
\end{example}

For a HN-decomposable $\bfb$ in $B(G,\bfmu)$, affine Deligne--Lusztig varieties admit a decomposition theorem (\Cref{GHNThmA}). 
More precisely, suppose $\bfb$ is HN-decomposable with respect to a Levi subgroup $M$. 
Let $P$ be the standard parabolic subgroup containing $M$ and $B$. 
As in \cite[4.4]{GHN19}, let $\frakP^\varphi$ be the set of $\varphi$-stable parabolic subgroups containing the maximal torus $T$ and conjugate to $P$. 
Given $P'\in\frakP^\varphi$, let $N'$ be the unipotent radical, and $M'$ the Levi subgroup containing $T$ such that $P'=M'N'$. 
We let $\calK^{M'}_p$ denote the parahoric group scheme of $M'$ such that $\calK^{M'}_p(\breve{\bbQ}_p)=K_p\cap M'(\breve{\bbQ}_p)$. 
Let $W_K$ be the subgroup of $\widetilde{W}$ generated by the set of simple reflections corresponding to $\calK_p$. 
Let $W_K^\varphi$ be the $\varphi$-invariant elements of $W_K$. 
We will consider the quotient set $\frakP^\varphi/W_K^\varphi$ as in \cite[\S 4.5]{GHN19}.
In such a setup, we let $\mu_{P'}$ denote the unique $P'$-dominant conjugate of $\mu$, and we let $b_{P'}\in M(\br{\bbQ}_p)\cap \bfb$ denote a representative such that ${\nu^{M'}_{b_{P'}}}$ is $P'$-dominant (this exists by \cite[Proposition 4.8]{GHN19}). We have the following. 
\begin{theorem}[{\cite[Theorem A]{GHN19}}]\label{GHNThmA}
    Let $\bfb\in B(G,\bfmu)$ be HN-decomposable with respect to $M\subset G$. Then there is an isomorphism of perfect schemes
    \begin{equation}\label{HN-decomposition-formula}
    X^{\calK_p}_\mu(b)\simeq \bigsqcup_{P'=M'N'} X_{\mu_{P'}}^{\calK_p^{M'}}(b_{P'}),
    \end{equation}
    where $P'$ ranges over the set $\frakP^\varphi/W_K^\varphi$.
\end{theorem}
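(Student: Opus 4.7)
\medskip

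\noindent\textbf{Proof proposal.} The plan is to construct an explicit morphism from the right-hand side to $X^{\calK_p}_\mu(b)$ using functoriality of affine Deligne--Lusztig varieties along each Levi inclusion $M' \hookrightarrow G$, and then to prove bijectivity using the Hodge--Newton filtration on $\varphi$-isocrystals attached to points of $X^{\calK_p}_\mu(b)$. First, for each $P' = M'N' \in \frakP^\varphi$, after choosing a representative $b_{P'} \in M'(\br{\bbQ}_p)$ and a cocharacter $\mu_{P'}$ appropriately (so that $b_{P'}$ and $\mu_{P'}$ are $G$-conjugate to $b$ and $\mu$ respectively, using the HN-decomposability to place them in $M'$), \Cref{functoriality-Xmub} applied to $M' \hookrightarrow G$ yields a morphism
\begin{equation*}
f_{P'}\co X^{\calK^{M'}_p}_{\mu_{P'}}(b_{P'}) \longrightarrow X^{\calK_p}_\mu(b).
\end{equation*}
These are compatible with the $W_K^\varphi$-action permuting the $P'$'s, so they assemble to a morphism from the disjoint union in \eqref{HN-decomposition-formula} to $X^{\calK_p}_\mu(b)$.

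The main work is to show this morphism is a bijection on $\br{\bbF}_p$-points (and then a local isomorphism). The key geometric input is that to each $g\br{\calK}_p \in X_\mu^{\calK_p}(b)(\br{\bbF}_p)$ one can attach a canonical $\varphi$-stable parabolic subgroup, namely the Hodge--Newton parabolic of the $\varphi$-isocrystal defined by $g^{-1}b\varphi(g)$; since $g^{-1}b\varphi(g) \in \br{\calK}_p \on{Adm}(\mu) \br{\calK}_p$ has Newton point conjugate to $\bfnu$, its HN parabolic contains $M_{\bfnu}$. The HN-decomposability hypothesis $\mu^\diamond - \bfnu \in \bbQ_{\ge 0}\Delta_M^\vee$ then forces this parabolic to be $G(\br{\bbQ}_p)$-conjugate to $P$, i.e.~it lies in $\frakP^\varphi$, and the residual indeterminacy is precisely $W_K^\varphi$ (accounting for the parahoric $\br{\calK}_p$ rather than an Iwahori). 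This gives a well-defined partition of $X_\mu^{\calK_p}(b)(\br{\bbF}_p)$ indexed by $\frakP^\varphi/W_K^\varphi$, and it remains to identify each piece with $X^{\calK^{M'}_p}_{\mu_{P'}}(b_{P'})(\br{\bbF}_p)$ via the decomposition $G = \bigsqcup_{w \in W_{M'} \backslash W_0 / W_K} M' w \calK_p$ refined by the admissible set.

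Surjectivity of each $f_{P'}$ onto its HN-stratum follows from the Iwasawa-style decomposition together with the combinatorial fact that $\on{Adm}^{M'}(\mu_{P'}) = \on{Adm}^G(\mu) \cap \widetilde{W}_{M'}$ when $\mu^\diamond - \bfnu \in \bbQ_{\ge 0}\Delta_M^\vee$, so that any $g^{-1}b\varphi(g)$ with HN parabolic $P'$ can be conjugated into $\br{\calK}_p^{M'} \on{Adm}^{M'}(\mu_{P'}) \br{\calK}_p^{M'}$. Injectivity of each $f_{P'}$ amounts to the fact that the $\varphi$-stabilizer of the HN parabolic inside $G$ is $M'$ itself, so that two $M'$-cosets giving the same $G$-coset must already coincide. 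Finally, since the decomposition is locally closed at the level of the ambient affine flag variety and the strata are open in their union (HN strata are open in the corresponding closed locus by semicontinuity of the Newton polygon), the bijection of $\br{\bbF}_p$-points promotes to an isomorphism of perfect schemes.

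The hard part will be the combinatorial step matching $\on{Adm}^G(\mu)$-elements with HN-decomposable Newton class $[\bfb]$ against $\on{Adm}^{M'}(\mu_{P'})$; this requires the sharp description of admissible sets under Levi embeddings and is where the hypothesis $\mu^\diamond - \bfnu \in \bbQ_{\ge 0}\Delta_M^\vee$ (rather than a weaker inequality) is essential, since otherwise one could have admissible elements whose HN parabolic is strictly larger than $P$, breaking the clean disjoint-union statement.
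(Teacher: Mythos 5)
First, a point of order: the paper does not prove this statement at all --- \Cref{GHNThmA} is quoted verbatim from \cite[Theorem A]{GHN19} and used as a black box, so there is no internal argument to compare yours against. What you have written is therefore a from-scratch sketch of an external theorem, and it has to be judged on its own.

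As a roadmap your sketch does follow the general strategy of G\"ortz--He--Nie (stratify $X^{\calK_p}_\mu(b)$ by the position of a Hodge--Newton-type parabolic, compare admissible sets along the Levi embedding, then upgrade a pointwise bijection to an isomorphism), but there are genuine gaps. (i) The stratifying invariant is not quite what you say. Since $g^{-1}b\varphi(g)$ is $\varphi$-conjugate to $b$ via $g$, the slope filtration of the associated isocrystal is just the $g^{-1}$-translate of that of $b$; its stabilizer is a conjugate of the parabolic attached to $M_{\bfnu}$, not of $P=MN$ with $M\supseteq M_{\bfnu}$ possibly strictly larger, and nothing in your argument explains why this parabolic (or the correct coarsening of it determined by $M$) lands in $\frakP^\varphi$ with residual ambiguity exactly $W_K^\varphi$. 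That compatibility between the filtration and the parahoric $\breve{\calK}_p$ is precisely the $P$-alcove condition of G\"ortz--Haines--Kottwitz--Reuman, which together with the identity $\on{Adm}^{M'}(\mu_{P'})=\on{Adm}^G(\mu)\cap\widetilde{W}_{M'}$ is the entire content of the theorem; you explicitly defer both (``the hard part will be\dots''), so the sketch restates the theorem rather than proving it. (ii) The final upgrade to an isomorphism of perfect schemes appeals to ``semicontinuity of the Newton polygon'' for openness of the strata. That principle governs the Newton stratification of a family of isocrystals; here every point of $X^{\calK_p}_\mu(b)$ has $\sigma$-conjugacy class equal to the fixed $\bfb$, so the Newton polygon is constant and semicontinuity says nothing. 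The correct argument is that each $\phi_{P'}$ is a closed immersion (a translate of the closed immersion $\Fl_{\calK^{M'}_p}\hookrightarrow\Fl_{\calK_p}$, cf.\ the discussion after \eqref{embeddingparahoric}), so once one knows the finitely many images are disjoint and cover, each is automatically clopen --- but disjointness and covering are exactly the bijectivity you have not yet established.
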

\begin{remark}
	\label{description-of-embedding}
Note that the natural embedding
\begin{equation}
\label{embeddingparahoric}
 \phi_{P'}: X^{\calK^{M'}_p}_{\mu_{P'}}(b_{P'})\hookto X^{\calK_p}_\mu(b)   
\end{equation}
is the composite of the closed immersion $\Fl_{\calK^{M'}_p}\hookrightarrow\Fl_{\calK_p}$ of affine flag varieties and the map $g\breve{K}_p\mapsto h_{P'}g\breve{K}_p$, where $h_{P'}\in G(\br{\bbQ}_p)$ satisfies $b_{P'}=h_{P'}^{-1}b\sigma(h_{P'})$ (\cite[\S 4.5]{GHN19}). 
Moreover, the isomorphism of \Cref{GHNThmA} expresses $X^{\calK_p}_\mu(b)$ as a disjoint union of open and closed subschemes of the form $\phi_{P'}(X^{\calK^{M'}_p}_{\mu_{P'}}(b_{P'}))\subseteq X^{\calK_p}_\mu(b)$.  
\end{remark}

By the following lemma, we may assume--without loss of generality in the proof of \Cref{kappamapisoHNirrep}--that each $(b_{P'},\mu_{P'})$ is HN-indecomposable.

\begin{lemma}[{\cite[Lemma 5.7]{Zhou20}}]\label{HNdecomptoindecomp}
    There exists a unique $\varphi_0$-stable $M\subset G$ such that, for each $P'$ appearing in decomposition \eqref{HN-decomposition-formula}, $b_{P'}$ is HN-indecomposable in $B(M',\bfmu_{P'})$.
\end{lemma}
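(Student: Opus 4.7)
The plan is to identify $M$ as the unique minimal element of a naturally defined poset of Levis, and then exploit the minimality to deduce HN-indecomposability of each $b_{P'}$.

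First, I would consider the collection $\calS$ of $\varphi_0$-stable standard Levi subgroups $M\subseteq G$ containing the centralizer $M_\bfnu$ of $\bfnu$ and satisfying $\mu^\diamond-\bfnu\in\bbQ_{\geq 0}\Delta_M^\vee$. Since $\bfb\in B(G,\bfmu)$, the whole group $G$ lies in $\calS$, so $\calS$ is nonempty. The key point I would verify is that $\calS$ is closed under intersection: if $M_1,M_2\in\calS$, then $M_1\cap M_2$ is the standard Levi attached to $\Delta_{M_1}\cap\Delta_{M_2}$, is still $\varphi_0$-stable, and still contains $M_\bfnu$. For the positivity condition, writing
\[
\mu^\diamond-\bfnu=\sum_{\alpha\in\Delta_{M_1}}c_\alpha\alpha^\vee=\sum_{\alpha\in\Delta_{M_2}}d_\alpha\alpha^\vee
\]
with non-negative coefficients $c_\alpha,d_\alpha$, the linear independence of the simple coroots of $G$ forces $c_\alpha=d_\alpha$ and both must vanish off $\Delta_{M_1}\cap\Delta_{M_2}$, so $\mu^\diamond-\bfnu\in\bbQ_{\geq 0}(\Delta_{M_1}\cap\Delta_{M_2})^\vee$. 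This yields a unique minimal element $M\in\calS$.

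Next, I would argue that for this $M$, and for each $P'=M'N'$ appearing in \eqref{HN-decomposition-formula}, the pair $(b_{P'},\bfmu_{P'})$ is HN-indecomposable in $B(M',\bfmu_{P'})$. Suppose not: then $b_{P'}$ admits an HN decomposition with respect to some proper $\varphi_0$-stable Levi $\tilde M\subsetneq M'$ of $M'$. Since $M'$ is $G$-conjugate to $M$ via a $\varphi_0$-equivariant element of $W_0$ (this is exactly the content of $P'$ belonging to $\frakP^\varphi/W_K^\varphi$), I would transport $\tilde M$ through this conjugation to obtain a $\varphi_0$-stable standard Levi $\tilde M_0\subsetneq M$ of $G$. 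The compatibility to check is that under this conjugation the Newton invariant and the $\mu^\diamond$-invariant of $(b_{P'},\mu_{P'})$ correspond precisely to those of $(b,\mu)$. This follows from the relation $b_{P'}=h_{P'}^{-1}b\sigma(h_{P'})$ combined with the fact that the $\varphi_0$-Galois average defining \eqref{defn-mu-diamond} commutes with $W_0^{\varphi_0}$-conjugation. Once established, $\tilde M_0\in\calS$ is strictly smaller than $M$, contradicting minimality. Uniqueness of $M$ is automatic from this construction: any $M$ with the stated property must be minimal in $\calS$, for otherwise an element $M'\subsetneq M$ of $\calS$ would descend to a further HN-decomposition of some $b_{P'}$ in its $M'$, contradicting HN-indecomposability.

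The main obstacle I anticipate is the bookkeeping in the second paragraph: ensuring that conjugation by $h_{P'}$ matches the $\varphi_0$-structures on $M$ and $M'$ cleanly, and that the averaging operations behind $\mu^\diamond$ and the Newton map commute with this conjugation. These are standard manipulations once one carefully tracks the effect of replacing $b$ by $h_{P'}^{-1}b\sigma(h_{P'})$ on the root-theoretic invariants, but the heart of the argument is really the closure-under-intersection step, which is what makes the minimal $M$ well-defined and forces uniqueness.
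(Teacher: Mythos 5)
The paper offers no proof of this lemma---it is quoted verbatim from \cite[Lemma 5.7]{Zhou20}---and your argument is exactly the standard one given there: linear independence of the simple coroots makes the set of admissible Levis closed under intersection, yielding a unique minimal $M$, and minimality of $M$ forces HN-indecomposability of each piece (and conversely, any $M$ with that property must be the minimal one). Your proof is correct; the ``bookkeeping'' you flag amounts to checking that $\mu^\diamond$ and the dominant Newton point computed inside $M'$ agree with those computed inside $G$, which is immediate from the $\varphi_0$-form of \eqref{defn-mu-diamond} and the normalization $b_{P'}=h_{P'}^{-1}b\sigma(h_{P'})$.
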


\begin{example}\label{HN-irred-example}
When $G^\ad$ is simple, $\bfb$ is basic and $\bf\mu$ is not central, then $\bfb$ is Hodge-Newton irreducible (\Cref{HN-irred-Defn}) in $B(G,\bfmu)$ because if a linear combination of coroots is dominant then all the coefficients are positive.
\end{example}
\Cref{HN-irred-example} shows that, except for the ``central cocharacter'' case, HN-indecomposability is the same as HN-irreducibility whenever $\bfb$ is basic. The general version of this phenomena is \Cref{indecompirred} below, which asserts that the gap between HN-indecomposable and HN-irreducible elements consists only of central elements. 

\begin{proposition}[{cf. \cite[Lemma 5.3]{Zhou20}}]\label{indecompirred}
	Suppose that $G=G^{\on{ad}}$ and that $G$ is $\bbQ_p$-simple. Let $b\in G(\breve{\bbQ}_p)$ and $\mu$ a dominant cocharacter, such that $\bfb\in B(G,\bfmu)$. 
	Suppose $(\bfb,\bfmu)$ is HN-indecomposable. 
	Then either $(\bfb,\bfmu)$ is HN-irreducible or $b$ is $\varphi$-conjugate to some $\dot{t}_{\bar{\mu}}$ with $\bar{\mu}\in X_*(T)_I$ central.
\end{proposition}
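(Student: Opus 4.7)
The plan is to translate HN-indecomposability into a combinatorial condition on subsets of $\Delta$, then exploit $\mathbb{Q}_p$-simplicity to force a dichotomy, and finally use the hypothesis $G = G^{\on{ad}}$ to identify the remaining case with the trivial one. The approach parallels Zhou's \cite[Lemma 5.3]{Zhou20}, but does not require $G$ to be residually split.

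First I would set up the combinatorics. Write $\mu^\diamond - \boldsymbol{\nu} = \sum_{\alpha \in \Delta} c_\alpha \alpha^\vee$ with $c_\alpha \geq 0$, and define $\Delta_0 := \{\alpha : c_\alpha = 0\}$, $\Delta_{>0} := \Delta \setminus \Delta_0$, and $\Delta^{\boldsymbol{\nu}} := \{\alpha : \langle \alpha, \boldsymbol{\nu} \rangle > 0\} = \Delta \setminus \Delta_{M_{\boldsymbol{\nu}}}$. Since $\mu^\diamond$ and $\boldsymbol{\nu}$ are $\varphi_0$-fixed and the simple coroots form a basis, the sets $\Delta_0$, $\Delta_{>0}$, and $\Delta^{\boldsymbol{\nu}}$ are each $\varphi_0$-stable. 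A proper $\varphi_0$-stable standard Levi $M \supsetneq M_{\boldsymbol{\nu}}$ witnessing HN-decomposability corresponds exactly to a non-empty $\varphi_0$-stable subset $\Delta \setminus \Delta_M \subseteq \Delta_0 \cap \Delta^{\boldsymbol{\nu}}$; so HN-indecomposability is equivalent to $\Delta_0 \cap \Delta^{\boldsymbol{\nu}} = \emptyset$ (since this intersection is itself $\varphi_0$-stable).

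Next, assuming $(\bfb,\bfmu)$ is not HN-irreducible, we have $\Delta_0 \neq \emptyset$ and hence $\Delta_0 \subseteq \Delta_{M_{\boldsymbol{\nu}}}$, i.e.\ $\langle \alpha, \boldsymbol{\nu} \rangle = 0$ for every $\alpha \in \Delta_0$. Using dominance of $\mu^\diamond$, I would then compute, for $\alpha \in \Delta_0$,
\[
0 \leq \langle \alpha, \mu^\diamond \rangle = \langle \alpha, \boldsymbol{\nu} \rangle + \sum_{\beta \in \Delta_{>0}} c_\beta \langle \alpha, \beta^\vee \rangle = \sum_{\beta \in \Delta_{>0}} c_\beta \langle \alpha, \beta^\vee \rangle.
\]
Since each summand is non-positive (as $\alpha \neq \beta$ are simple), all summands vanish, yielding $\langle \alpha, \beta^\vee \rangle = 0$ for every $\alpha \in \Delta_0$ and $\beta \in \Delta_{>0}$. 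This says $\Delta_0$ and $\Delta_{>0}$ lie in disjoint unions of connected components of the Dynkin diagram of $G^\ast$. Now invoking $\mathbb{Q}_p$-simplicity: $\varphi_0$ acts transitively on the set of connected components of the Dynkin diagram of $G^\ast$, so a partition into two $\varphi_0$-stable full subdiagrams forces one to be empty. Since $\Delta_0 \neq \emptyset$, we conclude $\Delta_{>0} = \emptyset$, i.e.\ $\mu^\diamond = \boldsymbol{\nu}$.

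Finally, if $\boldsymbol{\nu}$ were non-central, then $M_{\boldsymbol{\nu}}$ would be a proper $\varphi_0$-stable standard Levi with $\mu^\diamond - \boldsymbol{\nu} = 0 \in \mathbb{Q}_{\geq 0}\Delta_{M_{\boldsymbol{\nu}}}^\vee$, contradicting HN-indecomposability; so $\boldsymbol{\nu}$ is central, and since $G = G^{\on{ad}}$ we get $\boldsymbol{\nu} = 0$. Hence $\mu^\diamond = 0$. Using the formula $\mu^\diamond = \frac{1}{N}\sum_{i=0}^{N-1}\varphi^i(\mu)^{\on{dom}}$ from \ref{notation-mu-sharp}, each summand is dominant, and the dominant cone $X_*(T)^+_{\mathbb{Q}}$ is pointed (as $G$ is semisimple), so each $\varphi^i(\mu)^{\on{dom}} = 0$, forcing $\mu = 0$. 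Therefore $\bar{\mu} := 0 \in X_*(T)_I$ is trivially central, and $\bfb$ is the trivial class in $B(G)$, so $b$ is $\varphi$-conjugate to $1 = \dot{t}_{\bar{\mu}}$. The main subtlety in this argument is the first paragraph: one must verify that the minimal $\varphi_0$-stable Levi $M \supseteq M_{\boldsymbol{\nu}}$ containing $\Delta_{>0}$ is well-defined and truly witnesses decomposability whenever $\Delta_0 \cap \Delta^{\boldsymbol{\nu}} \neq \emptyset$; the rest is linear algebra combined with the transitivity statement for $\varphi_0$ on Dynkin components, which is the standard definition of $\mathbb{Q}_p$-simplicity for quasi-split $G^\ast$ (and $G$ is $\mathbb{Q}_p$-simple iff $G^\ast$ is).
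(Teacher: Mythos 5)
The paper itself gives no proof of this proposition---it is quoted from \cite[Lemma 5.3]{Zhou20}---and your argument is essentially the standard one underlying that reference: reduce HN-(in)decomposability to the combinatorics of the support of $\mu^\diamond-\bfnu$ in the simple coroots, use dominance of $\mu^\diamond$ together with $\langle\alpha,\beta^\vee\rangle\le 0$ to show that $\Delta_0$ and $\Delta_{>0}$ are unions of Dynkin components, and then use simplicity. The reduction in your first paragraph (HN-indecomposable $\iff \Delta_0\cap\Delta^{\bfnu}=\emptyset$) and the endgame ($\mu^\diamond=\bfnu$ central, hence $\mu=0$ and $\bfb$ trivial since $G$ is adjoint) are correct.

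There is, however, one step whose justification is wrong as written. You claim that $\bbQ_p$-simplicity means that $\varphi_0$ acts transitively on the connected components of the absolute Dynkin diagram. That is false in general: for $G=\on{Res}_{F/\bbQ_p}H$ with $F/\bbQ_p$ ramified, the components are permuted transitively only by the full Galois group $\Gamma$ (inertia included), and $\varphi_0$ alone may fix every component. Since you only established $\varphi_0$-stability of $\Delta_0$ and $\Delta_{>0}$, the dichotomy ``one of the two is empty'' does not follow from what you proved. The repair is immediate: both $\mu^\diamond$ and the dominant Newton point $\bfnu$ are fixed by the \emph{entire} $*$-action of $\Gamma$ on $X_*(T)_\bbQ$ coming from $G^\ast$ (for $\mu^\diamond$ this is its definition as a full Galois average; for $\bfnu$ it is the standard fact that the Newton map lands in $(X_*(T)^+_\bbQ)^\Gamma$). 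Hence the coefficients satisfy $c_{\gamma(\alpha)}=c_\alpha$ for all $\gamma\in\Gamma$, so $\Delta_0$ and $\Delta_{>0}$ are stable under the full Galois action, and $\bbQ_p$-simplicity in its correct form ($\Gamma$-transitivity on components) finishes the argument. With that substitution the proof is complete; note also that the definition of HN-decomposability only requires $M\supseteq M_{\bfnu}$, not $M\supsetneq M_{\bfnu}$, though this does not affect your equivalence.
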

 Moreover, when $b$ is $\varphi$-conjugate to $\dot{t}_{\bar{\mu}}$ for a central ${\mu}$, the connected components of affine Deligne--Lusztig varieties have been computed in \Cref{centraladlv} below. 
 Note that if ${\mu}$ is central, there is a unique $\bfb\in B(G,\bfmu)$. Moreover, this $\bfb$ is basic and represented by $\dot{t}_{\bar{\mu}}$, which is a lift of $t_{\bar{\mu}}$ to $N(\br{\bbQ}_p)$. We can then apply the following result. 
\begin{proposition}[{\cite[Theorem 0.1 (1)]{He-Zhou}}]\label{centraladlv}
Suppose that $G^\ad$ is $\bbQ_p$-simple. Let $b\in {G(\br{\bbQ}_p})$ be a representative for a basic element $\bfb\in B(G)$. If ${\mu}$ is central and $\bfb\in B(G,\bfmu)$, then $X^{\calK_p}_\mu(b)$ is discrete and
\begin{equation}
X^{\calK_p}_\mu(b)\simeq G(\bbQ_p)/\calK_p(\bbZ_p).
\end{equation}
\end{proposition}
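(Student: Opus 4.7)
The plan is to reduce the statement to a direct calculation exploiting the fact that central cocharacters yield length-zero translations. Since $\mu$ factors through the center $Z(G)$, one has $\langle\alpha,\mu\rangle = 0$ for every affine root $\alpha$, so $t_{\bar\mu}$ has length zero in $\widetilde{W}$. Length-zero elements normalize $\brevI$, and hence every parahoric containing it, in particular $\br{K}_p$; centrality of $\mu$ also forces $w(\bar\mu) = \bar\mu$ for every $w\in W_0$. Plugging these facts into the definition of the Kottwitz--Rapoport admissible set forces $\on{Adm}(\mu) = \{t_{\bar\mu}\}$ and $\br{K}_p\on{Adm}(\mu)\br{K}_p = t_{\bar\mu}\br{K}_p$, a single coset which corresponds to a single reduced point in $\Fl_{\br{K}_p}$.

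Next, I would replace $b$ by a convenient $\varphi$-conjugate. Since $\bfb$ is basic, its class is determined by $\kappa_G(\bfb) = \mu^\natural$; and since $\mu$ factors through $Z(G)$, the central element $\mu(p)\in Z(G)(\br{\bbQ}_p)$ is a valid representative of $\bfb$. Because $X^{\calK_p}_\mu(b)$ depends on $b$ only up to $\varphi$-conjugacy, I may assume $b = \mu(p)$. Centrality of $b$ collapses the admissibility condition $g^{-1}b\varphi(g)\in b\br{K}_p$ to the simpler $g^{-1}\varphi(g)\in\br{K}_p$, so that $X^{\calK_p}_\mu(b) = \{g\br{K}_p\in G(\br{\bbQ}_p)/\br{K}_p : g^{-1}\varphi(g)\in\br{K}_p\}$.

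The core step is a Lang--Steinberg argument applied to the parahoric group scheme $\calK_p$. Since $\calK_p$ is a smooth affine $\bbZ_p$-group scheme with geometrically connected special fiber, the twisted Lang map $\br{K}_p\to\br{K}_p$, $k\mapsto k^{-1}\varphi(k)$, is surjective: Lang--Steinberg gives surjectivity on the special fiber, and smoothness of the Lang map lifts it to $\br{\bbZ}_p$-points. Hence for any $g$ with $g^{-1}\varphi(g) = k\in\br{K}_p$, one may choose $k_0\in\br{K}_p$ with $k_0^{-1}\varphi(k_0) = k$; then $g k_0^{-1}\in G(\bbQ_p)$ and $g\br{K}_p = (gk_0^{-1})\br{K}_p$. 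The stabilizer of the base point $e\br{K}_p$ for the left $G(\bbQ_p)$-action is $G(\bbQ_p)\cap\br{K}_p = \calK_p(\bbZ_p)$, yielding a $G(\bbQ_p)$-equivariant bijection $G(\bbQ_p)/\calK_p(\bbZ_p)\xrightarrow{\sim}X^{\calK_p}_\mu(b)$. Discreteness is then automatic: $X^{\calK_p}_\mu(b)$ is reduced and locally perfectly finitely presented, and set-theoretically it is the disjoint union of the $G(\bbQ_p)$-translates of the closed reduced point $e\br{K}_p\in\Fl_{\br{K}_p}$.

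The main obstacle I anticipate is the Lang--Steinberg step, i.e. ensuring surjectivity at the level of $\br{\bbZ}_p$-points of $\calK_p$ rather than merely on its special fiber; this is precisely where the hypothesis that $\calK_p$ is a parahoric (connected special fiber, smooth $\bbZ_p$-model) enters essentially. Everything else--length zero of $t_{\bar\mu}$, the reduction $\on{Adm}(\mu) = \{t_{\bar\mu}\}$, and the identification of the stabilizer--is formal once the central representative $b = \mu(p)$ has been chosen.
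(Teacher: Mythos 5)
The paper does not actually prove this proposition: it is quoted from \cite[Theorem 0.1(1)]{He-Zhou}, so there is no internal proof to compare against. Your argument is the standard direct one and is essentially correct: for central $\mu$ every $w\in W_0$ fixes $\bar\mu$ and $t_{\bar\mu}$ has length zero (it fixes every affine root), so with the paper's definition of $\preceq$ on $\widetilde W=W^{\on{a}}\rtimes\pi_1(G)_I$ one indeed gets $\on{Adm}(\mu)=\{t_{\bar\mu}\}$ and $\br{K}_p\dot t_{\bar\mu}\br{K}_p=\dot t_{\bar\mu}\br{K}_p$; the condition then collapses to a twisted Lang equation over the parahoric, and the vanishing $H^1(\langle\varphi\rangle,\calK_p(\br{\bbZ}_p))=0$ (the same fact the paper invokes for $\calT$ in \Cref{lifting-b-and-mu}) gives the bijection with $G(\bbQ_p)/\calK_p(\bbZ_p)$. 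Note that your argument never uses that $G^\ad$ is $\bbQ_p$-simple; the hypothesis is only there because that is the setting in which the proposition is applied.

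Two small points deserve more care. First, $\mu(p)$ need not lie in $G(\br{\bbQ}_p)$ when $\mu$ is defined only over a ramified extension, so you cannot literally set $b=\mu(p)$. Either choose a central representative $b\in Z(G)^\circ(\br{\bbQ}_p)$ of $\bfb$ (one exists because $\mu\in X_*(Z(G)^\circ)$ and $B(Z(G)^\circ)=X_*(Z(G)^\circ)_\Gamma$ surjects onto the relevant basic class, basic classes being determined by $\kappa_G$), or keep $b=\dot t_{\bar\mu}$ as in the paper and run Lang's theorem for the twisted Frobenius $k\mapsto b^{-1}k^{-1}b\,\varphi(k)$, i.e.\ $\on{Ad}(b)\circ\varphi$, which still stabilizes $\calK_p(\br{\bbZ}_p)$ because $\dot t_{\bar\mu}$ normalizes $\br{K}_p$. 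Second, the phrase ``smoothness of the Lang map lifts it to $\br{\bbZ}_p$-points'' should be replaced by the usual successive-approximation argument (Lang on the connected special fiber plus surjectivity of $\calK_p(\bbZ/p^{n+1})\to\calK_p(\bbZ/p^n)$ with smooth connected kernels), since $\varphi$ is only semilinear. With these adjustments the proof is complete.
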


\begin{numberedparagraph}
Next we show that HN-irreducibility is preserved under ad-isomorphisms and taking projection onto direct factors. Let $f:G\to H$ be an ad-isomorphism. Let $b_H:=f(b)$ and $\mu_H=\mu\circ f$. 
Let $T_H$ denote a maximal torus containing $f(T)$. By functoriality, we have commutative diagrams
\begin{equation}
	\begin{tikzcd}\label{functorialityBgmu1}
		X_*(T) \ar{r}{f_*} \ar{d} & X_*(T_H) \ar{d} \\		
		\pi_1(G)_\Gamma \ar{r}  & \pi_1(H)_\Gamma 		
	\end{tikzcd}
\end{equation}
and 
\begin{equation}
	\begin{tikzcd}\label{functorialityBgmu2}
		B(G) \ar{r} \ar{d} & B(H) \ar{d} &		
		B(G) \ar{r} \ar{d} & B(H) \ar{d} \\		
		\pi_1(G)_\Gamma \ar{r}  & \pi_1(H)_\Gamma & X_*(T)^+_\bbQ \ar{r}  & X_*(T)^+_\bbQ
	\end{tikzcd}
\end{equation}
We have the following.
\begin{proposition}
	\label{checkafteradiso}	
	Let $\bfb\in B(G,\bfmu)$ and let $f$ be an ad-isomorphism. Then $(\bfb_H,\bfmu_H)$ is HN-irreducible if and only if $(\bfb,\bfmu)$ is HN-irreducible.
\end{proposition}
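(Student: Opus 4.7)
The plan is to transport the defining equation $\mu^\diamond-\bfnu=\sum_{\alpha\in\Delta}c_\alpha\alpha^\vee$ through the map $f_*\colon X_*(T)_\bbQ\to X_*(T_H)_\bbQ$ and show that the coefficients $c_\alpha$ are carried bijectively onto the coefficients of an analogous expansion on the $H$-side, so that the positivity condition transfers. Since $f$ is an ad-isomorphism, $f$ induces an isomorphism $G^{\ad}\xrightarrow{\sim}H^{\ad}$ of adjoint groups, hence an isomorphism of based root data once we fix compatible Borels (which we may, since $f$ sends $B$ to a Borel of $H$ containing $f(T)\subseteq T_H$, up to $W_0$-conjugacy). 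In particular, $f_*$ restricts to a bijection between the sets of simple coroots $\Delta^\vee$ and $\Delta_H^\vee$.

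Next, I would verify the two key compatibilities $f_*(\mu^\diamond)=\mu_H^\diamond$ and $f_*(\bfnu)=\bfnu_H$. The first follows directly from \eqref{defn-mu-diamond}: since $f$ is defined over $\bbQ_p$, it commutes with the actions of $I$ and of $\varphi_0$, so the Galois average that defines $\mu^\diamond$ is preserved by $f_*$. The second is encoded in the right square of \eqref{functorialityBgmu2}: the slope morphism $\nu_b\colon\bbD\to G_{\br{\bbQ}_p}$ composed with $f$ is the slope morphism for $b_H=f(b)$, and since $f$ sends the chosen Borel of $G$ to one of $H$, passing to dominant representatives is compatible. Combining these two compatibilities gives
\begin{equation}
\mu_H^\diamond-\bfnu_H = f_*(\mu^\diamond-\bfnu) = \sum_{\alpha\in\Delta}c_\alpha\,f_*(\alpha^\vee) = \sum_{\alpha_H\in\Delta_H}c_{\alpha_H}\alpha_H^\vee,
\end{equation}
where the last equality uses the bijection $\Delta^\vee\leftrightarrow\Delta_H^\vee$ induced by $f_*$. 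Because simple coroots are linearly independent in $X_*(T_H)_\bbQ$, the coefficients $c_{\alpha_H}$ are uniquely determined and equal the original $c_\alpha$, so all $c_\alpha>0$ if and only if all $c_{\alpha_H}>0$.

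The only potential obstacle is checking that $f_*$ really induces a bijection $\Delta^\vee\leftrightarrow\Delta_H^\vee$ rather than, say, collapsing or identifying distinct simple coroots. This is where the hypothesis that $f$ is an ad-isomorphism (as opposed to a general central isogeny between different root systems) is essential: because the induced map $G^{\ad}\to H^{\ad}$ is an \emph{isomorphism}, the root systems and their bases are in canonical bijection, and the simple coroots in $X_*(T)_\bbQ$ and $X_*(T_H)_\bbQ$ match up exactly. This is the heart of the argument and makes the statement essentially tautological once the compatibility of $\mu^\diamond$ and $\bfnu$ under $f_*$ is recorded.
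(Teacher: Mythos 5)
Your proposal is correct and follows essentially the same route as the paper: both transport $\mu^\diamond-\bfnu$ through $f_*$, use that an ad-isomorphism identifies the (simple) coroots of $G$ and $H$, and conclude that the coefficients $c_\alpha$ are preserved. The only point the paper records that you leave implicit is the Kottwitz condition $\kappa_H(\bfb_H)=\mu_H^\natural$ (needed so that $\bfb_H\in B(H,\bfmu_H)$, which is part of the definition of HN-irreducibility), but this follows at once from the functoriality diagrams you already cite.
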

\begin{proof} 
Since $\bfb\in B(G,\bfmu)$, we have $\kappa_G(\bfb)=\mu^\natural$ (see \P\ref{notation-mu-sharp}). By \eqref{functorialityBgmu1} and \eqref{functorialityBgmu2}, we have $\kappa_H(\bfb_H)=\mu_H^\natural$.
	Moreover, we can write 
\begin{equation}
	\mu^\diamond-\bfnu=\sum_{\alpha \in \Delta} c_\alpha \alpha^\vee,
\end{equation}
where $c_\alpha\geq 0$.
On the other hand, note that $f_*(\mu^\diamond-\bfnu)=\mu_H^\diamond -\bfnuH$. Since $f$ is an ad-isomorphism, $f_*(\alpha^\vee)=\alpha^\vee$. Thus we have $\mu_H^\diamond -\bfnuH=\sum_{\alpha \in \Delta} c_\alpha \alpha^\vee$, and hence $\bfb_H\in B(H,\bfmu_H)$. 
	Now, each $(\bfb_H,\bfmu_H)$ is HN-irreducible if and only if $(\bfb,\bfmu)$ is, since this is in turn equivalent to $c_\alpha>0$ for all $\alpha\in \Delta$.
\end{proof}

\end{numberedparagraph}

\begin{numberedparagraph}
Let $G=G_1\times G_2$, then $T=T_1\times T_2$, $B(G)=B(G_1)\times B(G_2)$, $\pi_1(G)_\Gamma=\pi_1(G_1)_\Gamma\times \pi_1(G_2)_\Gamma$ and $X_*(T)=X_*(T_1)\times X_*(T_2)$. In this case, the Kottwitz and Newton maps \P\ref{newtonpointdefi} can be computed coordinatewise.
\begin{proposition}
	\label{checkafterprod}	
	The following hold:
	\begin{enumerate}
	    \item $\bfb\in B(G,\bfmu)$ if and only if each $\bfb_i\in B(G_i,\bfmu_i)$ for $i\in \{1,2\}$. 
	    \item $(\bfb,\bfmu)$ is HN-irreducible if and only if each $(\bfb_i,\mu_i)$ is HN-irreducible for $i\in \{1,2\}$.
	\end{enumerate}
\end{proposition}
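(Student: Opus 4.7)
The plan is to verify the statement by reducing both assertions to the coordinate-wise behavior of the group-theoretic invariants involved: the Kottwitz map, the Newton point, the cocharacter $\mu^\diamond$, and the set of simple coroots. Since $G = G_1 \times G_2$, we have $T = T_1 \times T_2$, $W_0 = W_0^1 \times W_0^2$, and the (based) root datum of $G$ is the ``disjoint union'' of those of $G_1$ and $G_2$; in particular the set of simple positive coroots decomposes as $\Delta^\vee = \Delta_1^\vee \sqcup \Delta_2^\vee$. The remark preceding the proposition already records that $B(G) = B(G_1) \times B(G_2)$, $\pi_1(G)_\Gamma = \pi_1(G_1)_\Gamma \times \pi_1(G_2)_\Gamma$ and $X_*(T)_\bbQ^+ = X_*(T_1)_\bbQ^+ \times X_*(T_2)_\bbQ^+$, with $\kappa_G$ and $\nu_{(-)}$ computed coordinate-wise.

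For part (1), I will first observe that $\mu^\natural = (\mu_1^\natural, \mu_2^\natural)$ in $\pi_1(G)_\Gamma$, so that the condition $\kappa_G(\bfb) = \mu^\natural$ is equivalent to $\kappa_{G_i}(\bfb_i) = \mu_i^\natural$ for both $i$. Next, since the Galois action on $X_*(T)$ preserves the factors, the averaging formula defining $\mu^\diamond$ (see \eqref{defn-mu-diamond}) yields $\mu^\diamond = (\mu_1^\diamond, \mu_2^\diamond)$, and by construction $\bfnu = (\bfnu_1, \bfnu_2)$. Therefore
\begin{equation}
\mu^\diamond - \bfnu = (\mu_1^\diamond - \bfnu_1,\, \mu_2^\diamond - \bfnu_2)
\end{equation}
inside $X_*(T)_\bbQ = X_*(T_1)_\bbQ \oplus X_*(T_2)_\bbQ$. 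Using $\Delta^\vee = \Delta_1^\vee \sqcup \Delta_2^\vee$ and the fact that each $\alpha^\vee \in \Delta_i^\vee$ lies entirely in the $i$-th summand, writing $\mu^\diamond - \bfnu$ as a non-negative $\bbQ$-linear combination of simple coroots of $G$ is equivalent to writing each $\mu_i^\diamond - \bfnu_i$ as a non-negative $\bbQ$-linear combination of simple coroots of $G_i$. This gives (1).

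For part (2), the same decomposition shows that the coefficient $c_\alpha$ attached to $\alpha^\vee \in \Delta^\vee$ in $\mu^\diamond - \bfnu$ agrees with the coefficient attached to $\alpha^\vee$ in $\mu_i^\diamond - \bfnu_i$ whenever $\alpha \in \Delta_i$. Consequently, the condition $c_\alpha > 0$ for all $\alpha \in \Delta$ is equivalent to the corresponding condition for each $G_i$, proving (2). No step presents a real obstacle here: the proposition is essentially a bookkeeping exercise, and the only thing to be careful about is that the Galois action and the $L$-action $\varphi_0$ of \S\ref{phi0action} preserve the product decomposition, which is immediate because $G_1$ and $G_2$ are defined over $\bbQ_p$ and $\varphi_0$ is induced by an element of $\widetilde{W}^{\on{ad}} = \widetilde{W}^{\on{ad}}_1 \times \widetilde{W}^{\on{ad}}_2$.
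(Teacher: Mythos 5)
Your proof is correct and follows essentially the same route as the paper's: the paper simply observes that $\kappa_G(\bfb)=\mu^\natural$ and the positivity of the coefficients of $\mu^\diamond-\bfnu=(\mu_1^\diamond-\bfnu_1,\mu_2^\diamond-\bfnu_2)$ can each be checked component-wise. Your write-up just spells out the bookkeeping (the decomposition $\Delta^\vee=\Delta_1^\vee\sqcup\Delta_2^\vee$ and the compatibility of the Galois and $\varphi_0$-actions with the product) in more detail.
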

\begin{proof}
The condition $\kappa_G(\bfb)=\mu^\natural$ can be checked component-wise. 
Moreover, since $\mu^\diamond-\bfnu=(\mu_1^\diamond-\bfnu_1, \mu_2^\diamond-\bfnu_2)$, verifying whether it is a non-negative (resp.~positive) linear combination of positive coroots (see \Cref{HN-irred-Defn}) can also be done component-wise.  
\end{proof}
\end{numberedparagraph}

\begin{proposition}
	\label{kappamapisoHNirrep}
	Assume that $G$ is quasisplit. If the Kottwitz map $\omega:\pi_0(X^{\calK_p}_\mu(b))\to c_{b,\mu}\pi_1(G)^\varphi_I$ is a bijection, then $(\bfb,\bfmu)$ is HN-irreducible. 
\end{proposition}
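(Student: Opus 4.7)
The proof is by contrapositive: assume $(\bfb,\bfmu)$ is not HN-irreducible; I will show that $\omega$ is not bijective. First I reduce to the case $G=G^{\ad}$ is $\bbQ_p$-simple. The ad-isomorphism $G\twoheadrightarrow G^{\ad}$ combined with \Cref{checkafteradiso} preserves HN-irreducibility, and \Cref{adisoforadlv} provides a Cartesian square relating the Kottwitz maps of $G$ and $G^{\ad}$, reducing bijectivity of $\omega_G$ to bijectivity of $\omega_{G^{\ad}}$. A further product decomposition of $G^{\ad}$ into $\bbQ_p$-simple factors via \Cref{checkafterprod} and \Cref{productsopen} reduces to a single simple factor, so I may assume $G=G^{\ad}$ is $\bbQ_p$-simple (and isotropic by hypothesis). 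In particular, $\pi_1(G)$ is finite.

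I argue by induction on the semisimple rank of $G$ and split into two cases. If $(\bfb,\bfmu)$ is HN-indecomposable but not HN-irreducible, \Cref{indecompirred} forces $b$ to be $\varphi$-conjugate to $\dot{t}_{\bar{\mu}}$ for some central $\bar{\mu}\in X_*(T)_I$, so $\bfb$ is basic. Then \Cref{centraladlv} yields $X_\mu(b)\cong G(\bbQ_p)/\calI(\bbZ_p)$. Since $G$ is $\bbQ_p$-simple and isotropic, the Bruhat--Tits building is unbounded and so $G(\bbQ_p)/\calI(\bbZ_p)$ is infinite, while $c_{b,\mu}\pi_1(G)_I^\varphi$ is finite; hence $\omega$ cannot be bijective, contradicting the hypothesis.

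Suppose instead $(\bfb,\bfmu)$ is HN-decomposable with respect to a proper $\varphi_0$-stable standard Levi $M\subsetneq G$. By \Cref{HNdecomptoindecomp} I may choose $M$ so that each restricted datum $(b_{P'},\mu_{P'})$ is HN-indecomposable in $B(M',\bfmu_{P'})$. Specializing \Cref{GHNThmA} to Iwahori level, where $W_K=1$, gives
\begin{equation*}
X_\mu(b)\;\cong\;\bigsqcup_{P'\in\frakP^\varphi} X^{\calI^{M'}}_{\mu_{P'}}(b_{P'}).
\end{equation*}
If some piece is not HN-irreducible in its Levi $M'$, the inductive hypothesis (applied after reducing $M'$ to its adjoint $\bbQ_p$-simple factors, as in the first paragraph) shows that piece contributes too many components. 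Otherwise every piece is HN-irreducible in $M'$; I then bound the total count by tracking the Kottwitz surjection $\omega_{M'}\twoheadrightarrow c_{b_{P'},\mu_{P'}}\pi_1(M')_I^\varphi$ together with the translation elements $h_{P'}$ from \eqref{embeddingparahoric}. Using that $\pi_1(M)\twoheadrightarrow\pi_1(G)$ has nontrivial kernel and that $|\frakP^\varphi|\geq 2$ whenever $M\subsetneq G$, the total component count $\sum_{P'}|\pi_0(X^{\calI^{M'}}_{\mu_{P'}}(b_{P'}))|$ strictly exceeds $|\pi_1(G)_I^\varphi|$, again contradicting bijectivity of $\omega_G$.

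The main obstacle is the combinatorial count in the HN-decomposable case: one must show that the cosets of $\pi_1(M')_I^\varphi$ in $\pi_1(G)_I^\varphi$ attached to different $\varphi$-stable parabolics $P'\in\frakP^\varphi$ (translated by the $h_{P'}$) contribute disjointly and in total strictly exceed $|\pi_1(G)_I^\varphi|$. The HN-indecomposable case, by contrast, is dispatched cleanly via \Cref{indecompirred} and \Cref{centraladlv}.
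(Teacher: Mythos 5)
Your reduction to $G=G^{\ad}$ $\bbQ_p$-simple and your treatment of the HN-indecomposable case (via \Cref{indecompirred} and \Cref{centraladlv}, with $G(\bbQ_p)/\calI(\bbZ_p)$ infinite versus $\pi_1(G)_I^\varphi$ finite) match the paper's argument and are fine. The problem is the HN-decomposable case, where you have correctly identified a gap but not closed it, and where the ingredient you would need to close it is not the one you cite. You appeal to the fact that $\pi_1(M)\twoheadrightarrow\pi_1(G)$ has nontrivial kernel, but what the argument actually requires is non-injectivity of the induced map $\iota_I^\varphi:\pi_1(M')_I^\varphi\to\pi_1(G)_I^\varphi$ on $I$-coinvariants followed by $\varphi$-invariants. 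This does not follow formally from non-injectivity of $\pi_1(M)\to\pi_1(G)$: one must exhibit a nonzero \emph{Frobenius-invariant} class in the kernel. The paper isolates this as \Cref{Levipione} and proves it by producing the $\Gamma$-stable element $\theta_P=\sum_{\alpha\in\Phi_P}\alpha^\vee$, which is nonzero in $\pi_1(M)^\Gamma\otimes\bbQ$ but dies in $\pi_1(G)$, after identifying $\pi_1(-)_I^\varphi\otimes\bbQ$ with $\pi_1(-)^\Gamma\otimes\bbQ$ via the averaging map. Without this, your "combinatorial count" has no engine.

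Moreover, the counting/induction framework you set up is both harder than necessary and not obviously completable: you would need disjointness of the images of the various $\pi_0(X^{\calI^{M'}}_{\mu_{P'}}(b_{P'}))$ and control of the translations $h_{P'}$, and your induction on rank would need the Levi factors $M'^{\ad}$ to again have only isotropic factors, which is not automatic. The paper avoids all of this: since \Cref{GHNThmA} gives a \emph{bijection} $\pi_0(X_\mu(b))\cong\bigsqcup_{P'}\pi_0(X^{M'}_{\mu_{P'}}(b_{P'}))$, bijectivity of $\omega_G$ forces the composite $\pi_0(X^{M'}_{\mu_{P'}}(b_{P'}))\xrightarrow{\omega_{M'}} c_{b_{P'},\mu_{P'}}\pi_1(M')_I^\varphi\xrightarrow{+_{h_{P'}}\circ\iota_I} c_{b,\mu}\pi_1(G)_I^\varphi$ to be injective for a \emph{single} $P'$; since $\omega_{M'}$ is surjective (by \cite[Lemma 6.1]{He-Zhou}), this forces $\iota_I^\varphi$ to be injective, contradicting \Cref{Levipione}. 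No comparison of cardinalities across different $P'$, and no induction, is needed. To repair your proof, replace the counting step by this single-parabolic injectivity argument and supply the proof of \Cref{Levipione}.
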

\begin{proof}
	By \Cref{checkafteradiso}, \Cref{checkafterprod}, \Cref{productsopen} and \Cref{adisoforadlv}, we may assume without loss of generality that $G$ is adjoint and $\bbQ_p$-simple.
We prove by contradiction and assume that $(\bfb,\bfmu)$ is not HN-irreducible. 

(I) If $\bfb$ is HN-decomposable in $B(G,\bfmu)$, then by \Cref{GHNThmA}, we have 
	\begin{equation}\label{bijection-parahoric-union}
		\pi_0(X^{\calK_p}_\mu(b))=\bigsqcup_{P'\in \frakP^\varphi/W_K^\varphi}\pi_0(X_{\mu_{P'}}^{M'}(b_{P'})).
	\end{equation}
	Thus by \Cref{HNdecomptoindecomp}, we may assume that each $b_{P'}$ is HN-indecomposable in $B(M',\mu_{P'})$. Recall from \eqref{embeddingparahoric} that for each $P'\in \frakP^\varphi/W_K^\varphi$ we have an embedding $\phi_{P'}:X^{M'}_{\mu_{P'}}(b_{P'})\hookto X_\mu(b)$, which induces a map
\begin{equation}\label{pi0-embeddingparahoric}
	\pi_0(\phi_{P'}): \pi_0(X^{M'}_{\mu_{P'}}(b_{P'}))\hookto \pi_0(X^{\calK_p}_\mu(b)).
\end{equation}
The disjoint union over $P'\in \frakP^\varphi/W_K^\varphi$ in \eqref{pi0-embeddingparahoric} gives the bijection \eqref{bijection-parahoric-union}. 

Consider $\iota: M'(\br{F})\to G(\br{F})$. Let $\iota_I: \pi_1(M')_I\to\pi_1(G)_I$ be the induced map, which then induces a map $\iota_I^{\varphi}: \pi_1(M')_I^{\varphi}\to\pi_1(G)_I^{\varphi}$. 
By \cite[7.4]{KottwitzII}, the following diagram commutes: 
\begin{equation}\label{Kottwitz-map-diagram}
        \begin{tikzcd}
            M'(\br{\bbQ}_p)\arrow{r}{\kappa_{M'}}\arrow{d}[swap]{\iota}&\pi_1(M')_I\arrow{d}{\iota_I}\\
            G(\br{\bbQ}_p)\arrow{r}{\kappa_G}&\pi_1(G)_I
        \end{tikzcd}
\end{equation}
Recall from \Cref{description-of-embedding} that there is an element $h_{P'}$ inducing the embedding $\phi_{P'}$.
Denote by $+_{h_{P'}}:\pi_1(G)_I\to\pi_1(G)_I$ the addition-by-$\kappa_G(h_{P'})$ map.
    Then \eqref{Kottwitz-map-diagram} shows that $+_{h_{P'}}\circ\iota_I$ sends $c_{b_{P'},\mu_{P'}}\pi_1(M')_I^\varphi$ to $c_{b,\mu}\pi_1(G)_I^\varphi$. Moreover, we have the following commutative diagram
    
\begin{equation}
        \begin{tikzcd}
           \pi_0(X^{M'}_{\mu_{P'}}(b_{P'}))\arrow[hookrightarrow]{d}[swap]{\pi_0(\phi_{P'})}\arrow[twoheadrightarrow]{r}{\omega_{M'}}&c_{b_{P'},\mu_{P'}}\pi_1(M')_I^\varphi\arrow{d}{+_{h_{P'}}\circ\iota_I}\\\pi_0(X_\mu(b))\arrow{r}{\omega_G}[swap]{\cong}&c_{b,\mu}\pi_1(G)_I^\varphi
        \end{tikzcd}
\end{equation}

    Here the surjectivity of $\omega_{M'}$ follows from \cite[Lemma 6.1]{He-Zhou}. Now, if the lower horizontal arrow $\omega_G$ is a bijection, then the upper horizontal arrow $\omega_{M'}$ should also be a bijection. Moreover, this implies that $+_{h_{P'}}\circ\iota_I$ is injective, which then implies that
    $\iota_I^\varphi:\pi_1(M')_I^\varphi\to\pi_1(G)_I^\varphi$ is injective. 
    Since we assumed that $G$ is quasisplit, we may find a $\bbQ_p$-rational parabolic subgroup $M\subseteq P\subseteq G$ having $M$ as semi-direct factor.
    In this situation $\iota_I^\varphi:\pi_1(M')_I^\varphi\to\pi_1(G)_I^\varphi$ cannot be injective, since this would contradict \Cref{Levipione}.

(II) If $\bfb$ is HN-indecomposable in $B(G,\bfmu)$, 
by \Cref{indecompirred}, we may assume that $\mu$ is central and $b=\dot{t}_{\bar{\mu}}$. We now show that $\pi_0(X^{\calK_p}_\mu(b))\to\pi_1(G)_I^\varphi$ is not bijective. 
    
By \Cref{centraladlv}, there is a bijection $\pi_0(X^{\calK_p}_\mu(b))\simeq G(\bbQ_p)/{K}_p$. Since $G$ is not anisotropic (even quasisplit), there exists a non-trivial $\bbQ_p$-split torus $S$, and we can consider the composition of maps 
\begin{equation}
S(\bbQ_p)\hookrightarrow G(\bbQ_p) \twoheadrightarrow G(\bbQ_p)/\mathcal{K}_p(\bbZ_p).
\end{equation}
Since $S(\bbQ_p)\cap \calK_p(\bbZ_p)$ is compact, we have $S(\bbQ_p)\cap \mathcal{K}_p(\bbZ_p)\subseteq S(\bbZ_p)$. Therefore, we obtain an injective homomorphism
\begin{equation} X_*(S)\cong S(\bbQ_p)/S(\bbZ_p)\hookrightarrow G(\bbQ_p)/\mathcal{K}_p(\bbZ_p).
\end{equation}
Since $G$ is adjoint, $\pi_1(G)_I^\varphi$ is finite. However, $X_*(S)$ is infinite, thus the map $\omega_G:\pi_0(X_\mu(b))\to\pi_1(G)_I^\varphi$ cannot be bijective. We have a contradiction. 
\end{proof}

Now we finish the proof of \Cref{kappamapisoHNirrep} by proving the following lemma. 
\begin{lemma}\label{Levipione}
Let $G$ be adjoint and $\bbQ_p$-simple. Let $P\subseteq G$ be a proper parabolic defined over $\bbQ_p$ with Levi factor $M$. The natural map $\iota_I^{\varphi}:\pi_1(M)_I^\varphi \to \pi_1(G)_I^\varphi$ is not injective.
\end{lemma}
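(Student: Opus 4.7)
The plan is to compare $\bbQ$-ranks. Since $G$ is adjoint, $\pi_1(G) = P^\vee/Q^\vee$ is a finite abelian group, so $\pi_1(G)_I^\varphi$ has $\bbQ$-rank zero. It therefore suffices to prove $\pi_1(M)_I^\varphi$ is infinite, which I will do by producing a nonzero rational class. This avoids any direct combat with the (possibly complicated) torsion in $\pi_1(M)_I^\varphi$, and is the main reason to pass to $\bbQ$-coefficients.

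To compute $\pi_1(M)_I^\varphi \otimes \bbQ$, let $V = X_*(T)_\bbQ$ and $V_M = \bbQ\Phi_M^\vee \subseteq V$. Since $T$ splits over a finite extension, $\Gamma$ acts on $V$ through a finite quotient, so all higher $\Gamma$-cohomology of $V$ and of its $\Gamma$-subspaces vanishes; in particular, $I$-coinvariants over $\bbQ$ agree with $I$-invariants. Combined with the fact that $\Gamma$ is topologically generated by $I$ and a lift of $\varphi$, this gives a natural identification
\[
\pi_1(M)_I^\varphi \otimes \bbQ \;=\; (V/V_M)^\Gamma.
\]
Next, the $\bbQ_p$-isogeny $M^\der \times Z(M)^\circ \to M$ yields a $\Gamma$-equivariant decomposition $V = V_M \oplus X_*(Z(M)^\circ)_\bbQ$, and hence
\[
(V/V_M)^\Gamma \;=\; X_*(Z(M)^\circ)_\bbQ^{\,\Gamma} \;=\; X_*(S_M)_\bbQ,
\]
where $S_M \subseteq Z(M)^\circ$ is the maximal $\bbQ_p$-split subtorus.

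The remaining point is to exhibit that $S_M \neq 1$. Up to $G(\bbQ_p)$-conjugation I may take $P$ to be standard relative to a minimal $\bbQ_p$-parabolic with Levi $M_0 = Z_G(S)$, where $S$ is a maximal $\bbQ_p$-split torus of $G$; this $S$ is nontrivial because $G$ is $\bbQ_p$-simple, adjoint, and isotropic by the standing assumption on $G^\ad$. Then $M = Z_G(S_M)$ for the subtorus $S_M \subseteq S$ cut out by the relative simple roots defining $P$, and since $G$ is adjoint, $Z(G) = 1$, so $M = G$ is equivalent to $S_M = 1$. Properness of $P$ therefore forces $S_M \neq 1$, so $X_*(S_M)_\bbQ \neq 0$, $\pi_1(M)_I^\varphi$ is infinite, and $\iota_I^\varphi$ cannot be injective. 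No step is especially delicate; the only real subtlety is the rational reduction described above.
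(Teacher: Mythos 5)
Your proof is correct. It shares with the paper's proof the key rationalization step --- identifying $\pi_1(M)_I^\varphi\otimes\bbQ$ with $\pi_1(M)^\Gamma\otimes\bbQ=(V/V_M)^\Gamma$ via averaging over the finite image of the Galois action --- but the two arguments diverge in how they conclude. The paper exhibits an explicit rational kernel element: the sum of coroots $\theta_P=\sum_{\alpha\in\Phi_P}\alpha^\vee$, which is $\Gamma$-stable because $P$ is $\bbQ_p$-rational, has nonzero image in $\pi_1(M)^\Gamma\otimes\bbQ$, yet dies in $\pi_1(G)$ since it lies in the coroot lattice. You instead avoid producing any explicit element and argue by cardinality: $\pi_1(G)_I^\varphi$ is finite because $G$ is adjoint, while $\pi_1(M)_I^\varphi$ has positive rank equal to $\dim S_M$, the maximal split central torus of the proper Levi $M$, which is nontrivial since $M=Z_G(S_M)\neq G$ and $Z(G)=1$. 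The two are close under the hood --- the reason $q_M(\theta_P)\neq 0$ in the paper's proof is precisely that $\theta_P$ has nonzero component in $X_*(S_M)_\bbQ$ --- but your version buys a cleaner conclusion (no verification that a specific element is nonzero) at the cost of leaning harder on adjointness of $G$ (finiteness of the target), whereas the paper's explicit element would detect non-injectivity of $\pi_1(M)^\Gamma\otimes\bbQ\to\pi_1(G)^\Gamma\otimes\bbQ$ even without that finiteness. Both uses of adjointness are legitimate here, and the paper itself invokes finiteness of $\pi_1(G)_I^\varphi$ in the adjacent argument, so your route is entirely consistent with the surrounding text.
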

\begin{proof}
	Recall that $\pi_1(G)_\Gamma$, being the group of co-invariants for $\Gamma$, can be rewritten as $(\pi_1(G)_I)_{\hat{\bbZ}}$ where we form the group of co-invariants in two steps. 
We prove by contradiction and assume that the natural map $\iota_I^{\varphi}:\pi_1(M)_I^\varphi\to \pi_1(G)_I^\varphi$ is injective. In particular, 
$\pi_1(M)_I^\varphi\otimes\bbQ \hookto \pi_1(G)_I^\varphi\otimes\bbQ$ is also injective.
Via the ``average map'' under $\varphi$-action, we have 
\begin{equation}
    \pi_1(-)_I^\varphi\otimes\bbQ\simeq (\pi_1(-)_I)_{\langle\varphi\rangle}\otimes\bbQ\simeq \pi_1(-)_\Gamma\otimes\bbQ\simeq\pi_1(-)^\Gamma\otimes\bbQ.
\end{equation} 
If $\iota_I^\varphi$ is injective, we deduce that the natural map 
\begin{equation}\label{gamma-invariant-Levi-to-G}
    \pi_1(M)^\Gamma\otimes\bbQ\to\pi_1(G)^\Gamma\otimes\bbQ
\end{equation} 
is injective. 
Let $M\subseteq P\subseteq G$ be the corresponding parabolic subgroup. Let $\theta_P=\sum \limits_{\alpha\in \Phi_P}\alpha^{\vee}\in X_*(T)$ denote the sum of coroots of $P$.
Now, $\theta_P$ is $\Gamma$-stable since $P$ is defined over ${\bbQ}_p$. 
Moreover, its image under the natural projection map $q_M:X_*(T)\to \pi_1(M)$ is $\Gamma$-stable. One can check that $q_M(\theta_P)\neq 0$ in $\pi_1(M)^\Gamma\otimes \bbQ$. 
Since $q_G(\theta_P)=0$ in $\pi_1(G)$, the map in \eqref{gamma-invariant-Levi-to-G} is not injective. We have a contradiction, this proves that $\iota_I^\varphi$ is not injective.   
\end{proof}

\section{Generic Mumford--Tate groups}\label{generic-MT-section}

\subsection{Mumford--Tate groups of crystalline representations}\label{generic-MTgp-section}
We will use the theory of \textit{crystalline representations with $G$-structures} (see for example \cite{DOR10}). 
Let $\Rep_G$ be the category of algebraic representations of $G$ in $\bbQ_p$-vector spaces.
Let $\Isoc$ be the category of isocrystals over $\bar{\bbF}_p$. 

Fix a finite extension $K$ of $\br{\bbQ}_p$. Let $\Rep^\cris_{\Gamma_K}$ be the category of crystalline representations of $\Gamma_K$ on finite-dimensional $\bbQ_p$-vector spaces. 
Let $\omega:\Rep^\cris_{\Gamma_K}\to \vs_{\bbQ_p}$ be the forgetful fibre functor. 
Let $\FilI_{K/\br{\bbQ}_p}$ be the category of filtered isocrystals whose objects are pairs of an isocrystal $N$ and a decreasing filtration of $N\otimes K$. Furthermore, let $\FilI^\ad_{K/\br{\bbQ}_p}$ be Fontaine's subcategory of weakly admissible filtered isocrystals \cite{Fontaine-wa}. 
This is a $\bbQ_p$-linear Tannakian category, which is equivalent to $\Rep^\cris_{\Gamma_K}$ through Fontaine's functor $V_\cris$ \cite{ColmFont}.

\begin{numberedparagraph}
Fix a pair $(b,\mu^\eta)$ with $b\in G(\br{\bbQ}_p)$ and $\mu^\eta:\bbG_m\to G_K$ a group homomorphism over $K$.
This defines a $\otimes$-functor 
\begin{equation}\label{funtor-Gbmueta}
\calG_{(b,\mu^\eta)}:\Rep_G\to \FilI_{K/\br{\bbQ}_p}
\end{equation} 
sending $\rho:G\to \GL(V)$ to the filtered isocrystal $(V\otimes\br{\bbQ}_p,\rho(b)\sigma, \on{Fil}^{\bullet}_{\mu^\eta}V\otimes K)$, where the filtration on $V\otimes K$ is the one induced by $\mu^\eta$. 
The pair $(b,\mu^\eta)$ is called \textit{admissible} \cite[Definition 1.18]{RZ96}, if the image of $\calG_{(b,\mu^\eta)}$ lies in $\FilI^\ad_{K/\br{\bbQ}_p}$. 
Moreover, when $\bfb\in B(G,\bfmu)$, $V_\cris\circ \calG_{(b,\mu^\eta)}$ defines a conjugacy class of crystalline representations $\xi_{(b,\mu^\eta)}:\Gamma_K\to G(\bbQ_p)$ (see \cite[Proposition 11.4.3]{DOR10} and the paragraph preceding it). 
\begin{definition}\label{MT-defn}
	With notation as above, let $\MT_{(b,\mu^\eta)}$ denote the identity component of the Zariski closure of $\xi_{(b,\mu^\eta)}(\Gamma_K)$ in $G(\bbQ_p)$. This is the \textit{Mumford--Tate group} attached to $(b,\mu^\eta)$.
\end{definition}
\begin{theorem}
	\label{remarkserresen}(\cite[Th\'eor\`eme 1]{Serre79},\cite[\S 4, Th\'eor\`eme 1]{Sen73}, \cite[Proposition 3.2.1]{Chen})
The image of $\xi_{(b,\mu^\eta)}$ contains an open subgroup of $\MT_{(b,\mu^\eta)}$. 
\end{theorem}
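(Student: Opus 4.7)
The plan is to reduce to the classical result of Sen on Lie algebras of Galois representations arising from Hodge--Tate modules. Since crystalline representations are Hodge--Tate, $\xi_{(b,\mu^\eta)}$ is Hodge--Tate, and after composing with a faithful algebraic representation $G\hookrightarrow\GL(V)$ we obtain a Hodge--Tate representation $\rho:\Gamma_K\to \GL(V)$ through which all Lie-theoretic computations can be carried out.

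The first step is a reduction to the Zariski-connected case. By definition $\MT_{(b,\mu^\eta)}=H^\circ$, where $H$ is the Zariski closure of $\xi(\Gamma_K)$, so $H^\circ$ has finite index in $H$. Hence $\xi^{-1}(\MT(\bbQ_p))\subseteq \Gamma_K$ is an open subgroup of finite index, corresponding to a finite extension $K'/K$. After replacing $K$ by $K'$, I may assume that $\xi(\Gamma_K)\subseteq \MT(\bbQ_p)$ and that the Zariski closure of $\xi(\Gamma_K)$ inside $G$ is exactly $\MT_{(b,\mu^\eta)}$. Proving openness of $\xi(\Gamma_K)$ in $\MT(\bbQ_p)$ after this reduction implies the claim over the original $K$, since $\xi(\Gamma_K)\cap \MT(\bbQ_p)$ contains $\xi(\Gamma_{K'})$.

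The key step is to invoke Sen's theorem (\cite[\S 4, Th\'eor\`eme 1]{Sen73}, reformulated in the Tannakian language by Serre \cite[Th\'eor\`eme 1]{Serre79}): for any Hodge--Tate representation $\rho:\Gamma_K\to \GL(V)$, the Lie algebra of the compact $p$-adic Lie subgroup $\rho(\Gamma_K)\subseteq\GL(V)$ coincides with the Lie algebra of its Zariski closure. Viewing $\xi(\Gamma_K)$ as a $p$-adic Lie subgroup of $\MT(\bbQ_p)$ through the faithful representation chosen above, this yields $\Lie(\xi(\Gamma_K))=\Lie(\MT_{(b,\mu^\eta)})$. Since both groups are $p$-adic Lie subgroups of $G(\bbQ_p)$ and one is contained in the other with the same Lie algebra, $\xi(\Gamma_K)$ is open in $\MT_{(b,\mu^\eta)}(\bbQ_p)$, which is the desired conclusion.

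The main obstacle is of course Sen's theorem itself, whose proof is a delicate piece of $p$-adic Hodge theory; I would simply cite it rather than reprove it. A minor technical point is to check that Sen's theorem, which is stated for $\GL(V)$, transfers correctly to the $G$-setting, but this is formal: the Lie algebra of $\xi(\Gamma_K)$ as an abstract $p$-adic Lie group is intrinsic, and embedding $G\hookrightarrow\GL(V)$ faithfully preserves both this Lie algebra and the Zariski closure. The proof in \cite[Proposition 3.2.1]{Chen} follows essentially this outline.
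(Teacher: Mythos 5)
Your argument is correct and is exactly the route the paper takes: the theorem is stated there purely as a citation of Sen and Serre (as unpacked in \cite[Proposition 3.2.1]{Chen}), namely the reduction to the identity component via a finite extension $K'/K$ followed by Sen's theorem that the Lie algebra of the image of a Hodge--Tate representation equals that of its Zariski closure, whence openness. No discrepancy to report.
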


\end{numberedparagraph}

\begin{numberedparagraph}
As in \cite[\S 3]{Chen}, we let $\scrT^\cris_{(b,\mu^\eta)}:=\calG_{(b,\mu^\eta)}(\Rep_G)$ and $\scrT_{(b,\mu^\eta)}:=V_\cris\circ \calG_{(b,\mu^\eta)}(\Rep_G)$ be the images of $\Rep_G$.
Then $\MT_{(b,\mu^\eta)}$ is the Tannakian group for the fiber functor $\omega:\scrT_{(b,\mu^\eta)}\to \vs_{\bbQ_p}$ by \cite[Proposition 3.2.3]{Chen}.

In \cite[$\mathsection$3]{Chen}, there is a fiber functor $\omega_s:\scrT^\cris_{(b,\mu^\eta)}\to \vs_{\bbQ_{p^s}}$ for $s$ sufficiently large\footnote{Note that our notation $\omega_s$ differs from the notations \textit{loc.cit.}, where the notation $\omega_{b,\mu}^{\cris,s}$ is used instead.}, with Tannakian group $\MT^{\cris,s}_{(b,\mu^\eta)}:=\Aut^{\otimes}\omega_s$ as in \cite[D\'{e}finition 3.3.1]{Chen}.
When $b$ is $s$-decent (see \Cref{decent-definition}), there is a canonical embedding $\MT^{\cris,s}_{(b,\mu^\eta)}\subseteq G_{\bbQ_{p^s}}$ \cite[Lemme 3.3.2]{Chen}. Moreover,  $\MT^{\cris,s}_{(b,\mu^\eta)}$ and $\MT_{(b,\mu^\eta)}\otimes_{\Q_p}\Q_{p^s}$ are pure inner forms of each other \cite[Proposition 3.3.3]{Chen}. Both claims follow immediately using Tannakian formalism. 
In particular, to prove that $\MT_{(b,\mu^\eta)}$ contains $G^\der$, it suffices to prove that $\MT^{\cris,s}_{(b,\mu^\eta)}$ contains $G^\der_{\bbQ_{p^s}}$ (since $G^{\der}$ is normal). 
\end{numberedparagraph}

\begin{numberedparagraph}\label{concrete-MT}
In fact, there is a more concrete description of $\MT^{\cris,s}_{(b,\mu^\eta)}$ given as follows. 

Let $(V,\rho)\in \Rep_G$. The $\mu^\eta$-filtration of $V_K$ induces a degree function 
\begin{equation}
\dg_{\mu^\eta}:V\setminus \{0\}\to \bbZ,
\end{equation}
where $\dg_{\mu^\eta}(v)=i$ if $v\in \Fil_{\mu^\eta}^i V\setminus \Fil_{\mu^\eta}^{i+1}V$. 
We shall consider a subset $V^{s,k}_{(b,\mu^\eta)}\subseteq V\otimes {\bbQ_{p^s}}$ of elements that satisfy a certain ``Newton equation" \eqref{Newton-eqn} and a certain ``Hodge equation" \eqref{Hodge-eqn} with respect to $k$ for some $k\in \bbZ$.

Let $T^{s\cdot \nu_b}_{\rho}:V\otimes \bbQ_{p^s}\to V\otimes \bbQ_{p^s}$ be the operator with formula 
\begin{equation}\label{Newton-eqn}
T^{s\cdot \nu_b}_{\rho}:=\rho\circ [s\cdot \nu_b](p).
\end{equation}
Consider also the function $d^s_{\rho,\mu^\eta}:V\otimes \bbQ_{p^s}\setminus \{0\}\to \bbZ$ where
\begin{equation}\label{Hodge-eqn}
d^s_{\rho,\mu^\eta}(v)=\sum\limits_{i=0}^{s-1}\dg_{\mu^\eta}([\rho(b)\varphi]^i(v)).
\end{equation}
We consider the following subset of $V\otimes\bbQ_{p^s}$ given by
\begin{center}
\begin{align}
	\label{MTdescription}
	V^{s,k}_{(b,{\mu^\eta})}:= \{v\in V\otimes \bbQ_{p^s} \mid   T^{s\cdot \nu_b}_{\rho}(v)=p^kv, \,  d^s_{\rho,{\mu^\eta}}(v)= k \}   	
\end{align}
\end{center}
By \cite[Proposition 3.3.6]{Chen},  $\MT^{\cris,s}_{(b,{\mu^\eta})}$ consists of those elements $g\in G_{\bbQ_{p^s}}$ such that: for any $(V,\rho)\in \Rep_G$ and any $k\in \bbZ$, all of the elements $v\in V^{s,k}_{(b,{\mu^\eta})}$ are eigenvectors of $\rho(g)$. 
In particular, to prove $G_{\bbQ_{p^s}}^\der\subseteq \MT^{\cris,s}_{(b,{\mu^\eta})}$, it suffices to prove that $G_{\bbQ_{p^s}}^\der$ acts trivially on $V^{s,k}_{(b,{\mu^\eta})}$ for all $V$ and $k$. 
\end{numberedparagraph}

\subsection{Generic filtrations}
In this subsection and \S \ref{sect:mumford-tate} we give a representation theoretic formula for $d^s_{\rho,{\mu^\eta}}$ when ${\mu^\eta}$ is generic (see \Cref{equality-2-2}). 
We closely follows \cite[\S 4]{Chen}, except that we do small modifications to the argument \textit{loc.cit.} to deal with general reductive groups $G$. In particular, our version of the argument does not assume that $G$ is unramified or even quasisplit. 

\begin{numberedparagraph}
Before we move on to our goal, we recall some generalities, which we will apply later to $G_{\bbQ_{p^s}}$ for $s$-sufficiently large such that $G_{\bbQ_{p^s}}$ is quasisplit. 
Until further notice, $K$ will denote an arbitrary field of characteristic $0$, $G$ a quasisplit reductive group over $K$, and $\bfmu$ a conjugacy class of group homomorphisms $\mu:\bbG_m\to G_{\bar{K}}$. 
Let $E/K$ be the reflex field of $\bfmu$. Since $G$ is quasisplit, we can choose a representative $\mu\in \bfmu$ defined over $E$ such that it is dominant for a choice of $K$-rational Borel $B\subseteq G$. 
To this data, we can associate a flag variety $\on{Fl}_\mu:=G_E/P_\mu$ over $\Spec (E)$ as in \eqref{Bia-Bir-map}. 
It parametrizes filtrations of $\Rep_G$ of type $\bfmu$. Given a field extension $K'/K$, $x\in \on{Fl}_\mu(K')$ and $(V,\rho)\in \Rep_G$, we obtain a filtration $\Fil^\bullet_x V_{K'}$ as in \cite[D\'efinition 4.1.1]{Chen}.
\begin{definition}[{\cite[D\'efinition 4.2.1]{Chen}}]
	\label{defini-of-filt}
	With the setup as above, let 
	
\begin{align}\label{genfil-VE}
	&\overline{\Fil}_\mu^\bullet V_E:=\left(\bigcap \limits_{x\in \on{Fl}_\mu(E)}\Fil^\bullet_x V_E\right)\\
	&\overline{\Fil}_\bfmu^\bullet V:=V\cap \left(\bigcap \limits_{x\in \on{Fl}_\mu(E)}\Fil^\bullet_x V_E\right).\label{genfil-bfmu}
\end{align}
	We refer to \eqref{genfil-VE} (resp.~\eqref{genfil-bfmu}) as the \textit{generic filtration} of $V_E$ (resp.~$V$) attached to $\mu$ (resp.~$\bfmu$).
\end{definition}

\end{numberedparagraph}

\begin{numberedparagraph}
	Each step of the filtration, $\overline{\Fil}_\bfmu^i V\subseteq V$, is a subrepresentation of $V$. 
Indeed, since $G(E)$ acts transitively on $\on{Fl}_\mu(E)$ we obtain the identity
\begin{equation}\label{alternative-genfil-bfmu}
\overline{\Fil}_\bfmu^\bullet V=V\cap\left(\bigcap \limits_{g\in G(E)}\rho(g)\Fil^\bullet_\mu V_E\right).
\end{equation}
This filtration $\overline{\Fil}_\bfmu^i V$ gives rise to a degree function $\overline{\dg}_\bfmu:V\setminus \{0\}\to \bbZ$ which can be computed as:  
\begin{equation}
	\overline{\dg}_\bfmu(v)=\on{inf}_{g\in G(E)} \dg_\mu(\rho(g)\cdot v).
\end{equation}
 
Moreover we have the following

\begin{proposition}
	\label{generic-agree}
	For all $\tau\in \on{Gal}(E/K)$, the generic filtration attached to $\bfmu$ and $\tau(\bfmu)$ agree, i.e. we have 
	$\overline{\Fil}_\bfmu^\bullet V=\overline{\Fil}_{\tau(\bfmu)}^\bullet V.$
\end{proposition}
\begin{proof}
	We compute
	\begin{align}
		\overline{\Fil}_{\tau(\mu)}^\bullet V_E & =  \left(\bigcap \limits_{g\in G(E)}\rho(g)\Fil^\bullet_{\tau(\mu)} V_E\right)  \\
							&  =  \left(\bigcap \limits_{g\in G(E)}\tau[ \rho(\tau^{-1}(g)) \Fil^\bullet_{\mu} V_E]\right)  \\ 
							&  =  \tau[ \left(\bigcap \limits_{g\in G(E)}\rho(\tau^{-1}(g)) \Fil^\bullet_{\mu} V_E\right)  ]  \\
							&  =  \tau[  \overline{\Fil}_{\mu}^\bullet V_E ]  
	\end{align}
	In particular, $v\in \overline{\Fil}_\bfmu^\bullet V=V\cap \overline{\Fil}_\mu^\bullet V_E$, if and only if $\tau(v)\in \overline{\Fil}_{\tau(\bfmu)}^\bullet V$. 
	Since for such $v$, $v=\tau(v)$, we must have $\overline{\Fil}_{\bfmu}^\bullet V= \overline{\Fil}_{\tau(\bfmu)}^\bullet V$.
\end{proof}

Let $K'$ be an arbitrary extension of $K$. 
\begin{definition}
\label{generic-definition}
We say that a map $\on{Spec}(K')\to \on{Fl}_\mu$ is \textit{generic} if, at the level of topological spaces $|\Spec(K')|\to |\on{Fl}_\mu|$, the image of the unique point on the left is the generic point of $\on{Fl}_\mu$. 
\end{definition}
\end{numberedparagraph}
The following statement relates $\overline{\Fil}_\bfmu^\bullet V$ (see \eqref{genfil-bfmu} or \eqref{alternative-genfil-bfmu}) to the generic points of $\on{Fl}_\mu$ in the sense of \Cref{generic-definition}. 
\begin{proposition}[{\cite[Lemme 4.2.2]{Chen}}]
	\label{genericfilisgenericpoint} 
	
	Let $\mu^\eta:\Spec (K')\to \on{Fl}_\mu$ be generic (in the sense of \Cref{generic-definition}). 
	Then for all $i\in \bbZ$, we have 
\begin{equation}
\overline{\Fil}_\bfmu^i V=V\cap \Fil^i_{\mu^\eta} V_{K'},
\end{equation}
where the inclusion $V\subseteq V\otimes_K E \subseteq V\otimes_K K'$ is the natural one.
\end{proposition}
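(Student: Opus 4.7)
The plan is to reinterpret both sides as conditions on a certain closed subscheme of $\on{Fl}_\mu$, and then to exploit the Zariski density of $E$-rational points.

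First I would set up the universal picture. The flag variety $\on{Fl}_\mu$ carries a universal filtration of the trivial bundle $V\otimes_K \mathcal{O}_{\on{Fl}_\mu}$; let $\mathcal{F}^i \subseteq V\otimes_K \mathcal{O}_{\on{Fl}_\mu}$ denote the $i$-th step. For any field extension $L/E$ and any point $x\colon \Spec L \to \on{Fl}_\mu$, the pull-back $x^*\mathcal{F}^i \subseteq V\otimes_K L$ recovers $\Fil^i_x V_L$. Given $v\in V$, write $Z_v\subseteq \on{Fl}_\mu$ for the zero locus of the image of $v\otimes 1$ in the coherent quotient $V\otimes_K \mathcal{O}_{\on{Fl}_\mu}/\mathcal{F}^i$; then $Z_v$ is closed, and for any $x\in \on{Fl}_\mu(L)$ one has $v\in \Fil^i_x V_L$ iff $x$ factors through $Z_v$.

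Next I would reduce the right-hand side. Since $\mu^\eta$ is generic, it factors as $\Spec K' \to \Spec K(\on{Fl}_\mu) \to \on{Fl}_\mu$, and by flat base change along $K(\on{Fl}_\mu)\hookrightarrow K'$, the filtration $\Fil^i_{\mu^\eta} V_{K'}$ is the base change of the universal generic filtration $\Fil^i_\eta V_{K(\on{Fl}_\mu)}$. Because $V \hookrightarrow V\otimes_K K(\on{Fl}_\mu) \hookrightarrow V\otimes_K K'$ is injective with the last map faithfully flat, intersecting with $V$ over $K'$ coincides with intersecting with $V$ over $K(\on{Fl}_\mu)$. In terms of $Z_v$, this shows $v\in V\cap \Fil^i_{\mu^\eta} V_{K'}$ iff the generic point of $\on{Fl}_\mu$ lies in $Z_v$, which by geometric irreducibility of $\on{Fl}_\mu$ is equivalent to $Z_v=\on{Fl}_\mu$. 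On the other hand, by definition $v\in \overline{\Fil}_\bfmu^i V$ iff $\on{Fl}_\mu(E)\subseteq Z_v(E)$, so the proposition reduces to showing that a closed subscheme $Z\subseteq \on{Fl}_\mu$ containing every $E$-point must be all of $\on{Fl}_\mu$.

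The last input is Zariski density of $\on{Fl}_\mu(E)$. Since $G$ is quasisplit and $\mu$ is dominant for a $K$-rational Borel, the parabolic $P_\mu$ descends to $E$ and $\on{Fl}_\mu=G_E/P_\mu$; as $G$ is a smooth connected linear algebraic group over the infinite field $E$ (we are in characteristic zero), Borel's density theorem guarantees that $G(E)$ is Zariski dense in $G_E$, hence its image in $\on{Fl}_\mu$ is Zariski dense. The only genuine bookkeeping obstacle is keeping track of the tower $K\subseteq E\subseteq K(\on{Fl}_\mu)\subseteq K'$ and the associated base changes of $V$; no new ideas beyond Borel density and the universal subbundle on $\on{Fl}_\mu$ are needed.
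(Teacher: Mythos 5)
Your proposal is correct and follows essentially the same route as the paper: both regard $v$ as a global section of the universal quotient $V\otimes\mathcal{O}_{\on{Fl}_\mu}/\mathcal{F}^i$, observe that its vanishing locus $Z_v$ is Zariski closed, and conclude by comparing "contains the generic point" with "contains all $E$-points." You are somewhat more explicit than the paper about the two inputs it leaves implicit -- the factorization of a generic point through $\Spec K(\on{Fl}_\mu)$ and the Zariski density of $\on{Fl}_\mu(E)$ (which follows from unirationality of connected reductive groups over the infinite field $E$, rather than from the Borel density theorem in its usual sense) -- but the argument is the same.
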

\begin{proof} 
The following proof is in \cite[4.2.2]{Chen}. 
We recall the argument for the convenience of the reader. 
Note that we do not assume $G$ split over $K$, which is the running assumptions in \textit{loc.cit}. 

Let $\widetilde{\calY}_{\mu}$ be the universal $P_\mu$-bundle over  $\on{Fl}_\mu=G_E/P_{\mu}$ coming from the natural map to $[\ast/P_{\mu}]$. Consider the vector bundle $\mathcal{E}:=\widetilde{\calY}_{\mu}\times_{P_{\mu,\rho}}V$, with a filtration $$\dots \supseteq \Fil^0\mathcal{E}\supseteq \Fil^1\mathcal{E}\supseteq \cdots\supseteq \Fil^n\mathcal{E}\supseteq \dots$$ of locally free locally direct factors of the form $\widetilde{\calY}_{\mu}\times_{P_{\mu},\rho}\Fil^{\bullet}V$, where $\Fil^{\bullet}V$ is the natural filtration of $V$ by subrepresentations of $P_{\mu}$. 

We may regard elements $v\in V$ as global sections of $\mathcal{E}$, and we have that $$v\in\Fil_x^iV\Leftrightarrow v\in\ker\left(\Gamma(\on{Fl}_\mu,\mathcal{E}/\Fil^i\mathcal{E})\to  \Gamma(\Spec \kappa(x),\mathcal{E}/\Fil^i\mathcal{E})\right).$$ 

The vanishing locus of such an element is a Zariski closed subset, and it contains the generic point if and only if it contains all the $E$-rational points. Thus $\overline{\Fil}^i_{\bfmu} V=V\cap \Fil^i_{\mu^\eta}V_{K'}$. 
\end{proof}

\begin{numberedparagraph}
We need a more easily computable description of $\overline{\Fil}_\bfmu^\bullet V$.
In \cite[Proposition 4.3.2]{Chen}, there is such a description assuming that $G$ is split over $K$. We now prove a generalization in the quasisplit case.

Let $\Gamma_K$ denote the Galois group of $K$. We fix $K$-rational tori $S\subseteq T\subseteq B\subseteq G$ where $S$ is maximally split and $T$ is the centralizer of $S$. 
Recall that, by combining the theory of highest weights and Galois theory, one can classify all irreducible representations of a quasisplit group by the Galois orbits $\calO\subseteq X^*(T)^+$ of dominant weights. 
More precisely, attached to $\calO\subseteq X^*(T)^+$ there is a unique irreducible representation of $G$, which we denote by $W_\calO$, such that 
\[W_\calO\otimes_K \bar{K}=\oplus_{\lambda\in \calO} W_\lambda\]
where $W_\lambda$ is the unique irreducible representation of $G_{\bar{K}}$ that has highest weight $\lambda$.
Given $\lambda \in X^*(T)$, we let $\calO_\lambda:=\Gamma_K\cdot \lambda$ denote its Galois orbit. 
We let $X^*(T)/\Gamma_K$ denote the set of Galois orbits and $X^*(T)^+/\Gamma_K$ denote the subset of dominant ones.
We define $\calO^E_\lambda:=\Gamma_E\cdot \lambda$, $X^*(T)/\Gamma_E$ and $X^*(T)^+/\Gamma_E$ analogously.
Given a dominant $\Gamma_K$-Galois orbit $\calO\in X^*(T)^+/\Gamma_K$ (resp.~$\Gamma_E$-Galois orbit  $\calO^E\in X^*(T)^+/\Gamma_E$), let $V_\calO$ (resp.~$V_{\calO^E}$) denote the $W_\calO$-isotypic (resp.~$W_{\calO^E}$-isotypic) direct summand of $V$ (resp.~$V_E$).
We have 
\begin{align}
	V_\calO\otimes_K \bar{K}&=\bigoplus \limits_{\lambda \in \calO} V^\lambda_{\bar{K}}.\\
	V_{\calO^E}\otimes_E \bar{K}&=\bigoplus \limits_{\lambda \in \calO^E} V^\lambda_{\bar{K}},
\end{align}
where $V^\lambda_{\bar{K}}$ denotes the isotypic component attached to $W_\lambda$ (the unique irreducible representation of $G_{\bar{K}}$ with highest weight $\lambda$).
Observe that by Galois descent $V_\calO=(\bigoplus \limits_{\lambda \in \calO} V^\lambda_{\bar{K}})^{\Gamma_K}$, this will be used in the proof of \Cref{genericfiltration} below.
For $\calO\in X^*(T)/\Gamma_K$ a Galois orbit, we let $\underline{\calO}\in (X^*(T)_\bbQ)^{\Gamma_K}$ be given by $\underline{\calO}:=\frac{1}{|\calO|}\sum\limits_{\lambda\in \calO} \lambda$.
If $\calO_\lambda=\Gamma_K\cdot \lambda$ and $\Gamma_\lambda$ denotes the stabilizer of $\lambda$ in $\Gamma_K$ we have
\begin{equation}
\underline{\calO_\lambda}=\frac{1}{[\Gamma_K:\Gamma_\lambda]}\sum\limits_{\gamma\in \Gamma_K/\Gamma_\lambda} \gamma(\lambda).
\end{equation}
Analogously, we have 
$ \underline{\calO^E_\lambda}\in X^*(T)_\bbQ^+)^{\Gamma_E}$ with 
\[\underline{\calO^E_\lambda}=\frac{1}{[\Gamma_E:\Gamma_\lambda\cap\Gamma_E]}\sum\limits_{\gamma\in \Gamma_E/\Gamma_\lambda\cap \Gamma_E} \gamma(\lambda).\]
Let $\mathcal{W}$ denote the absolute Weyl group of $G$. Let $w_0\in \mathcal{W}$ be the longest element, which is $\Gamma_K$-invariant.

\begin{proposition}
	\label{genericfiltration}
	Let the setup be as above. For any $(V,\rho)\in \Rep_G$, the generic filtration attached to $\mu$ is given by the formula:
	\begin{center}
		\begin{align}
			\overline{\Fil}^k_\bfmu V =\bigoplus\limits_{\substack{\calO_\lambda\in X^*(T)^+/\Gamma_K\\ \text{such that } \langle\underline{\calO^E_{\tau(\lambda)}},w_0.\mu\rangle\geq k, \\
			\forall \tau\in \on{Gal}(E/K)}} V_{\calO_{\lambda}}
		\end{align}
	\end{center}
\end{proposition}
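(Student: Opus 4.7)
The plan is to reduce to the split case of \cite[Proposition 4.3.2]{Chen} via base change to a splitting field, then descend the resulting formula via Galois invariants. First I would reduce to the case $V=V_\calO$ for a single $\Gamma_K$-orbit $\calO=\Gamma_K\cdot\lambda$, since both sides of the claimed formula respect the canonical decomposition $V=\bigoplus_\calO V_\calO$ into isotypic components, and the defining intersection \eqref{genfil-bfmu} preserves $G$-stable direct summands.

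Fix a finite Galois extension $L/K$ containing $E$ over which $G$ splits. Over $L$, the representation $V_{\calO,L}$ decomposes as $\bigoplus_{\lambda'\in \calO} V^{\lambda'}_L$ with each $V^{\lambda'}_L$ absolutely irreducible of highest weight $\lambda'$. By \Cref{genericfilisgenericpoint}, the generic filtration is cut out by a single generic point of $\on{Fl}_\mu$, and this description is compatible with base change along $L/E$, so $\overline{\Fil}^\bullet_\mu V_{\calO,L}$ agrees with $\overline{\Fil}^\bullet_\mu V_{\calO,E}\otimes_E L$. For an absolutely irreducible $V^{\lambda'}_L$, its generic filtration at level $k$ is $G_L$-stable (using Zariski-density of $G(E)$ in $G_L$ together with the fact that filtration jumps are closed conditions), hence equals $0$ or all of $V^{\lambda'}_L$; it is everything exactly when every weight vector lies in $\Fil^k_\mu V^{\lambda'}_L$. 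For dominant $\mu$ the minimum weight-pairing is attained by the lowest weight $w_0\lambda'$, giving the condition $\langle w_0\lambda',\mu\rangle=\langle\lambda',w_0\mu\rangle\geq k$. This recovers Chen's split-case formula applied to $G_L$.

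To descend from $L$ to $E$, I would regroup $\calO$ into $\Gamma_E$-orbits $\calO^E\subseteq \calO$, giving the $E$-rational decomposition $V_{\calO,E}=\bigoplus_{\calO^E\subseteq\calO} V_{\calO^E,E}$ into $E$-irreducibles. Since $\mu$ is defined over $E$ and $w_0$ is Galois-invariant, $w_0\mu$ is $\Gamma_E$-fixed, so the pairing $\langle\lambda',w_0\mu\rangle$ is constant on each $\Gamma_E$-orbit and equals $\langle\underline{\calO^E},w_0\mu\rangle$. This yields $\overline{\Fil}^k_\mu V_{\calO,E}=\bigoplus_{\langle\underline{\calO^E},w_0\mu\rangle\geq k} V_{\calO^E,E}$. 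Finally, \eqref{genfil-bfmu} gives $\overline{\Fil}^k_\bfmu V_\calO=V_\calO\cap \overline{\Fil}^k_\mu V_{\calO,E}$, and $V_\calO=V_{\calO,E}^{\Gamma_{E/K}}$ inside $V_{\calO,E}$. Since $\Gamma_{E/K}$ permutes the $\calO^E$'s via $\tau\cdot \calO^E_\lambda=\calO^E_{\tau(\lambda)}$, the $\Gamma_{E/K}$-invariant part of the preceding $\Gamma_E$-indexed sum is controlled by the condition $\langle\underline{\calO^E_{\tau(\lambda)}},w_0\mu\rangle\geq k$ for every $\tau\in\on{Gal}(E/K)$, which is exactly the indexing set in the statement.

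The main technical obstacle will be the Zariski-density argument in the second paragraph: justifying that intersecting filtrations over $G(E)$-translates recovers the $G_L$-stable filtration, relying on $E$ being infinite and $G$ connected. A secondary subtlety is correctly tracking why $w_0$ appears — it enters precisely because the intersection of a filtration over all $G$-translates detects membership of the lowest weight rather than the highest, and the identity $\langle w_0\lambda,\mu\rangle=\langle\lambda,w_0\mu\rangle$ rewrites the condition in the form asserted.
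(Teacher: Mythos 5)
Your proof is correct and, despite being packaged as a reduction to the split case followed by a two-step Galois descent, it runs on the same engine as the paper's own argument: the generic filtration is the largest $G$-stable subspace of $\Fil^k_\mu$, membership of an irreducible constituent is detected by its lowest weight $w_0\lambda'$, the pairing $\langle \lambda', w_0\mu\rangle$ is constant on $\Gamma_E$-orbits (hence equals the pairing with $\underline{\calO^E}$), and the quantification over $\tau\in \on{Gal}(E/K)$ comes from the $K$-rationality of $\overline{\Fil}^k_\bfmu V$ (equivalently, from transitivity of $\on{Gal}(E/K)$ on the $\Gamma_E$-orbits inside a $\Gamma_K$-orbit). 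The one point to watch — present in the paper's write-up as well — is that $V^{\lambda'}_L$ is an isotypic component which may have multiplicity greater than one, so ``all or nothing'' is not literally Schur's lemma; the conclusion survives because any nonzero $G_L$-stable subspace of the isotypic component contains a lowest-weight vector, which must then lie in $\Fil^k_\mu$.
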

\begin{proof}
	Since $\overline{\Fil}^k_\bfmu V$ consists of subrepresentations, it suffices to show that 
\begin{equation}
	V_{\calO_\lambda}\subseteq \overline{\Fil}^k_\bfmu V\Longleftrightarrow k\leq \langle \underline{\calO^E_{\tau(\lambda)}}, w_0.  \mu\rangle  \,\,\,\, \forall \tau\in \on{Gal}(E/K).
\end{equation}
	
Let us first prove ``$\Longrightarrow$''. 
Let $V=\bigoplus\limits_{\sigma\in \Irrep(T)}V_{\sigma}$ be the decomposition into isotypic pieces with respect to the $T$-action. 
Observe that $\Irrep(T)$ is naturally in bijection with $\Gamma_K$-orbits in $X^*(T)$.
Over an algebraic closure, each $V_{\sigma}$ decomposes as $V_{\sigma}\otimes_K \bar{K}=\bigoplus\limits_{\chi'\in\calO_{\sigma}}V_{\chi'}$, where $\calO_\sigma\subseteq X^*(T)$ denotes the $\Gamma_K$-orbit attached to $\sigma$. 

Observe that $\overline{\Fil}^k_\bfmu V\subseteq \Fil^k_{\tau(\mu)} V_E$ for all $\tau\in \on{Gal}(E/K)$. 
Indeed, this follows from \Cref{generic-agree}.
Moreover, we can express $\Fil^k_{\tau(\mu)} V_E$ in terms of the grading that $\tau(\mu)$ induces on $V_E$ as follows  
\begin{equation}
	\Fil^k_{\tau(\mu)} V_{\bar{K}}=\bigoplus\limits_{\substack{\chi\in X^*(T),\\ \langle{{\chi}},\tau(\mu)\rangle\geq k  }}V_{\bar{K},{\chi}}.
\end{equation}
In particular, when $V_{\calO_\lambda}\subseteq \overline{\Fil}^k_\bfmu V\subseteq \Fil^k_{\tau(\mu)} V_{\bar{K}}$, the anti-dominant weights appearing in $V_{\calO_\lambda}$ pair with $\tau(\mu)$ to a number greater than or equal to $k$. 
In other words, $k\leq \langle w_0.\xi, \tau(\mu)\rangle$ for all $w_0.\xi \in \calO_{w_0.\lambda}$, but then pairing $\tau(\mu)$ with their $\Gamma_E$-average $w_0.\underline{\calO^E_{\lambda}}$ will still be greater than or equal to $k$, i.e.~
\[k\leq \langle w_0.\underline{\calO^{E}_\lambda}, \tau(\mu)\rangle=\langle \underline{\calO^E_{\tau(\lambda)}}, w_0.\mu\rangle.\] 
The above argument does not depend on $\tau$, thus
\begin{equation}
	\overline{\Fil}_{\bfmu}^k V\subset\bigoplus\limits_{\substack{ \lambda\in X^*(T)^+/\Gamma_K\\ \text{such that } \langle \underline{\calO^E_{\tau(\lambda)}},w_0.\mu\rangle\geq k \\ \forall \tau\in \on{Gal}(E/K)}} V_{\calO_{\lambda}}.
\end{equation}

Let us now prove ``$\Longleftarrow$''. 
Suppose $k\leq \langle w_0.\underline{\calO^E_\lambda}, \tau(\mu)\rangle$ for all $\tau\in \on{Gal}(E/K)$, this implies that for at least one $w_0.\xi\in \calO^E_{w_0.\lambda}$, we have $k\leq \langle w_0.\xi, \tau(\mu)\rangle$. 
Since $\langle \cdot, \cdot \rangle$ is $\Gamma_E$-equivariant, $k\leq \langle w_0.\xi, \tau(\mu)\rangle$ for all $w_0.\xi\in \calO^E_{w_0.\lambda}$.
We can view $w_0.\xi$ as a character of $T_{\bar{K}}$ and $V_{\bar{K},w_0.\xi}\subseteq \Fil^k_{\tau(\mu)}V_{\bar{K}}$. 
Consider $V_{\bar{K}}^{\xi}$, the $W_\xi$-isotypic part of $V_{\bar{K}}$, where again $W_\xi$ is the irreducible representation of $G_{\bar{K}}$ of highest weight $\xi$. 
If $\chi$ is a weight appearing in $V_{\bar{K}}^\xi$, then $k\leq \langle w_0.\xi, \tau(\mu)\rangle\leq  \langle \chi, \tau(\mu)\rangle$, since $\tau(\mu)$ is dominant and $\xi$ is the highest weight of $W_\xi$. 
Thus $V_{\bar{K}}^\xi \subseteq \Fil^k_{\tau(\mu)}V_{\bar{K}}$. 
Since $\Fil^k_{\tau(\mu)}V_{\bar{K}}$ is $\Gamma_E$-stable, the same follows for every $\xi\in \calO_\lambda$.
We deduce that 
\begin{equation}
	V_{\calO^E_{\lambda}}:=\bigg(\,\bigoplus\limits_{\xi\in \calO^E_{\lambda}} V_{\bar{K}}^\xi\bigg)^{\Gamma_E}
\end{equation}
is a subrepresentation of $\Fil^k_{\tau(\mu)}V_E$ defined over $E$. 
By \Cref{alternative-genfil-bfmu}, this implies that 
\[V_{\calO^E_\lambda}\subseteq \bigcap _{g\in G(E)} \rho(g)\cdot  {\Fil}^k_{\tau(\mu)}V_E = \overline{\Fil}^k_{\tau(\bfmu)}V_E,\]
and since the above argument does not depend on $\tau$, this holds for all $\tau\in \on{Gal}(E/K)$.
Equivalently, $\tau^{-1}(V_{\calO^E_\lambda})\subseteq \overline{\Fil}^k_{\bfmu}V_E$ for all $\tau\in \on{Gal}(E/K)$.
Recall that
\[V_{\calO_\lambda}\otimes_K E:=\Sigma_{\tau\in \on{Gal}(E/K)} \tau(V_{\calO^E_\lambda})\subseteq V_E,\] 
by the above $V_{\calO_\lambda}\otimes_K E\subseteq \overline{\Fil}^k_{\bfmu}V_E$. 
This shows that 
\[V_{\calO_\lambda}=V\cap V_{\calO_\lambda}\otimes_K E\subseteq V\cap \overline{\Fil}^k_{\bfmu}V_E = \overline{\Fil}^k_{\bfmu}V\]
as desired.
\end{proof}
\end{numberedparagraph}

\subsection{Mumford--Tate group computations}
\label{sect:mumford-tate}
The goal of this section is to prove \Cref{enhancedChen-maintxt} (or \Cref{enhancedchen} in the introduction).

Let $G$ be a reductive group over $\bbQ_p$. Let $K'$ be a finite extension of $\br{\bbQ}_p$. 
Let $b\in G(\br{\bbQ}_p)$ be decent (\Cref{decent-definition}) and $\mu^\eta:\bbG_m\to G_{K'}$ be generic (\Cref{generic-definition}) with $\mu^\eta\in \bfmu$. 

\begin{theorem}\label{enhancedChen-maintxt}
	Suppose that $b$ is decent,  $\mu^\eta$ is generic and $\bfb\in B(G,\bfmu)$.
	The following hold: 
	\begin{enumerate}
		\item $(b,\mu^\eta)$ is admissible.
		\item If $(\bfb,\bfmu)$ is HN-irreducible, then $\on{MT}_{(b,\mu^\eta)}$ contains $G^\der$. 
	\end{enumerate}
\end{theorem}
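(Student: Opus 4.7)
The plan is to adapt the strategy of \cite[\S 4--5]{Chen} from the unramified setting to the (possibly ramified) reductive setting, using our \Cref{genericfiltration} in place of \cite[Proposition 4.3.2]{Chen}. Choose $s$ large enough that $b$ is $s$-decent (see \ref{decencydominant}) and $G_{\bbQ_{p^s}}$ is quasi-split. Since $\on{MT}^{\cris,s}_{(b,\mu^\eta)}$ and $\on{MT}_{(b,\mu^\eta)}\otimes_{\bbQ_p}\bbQ_{p^s}$ are pure inner forms \cite[Proposition 3.3.3]{Chen} and $G^\der$ is preserved by such twistings, both statements reduce to $\on{MT}^{\cris,s}_{(b,\mu^\eta)}$ containing $G^\der_{\bbQ_{p^s}}$; thus we may assume $G$ is quasi-split throughout.

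For (1), by \cite{ColmFont} it suffices to verify weak admissibility of the filtered isocrystal $\calG_{(b,\mu^\eta)}(V)$ for each $(V,\rho)\in\on{Rep}_G$. The equality $t_H(V) = t_N(V)$ on the full representation translates into the identity of central characters encoded in $\kappa_G(b) = \mu^\natural$. For each sub-isocrystal $W\subseteq V\otimes\br{\bbQ}_p$, we decompose along the weights of $T$; the Newton contribution at a weight $\lambda$ is $\langle\lambda,\bfnu\rangle$, while the Hodge contribution is controlled, thanks to genericity of $\mu^\eta$, by \Cref{genericfilisgenericpoint} and the explicit formula of \Cref{genericfiltration}. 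Summing over weights in $W$ and applying $\mu^\diamond - \bfnu \in \bbQ_{\geq 0}\Delta^\vee$ (which is exactly $\bfb\in B(G,\bfmu)$) yields $t_H(W)\leq t_N(W)$, completing (1).

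For (2), by the concrete description of $\on{MT}^{\cris,s}_{(b,\mu^\eta)}$ recalled in \ref{concrete-MT} and the fact that $G^\der$ admits no nontrivial characters, it suffices to show that $G^\der_{\bbQ_{p^s}}$ fixes $V^{s,k}_{(b,\mu^\eta)}$ pointwise for every $(V,\rho)\in\on{Rep}_{G_{\bbQ_{p^s}}}$ and every $k\in\bbZ$. Reducing to $V$ irreducible, assume $G^\der$ acts non-trivially on $V$; we want $V^{s,k}_{(b,\mu^\eta)}=0$. A nonzero $v$ in this set lies in the slope-$k/s$ subspace of $\nu_b$ by \eqref{Newton-eqn}, while the Hodge condition \eqref{Hodge-eqn}, combined with \Cref{genericfiltration}, forces the $T$-weights $\lambda$ appearing in the $\varphi$-orbit of $v$ to saturate the inequality $\langle\lambda,\mu^\diamond\rangle\geq\langle\lambda,\bfnu\rangle$ of Step 2 term by term. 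Writing $\mu^\diamond-\bfnu = \sum_{\alpha\in\Delta} c_\alpha \alpha^\vee$ with $c_\alpha>0$ by HN-irreducibility, saturation forces $\langle\lambda,\alpha^\vee\rangle=0$ for every $\alpha\in\Delta$; hence all weights of $V$ lie in $X^*(G^\ab)$, contradicting non-triviality on $G^\der$.

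\textbf{Main obstacle.} The delicate point is Step 3, where the equality case of the weak-admissibility estimate of Step 2 must be turned into the vanishing of $V^{s,k}_{(b,\mu^\eta)}$. The Galois averaging in \Cref{genericfiltration}, running both over inertia orbits producing $\underline{\calO^E_{\tau(\lambda)}}$ and over $\on{Gal}(E/K)$, must be matched precisely with the double averaging implicit in $\mu^\diamond = \frac{1}{N}\sum_i \varphi_0^i(\underline{\mu})$. In the unramified setting of \cite[\S 5]{Chen} these averagings collapse to a single $\varphi$-orbit average, whereas in the quasi-split (ramified) case the inertia contribution requires additional bookkeeping; this is where the bulk of the technical work will lie.
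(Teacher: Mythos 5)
Your proposal follows essentially the same route as the paper: reduce to the quasi-split case over $\bbQ_{p^s}$ via $s$-decency and the pure-inner-form comparison of $\on{MT}^{\cris,s}_{(b,\mu^\eta)}$ with $\on{MT}_{(b,\mu^\eta)}$, use the concrete eigenvector description of \ref{concrete-MT} together with \Cref{genericfilisgenericpoint} and \Cref{genericfiltration} to bound $d^s_{\rho,\mu^\eta}(v)$ by $s\langle w_0\cdot\lambda,\mu^\diamond\rangle$, play this off against the Newton equation, and let HN-irreducibility force $\langle w_0\cdot\lambda,\alpha^\vee\rangle=0$ for all simple $\alpha$. The only cosmetic difference is in part (1), where the paper simply observes that Chen's argument goes through once $\on{Fl}_\mu^{\on{adm}}\neq\emptyset$ is known in general (by \cite[Theorem 9.5.10]{DOR10}) rather than re-running the weak-admissibility estimate; and the "additional bookkeeping" you flag as the main obstacle is precisely what \Cref{genericfiltration} and the displayed chain of inequalities in the paper's proof carry out.
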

\begin{proof}

We fix $s$ large enough so that $b$ is $s$-decent, $G$ is quasisplit over $\bbQ_{p^s}$ and splits over a totally ramified extension of $\bbQ_{p^s}$ that we denote by $L$. 
After fixing a Borel $B\subseteq G_{\bbQ_{p^s}}$, we let $\mu\in X^*(T)^+$ denote the dominant representative of $\bfmu$ and we let $\bbQ_{p^s}\subseteq E\subseteq L$ denote the field of definition of $\bfmu$.
Recall that replacing $b$ by $g^{-1}b\varphi(g)$ and $\mu^\eta$ by $g^{-1}\mu^\eta g$ gives isomorphic fiber functors $\calG_{(b,\mu^\eta)}$ (see \eqref{funtor-Gbmueta}). 
Moreover, via this kind of replacement, we can arrange that $\nu_b=\bfnu$ (see \P\ref{decencydominant}). 
Note that this replacement preserves genericity of $\mu^\eta$.

(1) The argument in \cite[Th\'eor\`eme 5.0.6.(1)]{Chen} goes through in our setting. Indeed, the only part in the proof \textit{loc.cit.} using that $G$ is unramified is to justify that $\on{Fl}_\mu^{\on{ad}}\neq \emptyset$ whenever $\bfb\in B(G,\bfmu)$, but this is true by \cite[Theorem 9.5.10]{DOR10} in full generality.  

(2) We adapt the argument given for \cite[Th\'eor\`eme 5.0.6.(2)]{Chen} to our setup. 
Let $(V,\rho)\in \Rep_G$ and let $v\in V^{s,k}_{(b,\mu^\eta)}$ as in \eqref{MTdescription}. 
By \P\ref{concrete-MT}, it suffices to show that $\rho(g)v=v$ for all $g\in G^\der_{\bbQ_{p^s}}$. 
Recall that $L$ denotes the splitting field of $G$.
Over $L$, we can write $v=\sum\limits_{\lambda\in \Lambda_v} v_\lambda$ where $\Lambda_v \subseteq X^*(T)^+$, $v_\lambda\in V_L^\lambda$ and $v_\lambda\neq 0$ (here $V_L^\lambda$ denotes the $W_\lambda$-isotypic part of $V_L$ where $W_\lambda$ is the highest weight representation of $G_L$ attached to $\lambda$).  
Since $v$ is defined over $\bbQ_{p^s}$, we have $\gamma(v_\lambda)=v_{\gamma(\lambda)}$ for $\gamma\in \Gamma_{\bbQ_{p^s}}$.

Consider $\calO\subseteq X^*(T)^+$ a Galois orbit for $\Gamma_{\bbQ_{p^s}}$ giving rise to $V_\calO \in \Irrep_{G_{\bbQ_{p^s}}}$ that appears in the isotypic decomposition of $V_{\bbQ_{p^s}}$. 
We let $v_\calO=\sum\limits_{\lambda\in \calO\subseteq \Lambda_v}v_\lambda$.
We have $v_\calO\in V_{\bbQ_{p^s}}$.
Fix $\lambda\in \Lambda_v$, by \Cref{genericfiltration} and \Cref{genericfilisgenericpoint}, we can write
	\begin{align}\label{first-eqn}
		\dg_{\mu^\eta}(v) & =  \overline{\dg}_\bfmu(v) \\
			   & =  \Inf_{\lambda\in \Lambda_v} \overline{\dg}_\bfmu(v_{\calO_\lambda})\label{second-eqn} \\
			   & \leq  \overline{\dg}_\bfmu(v_{\calO_\lambda})\label{3rd-eqn} \\
			   & =  \Inf_{\tau \in \on{Gal}(E/\bbQ_{p^s})} \langle \underline{\calO^E_{\tau(\lambda)}},w_0\cdot \mu \rangle \label{4th-eqn} \\
			   & \leq   \langle \underline{\calO_\lambda},w_0\cdot \mu \rangle \label{5th-eqn}
			   \\
			   & =  \langle w_0\cdot\lambda, \underline{\mu} \rangle, \label{last-equality}
	\end{align}
Here \eqref{first-eqn} follows from \Cref{genericfilisgenericpoint}. Since each step of $\overline{\on{Fil}}^\bullet_{\bfmu} V$ is a subrepresentation of $V$, in order for $v\in \overline{\on{Fil}}^k_{\bfmu} V$, each $v_{\calO_\lambda}$ has to be in $\overline{\on{Fil}}^\bullet_{\bfmu} V$, and hence \eqref{second-eqn}. Inequality \eqref{3rd-eqn} follows from the definition of infimum and our assumption $\lambda\in \Lambda_v$. \eqref{4th-eqn} follows from \Cref{genericfiltration}, and the fact that 
\begin{equation}
v_{\calO_\lambda}=\sum \limits_{\tau\in \on{Gal}(E/K)} v_{\calO^E_{\tau(\lambda)}}.
\end{equation}
Since the average is larger than the infimum, \eqref{5th-eqn} follows. Finally, \eqref{last-equality} follows from equivariance of the pairing $\langle \cdot, \cdot \rangle$ with respect to the $\Gamma_K$-action, and invariance of the pairing under the $w_0$-action. 

Observe that $\varphi\in \on{Gal}(\bbQ_{p^s}/\bbQ_p)$ acts on the set of orbits $X^*(T)/\Gamma_{\bbQ_{p^s}}$. 
Moreover, if $\calO_\lambda\subseteq X^*(T)^+$ is a $\Gamma_{\bbQ_{p^s}}$-orbit corresponding to an irreducible representation $W_\calO$ of $G_{\bbQ_{p^s}}$, then the $\varphi$-twist of $W_\calO$ corresponds to $\varphi_0(\calO)\subseteq X^*(T)^+$, with the definition of $\varphi_0$ as in \P\ref{phi0action}.
We deduce that $\varphi(v_\calO)=\varphi(v)_{\varphi_0(\calO)}$.
Write $v^i=(\rho(b)\varphi)^i[v]$, it follows that $v^i_{\varphi^i_0(\calO)}=(\rho(b)\varphi)^i[v_{\calO}]$. 
In particular, $\Lambda_{v^i}=\varphi^i_0(\Lambda_v)$.

We have the following formula
	\begin{align}
	d^s_{\rho,\mu^\eta}(v) & =\sum\limits_{i=0}^{s-1}\dg_{\mu^\eta}((\rho(b)\varphi)^iv)\label{equality-2-1} \\
	& = \sum\limits_{i=0}^{s-1} \Inf_{\lambda\in \Lambda_{v^i}} \overline{\dg}_\bfmu(v^i_{\calO_\lambda})\label{equality-2-2} \\
	& \leq \sum\limits_{i=0}^{s-1} \langle \underline{\calO_{\varphi_0^i(\lambda)}},w_0\cdot \mu \rangle \label{inequality-2-3}\\
	& = \sum\limits_{i=0}^{s-1} \langle {w_0\cdot {\varphi_0^i(\lambda)}},\underline{\mu} \rangle \label{equality-2-4}\\
	& = \sum\limits_{i=0}^{s-1} \langle w_0\cdot\lambda, \varphi^i_0(\underline{\mu}) \rangle \label{equality-2-5}
	\\
	& =s\cdot \langle w_0\cdot\lambda, \mu^\diamond \rangle \label{equality-2-7}
	\end{align}

Equality \eqref{equality-2-1} follows from the definition in \eqref{Hodge-eqn}. 
By \Cref{genericfiltration}, we obtain \eqref{equality-2-2}. 
Inequality \eqref{inequality-2-3} follows from the inequalites \eqref{first-eqn} through \eqref{last-equality} above, and $\lambda\in \Lambda_v \implies \varphi_0^i(\lambda)\in \Lambda_{v^i}$. Equality
\eqref{equality-2-4} follows from equivariance of $\langle , \rangle$ under the Galois action and $w_0$-action.
Since $T$ is $\varphi_0$-stable,
\eqref{equality-2-5} follows from equivariance of $\langle, \rangle$ under the $\varphi_0$-action.
Equality \eqref{equality-2-7} follows from the definition of $\mu^\diamond$ (see \eqref{defn-mu-diamond}).

Since $v\in V^{s,k}_{(b,\mu^\eta)}$, by \eqref{equality-2-1} through \eqref{equality-2-7}, we have 
\begin{equation}
	\label{first-key-ineq}
\frac{k}{s}\leq \langle w_0\cdot\lambda, \mu^\diamond \rangle
\end{equation}
for all $\lambda\in \Lambda_v$. 
On the other hand, over $L$ (the splitting field of $G$), we have a decomposition $v=\sum\limits_{\chi\in X^*(T)} u_\chi$ in terms of weight spaces with respect to the $T$-action. 
Since we have arranged that $\nu_b=\bfnu$, by \eqref{Newton-eqn} and \eqref{MTdescription} we have

	\begin{equation}
		T^{s\cdot \nu_b}_{\rho}(v)  =\sum\limits_{\chi\in X^*(T)} T^{s\cdot \nu_b}_{\rho}(u_\chi)  =\sum\limits_{\chi\in X^*(T)}p^{\langle \chi,s\cdot \bfnu\rangle}u_\chi.
	\end{equation}

The assumption $v\in V^{s,k}_{(b,\mu^\eta)}$ forces $\chi$ to satisfy $\langle \chi,s\cdot \bfnu\rangle=k$ for all $\chi$ where $u_\chi\neq 0$. 
In particular, since $w_0\cdot \lambda\leq \chi$ when $\chi$ is a weight for $T$ in $V^\lambda_L$, we deduce that 
\begin{equation}
	\label{second-key-eq}
\langle w_0\cdot\lambda, \bfnu\rangle \leq \frac{k}{s}
\end{equation}
for all $\lambda\in \Lambda_v$. 
It follows \Cref{second-key-eq} and \Cref{first-key-ineq} that 
\[\langle w_0\cdot \lambda, \mu^\diamond-\bfnu\rangle \geq 0.\] 
Moreover, since $(\bfb,\bfmu)$ is HN-irreducible, we may write $\mu^\diamond-\bfnu=\Sigma_{\alpha\in \Delta} c_\alpha \alpha^\vee$ with $c_\alpha> 0$. 
Since $w_0\cdot \lambda$ is anti-dominant, this can only happen if $\langle w_0\cdot \lambda, \alpha^\vee\rangle = 0$ for all $\alpha\in \Delta$. 
This shows that $\lambda$ is orthogonal to the coroot lattice and can only be the highest weight of a $1$-dimensional representation of $G_L$.
This shows that the action of $G^\der_{L}$ on $V^\lambda$ is trivial for all $\lambda\in \Lambda_v$. 
In particular, $G^\der_{L}$ acts trivially on $v$ as we wanted to show.
This is the end of the proof.
\end{proof}

\begin{proposition}
	\label{cor:MainChen}
	Let $(G,b,\mu)$ be a local shtuka datum over $\bbQ_p$ with $(\bfb,\bfmu)$ HN-irreducible. There exists a finite extension $K$ over $\br{\bbQ}_p$ containing the reflex field of $\bfmu$, and a point $x\in \Gr^b_\mu(K)$ whose induced (conjugacy class of) crystalline representation(s) 
	$$\rho_x:\Gamma_K\to G(\bbQ_p)$$
	satisfies that $\rho_x(\Gamma_K)\cap G^\der(\bbQ_p)$ is open in $G^\der(\bbQ_p)$.
\end{proposition}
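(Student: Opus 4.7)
The plan is to combine \Cref{enhancedChen-maintxt} with \Cref{remarkserresen} and the Bialynicki--Birula bijection at classical points. First, using \ref{decencydominant}, I replace $b$ by a decent representative of its $\varphi$-conjugacy class $\bfb$; this is harmless since both the hypotheses and the desired conclusion are invariant under $\varphi$-conjugation of $b$. Second, I produce an extension $K/\br{\bbQ}_p$ containing the reflex field $\br{E}$, together with a cocharacter $\mu^\eta: \bbG_m \to G_K$ in the conjugacy class $\bfmu$ such that $\mu^\eta$ is generic in the sense of \Cref{generic-definition}. This is possible because $\br{\bbQ}_p$ has infinite transcendence degree over $\bbQ_p$, which allows one to realize the generic point of $\Fl_\mu$ over such a $K$ and lift it to a cocharacter of the desired conjugacy class.

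Given this data, \Cref{enhancedChen-maintxt}(1) guarantees that the pair $(b,\mu^\eta)$ is admissible, so the induced filtration is weakly admissible and defines a $K$-point of $\on{Fl}^{\on{adm}}_\mu$. Via the Bialynicki--Birula bijection at classical points from \eqref{Bialynickiadmissible}, this corresponds to a point $x \in \Gr^{\circ,b}_\mu(K) \subseteq \Gr^b_\mu(K)$. By the moduli description of the universal $G(\bbQ_p)$-torsor $\bbL_b$ recalled in \S\ref{grothendieck-section}, the pullback $\rho_x := x^*\bbL_b$ agrees, up to $G(\bbQ_p)$-conjugation, with the crystalline representation $\xi_{(b,\mu^\eta)}:\Gamma_K \to G(\bbQ_p)$ attached to the filtered isocrystal $\calG_{(b,\mu^\eta)}$.

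By \Cref{enhancedChen-maintxt}(2), the HN-irreducibility of $(\bfb,\bfmu)$ forces $G^{\der} \subseteq \MT_{(b,\mu^\eta)}$. By \Cref{remarkserresen}, the image $\rho_x(\Gamma_K)$ contains an open subgroup $U$ of $\MT_{(b,\mu^\eta)}(\bbQ_p)$. Since $G^{\der}(\bbQ_p)$ is a closed subgroup of $\MT_{(b,\mu^\eta)}(\bbQ_p)$ carrying the subspace $p$-adic topology, the intersection $U \cap G^{\der}(\bbQ_p)$ is open in $G^{\der}(\bbQ_p)$; hence $\rho_x(\Gamma_K) \cap G^{\der}(\bbQ_p)$ is open in $G^{\der}(\bbQ_p)$, as required.

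The substantive content is concentrated in \Cref{enhancedChen-maintxt}(2), which has already been proved by the representation-theoretic computation with the Newton and Hodge equations from \ref{concrete-MT}. The remaining step is a formal assembly; the only mildly delicate point is the choice of $K$, which relies on the transcendence-degree argument to exhibit a generic $\mu^\eta$ and, after that, Colmez--Fontaine to promote weak admissibility to admissibility via the classical-point Bialynicki--Birula bijection.
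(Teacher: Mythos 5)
Your proposal is correct and follows essentially the same route as the paper: produce a generic $\mu^\eta$ over a finite extension $K$ of $\br{\bbQ}_p$ (the paper's \Cref{existence-generic-pt}), pass through the Bialynicki--Birula bijection on classical points to get $x\in\Gr^b_\mu(K)$, and then apply \Cref{enhancedChen-maintxt}(2) together with \Cref{remarkserresen}. The extra details you spell out (choosing a decent representative of $b$, invoking \Cref{enhancedChen-maintxt}(1) and Colmez--Fontaine for admissibility, and noting that a subgroup containing an open subgroup is open) are exactly the steps the paper leaves implicit.
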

\begin{proof}
	Recall from \cite[Proposition 2.12]{Gle22a} (see also \cite[Theorem 5.2]{viehmannBdr}) that the Bialynicki-Birula map BB in \eqref{Bia-Bir-map} induces a bijection of classical points. Therefore it suffices to construct the image $\on{BB}(x)\in \on{Fl}_{\mu}$, which corresponds to constructing a weakly admissible filtered isocrystal with $G$-structure.

	By \Cref{existence-generic-pt}, we can take $\on{BB}(x)=\mu^{\eta}$ to be generic (\Cref{generic-definition}). 
	By \Cref{enhancedChen-maintxt}(2), $\MT_{(b,\mu^{\eta})}$ contains $G^{\der}$. By \Cref{remarkserresen}, the image of the generic crystalline representation $\xi_{(b,\mu^{\eta})}$ contains an open subgroup of $\MT_{(b,\mu^{\eta})}$, thus containing an open subgroup of $G^{\der}$. 
\end{proof}
\begin{lemma}\label{existence-generic-pt}
There exist a finite extension $K$ over $\br{\bbQ}_p$ and a map $\mu^\eta:\on{Spec}(K)\to \on{Fl}_\mu$ such that $|\mu^\eta|:\{\ast\}\to |\on{Fl}_\mu|$ maps to the generic point.
\end{lemma}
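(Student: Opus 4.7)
The plan is to produce such a $\mu^\eta$ directly by embedding the function field of $\on{Fl}_\mu$ into a finite extension of $\br{\bbQ}_p$. Since $\on{Fl}_\mu = G/P_\mu$ is a smooth geometrically irreducible variety defined over the reflex field $E$, it has a function field $F := E(\on{Fl}_\mu)$, finitely generated over $E$ of transcendence degree $d = \dim \on{Fl}_\mu$. A morphism $\Spec(K) \to \on{Fl}_\mu$ (over $E$, hence over $\br{\bbQ}_p$ after base change) whose underlying map of topological spaces hits the generic point corresponds exactly to an $E$-embedding $F \hookrightarrow K$, so the task reduces to constructing such an embedding with $K/\br{\bbQ}_p$ finite.

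The key input is that $\br{\bbQ}_p = \widehat{\bbQ_p^{\mathrm{unr}}}$ is uncountable and hence has uncountable transcendence degree over $\bbQ_p$, as flagged in the footnote to the lemma. Since $E/\bbQ_p$ is finite, $\br{E} := E \cdot \br{\bbQ}_p$ is a finite extension of $\br{\bbQ}_p$ that still has infinite transcendence degree over $E$. First I would pick algebraically independent elements $x_1, \ldots, x_d \in \br{E}$ over $E$, together with a transcendence basis $t_1, \ldots, t_d$ of $F/E$, and form the $E$-embedding $E(t_1, \ldots, t_d) \hookrightarrow \br{E}$ defined by $t_i \mapsto x_i$.

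Next, since $F/E(t_1,\ldots,t_d)$ is finite algebraic, this embedding extends to a map $F \hookrightarrow \ov{\br{E}}$ into an algebraic closure, and its image lies in some finite subextension $K/\br{E}$. Such a $K$ is a finite extension of $\br{\bbQ}_p$, and the resulting composition $F \hookrightarrow K$ defines the sought-after map $\mu^\eta \co \Spec(K) \to \on{Fl}_\mu$; by construction it factors through $\Spec(F) \hookrightarrow \on{Fl}_\mu$, which is the generic point. I do not anticipate any real obstacle here: the only substantive ingredient is the infinite transcendence degree of $\br{\bbQ}_p$ over $\bbQ_p$, and the rest is elementary field theory.
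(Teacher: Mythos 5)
Your argument is correct and rests on exactly the same key input as the paper's proof (the infinite transcendence degree of $\br{\bbQ}_p$ over $\bbQ_p$), but it is packaged differently. The paper invokes the structure theorem for smooth morphisms to realize an open $U\subseteq \on{Fl}_\mu$ as \'etale over affine space, maps $\Spec(\br{\bbQ}_p)$ to the generic point of $\bbA^n$ by choosing algebraically independent elements, and then pulls back the \'etale cover to land in a finite extension $K/\br{\bbQ}_p$. You instead argue purely field-theoretically: embed a transcendence basis of the function field $F=E(\on{Fl}_\mu)$ into $\br{E}$, then extend over the finite algebraic extension $F/E(t_1,\dots,t_d)$ into some finite $K/\br{E}$. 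The two final steps correspond exactly (your finite algebraic extension plays the role of the paper's \'etale pullback), and your version is if anything more elementary, since it only uses that $\on{Fl}_\mu$ is integral rather than smooth.

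One point does need fixing: your justification of the key input is not valid as stated. ``$\br{\bbQ}_p$ is uncountable and hence has uncountable transcendence degree over $\bbQ_p$'' is a non sequitur: $\bbQ_p$ itself, and any algebraic extension of it such as $\bbQ_p^{\on{nr}}$, is already uncountable, so uncountability says nothing about transcendence degree; moreover a cardinality count cannot even rule out countable transcendence degree, since the algebraic closure of $\bbQ_p(S)$ has cardinality $2^{\aleph_0}$ for any countable set $S$. The fact you actually need --- that $\on{trdeg}(\br{\bbQ}_p/\bbQ_p)$ is infinite --- is true but requires a genuine argument; cite it as the paper does, namely \cite[Proposition 2.0.3]{Chen}. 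With that citation substituted for the cardinality remark, the rest of your proof goes through without change.
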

\begin{proof}
Recall from \cite[Proposition 2.0.3]{Chen} that the transcendence degree of $\br{\bbQ}_p$ over $\bbQ_p$ is infinite. 
By the structure theorem of smooth morphisms \cite[Tag 054L]{Stacks}, one can find an open neighborhood $U\to \on{Fl}_\mu$ that is \'etale over $\bbA^n_{\bbQ_p}$. On the other hand, one can always find a map $\Spec(\br{\bbQ}_p)\to \bbA^n_{\bbQ_p}$ mapping to the generic point by choosing $n$ transcendentally independent elements of $\br{\bbQ}_p$. Its pullback to $U$ is an \'etale neighborhood of $\Spec(\br{\bbQ}_p)$ that consists of a finite disjoint union of finite extensions $K$ of $\br{\bbQ}_p$. Any of these components will give a map to the generic point of $\on{Fl}_\mu$.
\end{proof}

The following is a partial converse to \Cref{enhancedChen-maintxt}, and it follows directly from \Cref{secondthmmain}.
\begin{proposition}
\label{partial-converse-Chensection}
\label{chenconverse}
		Assume that $G$ is quasisplit. If $\on{MT}_{(b,\mu^\eta)}$ contains $G^\der$, then $(\bfb,\bfmu)$ is HN-irreducible. 
\end{proposition}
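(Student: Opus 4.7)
The plan is to reduce the statement directly to the equivalences of \Cref{secondthmmain} (i.e.~the main Theorem~\ref{secondthm} announced in the introduction), in which the non-trivial implication (4)$\implies$(3) gives precisely what we want under the isotropic-factors hypothesis. So the task is simply to repackage the hypothesis ``$G^\der \subseteq \on{MT}_{(b,\mu^\eta)}$'' into condition (4) of that theorem.

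First I would invoke the Serre--Sen statement recorded as \Cref{remarkserresen} (following \cite{Serre79, Sen73, Chen}): the image $\xi_{(b,\mu^\eta)}(\Gamma_K)$ contains an open subgroup $U \subseteq \on{MT}_{(b,\mu^\eta)}(\bbQ_p)$. Under the assumption that $G^\der \subseteq \on{MT}_{(b,\mu^\eta)}$, the intersection $U \cap G^\der(\bbQ_p)$ is an open subgroup of $G^\der(\bbQ_p)$ (open subgroups of a $p$-adic Lie group intersect closed subgroups in open subgroups of the closed subgroup). Since $U \subseteq \xi_{(b,\mu^\eta)}(\Gamma_K)$, this shows that $\xi_{(b,\mu^\eta)}(\Gamma_K)\cap G^\der(\bbQ_p)$ contains an open subgroup of $G^\der(\bbQ_p)$, hence is open. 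By construction $\xi_{(b,\mu^\eta)}$ is a (conjugacy class of) crystalline representation(s) $\Gamma_K \to G(\bbQ_p)$ with invariants $(\bfb,\bfmu)$. This is exactly condition (4) of \Cref{secondthmmain}.

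Since we assume $G^\ad$ has only isotropic factors, \Cref{secondthmmain} applies and gives the equivalence of conditions (1)--(5); in particular (4)$\implies$(3), so $(\bfb,\bfmu)$ is HN-irreducible. The only subtlety to keep in mind is that one must have $\bfb \in B(G,\bfmu)$ (and hence admissibility of $(b,\mu^\eta)$) for $\xi_{(b,\mu^\eta)}$ to be defined as a crystalline representation in the first place; but this is implicit in the hypothesis that $\on{MT}_{(b,\mu^\eta)}$ exists as the identity component of the Zariski closure of the image. There is no real ``hard step'' here — the work is all packaged inside \Cref{secondthmmain}, and the role of this corollary is only to observe that ``$G^\der \subseteq \on{MT}_{(b,\mu^\eta)}$'' is manifestly equivalent, via Serre--Sen, to the openness condition (4).
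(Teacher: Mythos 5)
Your proof is correct and is essentially identical to the paper's own argument: both invoke the Serre--Sen openness statement (\Cref{remarkserresen}) to convert the hypothesis $G^\der\subseteq \on{MT}_{(b,\mu^\eta)}$ into condition (4) of \Cref{secondthmmain}, and then conclude via the implication $(4)\implies(3)$. Your added remark on why an open subgroup of $\on{MT}_{(b,\mu^\eta)}(\bbQ_p)$ meets $G^\der(\bbQ_p)$ in an open subgroup is a harmless elaboration of what the paper leaves implicit.
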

\begin{proof}
	If $\Gder\subseteq \on{MT}_{(b,\mu^\eta)}$, then by \Cref{remarkserresen} there exists a finite field extension $K$ over $\br{\bbQ}_p$, and a crystalline representation $\xi:\Gamma_K\to G(\bbQ_p)$ with invariants $(\bfb,\bfmu)$ whose image in $G^{\der}(\bbQ_p)$ is open. Indeed, we can let $K$ be generic as in \Cref{generic-definition}. The result follows from the equivalence $(3)\iff (4)$ in \Cref{secondthmmain}.
\end{proof}

\section{Proof of main theorems}\label{proof-main-thm-section}
The first goal in this section is to prove the following main theorem:
\begin{theorem}
		\label{secondthmmain} 
		Suppose that $\bfb\in B(G,\bfmu)$.   
		\begin{enumerate}[label=\alph*)]
			\item If we assume that $G$ is quasisplit, then the following statements are equivalent: 
				\begin{enumerate}[label=(\arabic*)]
			\item The map $\omega_G:\pi_0(X_\mu(b))\to c_{b,\mu}\pi_1(G)_I^\varphi$ is bijective.  
			\item The map $\omega_G:\pi_0(X^{\calK_p}_\mu(b))\to c_{b,\mu}\pi_1(G)_I^\varphi$ is bijective.  
			\item The pair $(\bfb,\bfmu)$ is HN-irreducible.
			\item There exists a field extension $K$ of finite index over $\br{\bbQ}_p$, and a crystalline representation $\xi:\Gamma_K\to G(\bbQ_p)$ with invariants $(\bfb,\bfmu)$ for which $G^{\der}(\bbQ_p)\cap \xi(\Gamma_K)\subseteq G^{\der}(\bbQ_p)$ is open. 
			\item The action of $G(\bbQ_p)$ on $\Sht_{(G,b,\mu,\infty)}$ makes $\pi_0(\Sht_{(G,b,\mu,\infty)}\times \Spd \bbC_p)$ into a $G^\circ$-torsor.
\end{enumerate}
\item If we assume that $G^\ad$ does not have anisotropic factors, then the implications 
	\[(3)\implies (4)\implies (5) \implies (1) \implies (2)\]
	still hold.
\item If we assume that $G$ is arbitrary, then $(5)\implies (1) \implies (2)$ and $(3)\implies (4)$ still hold.
\end{enumerate}
\end{theorem}
	The second goal in this section is to prove the following corollary of \Cref{secondthmmain}. 

	\begin{theorem}
		\label{mainthmmain}
		Let $G$ be arbitrary. Suppose that $(\bfb,\bfmu)$ is HN-irreducible, then the Kottwitz map
		$\omega_G:\pi_0(X^{\calK_p}_\mu(b))\to c_{b,\mu}\pi_1(G)_I^\varphi$ is bijective. 
	\end{theorem}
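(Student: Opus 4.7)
The plan is to reduce \Cref{mainthmmain} to two already-handled cases: the case where $G^\ad$ has no anisotropic factors (covered by \Cref{secondthmmain}) and the case where $\bfb$ is basic (covered classically by \cite[Theorem 0.1]{He-Zhou}). The bridge between them is the observation that if $G^\ad$ is anisotropic, then $\bfb$ is automatically basic. As a first step I would apply \Cref{adisoforadlv} to the ad-isomorphism $f:G\to G^\ad$, producing a Cartesian square relating $\omega_G$ to $\omega_{G^\ad}$. Since surjectivity of $\omega_G$ onto $c_{b,\mu}\pi_1(G)^\varphi_I$ is part of the setup, the Cartesian property makes bijectivity of $\omega_G$ equivalent to bijectivity of $\omega_{G^\ad}$. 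By \Cref{checkafteradiso}, HN-irreducibility is preserved under ad-isomorphisms, so I may assume from now on that $G$ is adjoint. (Note that this already handles the torus case, where $G^\ad=\{e\}$ and the bottom row of the Cartesian square is a trivial bijection.)

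\textbf{Splitting into isotropic and anisotropic factors.} With $G$ adjoint, decompose $G = G_1 \times G_2$, where $G_1$ is the product of the $\bbQ_p$-isotropic simple factors and $G_2$ the product of the $\bbQ_p$-anisotropic ones. The Bruhat--Tits building of $G$ is the product of the buildings of $G_1$ and $G_2$, so $\calK_p = \calK_{1,p}\times \calK_{2,p}$; writing $b = (b_1,b_2)$ and $\mu=(\mu_1,\mu_2)$, \Cref{productsopen} yields
\begin{equation*}
X^{\calK_p}_\mu(b) \;=\; X^{\calK_{1,p}}_{\mu_1}(b_1) \;\times\; X^{\calK_{2,p}}_{\mu_2}(b_2),
\end{equation*}
with the Kottwitz map decomposing coordinatewise, and \Cref{checkafterprod} shows that HN-irreducibility passes to each factor. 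It therefore suffices to prove bijectivity of $\omega_{G_i}$ separately for $i = 1, 2$. For $G_1$, the implication $(3) \Rightarrow (2)$ in \Cref{secondthmmain} applies directly, since $G_1 = G_1^\ad$ has no anisotropic factors (and the case where $G_1$ is trivial is vacuous).

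\textbf{The anisotropic factor via basicity.} For $G_2$, which is adjoint and $\bbQ_p$-anisotropic, I claim that $\bfb_2$ is basic. Granted this, bijectivity of $\omega_{G_2}$ follows from \cite[Theorem 0.1]{He-Zhou}, which holds for arbitrary reductive $G$ whenever $\bfb$ is basic. To establish basicity: the Newton cocharacter $\bfnu_2$ is $\varphi$-fixed, and so corresponds to a $\bbQ_p$-rational morphism $\bbD \to G_2$. Since $G_2$ is adjoint and $\bbQ_p$-anisotropic, any nontrivial $\bbQ_p$-rational homomorphism $\bbG_m \to G_2$ would have image a nontrivial $\bbQ_p$-split subtorus, which does not exist. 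Hence $\bfnu_2 = 0$, so $\bfb_2$ is basic, and the claim follows.

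\textbf{Main obstacle.} The proof is mostly a clean reduction, and the only nonformal input is the basicity observation for the anisotropic factor. The real care needed is in the product step: one must ensure that the decomposition of the adjoint group into simple factors is compatible with the parahoric $\calK_p$, the Kottwitz invariants, and HN-irreducibility. These compatibilities are packaged into \Cref{productsopen} and \Cref{checkafterprod} in \S\ref{geometric-preps} and \S\ref{section-HN-decomposition} precisely to make this reduction clean; without them, one would need a genuinely new geometric argument for anisotropic factors, since the $p$-adic Hodge-theoretic inputs of \Cref{secondthmmain} (in particular \Cref{badmconn}) are tailored to situations where $G(\bbQ_p)$ is noncompact.
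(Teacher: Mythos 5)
Your reductions (to $G$ adjoint via \Cref{adisoforadlv} and \Cref{checkafteradiso}, then to $\bbQ_p$-simple factors via \Cref{productsopen} and \Cref{checkafterprod}, with the isotropic factors handled by $(3)\Rightarrow(2)$ of \Cref{secondthmmain}) are sound and parallel the paper's own first step. The gap is in the anisotropic factor: the claim that $\bfb_2$ is automatically basic is false, and the argument offered for it does not work. The dominant Newton point is fixed only by the twisted Frobenius $\varphi_0$ of the quasi-split inner form; equivalently, only the $G_2(\br{\bbQ}_p)$-conjugacy class of $\nu_{b_2}$ is $\varphi$-stable. A rational conjugacy class of (fractional) cocharacters of a non-quasi-split group need not contain a $\bbQ_p$-rational representative, so one cannot conclude that $\nu_{b_2}$ factors through a $\bbQ_p$-split subtorus of $G_2$. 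And the conclusion itself fails: take $G_2=\PGL_1(D)$ with $D$ a quaternion algebra over $\bbQ_p$ and $\mu=3\varpi^\vee$ (non-minuscule, which is allowed in \Cref{mainthmmain}). The class $\bfb_2$ with dominant Newton point $\tfrac12\alpha^\vee$ satisfies $\kappa(\bfb_2)=\mu^\natural$ and $\mu^\diamond-\nu_{b_2}=\alpha^\vee$, so $(\bfb_2,\bfmu_2)$ is HN-irreducible but $\bfb_2$ is not basic. Hence the appeal to the basic case of He--Zhou does not cover all anisotropic situations (it happens to suffice for minuscule $\mu$ in type $A$, but the theorem is stated for arbitrary $\mu$).

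For comparison, the paper does not use basicity at all in the anisotropic case. It reduces instead to $G^{\der}=G^{\on{sc}}$ with $G^{\ad}$ $\bbQ_p$-simple (keeping the central torus rather than passing to the adjoint group), observes that then ${\calI(\bbZ_p)}$ is normal in $G(\bbQ_p)$ with quotient $G(\bbQ_p)/{\calI(\bbZ_p)}\cong\pi_1(G)_I^\varphi$, so that $\Sht_{(G,b,\mu,{\calI(\bbZ_p)})}\times\Spd\bbC_p$ is a trivial $\underline{\pi_1(G)_I^\varphi}$-torsor over $\Gr^b_\mu\times\Spd\bbC_p$, and then invokes the connectedness of $\Gr^b_\mu\times\Spd\bbC_p$ (\Cref{badmconn}, which holds for any $\bfb\in B(G,\bfmu)$, basic or not) together with the specialization bijection \eqref{bijectivityofspec}. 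To repair your proof you would need an argument of this kind for the non-basic anisotropic classes; the basicity shortcut is not available.
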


	The proof of the above main theorems 
	proceeds as follows and will occupy the rest of section \ref{proof-main-thm-section}. We first prove a modified version of the statement in the case of tori (see $\mathsection$\ref{tori-case-section}, \Cref{torisecondmain}, \Cref{lemmagabwhatever}). We then use z-extensions and ad-isomorphisms to reduce the proof of \Cref{secondthmmain} and \Cref{mainthmmain} to the case where $\Gder=\Gsc$ (see \Cref{reductionsofsecondthm}). 
	We prove the circle of implications of \Cref{secondthmmain} in this case. Then, we deduce \Cref{mainthmmain} from \Cref{secondthmmain} whenever $G$ has no anisotropic factors. Finally, we deduce \Cref{mainthmmain} in the anisotropic case.
	
Before we dive into the proof of \Cref{mainthmmain}, we deduce \Cref{Pappas-Rapoport-uniformization-corollary} below.
We choose our notation to resemble that of \cite{pappas2021padic}.
Let $(p,\mathbf{G},X,\mathbf{K})$ be a tuple of global Hodge type \cite[\S 1.3]{pappas2021padic}, let $\scrS_{\bfK}$ denote the integral model of \cite[Theorem 1.3.2]{pappas2021padic}, let $k$ be an algebraically closed field in characteristic $p$ and let $x_0\in \scrS_{\bfK}(k)$.
Let $G=\mathbf{G}_{\bbQ_p}$, suppose that $\mathbf{K}=K_p\bf{K}^p$ where $K_p$ denotes the level at $p$ and $\bf{K}^p$ denotes the level away from $p$.
Suppose that $\calG$ is a parahoric group scheme with $\calG(\bbZ_p)=K_p$.
Pappas and Rapoport consider a map of v-sheaves $c:\on{RZ}_{\calG,\mu,x_0}^\diamondsuit\to \calM^{\on{int}}_{\calG,b,\mu}$ \cite[Lemma 4.1.0.2]{pappas2021padic}, where the source is a Rapoport--Zink space and the target is another name for $\Sht^\calG_\mu(b)$ i.e. $\calM^{\on{int}}_{\calG,b,\mu}=\Sht^\calG_\mu(b)$.
Let the notations be as in \cite[Theorem 4.10.6, \S 4.10.2]{pappas2021padic}. 
Fix $x_0\in \scrS_{\bfK}(k)$, and recall that by construction $\on{RZ}_{\calG,\mu,x_0}$ is a formal scheme equipped with a uniformization map 
\[\Theta^{\on{RZ}}_{\calG,x_0}:\on{RZ}_{\calG,\mu,x_0}\to \hat{\scrS}_{\bfK}\]
of formal schemes over $\on{Spf} O_{\breve{E}}$ (see \cite[Equation 4.10.3]{pappas2021padic}).
Using the action of $\bfG(\bbA^f_p)$, we obtain a morphism 
\begin{equation}
	\label{unifo-equation}
	\Theta^{\on{RZ}}_{\calG,x_0}:\on{RZ}_{\calG,\mu,x_0}\times \bfG(\bbA_f^p)\to \hat{\scrS}_{\bfK}
\end{equation}
The image under this map $\calI(x_0)$ after taking $k$-points is called the isogeny class of $x_0$. 

Let us recall the algebraic group $I_{x_0}\subseteq \on{Aut}_\bbQ(\calA_{x_0})$.
This is analogous to the situation considered in the introduction (see \Cref{isog-inject}) with $\bar{x}$ replaced by $x_0$.
Attached to ${x_0}\in \scrS_{\bfK}(k)$ we obtain an abelian variety $\calA_{{x_0}}$ with associated $p$-divisible group $\calG_{{x_0}}$. 
These are equipped with tensors $s_{\alpha, \ell, {x_0}}\in \on{H}^1_{\acute{e}t}(\calA_{{x_0}}, \bbQ_\ell)^\otimes$ for all prime $\ell\neq p$ and with crystalline tensors on the rational Dieudonn\'e module $s_{\alpha,\on{crys},{x_0}}\in \bbD(\calG_{{x_0}})^\otimes$ (see \cite[\S 4.10.3]{pappas2021padic}).\footnote{We denote these crystalline tensors as \cite{pappas2021padic} do. This is in contrast with the introduction, where we follow the notation used in \cite{Zhou20}.}
	Recall that associated to such an abelian variety we can consider an algebraic group $\on{Aut}_\bbQ(\calA_{{x_0}})$, defined over $\bbQ$, parametrizing automorphisms of $\calA_{{x_0}}$ treated as an abelian variety up-to-isogeny.
	This has as $R$-points $\on{Aut}_\bbQ(\calA_{{x_0}})(R)=(\on{End}(\calA_{{x_0}})\otimes_{\bbZ} R)^\times$. 
	We consider the algebraic subgroup $I_{{x_0}}\subseteq \on{Aut}_{\bbQ}(\calA_{{x_0}})$ of those automorphisms of the abelian variety $\calA_{{x_0}}$ up-to-isogeny that preserve the tensors $s_{\alpha,\ell,{x_0}}$ and $s_{\alpha,\on{crys},{x_0}}$.

\begin{corollary}\label{Pappas-Rapoport-uniformization-corollary}
	The map $c:\on{RZ}_{\calG,\mu,x_0}^\diamondsuit\to \calM^{\on{int}}_{\calG,b,\mu}$ is an isomorphism. Thus, $\calM^{\on{int}}_{\calG,b,\mu}$ is representable by a formal scheme $\scrM_{\calG,b,\mu}$,
	and we obtain a $p$-adic uniformization isomorphism of $O_{\br{E}}$-formal schemes 
	\begin{equation}
		I_{x_0}(\bbQ)\backslash (\scrM_{\calG,b,\mu}\times \bfG(\bbA^p_f)/\bfK^p) \to (\widehat{\scrS_{\bfK}\otimes_{O_E}O_{\br{E}}})_{/\calI(x_0)}.
	\end{equation}
\end{corollary}

\begin{proof}
	Throughout this argument, we let the notations be as in \cite[\S 4.8]{pappas2021padic}.  
	In particular, we have fixed an appropriate Hodge embedding $\bfG\hookrightarrow \on{GSp}(V,\psi)$ into a symplectic similitude group, inducing an embedding $G\hookrightarrow H$ with $H=\on{GSp}(V_{\bbQ_p},\psi_{\bbQ_p})$.
	Further, we are assuming that this embedding comes from a dilated immersion of parahoric group schemes $\calG\to \calH$ (see \cite[\S 4.5.2]{pappas2021padic}).
	Our chosen Hodge embedding produces a map 
	\[\iota:\scrS_{\bfK}\to \calA_{\bfK^\flat}\otimes_{\bbZ_{(p)}}O_E.\]
		where $\calA_{\bfK^\flat}$ denotes a Siegel moduli scheme for an appropriately chosen level satisfying $\bfK^\flat\cap G(\bbA_f)=\bfK$ (see \cite[\S 4.5.2]{pappas2021padic}).

	By the proof of \cite[Theorem 4.10.6]{pappas2021padic}, it suffices to verify condition ($\mathrm{U}_{x_0}$) in \cite[\S 4.10.2]{pappas2021padic} which is the running assumption in the reference. 
	This condition poses the existence of a commutative diagram of perfect schemes
	\begin{center}
	\begin{tikzcd}
		X^{\calG}_\mu(b_{x_0})	\arrow{r} \arrow[dotted]{d}{\exists}  & X^{\calH}_\mu(b_{\iota(x_0)})  \arrow{d} \\
		\scrS^{perf}_{\mathbf{K},\bar{\bbF}_p} \arrow{r} & \calA^{perf}_{\mathbf{K}^\flat,\bar{\bbF}_p},
	\end{tikzcd}
	\end{center}
	where the solid arrows have been fixed (we refer the reader to \cite{pappas2021padic} for the precise definition of the solid arrows) and condition $\mathrm{U}_{x_0}$ asks for the existence of the dotted arrow.
	As explained in \cite[\S 4.10.2]{pappas2021padic}, the condition $\mathrm{U}_{x_0}$ follows from showing that the map of v-sheaves
	\[c:\on{RZ}_{\calG,\mu,x_0}^\diamondsuit\to \calM^{\on{int}}_{\calG,b,\mu}\]
	is an isomorphism.
	Further, as explained in \textit{loc.cit.}, this is also equivalent to showing that the map of sets
	\[c:\on{RZ}_{\calG,\mu,x_0}^\diamondsuit(\bar{\bbF}_p)\to \calM^{\on{int}}_{\calG,b,\mu}(\bar{\bbF}_p)\]
	is surjective, and this is what we will show.

By \cite[Proposition 4.10.3 and Lemma 4.10.2.b)]{pappas2021padic}, $c:\on{RZ}_{\calG,\mu,x_0}^\diamondsuit\subseteq \calM^{\on{int}}_{\calG,b,\mu}$ is an open and closed immersion, and it suffices to check that for every $x\in \pi_0(\calM^{\on{int}}_{\calG,b,\mu})$ there is $y\in \pi_0(\on{RZ}_{\calG,\mu,x_0}^\diamondsuit)$ with $c(y)=x$. 

Let $\tilde{x}_0\in {\scrS_{\bfK}}(\breve{E}_{\tilde{x}_0})$ be a $\breve{E}_{\tilde{x}_0}$-valued point of ${\scrS_{\bfK}}$ specializing to $x_0$ with $[\breve{E}_{\tilde{x}_0}:\breve{E}]<\infty$. 
Such a point exists by flatness of ${\scrS_{\bfK}}$ over $\bbZ_{(p)}$. 
By Serre--Tate theory, $\tilde{x}_0$ induces a canonical point in $\tilde{y}_0\in \on{RZ}_{\calH,\iota(x_0)}(O_{\breve{E}_{\tilde{x}_0}})$, which overall induces a point $\tilde{z}_0\in \on{RZ}_{\calG,\mu,x_0}^\diamondsuit(\breve{E}_{\tilde{x}_0})$. 
Indeed, by definition (see \cite[\S 4.10.1]{pappas2021padic}) $\on{RZ}_{\calG,\mu,x_0}$ is an open and closed formal subscheme of $S$ where $S$ fits in the following Cartesian diagram of formal schemes
\begin{center}
\begin{tikzcd}
				 S \arrow{r} \arrow{d}  & \on{RZ}_{\calH,\mu,\iota(x_0)} \arrow{d} \\
				 \widehat{\scrS}_{\mathbf{K}} \arrow{r} & \widehat{\calA_{\mathbf{K}^\flat}\otimes_{\bbZ_{(p)}} O_E}.
\end{tikzcd}
\end{center}
It is clear that $\tilde{x}_0$ and $\tilde{y}_0$ induce a unique point $\tilde{z}_0\in S(O_{\breve{E}_{\tilde{x}_0}})$ that specializes to $x_0$, and, since $x_0\in \on{RZ}_{\calG,\mu,x_0}$, $\tilde{z}_0\in \on{RZ}_{\calG,\mu,x_0}(O_{\breve{E}_{\tilde{x}_0}})$.
Passing to diamonds, we get a point in $\on{RZ}^\diamondsuit_{\calG,\mu,x_0}(O_{\breve{E}_{\tilde{x}_0}})$ which we may restrict to a point in $\on{RZ}^\diamondsuit_{\calG,\mu,x_0}({\breve{E}_{\tilde{x}_0}})$. By abuse of notation we will still call this $\tilde{z_0}$.

Recall that to any element of $g\cdot \calG(\bbZ_p)\in G(\bbQ_p)/\calG(\bbZ_p)$ we may attach a point in $g\cdot \tilde{x}_0\in {\scrS_{\bfK}}(\bbC_p)$ by acting through at-$p$ $G$-isogenies. Analogously, for every $h\cdot \calH(\bbZ_p)\in \calH(\bbQ_p)/\calH(\bbZ_p)$ we get an element $h\cdot \tilde{y}_0\in \on{RZ}_{\calH,\iota(x_0)}(\bbC_p)$, and we get a commutative diagram:
\begin{center}
\begin{tikzcd}
	G(\bbQ_p)/\calG(\bbZ_p) \arrow{r}{(-)\cdot \tilde{z}_0} \arrow[bend left=20]{rr}{(-)\cdot \tilde{x}_0} \ar{d} & \on{RZ}_{\calG,\mu,x_0}(\bbC_p) \ar{r} \ar{d} \arrow{d} &   \scrS_{\bfK}(\bbC_p) \arrow{d} \\
	H(\bbQ_p)/\calH(\bbZ_p) \arrow{r}{(-)\cdot \tilde{y}_0} & \on{RZ}_{\calH,\iota(x_0)}(\bbC_p) \ar{r} & \calA_{\bfK^\flat}(\bbC_p) .
\end{tikzcd}
\end{center}
Moreover, we get a further compatibility
\begin{center}
\begin{tikzcd}
	G(\bbQ_p)/\calG(\bbZ_p)\arrow{r}{\on{GM}_{\tilde{z}_0}} \arrow{d}{(-)\cdot \tilde{z}_0}  & \calM^{\on{int}}_{\calG,b,\mu}  \\
 \on{RZ}_{\calG,\mu,x_0}^\diamondsuit \arrow{ru}{c} & 
\end{tikzcd}
\end{center}
where $\on{GM}_{\tilde{z}_0}$ is the map $G(\bbQ_p)/\calG(\bbZ_p)\to \calM^{\on{int}}_{\calG,b,\mu}(\bbC_p)=\Sht_{(G,b,\mu,\calG(\bbZ_p))}(\bbC_p)$ induced from choosing an identification of the fibers of the Grothendieck--Messing period morphism $G(\bbQ_p)=\pi_{GM}^{-1}(\pi_{GM}(\tilde{z}_0))\subseteq \Sht_{(G,b,\mu,\infty)}(\bbC_p)$ (see \S \ref{grothendieck-section}).
It suffices to prove that $\on{GM}_{\tilde{z}_0}:G(\bbQ_p)/\calG(\bbZ_p)\to \pi_0(\calM^{\on{int}}_{\calG,b,\mu})$ is surjective, but this is precisely the content of \Cref{kisins-surjectivity-argument}.
\end{proof}

\subsection{The tori case}\label{tori-case-section}
When $G=T$ is a torus, there is only one parahoric model that we denote by $\calT$. 
The tori analogue of \Cref{secondthmmain} is as follows. 
 \begin{proposition}
		\label{torisecondmain} 
		  Suppose that $\bfb\in B(T,\bfmu)$. The following hold: 
		\begin{enumerate}
			\item The map $\omega_T:\pi_0(X^\calT_\mu(b))\to c_{b,\mu}\pi_1(T)_I^\varphi$ is bijective.  
			\item The action of $T(\bbQ_p)$ on $\Sht_{(T,b,\mu,\infty)}$ makes $\pi_0(\Sht_{(T,b,\mu,\infty)}\times \Spd \bbC_p)$ into a $T^\circ$-torsor.
\end{enumerate}
 \end{proposition}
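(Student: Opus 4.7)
For part (1), I would use that when $T$ is a torus the problem is linear. The relative Weyl group $W_0$ is trivial, so the Iwahori--Weyl group is $X_*(T)_I$, $\on{Adm}(\mu)=\{t_{\bar\mu}\}$, and the double coset $\calT(\br{\bbZ}_p)t_{\bar\mu}\calT(\br{\bbZ}_p)$ collapses to $t_{\bar\mu}\calT(\br{\bbZ}_p)$. Applying the surjection $\kappa_T\colon T(\br{\bbQ}_p)\twoheadrightarrow \pi_1(T)_I$, whose kernel is precisely $\calT(\br{\bbZ}_p)$, to the defining condition $g^{-1}b\varphi(g)\in t_{\bar\mu}\calT(\br{\bbZ}_p)$ converts it into the affine-linear equation
\begin{equation}
(\varphi-1)\kappa_T(g)=\bar\mu-\kappa_T(b)\quad\text{in }\pi_1(T)_I.
\end{equation}
The hypothesis $\bfb\in B(T,\bfmu)$ is exactly that $\bar\mu-\kappa_T(b)$ vanishes in $\pi_1(T)_\Gamma=(\pi_1(T)_I)_\varphi$, hence lies in $(\varphi-1)\pi_1(T)_I$, so the equation is solvable and its set of solutions is a $\pi_1(T)_I^\varphi$-torsor. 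Because $\on{Adm}(\mu)$ is a singleton, $X^\calT_\mu(b)$ is discrete, so this torsor already equals $\pi_0(X^\calT_\mu(b))$, and $\omega_T$ is the tautological inclusion of this coset, which is $c_{b,\mu}\pi_1(T)_I^\varphi$ by the defining property of $c_{b,\mu}$.

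For part (2), I would use that for a torus $T$ the Schubert variety $\Gr_\mu$ is a single geometric point over $\Spd\br E$, and since $\bfb\in B(T,\bfmu)$ the $b$-admissible locus satisfies $\Gr^b_\mu=\Gr_\mu$. The Grothendieck--Messing morphism \eqref{Grothendieckmessing} therefore presents $\Sht_{(T,b,\mu,\infty)}$ as the pro-\'etale $\underline{T(\bbQ_p)}$-torsor over $\Spd\br E$ attached to the universal crystalline representation $\bbL_b$. After base change to $\Spd\bbC_p$, this torsor trivializes: $\Spd\bbC_p$ is a geometric point for the pro-\'etale topology (its pro-\'etale fundamental group is trivial), so every $\underline{T(\bbQ_p)}$-torsor becomes trivial once $T(\bbQ_p)$ is presented as an extension of the discrete lattice $T(\bbQ_p)/\calT(\bbZ_p)$ by the profinite group $\calT(\bbZ_p)$. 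Hence $\Sht_{(T,b,\mu,\infty)}\times\Spd\bbC_p$ is non-canonically isomorphic to $\underline{T(\bbQ_p)}\times\Spd\bbC_p$ as a $T(\bbQ_p)$-torsor, and taking $\pi_0$ gives $T(\bbQ_p)=T^\circ$ with the translation action.

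The delicate points are (a) bookkeeping to match the coset $c_{b,\mu}$ of part (1) to the affine space of solutions of the linear equation, which is a routine computation with the natural projections $\pi_1(T)_I\twoheadrightarrow\pi_1(T)_\Gamma$; and (b) in part (2), the triviality of locally profinite pro-\'etale torsors over $\Spd\bbC_p$, which I expect to follow by trivializing separately the discrete and profinite pieces. Neither should be substantive: the whole argument is essentially linear-algebraic for tori, and the statements \Cref{adlvtosht} and \Cref{corollary-badmconn} (bijectivity of the specialization map and transitivity of the $T(\bbQ_p)$-action) serve as consistency checks rather than essential inputs, since both sides of the relevant maps admit direct descriptions in this setting.
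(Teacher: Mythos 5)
Your proof is correct, and for part (1) it takes a genuinely different route from the paper. The paper deduces (1) from (2): it first establishes that $\Sht_{(T,b,\mu,\infty)}\times\Spd\bbC_p$ is a $\underline{T(\bbQ_p)}$-torsor over $\Spd\bbC_p$, then uses the bijectivity of the specialization map (\Cref{adlvtosht}) together with the vanishing of $H^1(\widehat{\bbZ},\calT(\br{\bbZ}_p))$ to transfer the resulting $\pi_1(T)_I^\varphi$-torsor structure from $\pi_0(\Sht_{(T,b,\mu,\calT)}\times\Spd\bbC_p)$ down to $\pi_0(X^{\calT}_\mu(b))$ (this is the content of \Cref{lemmagabwhatever}). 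You instead compute $X^{\calT}_\mu(b)$ directly on the special fiber: since $W_0$ is trivial, $\on{Adm}(\mu)=\{t_{\bar\mu}\}$ and $\kappa_T$ identifies the defining condition with the affine equation $(\varphi-1)\kappa_T(g)=\bar\mu-\kappa_T(b)$ in $\pi_1(T)_I$, whose solution set is a $\pi_1(T)_I^\varphi$-coset precisely because $\bfb\in B(T,\bfmu)$. Your argument is more elementary and self-contained for (1); the paper's has the advantage of simultaneously producing the $T(\bbQ_p)$-equivariant compatibility between $\pi_0(\Sht_{(T,b,\mu,\calT)}\times\Spd\bbC_p)$ and $\pi_0(X^{\calT}_\mu(b))$ recorded in \Cref{lemmagabwhatever}, which is what is actually invoked later in the proof of $(5)\implies(1)$. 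If you adopt your route, that comparison still has to be supplied separately (your "consistency check" remark undersells this: \Cref{adlvtosht} is an essential input for the downstream use of the lemma, even if not for the bare statement of the proposition). For part (2) your argument coincides with the paper's, with the triviality of locally profinite torsors over the geometric point $\Spd\bbC_p$ made explicit rather than delegated to the citation of \cite[Theorem 1.24]{Gle22a}.
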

In this case, both $X^{\calT}_\mu(b)$ and $\Sht_{(T,b,\mu,\calT)}\times \Spd \bbC_p$ are zero-dimensional. Since we are working over algebraically closed fields, they are of the form $\coprod\limits_J \Spec\bar{\bbF}_p$ and $\coprod\limits_I \Spd\bbC_p$ for some index sets $I$ and $J$, respectively.
Moreover, by \Cref{adlvtosht}, the specialization map  \eqref{specializationmap} induces a bijection $\pi_0(\on{sp}):I\cong J$. 
Also,  $T^\circ=T(\bbQ_p)$ and $\Sht_{(T,b,\mu,\infty)}\times \Spd \bbC_p$ is a $\underline{T(\bbQ_p)}$-torsor over $\Spd \bbC_p$ (see for example \cite[Lemma 3.10]{Gle22a}).
In particular, $\pi_0(\Sht_{(T,b,\mu,\infty)}\times \Spd \bbC_p)$ is a $T(\bbQ_p)$-torsor and \Cref{torisecondmain} (2) holds. 
The content of \Cref{torisecondmain} (1) becomes the following lemma. 
\begin{lemma}
\label{lemmagabwhatever}
Let $T$ be a torus. We have a $T(\bbQ_p)$-equivariant commutative diagram, where the horizontal arrows are isomorphisms: 
\begin{center}
	\begin{tikzcd}
		\pi_0(\Sht_{(T,b,\mu,\calT)}\times \Spd \bbC_p )\arrow[r, "\cong"]&	\pi_0(X^{\calT}_\mu(b)) \ar[r, "\cong"] \ar[d] & c_{b,\mu}\pi_1(T)^\varphi_I \ar[d]\\
	&	\pi_0(\Fl_{\calT}) \arrow[r, "\cong"]  & \pi_1(T)_I\\
	\end{tikzcd}
\end{center}
\end{lemma}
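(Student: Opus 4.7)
The top-left horizontal isomorphism $\pi_0(\Sht_{(T,b,\mu,\calT)}\times \Spd \bbC_p)\cong \pi_0(X^{\calT}_\mu(b))$ is a direct instance of \Cref{adlvtosht}, applied in the degenerate case where the relevant diamond is zero-dimensional; so we may focus on the characteristic-$p$ square. The right vertical arrow $\pi_0(\Fl_\calT)\cong \pi_1(T)_I$ is the standard Kottwitz isomorphism for the torus $T$: the map $\kappa_T$ identifies $T(\br{\bbQ}_p)/\calT(\br{\bbZ}_p)$ with $\pi_1(T)_I=X_*(T)_I$, and $\Fl_\calT$ is a discrete ind-scheme whose $\bar{\bbF}_p$-points are exactly $T(\br{\bbQ}_p)/\calT(\br{\bbZ}_p)$.

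The main content is to identify the left vertical arrow. For a torus, $\widetilde{W}=X_*(T)_I$, and the admissible set reduces to the singleton $\on{Adm}(\mu)=\{t_{\bar{\mu}}\}$ with $\bar{\mu}\in X_*(T)_I$ the image of $\mu$. Hence, by definition,
\begin{equation}
X^{\calT}_\mu(b)(\bar{\bbF}_p)=\{g\calT(\br{\bbZ}_p)\mid g^{-1}b\varphi(g)\in t_{\bar{\mu}}\calT(\br{\bbZ}_p)\},
\end{equation}
and in particular $X^{\calT}_\mu(b)$ is a discrete perfect scheme, so $\pi_0(X^{\calT}_\mu(b))=X^{\calT}_\mu(b)(\bar{\bbF}_p)$. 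Applying $\kappa_T$ to the Lang-type equation and using that $\kappa_T$ is $\varphi$-equivariant and trivial on $\calT(\br{\bbZ}_p)$, we see that $\omega_T$ identifies $\pi_0(X^{\calT}_\mu(b))$ with
\begin{equation}
\{x\in \pi_1(T)_I\mid (1-\varphi)(x)=\kappa_T(b)-\bar{\mu}\}.
\end{equation}

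To conclude, I will note two things. First, this fiber is non-empty: the assumption $\bfb\in B(T,\bfmu)$ means precisely that $\kappa_T(b)=\mu^\natural$ in $\pi_1(T)_\Gamma$ and $\bfnu=\mu^\diamond$, which together with the exactness of $X_*(T)_I\xrightarrow{1-\varphi} X_*(T)_I\to \pi_1(T)_\Gamma$ guarantees the existence of a lift $x$ (see the argument in \ref{newtonpointdefi}). Second, once non-empty, the fiber is a torsor under $\ker(1-\varphi)=\pi_1(T)_I^\varphi$, hence coincides with the coset $c_{b,\mu}\pi_1(T)_I^\varphi$ by the very definition of $c_{b,\mu}$. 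Commutativity of the right-hand square is tautological, since both compositions send $g\calT(\br{\bbZ}_p)$ to $\kappa_T(g)\in \pi_1(T)_I$. Finally, $T(\bbQ_p)$-equivariance in all three columns follows from the fact that the $T(\bbQ_p)$-action is by left multiplication on $\Fl_\calT$, and $\kappa_T$ is a group homomorphism, so translation by $h\in T(\bbQ_p)$ corresponds to translation by $\kappa_T(h)\in \pi_1(T)_I$ on the right-hand side; the upper-left isomorphism is $T(\bbQ_p)$-equivariant by construction of the specialization map in \cite{Gle22a}.
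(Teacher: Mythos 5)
Your proof is correct, but it identifies $\pi_0(X^{\calT}_\mu(b))$ with the coset $c_{b,\mu}\pi_1(T)^\varphi_I$ by a genuinely different route than the paper. You work entirely on the special fiber: since $\on{Adm}(\mu)=\{t_{\bar\mu}\}$ for a torus, you apply the Kottwitz homomorphism $\kappa_T$ (with kernel $\calT(\br{\bbZ}_p)$) to the Lang-type equation $g^{-1}b\varphi(g)\in t_{\bar\mu}\br{\calT}$, obtaining the affine equation $(1-\varphi)(x)=\kappa_T(b)-\bar\mu$ in $\pi_1(T)_I$, whose solution set is nonempty by $\kappa_T(\bfb)=\mu^\natural$ and is then a coset of $\ker(1-\varphi)=\pi_1(T)_I^\varphi$. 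The paper instead works from the generic fiber down: it fixes a trivialization $\pi_0(\Sht_{(T,b,\mu,\infty)}\times\Spd\bbC_p)\cong T(\bbQ_p)$, identifies $T(\bbQ_p)/\calT(\bbZ_p)$ with $\pi_1(T)_I^\varphi$ via the vanishing of $H^1_{\et}(\Spec\bbZ_p,\calT)$ (Lang), and then argues that equivariance forces both $\pi_0$'s to be the coset $c_{b,\mu}\pi_1(T)^\varphi_I$. Your version is more explicit and self-contained on the ADLV side (it re-derives, rather than quotes, the surjectivity of $\omega_T$ onto the coset), while the paper's version directly exhibits the torsor structure under $\pi_1(T)_I^\varphi$ that is reused later (e.g.\ in \S5.5). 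Both rely on \Cref{adlvtosht} for the isomorphism with $\pi_0(\Sht_{(T,b,\mu,\calT)}\times\Spd\bbC_p)$. Two cosmetic points: the isomorphism $\pi_0(\Fl_\calT)\cong\pi_1(T)_I$ you describe is the bottom \emph{horizontal} arrow of the diagram (the right vertical arrow is the inclusion $c_{b,\mu}\pi_1(T)_I^\varphi\hookrightarrow\pi_1(T)_I$), and the Newton condition $\bfnu=\mu^\diamond$ plays no role in your nonemptiness argument (only the Kottwitz condition does) -- neither affects correctness.
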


\begin{proof}

Upon fixing an element of $\Sht_{(T,b,\mu,\infty)}\times \Spd \bbC_p$, we can identify $\pi_0(\Sht_{(T,b,\mu,\infty)}\times \Spd \bbC_p)\cong T(\bbQ_p)$ (see for example \cite[Lemma 3.10]{Gle22a}), which then gives an identification 
\[\Sht_{(T,b,\mu,\calT)}\times \Spd \bbC_p\cong T(\bbQ_p)/\calT(\bbZ_p)\cong T(\br{\bbQ}_p)^{\varphi=\on{id}}/\calT(\br{\bbZ}_p)^{\varphi=\on{id}}.\] 
Since ${H}^1_{\acute{e}t}(\Spec \bbZ_p,\calT)$ vanishes, we can write 
\begin{equation}
    T(\br{\bbQ}_p)^{\varphi=\on{id}}/\calT(\br{\bbZ}_p)^{\varphi=\on{id}}\cong (T(\br{\bbQ}_p)/\calT(\br{\bbZ}_p))^{\varphi=\on{id}},
\end{equation}
where the right-hand side is $X_*(T)^\varphi_I=\pi_1(T)_I^\varphi$.  
Therefore the $T(\bbQ_p)$-action makes $\pi_0(X^{\calT}_\mu(b))$ and $\pi_0(\Sht_{(T,b,\mu,\calT)}\times \Spd \bbC_p)$ into $\pi_1(T)^\varphi_I$-torsors (via the specialization map \eqref{specializationmap}). Thus by equivariance of $\pi_1(T)_I^{\varphi}$-action,  $\pi_0(\Sht_{(T,b,\mu,\calT)}\times \Spd \bbC_p )$ and $\pi_0(X^{\calT}_\mu(b))$ can be identified with a unique coset $c_{b,\mu}\pi_1(T)^\varphi_I\subseteq \pi_1(T)_I$ (by the definition of $c_{b,\mu}$). 
\end{proof}

\subsection{Reduction to the $G^{\der}=G^{\on{sc}}$ case}
 For the rest of this subsection, assume that $f:G\to H$ is an ad-isomorphism. 
Let $b_H:=f(b)$ and $\mu_H:=f\circ \mu$. Let $\calK^H_p$ denote the unique parahoric of $H$ that corresponds to the same point in the Bruhat--Tits building as $\calK_p$.

\begin{proposition}
	\label{pro:centralpreservecool}
	
(1) We have a canonical identification of 
diamonds
\begin{equation}\label{ad-iso-sht}
	\Sht_{(H,b_H,\mu_H,\infty)}\cong\Sht_{(G,b,\mu,\infty)}{\times^{\underline{G(\bbQ_p)}}} \underline{H(\bbQ_p)}.
\end{equation}
(2) In particular, $\pi_0(\Sht_{(G,b,\mu,\infty)}\times \Spd \bbC_p)$ is a $G^\circ$-torsor, if and only if
\begin{equation}
\pi_0(\Sht_{(H,b_H,\mu_H,\infty)}\times \Spd \bbC_p)
\end{equation}
is a $H^\circ$-torsor.
\end{proposition}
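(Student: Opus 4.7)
The plan is to prove (1) via functoriality of shtukas together with the torsor description recalled in $\S$\ref{grothendieck-section}, and then deduce (2) by applying $\pi_0(\cdot \times \Spd \bbC_p)$ and computing the resulting pushout using that $f$ canonically identifies $G^{\on{sc}}$ with $H^{\on{sc}}$.

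For (1): the functoriality \eqref{functorialityshtukas} produces an equivariant morphism $\Sht_{(G,b,\mu,\infty)} \to \Sht_{(H,b_H,\mu_H,\infty)}$, with $\underline{G(\bbQ_p)}$ acting on the target via $f$; this factors through the contracted product to yield a candidate morphism
\begin{equation*}
\bar{F}\colon \Sht_{(G,b,\mu,\infty)} \times^{\underline{G(\bbQ_p)}} \underline{H(\bbQ_p)} \longrightarrow \Sht_{(H,b_H,\mu_H,\infty)}.
\end{equation*}
I would prove $\bar{F}$ is an isomorphism by exhibiting both sides as $\underline{H(\bbQ_p)}$-torsors over a common base. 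Recall that $\Sht_{(H,b_H,\mu_H,\infty)}$ is the moduli of trivializations of the $\underline{H(\bbQ_p)}$-local system $\mathbb{L}_{b_H}$ over $\Gr_{H,\mu_H}^{b_H}$, while the source of $\bar{F}$ is the $\underline{H(\bbQ_p)}$-torsor associated to the pushout local system $\mathbb{L}_b \times^{\underline{G(\bbQ_p)}} \underline{H(\bbQ_p)}$ over $\Gr_{G,\mu}^b$. Two facts complete the argument: (a) the morphism $\Gr_{G,\mu}^b \to \Gr_{H,\mu_H}^{b_H}$ induced by $f$ is an isomorphism of diamonds, since $\ker(f)$ is central and therefore lies inside the parabolics defining the Schubert variety, while the preservation of admissibility uses the isomorphism $B(G,\bfmu) \cong B(H,\bfmu_H)$ of \cite[(6.5.1)]{KottwitzII}; and (b) the pushout local system pulls back to $\mathbb{L}_{b_H}$, because the underlying crystalline representation is $f \circ \xi_{(b,\mu^\eta)}$ by construction. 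Since $\bar{F}$ is then a morphism of $\underline{H(\bbQ_p)}$-torsors over the same base, it must be an isomorphism.

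For (2): apply $\pi_0(\cdot \times \Spd \bbC_p)$ to the isomorphism of (1); since $\pi_0$ commutes with colimits (as exploited in the proof of \Cref{corollary-badmconn}), we obtain
\begin{equation*}
\pi_0(\Sht_{(H,b_H,\mu_H,\infty)} \times \Spd \bbC_p) \cong \pi_0(\Sht_{(G,b,\mu,\infty)} \times \Spd \bbC_p) \times^{G(\bbQ_p)} H(\bbQ_p).
\end{equation*}
Assuming the first factor on the right is a $G^\circ$-torsor, the $G(\bbQ_p)$-action factors through $G^\circ$, so $\on{Im}\,G^{\on{sc}}(\bbQ_p)$ acts trivially, and the contracted product reduces to $H(\bbQ_p)/f(\on{Im}\,G^{\on{sc}}(\bbQ_p))$. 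Since $f$ is an ad-isomorphism, both $G^{\on{sc}}$ and $H^{\on{sc}}$ are canonically identified as the simply connected cover of the shared adjoint group $G^{\on{ad}} \cong H^{\on{ad}}$, and the composite $G^{\on{sc}} \to G \xrightarrow{f} H$ coincides with the canonical map $H^{\on{sc}} \to H$. Consequently $f(\on{Im}\,G^{\on{sc}}(\bbQ_p)) = \on{Im}\,H^{\on{sc}}(\bbQ_p)$, so the quotient is precisely $H^\circ$, a torsor under itself. The main obstacle in this plan is verifying the isomorphism $\Gr_{G,\mu}^b \to \Gr_{H,\mu_H}^{b_H}$ rigorously at the level of v-sheaves, particularly the preservation and reflection of admissibility; an alternate route would factor $f$ through its image and treat surjections with central kernel and closed immersions with torus cokernel separately.
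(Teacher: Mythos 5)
Your proposal is correct and follows essentially the same route as the paper: for (1), identify $\Gr^b_\mu$ with $\Gr^{b_H}_{\mu_H}$ and then conclude via the fact that the induced map of $\underline{H(\bbQ_p)}$-torsors over this common base must be an isomorphism; for (2), take $\pi_0$ and compute the contracted product using that $\on{Im}\,G^{\on{sc}}(\bbQ_p)$ acts trivially and that $G^{\on{sc}}\cong H^{\on{sc}}$ under an ad-isomorphism. The paper's own verification of the base identification likewise proceeds by checking bijectivity on points and preservation of basic elements, so your step (a) matches it in substance.
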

\begin{proof}
	(1) A version of \eqref{ad-iso-sht} was proven in \cite[Proposition 4.15]{Gle21a}, where the result is phrased in terms of the torsor $\bbL_b$ from $\mathsection$\ref{grothendieck-section}.
	\footnote{Although \cite[Proposition 4.15]{Gle21a} only considers unramified groups $G$ (since this was the ongoing assumption in \textit{loc.cit.}), the proof goes through without this assumption. 
	A more detailed proof of \Cref{pro:centralpreservecool} (1) can also be found in \cite[Proposition 5.2.1]{PapRap22}, which was obtained independently 
	as \textit{loc.cit.}}
We sketch the proof for the reader's convenience:

\textit{Step 1.}  $\Gr_\mu=\Gr_{\mu_H}$: there is an obvious proper map $\Gr_\mu\to \Gr_{\mu_H}$ of spatial diamonds. 
Therefore, to prove that it is an isomorphism, it suffices to prove bijectivity on points, which can be done as in the classical Grassmannian case (see \cite[Proposition 4.16]{AGLR22} for a stronger statement).

\textit{Step 2.}  $\Gr_\mu^b=\Gr_{\mu_H}^{b_H}$: the $b$-admissible and $b_H$-admissible loci are open subsets of $\Gr_\mu=\Gr_{\mu_H}$. To prove that they agree, we can prove it on geometric points. 
	This ultimately boils down to the fact that an element $e\in B(G)$ is basic if and only if $f(e)\in B(H)$ is basic, which holds because centrality of the Newton point $\nu_e$ can be checked after applying an ad-isomorphism.

\textit{Step 3.}  $\Sht_{(H,b_H,\mu_H,\infty)}\cong\Sht_{(G,b,\mu,\infty)}{\times^{\underline{G(\bbQ_p)}}} \underline{H(\bbQ_p)}$: recall from $\mathsection$\ref{grothendieck-section} that the Grothendieck--Messing period map \eqref{Grothendieckmessing} realizes $\Sht_{(G,b,\mu,\infty)}$ (respectively $\Sht_{(H,b_H,\mu_H,\infty)}$) as a $\underline{G(\bbQ_p)}$-torsor (respectively an $\underline{H(\bbQ_p)}$-torsor) over $\Gr_\mu^b=\Gr_{\mu_H}^{b_H}$. 
Since the $\underline{G(\bbQ_p)}$-equivariant map 
$\Sht_{(G,b,\mu,\infty)}\to \Sht_{(H,b_H,\mu_H,\infty)}$
extends to a map of $\underline{H(\bbQ_p)}$-torsors
	\begin{equation}
	\Sht_{(G,b,\mu,\infty)}\times^{\underline{G(\bbQ_p)}} \underline{H(\bbQ_p)} \to \Sht_{(H,b_H,\mu_H,\infty)},
	\end{equation}
and any map of torsors is an isomorphism, the conclusion follows.

(2) Recall that since $G\to H$ is an ad-isomorphism, we have an isomorphism $\Gsc\to H^{\on{sc}}$. 
	Recall $G^\circ:=G(\bbQ_p)/\on{Im}(\Gsc(\bbQ_p))$ and $H^\circ:=H(\bbQ_p)/\on{Im}(\Gsc(\bbQ_p))$. 
	By \eqref{ad-iso-sht} combined with \Cref{general-lemma-profinite-qt}, we have a canonical isomorphism
	\begin{equation}\label{pi0-ad-iso-sht}
	\pi_0(\Sht_{(H,b_H,\mu_H,\infty)}\times \Spd \bbC_p)\cong \pi_0(\Sht_{(G,b,\mu,\infty)}\times \Spd \bbC_p){\times^{{G(\bbQ_p)}}} {H(\bbQ_p)}.
	\end{equation}
	The right-hand side of \eqref{pi0-ad-iso-sht} is by definition 
	\begin{equation}\label{diagonal-quotient}
	    \left(\pi_0(\Sht_{(G,b,\mu,\infty)}\times \Spd \bbC_p){\times} {H(\bbQ_p)}\right)/G(\bbQ_p),
	\end{equation}
	where the quotient is via the diagonal action.
	Since $\Gsc(\bbQ_p)$ acts trivially on $\pi_0(\Sht_{(G,b,\mu,\infty)}\times \Spd \bbC_p)$, quotienting \eqref{diagonal-quotient} by $\Gsc(\bbQ_p)$ first gives
	\begin{align}
	    &\pi_0(\Sht_{(G,b,\mu,\infty)}\times \Spd \bbC_p){\times^{{G(\bbQ_p)}}} {H(\bbQ_p)}\\
	    &\cong \left(\pi_0(\Sht_{(G,b,\mu,\infty)}\times \Spd \bbC_p){\times} ({H(\bbQ_p)}/\on{Im}\Gsc(\bbQ_p))\right)/G^\circ,
	\end{align}
which simplifies, via \eqref{pi0-ad-iso-sht} and since $\Gsc(\bbQ_p)=H^{\mathrm{sc}}(\bbQ_p)$, to	
\begin{equation}\label{Hcirc-torsor}
	\pi_0(\Sht_{(H,b_H,\mu_H,\infty)}\times \Spd \bbC_p)=\pi_0(\Sht_{(G,b,\mu,\infty)}\times \Spd \bbC_p)\times^{G^\circ} {H^\circ}.
	\end{equation}
	From this expression, it is clear that $\pi_0(\Sht_{(G,b,\mu,\infty)} \times \Spd \bbC_p)$ is a $G^\circ$-torsor if and only if $\pi_0(\Sht_{(H,b_H,\mu_H,\infty)}\times \Spd \bbC_p)$ is an $H^\circ$-torsor.
\end{proof}

\begin{proposition}
	\label{reductionsofsecondthm}
	If \Cref{secondthmmain} holds for $\Gder=\Gsc$, then it holds in general as well. 
\end{proposition}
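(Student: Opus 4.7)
The approach is to select a z-extension $f\co \tilde G \twoheadrightarrow G$ (\Cref{z-extensions}), so that $\tilde G^{\der} = \tilde G^{\on{sc}}$ and $f$ is an ad-isomorphism; consequently $\tilde G^{\on{ad}} = G^{\on{ad}}$ still has no anisotropic factors. By \Cref{lifting-b-and-mu}, lift $(b,\mu)$ to $(\tilde b, \tilde \mu)$ with $\widetilde{\bfb}\in B(\tilde G, \widetilde{\bfmu})$, and the induced map $c_{\tilde b, \tilde \mu}\pi_1(\tilde G)^\varphi_I \twoheadrightarrow c_{b,\mu}\pi_1(G)^\varphi_I$ is surjective. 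Let $\tilde\calK_p$ denote the parahoric of $\tilde G$ lying over $\calK_p$; by hypothesis, \Cref{secondthmmain} holds for $(\tilde G, \tilde b, \tilde \mu, \tilde\calK_p)$.

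The bulk of the proof consists in transferring each of the five conditions between $G$ and $\tilde G$. Condition (3) is invariant under ad-isomorphisms by \Cref{checkafteradiso}, giving $(3)_G \Leftrightarrow (3)_{\tilde G}$. For (1) and (2), the Cartesian square of \Cref{adisoforadlv} combined with surjectivity of the right vertical arrow from \Cref{lifting-b-and-mu}(2) yields $(1)_G \Leftrightarrow (1)_{\tilde G}$ and $(2)_G \Leftrightarrow (2)_{\tilde G}$: the $\Leftarrow$ direction is immediate from the fiber product identification, while the $\Rightarrow$ direction picks a common preimage in the upper right to translate a putative collision downstairs into one upstairs, ruled out by bijectivity. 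For (5), \Cref{pro:centralpreservecool}(2) gives the one-way transfer $(5)_{\tilde G} \Rightarrow (5)_G$. For (4), pushforward of a crystalline representation $\tilde\xi\co \Gamma_K \to \tilde G(\bbQ_p)$ along $f$ produces $\xi = f\circ \tilde\xi$ with the same invariants, and openness of the image in $G^{\der}(\bbQ_p)$ follows because $f^{\der}\co \tilde G^{\der} \to G^{\der}$ is an isogeny open on $\bbQ_p$-points.

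With these transfers and the equivalences for $\tilde G$ in hand, we obtain $(1)_G \Leftrightarrow (2)_G \Leftrightarrow (3)_G$, together with $(3)_G \Rightarrow (4)_G$ and $(3)_G \Rightarrow (5)_G$. To close the cycle for $G$, two implications remain. First, $(5)_G \Rightarrow (1)_G$: combine the specialization bijection \Cref{adlvtosht} with the identification $\pi_0(\Sht_{(G,b,\mu,\calI(\bbZ_p))}) = \pi_0(\Sht_{(G,b,\mu,\infty)})/\calI(\bbZ_p)$ and reduce the resulting quotient of $G^\circ$ to the torus case (\Cref{lemmagabwhatever}) via the determinant map $G\to G^{\on{ab}}$ together with Lang's theorem. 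Second, $(4)_G \Rightarrow (3)_G$: lift the classifying point $x \in \Gr^b_\mu(K) = \Gr^{\tilde b}_{\tilde\mu}(K)$ (using the identifications established in the proof of \Cref{pro:centralpreservecool}) to read off a crystalline representation $\tilde\xi$ witnessing $(4)_{\tilde G}$, and use the hypothesis for $\tilde G$ to deduce $(3)_{\tilde G}$, whence $(3)_G$.

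The main obstacle is the asymmetry of the transfer of (5): the converse $(5)_G \Rightarrow (5)_{\tilde G}$ can genuinely fail because the natural map $\tilde G^\circ \to G^\circ$ may have nontrivial kernel coming from the central induced torus $\ker f$. Consequently $(5)_G \Rightarrow (1)_G$ cannot be transported through $\tilde G$ and must be proved intrinsically on $G$. A parallel subtlety arises in the openness-preserving lift underlying $(4)_G \Rightarrow (4)_{\tilde G}$, which requires careful handling at the level of the abelianization and the associated determinant-type character to ensure that $\tilde\xi(\Gamma_K) \cap \tilde G^{\der}(\bbQ_p)$ remains open in $\tilde G^{\der}(\bbQ_p)$.
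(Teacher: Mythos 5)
Your skeleton --- pass to a z-extension $\tilde G\to G$, lift $(\bfb,\bfmu)$ via \Cref{lifting-b-and-mu}, and transfer each condition: (1),(2) through the Cartesian square of \Cref{adisoforadlv} plus surjectivity of $c_{\tilde b,\tilde\mu}\pi_1(\tilde G)_I^\varphi\to c_{b,\mu}\pi_1(G)_I^\varphi$, (3) through \Cref{checkafteradiso}, (4) through openness of the isogeny $\tilde G^{\der}\to G^{\der}$, (5) through \Cref{pro:centralpreservecool}(2) --- is exactly the paper's argument. You are also right, and more explicit than the paper, that the transfers for (4) and (5) as stated are one-directional, so that by themselves they only give $(1)_G\Leftrightarrow(2)_G\Leftrightarrow(3)_G$ together with $(3)_G\Rightarrow(4)_G$ and $(3)_G\Rightarrow(5)_G$.

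The problem is that your two proposed repairs do not hold up. For $(5)_G\Rightarrow(1)_G$ ``intrinsically on $G$'': the determinant-plus-Lang argument of \S\ref{secondthm-4-to-1} uses $G^{\der}=G^{\on{sc}}$ essentially, via the identifications $G^\circ=G^{\ab}(\bbQ_p)$ and $\pi_1(G)_I=X_*(G^{\ab})_I$. For general $G$ --- already for $G$ adjoint --- $G^{\ab}$ is trivial while $G^\circ=G(\bbQ_p)/\on{Im}(G^{\on{sc}}(\bbQ_p))$ and $\pi_1(G)_I^\varphi$ are not, so ``reducing to the torus case via $\det$'' loses all the information and proves nothing; this implication cannot be run intrinsically by that route. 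For $(4)_G\Rightarrow(4)_{\tilde G}$: you flag the difficulty but do not resolve it, and ``handling at the level of the abelianization'' is not the right mechanism, since the obstruction lives in the finite kernel of $\tilde G^{\der}\to G^{\der}$, not in $\ker(\tilde G\to G)$. A correct route is via Mumford--Tate groups: openness of $\xi(\Gamma_K)\cap G^{\der}(\bbQ_p)$ forces $\on{MT}_{\xi}\supseteq G^{\der}$; since $\on{MT}_{\tilde\xi}$ surjects onto $\on{MT}_{\xi}$ and $\on{MT}_{\tilde\xi}^{\der}$ is a connected subgroup of $\tilde G^{\der}$ mapping onto $G^{\der}$ under an isogeny, one gets $\on{MT}_{\tilde\xi}\supseteq\tilde G^{\der}$, and \Cref{remarkserresen} then gives openness of $\tilde\xi(\Gamma_K)\cap\tilde G^{\der}(\bbQ_p)$. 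As written, the steps you yourself identify as necessary to close the cycle for $G$ are the ones that fail, so the proposal has a genuine gap there.
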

\begin{proof}
	Consider an arbitrary z-extension $\tilde{G}\to G$ (see \Cref{z-extensions}). 
	By \Cref{lifting-b-and-mu} (1), we may choose a conjugacy class of cocharacters $\tilde{\bfmu}$ and an element $\tilde{\bfb}\in B(\tilde{G},\tilde{\bfmu})$ that map to $\bfmu$ and $\bfb$, respectively, under the map $B(\tilde{G},\tilde{\bfmu})\to B(G,\bfmu)$.

	For each item $i\in\{1,\dots,5\}$, we show that $(i)$ holds for $\tilde{G}$ if and only if $(i)$ holds for $G$. 
	Since by definition of z-extensions, $\tilde{G}^\der=\tilde{G}^{\on{sc}}$, whenever we obtain an implication of the form $(i)\implies (i+1)$ or $(5)\implies (1)$ in the $\Gder=\Gsc$ case, we automatically deduce the same implication in the general case.
	Indeed, $\tilde{G}$ is quasisplit if and only if $G$ is, so the hypothesis \Cref{secondthmmain}.(a) are preserved. 
	Analogously, $\tilde{G}^{\on{ad}}=G^{\on{ad}}$ so they have the same $\bbQ_p$-simple factors and the hypothesis in \Cref{secondthmmain}.(b) are preserved.

	We first justify $(1)_{\tilde{G}}\iff (1)_{{G}}$ and $(2)_{\tilde{G}}\iff (2)_{{G}}$ of \Cref{secondthmmain}.  
	Recall that by \Cref{lifting-b-and-mu}.(2),  $c_{\tilde{b},\tilde{\mu}}\pi_1(\tilde{G})^\varphi_I \to c_{b,\mu}\pi_1(G)^\varphi_I$ is surjective. 
	We apply \Cref{adisoforadlv} to the ad-isomorphism $\tilde{G}\to G$. 
	The top horizontal arrow in \eqref{diagram-adisoforadlv} is a bijection (of sets) if and only if the bottom horizontal arrow in \eqref{diagram-adisoforadlv} is also a bijection of sets. 
	Now, $(3)_{\tilde{G}}\iff (3)_{{G}}$ of \Cref{secondthmmain} is a direct consequence of \Cref{checkafteradiso}.

	For $(4)_{\tilde{G}}\iff (4)_{{G}}$ recall that the map $\tilde{G}^\der\to G^\der$ is surjective with finite kernel. 
	In particular, $\xi(\Gamma_K)\subseteq \tilde{G}^\der$ is open if and only if its image in $G^\der$ is open.
	Further, after replacing $K$ by a finite extension $L$ containing $\tilde{E}$, the field of definition of $\tilde{\bfmu}$, every crystalline representation $\Gamma_K\to G(\bbQ_p)$ of type $(\bfb,\bfmu)$ can be lifted to a crystalline representation $\Gamma_L\to \tilde{G}(\bbQ_p)$ of type $(\tilde{\bfb},\tilde{\bfmu})$.
	Indeed, these crystalline representations are obtained from $K$-points in $\Gr_\mu^b$ (see \Cref{cor:MainChen}), and we have an identification (see proof of \Cref{pro:centralpreservecool}) 
	\[\Gr_\mu^b\times_{E} \Spd \tilde{E}=\Gr_{\tilde{\mu}}^{\tilde{b}}.\]
	Finally $(5)_{\tilde{G}}\iff (5)_{{G}}$ follows from \Cref{pro:centralpreservecool}.(2). 
\end{proof}
\subsection{Argument for $(1)\implies (2)$}
We start by giving a new proof to \cite[Theorem 7.1]{He}. 
\begin{theorem}[He]
	\label{thmHe}
	The map $X^\calI_\mu(b)\to X^{\calK_p}_\mu(b)$ is surjective. 
\end{theorem}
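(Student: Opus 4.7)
The plan is to transfer the problem from the special fiber, where the ADLVs live, to the generic fiber of the integral moduli of $p$-adic shtukas, where the corresponding level-lowering map is automatically surjective. This already reflects the authors' own remark in the introduction that $\Sht_{(G,b,\mu,\calI(\bbZ_p))}\to \Sht_{(G,b,\mu,K_p)}$ is ``automatically surjective.''

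First I would assemble the commutative square of underlying topological spaces
\begin{equation*}
\begin{tikzcd}
|\Sht_{(G,b,\mu,\calI(\bbZ_p))}| \ar[r] \ar[d] & |\Sht_{(G,b,\mu,K_p)}| \ar[d] \\
|X^\calI_\mu(b)| \ar[r] & |X^{\calK_p}_\mu(b)|
\end{tikzcd}
\end{equation*}
whose horizontal arrows are the level-lowering morphisms from \eqref{changeoflevelstructureshtukas} and \Cref{functoriality-Xmub}, and whose vertical arrows are the specialization maps of the smelted kimberlites $(\Sht^\calI_\mu(b), \Sht_{(G,b,\mu,\calI(\bbZ_p))})$ and $(\Sht^{\calK_p}_\mu(b), \Sht_{(G,b,\mu,K_p)})$ furnished by \Cref{Thmshtprekimb}.

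Next I would observe that the top horizontal arrow is surjective: writing $\Sht_{(G,b,\mu,\calI(\bbZ_p))} = \Sht_{(G,b,\mu,\infty)}/\calI(\bbZ_p)$ and $\Sht_{(G,b,\mu,K_p)} = \Sht_{(G,b,\mu,\infty)}/K_p$ realizes the top arrow as a quotient map for the action of the compact group $K_p/\calI(\bbZ_p)$, which is automatically surjective on underlying topological spaces. The right vertical specialization map is surjective by \Cref{Thmshtprekimb}, so the upper-right composition is surjective. By commutativity of the square, the lower-left composition is surjective too, forcing the bottom arrow $|X^\calI_\mu(b)| \to |X^{\calK_p}_\mu(b)|$ to be surjective on topological spaces. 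Since both ADLVs are locally perfectly of finite presentation over $\bar{\bbF}_p$, this immediately translates into surjectivity on $\bar{\bbF}_p$-points.

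There is no real obstacle once \Cref{Thmshtprekimb} is granted; the argument simply packages He's nontrivial combinatorial statement into the surjectivity of the specialization map of a kimberlite. This is precisely the payoff of working with the v-sheaf integral model of the shtuka moduli, as the authors themselves emphasize, since it lets us bypass the combinatorics of the original proof in \cite{He}.
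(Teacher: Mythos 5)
Your proof is correct and is essentially identical to the paper's: the same commutative square of specialization maps (via \Cref{Thmshtprekimb} and functoriality), surjectivity of the top arrow from the level-lowering quotient (the paper cites the $\calK_p/\calI(\bbZ_p)$-torsor statement of \cite[Proposition 23.3.1]{Ber}), and surjectivity of the specialization map to conclude. No meaningful difference in route.
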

\begin{proof}
By functoriality of the specialization map \cite[Proposition 4.14]{Specializ} applied to 
$\Sht^{\calI}_\mu(b)\to \Sht^{\calK}_\mu(b)$ from \eqref{map-integral-Sht}, we get a commutative diagram:
\begin{center}
	\begin{tikzcd}
		\mid\Sht_{(G,b,\mu,{\calI(\bbZ_p)})}\mid \arrow{r} \arrow{d}[swap]{\on{sp}} & \mid \Sht_{(G,b,\mu,\calK_p)}\mid  \arrow{d}{\on{sp}}\\
		\mid X^\calI_\mu(b)\mid  \arrow{r}  & \mid X^{\calK_p}_\mu(b)\mid  
	\end{tikzcd}
\end{center}
The top arrow is given by \eqref{changeoflevelstructureshtukas}. 
By \cite[Proposition 23.3.1]{Ber}, the map $\Sht_{(G,b,\mu,{\calI(\bbZ_p)})}\to\Sht_{(G,b,\mu,\calK_p)}$ is finite \'etale and the fibers are isomorphic to $\calK_p/{\calI(\bbZ_p)}$.
In particular, the map of topological spaces is surjective. 
It then suffices to prove that the specialization map is surjective, which follows directly from \cite[Theorem 2 b)]{Gle22a}. 
\end{proof}
Now, \Cref{thmHe} implies the $(1)\implies (2)$ part of \Cref{secondthmmain}: 
by \Cref{functoriality-Xmub}, we have the following commutative diagram:
\begin{equation}
	\begin{tikzcd}
		\pi_0(X^\calI_\mu(b)) \arrow{r} \arrow{d} & \pi_0(X^{\calK_p}_\mu(b)) \arrow{d} \\
		\pi_0(\Fl_\brevI) \arrow{r}{\cong}  & \pi_0(\Fl_{\breve{\calK}_p})  & 
	\end{tikzcd}
\end{equation}
For the bijection of the lower horizontal arrow, see for example \cite[Lemma 4.17]{AGLR22}. 
The left downward arrow is injective by assumption (1), and the top arrow is surjective by \Cref{thmHe}. Thus the right downward arrow is also injective.
\subsection{Argument for $(2) \overset{\on{q.split}}{\implies} (3)$}
This is the content of \Cref{kappamapisoHNirrep}.
\subsection{Argument for $(3) \implies (4)$}
This is the content of \Cref{cor:MainChen}.

\subsection{Argument for $(5)\implies (1)$}\label{secondthm-4-to-1}

\begin{proposition}
	\label{5-implies-1}
   $(5)\implies (1)$ in \Cref{secondthmmain}. 
\end{proposition}
\begin{proof}
Recall that by \Cref{reductionsofsecondthm}, we may and do assume that $\Gder=\Gsc$. 
Consider the map ${\on{det}}:G\to \Gab$ where $\Gab=G/\Gder$. 
Let $\calI^\der$ denote the Iwahori subgroup of $G^{\on{der}}$ attached to our alcove $\bf{a}$ (see $\mathsection$\ref{preliminaries-section}). Let $\calG^\ab$ be the unique parahoric group scheme of $\Gab$. 
By \cite[proof of Proposition 3]{HR08} combined with \Cref{maps-of-points-vs-groups}, we have an exact sequence of parahoric group schemes:
\begin{equation}
e\to \calI^\der \to \calI \to \calG^\ab	\to e,
\end{equation}
which induces maps $\Sht^\calI_\mu(b)\to \Sht^{\calG^\ab}_{\mu^\ab}(b^\ab)$ and $X_\mu(b)\to X_{\mu^{\on{ab}}}(b^{\on{ab}})$ by \eqref{map-on-integral-Sht} and \Cref{functoriality-Xmub}, respectively.
Since $\Gder=\Gsc$, we automatically have $G^\circ=\Gab(\bbQ_p)$ and  $\pi_1(G)=X_*(\Gab)$, which induces an isomorphism $\pi_1(G)_I=X_*(\Gab)_I$. In this case, by functoriality of the Kottwitz map ${\kappa}$, we have the following commutative diagram
\begin{equation}\label{commutative-diagram-kottwitz-map-functoriality}
    \begin{tikzcd}
		\pi_0(X_\mu(b)) \arrow{r}{{\omega_G}}{} \arrow{d}[swap]{\pi_0(\on{det})} &  \pi_1(G)_I \arrow{d}{\cong} \\
		X_{\mu^{\on{ab}}}(b^{\on{ab}})	 \arrow{r}{{\omega_{G^{\on{ab}}}}}{} &  X_*(G^{\on{ab}})_I.
	\end{tikzcd}
\end{equation}
This fits into the following commutative diagram (sse \Cref{torisecondmain})
\begin{equation}
	\begin{tikzcd}\label{commutative-diagram-kottwitz-map-factorthrough}
		\pi_0(X_\mu(b)) \arrow{r}{\omega_G}[swap]{} \arrow{d}[swap]{\pi_0(\on{det})} &  c_{b,\mu}\pi_1(G)_I^\varphi \arrow{d}{\cong}\arrow[hook]{r}{} &\pi_1(G)_I\arrow{d}{\cong}\\
	X_{\mu^{\on{ab}}}(b^{\on{ab}})	 \arrow{r}{\omega_{G^{\on{ab}}}}[swap]{\cong} &  c_{b^{\on{ab}},\mu^{\on{ab}}}X_*(G^{\on{ab}})_I^\varphi \arrow[hook]{r}{} & X_*(G^{\ab})_I.
	\end{tikzcd}
\end{equation}
In particular, it suffices to prove that left-hand side vertical arrow is a bijection.
By functoriality of the specialization map \cite[Proposition 4.14]{Specializ} and \Cref{adlvtosht}, we have
\begin{equation}
	\label{specializationdiag}
	\begin{tikzcd}
		 \pi_0(\Sht_{(G,b,\mu,{\calI(\bbZ_p)})}\times \Spd \bbC_p) \arrow{r}{\pi_0(\on{sp})}[swap]{\cong} \arrow{d}[swap]{\pi_0(\on{det})} &  \pi_0(X_\mu(b)) \arrow{d}{\pi_0(\on{det})} \\ 
		  \Sht_{(\Gab,b^{\on{ab}},\mu^{\on{ab}},\calG^{\on{ab}})} \times \Spd \bbC_p \arrow{r} {\pi_0(\on{sp})}[swap]{\cong} & X_{\mu^{\on{ab}}}(b^{\on{ab}}) 
	\end{tikzcd}
\end{equation}
Note that by \Cref{general-lemma-profinite-qt} we have the following identification
\begin{align*}
\pi_0(\Sht_{(G,b,\mu,{\calI(\bbZ_p)})}\times \Spd \bbC_p)&=\pi_0\left(\Sht_{(G,b,\mu,\infty)}\times \Spd \bbC_p\right/\underline{{\calI(\bbZ_p)}})\\
&=\pi_0(\Sht_{(G,b,\mu,\infty)}\times\Spd\bbC_p)/{\calI(\bbZ_p).}
\end{align*}
Since $\pi_0(\Sht_{(G,b,\mu,\infty)}\times \Spd \bbC_p)$ is a $G^{\ab}(\bbQ_p)$-torsor 
(i.e.~assumption (5) of \Cref{secondthmmain}), 
up to choosing an $x\in\pi_0(\Sht_{(G,b,\mu,\infty)}\times \Spd \bbC_p)$, we have compatible identifications $\pi_0(\Sht_{(G,b,\mu,\infty)}\times \Spd \bbC_p)\cong G^{\on{ab}}(\bbQ_p)$ and 
\begin{equation}
\pi_0(\Sht_{(G,b,\mu,{\calI(\bbZ_p)})}\times \Spd \bbC_p)\cong G^{\on{ab}}(\bbQ_p)/\on{det}(\calI(\bbZ_p)).
\end{equation}
Analogously, taking $x^{\on{ab}}\in\pi_0(\Sht_{(G^{\on{ab}},b^{\on{ab}},\mu^{\on{ab}},\infty)}\times \Spd \bbC_p)$ as $x^{\on{ab}}=\pi_0(\on{det}(x))$, we obtain a compatible identification $\pi_0(\Sht_{(\Gab,b^{\on{ab}},\mu^{\on{ab}},\calG^{\on{ab}})}\times \Spd \bbC_p)=G^{\on{ab}}(\bbQ_p)/\calG^{\on{ab}}(\bbZ_p)$ by \Cref{lemmagabwhatever}. Moreover, the map $\det: \pi_0(\Sht_{G,b,\mu,\infty}\times \Spd \bbC_p)\to \pi_0(\Sht_{(\Gab,b^{\on{ab}},\mu^{\on{ab}},\infty)}\times \Spd \bbC_p)$ is equivariant with respect to the $G(\bbQ_p)$-action on the left and the $G^{\ab}(\bbQ_p)$-action on the right. Thus we have the following commutative diagram:
\begin{center}
	\begin{tikzcd}
		\Gab(\bbQ_p)/\on{det}(\calI(\bbZ_p)) \arrow{r}{\cong} \arrow{d}[swap]{\on{det}} & \pi_0(\Sht_{(G,b,\mu,{\calI(\bbZ_p)})}\times \Spd \bbC_p) \arrow{r}{\cong}[swap]{\pi_0(\on{sp})} \arrow{d}{\on{det}} &  \pi_0(X_\mu(b)) \arrow{d}{\on{det}} \\ 
		\Gab(\bbQ_p)/\calG^{\on{ab}}(\bbZ_p) \arrow{r}{\cong} & \Sht_{(\Gab,b^{\on{ab}},\mu^{\on{ab}},\calG^{\on{ab}})} \times \Spd \bbC_p \arrow{r} {\cong}[swap]{\pi_0(\on{sp})} & X_{\mu^{\on{ab}}}(b^{\on{ab}}) 
	\end{tikzcd}
\end{center}
Thus in order to prove that the vertical arrow on the left-hand side is a bijection, it suffices to show that $\calI\to \calG^{\on{ab}}$ is surjective on the level of $\bbZ_p$-points. 
Now, the map $\calI\to \calG^\ab$ is smooth. 
In particular, the surjectivity of $\bbZ_p$-points follows from the surjectivity of $\bbF_p$-points. 
On the other hand, the $\bbF_p$-fibers of the map $\calI\to \calG^\ab$ are $\calI^\der_{\bbF_p}$-torsors.
Since $\calI^\der$ is parahoric, $\calI^\der_{\bbF_p}$ is smooth and connected. 
An application of Lang's theorem shows that all $\calI^\der_{\bbF_p}$-torsors are trivial.
\end{proof}

\begin{remark}
	Note that the proof of \Cref{5-implies-1} does not use that $\calI$ is an Iwahori, so one could have shown that implication $(5)\implies (2)$ holds directly for $\calK_p$ any parahoric group scheme.
\end{remark}

\subsection{Argument for $(4)\overset{\on{iso.fact.}}{\implies} (5)$}
\label{subsection3implies4}

\begin{proposition}
	If $G^{\on{ad}}$ only has isotropic $\bbQ_p$-simple factors then $(4)\implies (5)$ in \Cref{secondthmmain}.
\end{proposition}
\begin{proof}
As seen earlier (for example in $\mathsection$\ref{secondthm-4-to-1}), the map 
\begin{equation}\det: \pi_0(\Sht_{G,b,\mu,\infty}\times \Spd \bbC_p)\to \pi_0(\Sht_{(\Gab,b^{\on{ab}},\mu^{\on{ab}},\infty)}\times \Spd \bbC_p)
\end{equation}
is equivariant with respect to the $G(\bbQ_p)$-action on the source and the $G^{\ab}(\bbQ_p)$-action on the target. 
By the assumption that $G^\der=\Gsc$ (in particular $G^\circ=\Gab(\bbQ_p)$), it suffices to show that
\begin{equation}\label{det-pi0}
\pi_0(\on{det}):\pi_0(\Sht_{(G,b,\mu,\infty)}\times \Spd \bbC_p)\to \Sht_{(G^\ab,b^\ab,\mu^\ab,\infty)}\times \Spd \bbC_p
\end{equation}
is bijective. 
Since the map $G(\bbQ_p)\to G^{\ab}(\bbQ_p)$ is surjective, by equivariance of the respective group actions, the map \eqref{det-pi0} is always surjective. 
By \Cref{corollary-badmconn}, $G(\bbQ_p)$ acts transitively on $\pi_0(\Sht_{(G,b,\mu,\infty)}\times \Spd \bbC_p)$, thus up to picking a $t\in \pi_0(\Sht_{(G,b,\mu,\infty)}\times \Spd \bbC_p)$ we have an identification of sets $\pi_0(\Sht_{(G,b,\mu,\infty)}\times \Spd \bbC_p)\cong G(\bbQ_p)/H_t$ for some subgroup $H_t:=\mathrm{Stab}(t)$. To prove (5), it suffices to show that $H_t=G^{\der}(\bbQ_p)$. Firstly, it is easy to see that $H_t\subseteq G^{\der}(\bbQ_p)$: take any $g\in H_t$, we have $g\cdot t=t$; thus $\deg(g)\cdot\det(t)=\det(g\cdot t)=\det(t)$; by the tori case (see $\mathsection$\ref{tori-case-section}), $\det(g)$ is trivial, thus $g\in G^{\der}(\bbQ_p)$.

We now prove the other inclusion, i.e.~that $G^{\der}(\bbQ_p)$ acts trivially on $\pi_0(\Sht_{(G,b,\mu,\infty)}\times \Spd \bbC_p)$. We argue instead over finite extensions of $\br{\bbQ}_p$. 
Recall from \cite[Lemma 12.17]{Et} that, the underlying topological space of a cofiltered inverse limit of locally spatial diamonds along qcqs\footnote{i.e. quasicompact quasiseparated} transition maps is the limit of the underlying topological spaces. 
Moreover, 
\[\Sht_{(G,b,\mu,\infty)}\times \Spd \bbC_p=\varprojlim_{\breve{\bbQ}_p\subseteq K \subseteq \bbC_p} \Sht_{(G,b,\mu,\infty)}\times \Spd K \]
as $K$ ranges over Galois extensions that are finite over $\breve{\bbQ}_p$. 
Indeed, the similar claim $\Spd \bbC_p=\varprojlim_{\breve{\bbQ}_p\subseteq K \subseteq \bbC_p}\Spd K$ holds and limits commute with products. 
Therefore, it suffices to prove that $\Gder(\bbQ_p)$ acts trivially on 
\[\pi_0(\Sht_{(G,b,\mu,\infty)}\times \Spd K)\]
for all finite degree extensions $K$ over $\br{\bbQ}_p$ (or equivalently, for a cofinal family of such). 
For any fixed $x\in \pi_0(\Sht_{(G,b,\mu,\infty)}\times \Spd K)$, we denote by $G_x\subseteq G(\bbQ_p)$ the stabilizer of $x$. Let $G^\der_x:=G_x\cap \Gder(\bbQ_p)$. It suffices to prove that $G_x^{\der}=G^{\der}(\bbQ_p)$, which is shown in \Cref{stab-equal-Gder} after some preparations in \Cref{open-stab-lem} and \Cref{normalizer-fin-index}. 
\end{proof}

\begin{lemma}\label{open-stab-lem}
	$G^\der_x$ is open in $\Gder(\bbQ_p)$.	
\end{lemma}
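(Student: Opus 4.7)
The plan is to exploit hypothesis $(4)$ of \Cref{secondthmmain} directly, combined with the torsor structure of the Grothendieck--Messing period map from $\mathsection$\ref{grothendieck-section}, to produce an explicit open subgroup of $G^{\der}(\bbQ_p)$ sitting inside $G^{\der}_x$. Observe first that if $K'/K$ is any finite extension and $x''\in\pi_0(\Sht_{(G,b,\mu,\infty)}\times\Spd K')$ lies above $x$ under the natural $G(\bbQ_p)$-equivariant surjection, then $G^{\der}_{x''}\subseteq G^{\der}_x$, so openness of the former forces openness of the latter. Hence I may freely enlarge $K$ and, after doing so, assume via \Cref{cor:MainChen} together with \Cref{existence-generic-pt} that the crystalline representation $\xi:\Gamma_K\to G(\bbQ_p)$ furnished by hypothesis $(4)$ arises from a generic classical point $\tilde{x}_0\in\Gr^{\circ,b}_\mu(K)$.

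The key geometric input is that the fiber $\pi_{\on{GM}}^{-1}(\tilde{x}_0)$ of the Grothendieck--Messing map is a pro-\'etale $\underline{G(\bbQ_p)}$-torsor over $\Spd K$ classified precisely by $\xi$. Concretely, on $\bbC_p$-points, for any $y$ in this fiber the Galois action of $\Gamma_K$ agrees with the $G(\bbQ_p)$-action twisted by $\xi$, namely $\gamma\cdot y=\xi(\gamma)\cdot y$. Because the Galois action on $\bbC_p$-points preserves $K$-rational connected components, this forces $\xi(\Gamma_K)\subseteq G_{x_y}$ where $x_y\in\pi_0(\Sht_{(G,b,\mu,\infty)}\times\Spd K)$ is the component containing $y$. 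Intersecting with $G^{\der}(\bbQ_p)$ and invoking the openness hypothesis on $\xi(\Gamma_K)\cap G^{\der}(\bbQ_p)$, I conclude that $G^{\der}_{x_y}$ is open in $G^{\der}(\bbQ_p)$.

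To transfer this conclusion from the specific $x_y$ to the component $x$ with which we started, I would invoke \Cref{corollary-badmconn}: the $G(\bbQ_p)$-action on $\pi_0(\Sht_{(G,b,\mu,\infty)}\times\Spd\bbC_p)$ is transitive, and this transitivity descends to $\pi_0(\Sht_{(G,b,\mu,\infty)}\times\Spd K)$ via the surjective $G(\bbQ_p)$-equivariant map induced by base change. Since $G^{\der}$ is normal in $G$, all the subgroups $G^{\der}_{x'}$ for varying $x'\in\pi_0(\Sht_{(G,b,\mu,\infty)}\times\Spd K)$ are mutually $G(\bbQ_p)$-conjugate, so the openness of $G^{\der}_{x_y}$ propagates to openness of $G^{\der}_{x'}$ at every such $x'$, including the specific one lying above $x$. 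The main technical subtlety, and the step I would want to pin down most carefully, is the identification $\gamma\cdot y=\xi(\gamma)\cdot y$ on $\bbC_p$-points of the fiber, which ultimately rests on the description of $\Sht_{(G,b,\mu,\infty)}$ as the moduli of trivializations of the universal crystalline torsor $\bbL_b$ over $\Gr^b_\mu$ recalled in $\mathsection$\ref{grothendieck-section}.
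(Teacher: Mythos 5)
Your proposal is correct and follows essentially the same route as the paper: both arguments pass to the fiber of the Grothendieck--Messing map over a generic classical point furnished by \Cref{cor:MainChen}, identify the Galois action on that fiber with the crystalline representation $\xi$ (via the descent of the $\Gamma_K$-orbit of a section of the trivialized torsor to a single connected component), deduce that $\xi(\Gamma_K)$ stabilizes that component, and then use transitivity (\Cref{corollary-badmconn}) to move the resulting open stabilizer to the given $x$. The only cosmetic difference is that the paper arranges $w\mapsto x$ at the outset while you conjugate at the end.
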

\begin{proof}
	For any choice of $y\in \Gr^b_\mu(K)$, let $\mathcal{T}_y:=\Sht_{(G,b,\mu,\infty)}\times_{\Gr^b_\mu} \Spd K$ be the fiber of $y$ under the Grothendieck--Messing period morphism. 
	Take an arbitrary $w\in \pi_0(\mathcal{T}_y)$. 
	Since $\mathcal{T}_y\to \Spd K$ is a $\underline{G(\bbQ_p)}$-torsor and $\Spd K$ is connected, the $G(\bbQ_p)$-action on $\pi_0(\mathcal{T}_y)$ is transitive (see \Cref{general-lemma-profinite-qt}).
Similarly, by \Cref{corollary-badmconn}, the $G(\bbQ_p)$-action on $\pi_0(\Sht_{(G,b,\mu,\infty)}\times\Spd K)$ is also transitive.
	Observe that the map 
	\[\pi_0(\mathcal{T}_y)\to \pi_0(\Sht_{(G,b,\mu,\infty)}\times\Spd K)\]
	is $G(\bbQ_p)$-equivariant. 
	Using this, we may assume without loss of generality that $w\mapsto x$ under the surjection $\pi_0(\mathcal{T}_y)\to \pi_0(\Sht_{(G,b,\mu,\infty)}\times\Spd K)$. 
	Consider $G_w^{\der}:=G_w\cap G^{\der}(\bbQ_p)$, and the inclusion of groups $G^\der_w\subseteq G^\der_x\subseteq G^\der(\bbQ_p)$. 
	It suffices to find a $y\in \Gr^b_{\mu}(K)$, such that 
\[(*)\quad\text{there exists a }w\in \pi_0(\mathcal{T}_y)\text{ with  }G^\der_w\text{ open in } G^\der(\bbQ_p).\] 
	
Recall the $\underline{G(\bbQ_p)}$-torsor $\bbL_b$ over $\Gr^b_{\mu}$ from $\mathsection$ \ref{grothendieck-section}. 
Let $y^*\bbL_b$ be the corresponding torsor over $\Spd K$, which induces a crystalline representation $\rho_y:\Gamma_K\to G(\bbQ_p)$, well-defined up to conjugacy. We claim that $G_w$ is equal to $\rho_y(\Gamma_K)$ up to $G(\bbQ_p)$-conjugacy. 
We now justify the claim. Consider the pullback $\mathcal{T}_t$ of $\mathcal{T}_y$ under the geometric point $t:\Spd\bbC_p\to \Spd K$. Thus $\mathcal{T}_t$ is a trivial torsor that gives a section $s:\Spd\bbC_p\to \mathcal{T}_t$. 
The Galois action of $\Gamma_K$ on $\mathcal{T}_t$ defines a representative of the crystalline representation $\rho_y$. 
We claim that the orbit $\Gamma_K\cdot s$ descends to a unique connected component $w_s\in \pi_0(\mathcal{T}_y)$. 
Indeed, the $\Gamma_K$-action encodes descent datum along the v-cover $\Spd \bbC_p\to \Spd K$, and any $\Gamma_K$-stable closed subsheaf $\calF_t\subseteq \calT_t$ gives rise, by pro-\'etale (or v-)descent to a unique closed subsheaf $\calF_y\subseteq \calT_y$. 
This will further satisfy $\pi_0(\calF_y)=\pi_0(\calF_t)/\Gamma_K$ by \Cref{general-lemma-profinite-qt}.
Applying this reasoning to a $\Gamma_K$-orbit we obtain our claim.
Therefore, for any $g\in G(\bbQ_p)$ such that $g\cdot s\in \Gamma_K\cdot s$, we have $g\cdot w_s=w_s$. 
This gives us the desired claim. 
As we have seen, to find a point $y\in \Gr^b_{\mu}(K)$ satisfying $(*)$ it suffices to find a crystalline representation with $G$-structure $\rho_y:\Gamma_K\to G(\bbQ_p)$ that has invariants $(\bfb,\bfmu)$ and for which $\rho_y(\Gamma_K)\cap G^\der(\bbQ_p)$ is open in $G^\der(\bbQ_p)$. 
But hypothesis $(4)$ of \Cref{secondthmmain} ensures that such a crystalline representation exists.  
\end{proof}

\begin{lemma}\label{normalizer-fin-index}  
	Let $N_x$ denote the normalizer of $G_x$ in $G(\bbQ_p)$, then $N_x$ has finite index in $G(\bbQ_p)$. In particular, $N_x$ contains $\Gder(\bbQ_p)$.
\end{lemma}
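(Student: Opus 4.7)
The plan is to combine the commuting $J_b(\bbQ_p)$- and $G(\bbQ_p)$-actions on $\Sht_{(G,b,\mu,\infty)}$ with the finiteness of $J_b(\bbQ_p)$-orbits on $\pi_0(X^{\calI}_\mu(b))$ due to Hamacher--Viehmann \cite{HV20}, then invoke Margulis--Prasad to upgrade finite index to containment of $\Gder(\bbQ_p)$.

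First I would unravel the $J_b(\bbQ_p)$-action on the single $G(\bbQ_p)$-orbit. By \Cref{corollary-badmconn}, the $G(\bbQ_p)$-action on $\pi_0(\Sht_{(G,b,\mu,\infty)}\times\Spd\bbC_p)$ is transitive, so this set is identified with $G(\bbQ_p)/G_x$. For each $j\in J_b(\bbQ_p)$, write $j\cdot x = g_j\cdot x$ with $g_j\in G(\bbQ_p)$ uniquely determined modulo $G_x$. Since the $J_b(\bbQ_p)$- and $G(\bbQ_p)$-actions on $\Sht_{(G,b,\mu,\infty)}$ commute (see \cite[\S 23]{Ber}), any $h\in G_x$ satisfies $h g_j\cdot x = hj\cdot x = jh\cdot x = j\cdot x = g_j\cdot x$, forcing $g_j^{-1}hg_j\in G_x$. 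Applying the same reasoning to $j^{-1}$ gives $g_jG_xg_j^{-1}=G_x$, so $g_j\in N_x$. Thus $j\mapsto g_jG_x$ defines a homomorphism $J_b(\bbQ_p)\to N_x/G_x$ whose image $H$ controls the $J_b(\bbQ_p)$-orbit structure: the $J_b(\bbQ_p)$-orbits on $G(\bbQ_p)/G_x$ correspond bijectively to $G(\bbQ_p)/\widetilde H$, where $\widetilde H\subseteq N_x$ denotes the preimage of $H$.

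Next I would extract the finite index statement via a double coset computation at Iwahori level. Via \Cref{adlvtosht}, $\pi_0(X^{\calI}_\mu(b))\cong \calI(\bbZ_p)\backslash G(\bbQ_p)/G_x$, and since $\calI(\bbZ_p)$ commutes with the $J_b(\bbQ_p)$-action, the $J_b(\bbQ_p)$-orbits there are indexed by the double cosets $\calI(\bbZ_p)\backslash G(\bbQ_p)/\widetilde H$, a set which is finite by \cite{HV20}. Writing $G(\bbQ_p)=\bigsqcup_{i=1}^n \calI(\bbZ_p)g_i\widetilde H$, and using that $\calI(\bbZ_p)$ is compact while $\widetilde H\supseteq G_x$ is open (by \Cref{open-stab-lem}), each fiber $\calI(\bbZ_p)g_i\widetilde H/\widetilde H\cong \calI(\bbZ_p)/(\calI(\bbZ_p)\cap g_i\widetilde Hg_i^{-1})$ is finite. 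Therefore $G(\bbQ_p)/\widetilde H$, and \textit{a fortiori} $G(\bbQ_p)/N_x$, is finite.

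For the ``in particular'' assertion, after \Cref{reductionsofsecondthm} we may assume $\Gder=\Gsc$, and by the standing hypothesis of \Cref{secondthmmain} the group $G^{\on{ad}}$ has only isotropic factors. The Margulis--Prasad theorem (\cite[Chapter II, Theorem 5.1]{Marg}) then asserts that $\Gder(\bbQ_p)$ has no proper subgroups of finite index; consequently $N_x\cap \Gder(\bbQ_p)$, being of finite index in $\Gder(\bbQ_p)$, must equal all of $\Gder(\bbQ_p)$. The main subtlety lies in the first step: carefully using the commuting actions to see that the $J_b(\bbQ_p)$-action actually factors through $N_x/G_x$, so that the finiteness of double cosets from Hamacher--Viehmann can translate into finiteness of $[G(\bbQ_p):N_x]$.
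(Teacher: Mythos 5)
Your overall route is the same as the paper's: use the commutativity of the $J_b(\bbQ_p)$- and $G(\bbQ_p)$-actions to see that the $J_b(\bbQ_p)$-action on $\pi_0(\Sht_{(G,b,\mu,\infty)}\times \Spd K)\cong G_x\backslash G(\bbQ_p)$ is implemented by elements of $N_x$, transfer the Hamacher--Viehmann finiteness of $J_b(\bbQ_p)$-orbits through \Cref{adlvtosht} into finiteness of a double-coset space involving $\calI(\bbZ_p)$, use compactness of $\calI(\bbZ_p)$ to upgrade to finite index, and finish with Margulis. Your packaging of the first step as a homomorphism $J_b(\bbQ_p)\to N_x/G_x$ with image $H$ and preimage $\widetilde{H}$ is a clean reformulation of the paper's element-by-element construction of the surjection $\coprod_{s\in S}\calI(\bbZ_p)\cdot h_s\twoheadrightarrow G(\bbQ_p)/N_x$, and that part is correct.

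The problem is the fiber-finiteness step. You justify the finiteness of $\calI(\bbZ_p)/(\calI(\bbZ_p)\cap g_i\widetilde{H}g_i^{-1})$ by asserting that ``$\widetilde{H}\supseteq G_x$ is open (by \Cref{open-stab-lem})''. That lemma does not say this: it says that $G_x^{\der}=G_x\cap \Gder(\bbQ_p)$ is open \emph{in} $\Gder(\bbQ_p)$, not that $G_x$ is open in $G(\bbQ_p)$. In general $G_x$ is not open in $G(\bbQ_p)$: its image under $\det$ is contained in the stabilizer computed in the torus case of \S\ref{tori-case-section}, i.e.\ in the (compact, possibly trivial) image of $\Gamma_K$ in $\Gab(\bbQ_p)$; for instance if $\mu^{\ab}$ and $b^{\ab}$ are trivial then $G_x\subseteq \Gder(\bbQ_p)$, which is not open once $\dim \Gab>0$. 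Openness of $G_x\cap\Gder(\bbQ_p)$ in $\Gder(\bbQ_p)$ only tells you that $\calI(\bbZ_p)\cap g_i\widetilde{H}g_i^{-1}$ contains an open subgroup of $\calI(\bbZ_p)\cap\Gder(\bbQ_p)$, and the latter has \emph{infinite} index in $\calI(\bbZ_p)$ whenever $\Gab$ is positive-dimensional (e.g.\ for $G=\GL_n$ the determinant of the Iwahori is all of $\bbZ_p^\times$). So the finiteness of the fibers does not follow from what you cite; one must additionally produce elements of $g_i\widetilde{H}g_i^{-1}\cap\calI(\bbZ_p)$ whose determinants fill out a finite-index subgroup of $\det(\calI(\bbZ_p))$ --- the only available sources being the elements $g_j$ coming from $J_b(\bbQ_p)$ and the abelianized picture --- before compactness can be invoked. (The paper's own proof asserts at the parallel point that the target $G(\bbQ_p)/N_x$ is discrete, i.e.\ that $N_x$ is open, so the same issue must be addressed there; but as written your justification is a mis-citation and the step does not go through.) The reduction to $\Gder=\Gsc$ and the Margulis step at the end are fine.
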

\begin{proof}	
	Let $S$ be the set of orbits of $\pi_0(\Sht_{(G,b,\mu,{\calI(\bbZ_p)})}\times \Spd K)$ under the $J_b(\bbQ_p)$-action. 
	By \cite[Theorem 1.2]{HV20}, $S$ is finite. 
	For each $s\in S$, we choose a representative $x_s\in \pi_0(\Sht_{(G,b,\mu,\infty)}\times \Spd K)$ that maps to $s$ under the map $\pi_0(\Sht_{(G,b,\mu,\infty)}\times \Spd K)\to \pi_0(\Sht_{(G,b,\mu,{\calI(\bbZ_p)})}\times \Spd K)\to S$. 
	We can always arrange that $x$ is in this set of representatives for some $s$. 
	By \Cref{corollary-badmconn}, and since we have an identification
	\[\pi_0(\Sht_{(G,b,\mu,\infty)}\times \Spd K)=\pi_0(\Sht_{(G,b,\mu,\infty)}\times \Spd \bbC_p)/\Gamma_K\]
	(using \Cref{general-lemma-profinite-qt}) that is compatible with $G(\bbQ_p)$-action,
	we can find an element $h_s\in G(\bbQ_p)$ such that $x_s\cdot h_s=x$, for each $s\in S$.  
	We construct a surjection   
		$\coprod_{s\in S} {\calI(\bbZ_p)}\cdot h_s \twoheadrightarrow G(\bbQ_p)/N_x$.	
	We do this in two steps. 
The first step is to construct, for any $g\in G(\bbQ_p)$, a triple $(i,j,s)$ where $i\in {\calI(\bbZ_p)}$, $j\in J_b(\bbQ_p)$ and $s\in S$ such that 
\begin{equation}\label{desired-triple}
j\cdot (x \cdot g)\cdot i=x_s
\end{equation}
(note that $s$ is uniquely determined by $x$ and $g$). 
We do this by choosing $j$ so that $j\cdot (x\cdot g)$ and $x_s$ map to the same element in $\pi_0(\Sht_{(G,b,\mu,{\calI(\bbZ_p)})}\times \Spd K)$. 
	Since ${\calI(\bbZ_p)}$ acts transitively on the fibers of the map $\pi_0(\Sht_{(G,b,\mu,\infty)}\times \Spd K) \to \pi_0(\Sht_{(G,b,\mu,{\calI(\bbZ_p)})}\times \Spd K)$, there exists an $i$ satisfying \eqref{desired-triple}. Thus we have
	\begin{equation}\label{bad-j-equation}
	j\cdot x \cdot (gih_s)=x 
	\end{equation}
The second step	is to eliminate $j$ from \eqref{bad-j-equation}. 
By \Cref{corollary-badmconn}, there exists an $n\in G(\bbQ_p)$ such that $x\cdot n=j\cdot x$. We now show that $n\in N_x$. 
	Indeed, $n^{-1}G_x n=G_{x\cdot n}=G_{j\cdot x}=G_x$ since the actions of $J_b(\bbQ_p)$ and $G(\bbQ_p)$ commute. Thus we have $(x\cdot n) \cdot (gih_s)=x$. 
	Since $G_x\subseteq N_x$, in particular $n\cdot (gih_s)\in G_x\subseteq N_x$. Thus $g\cdot i\cdot h_s\in N_x$, and we have a natural surjection:
	\begin{equation}\label{desired-surjection}
		\begin{aligned}
			\coprod_{s\in S} {\calI(\bbZ_p)}\cdot h_s & \twoheadrightarrow G(\bbQ_p)/N_x 	 \\
			(s,i) & \mapsto i\cdot h_sN_x= g^{-1}N_x.
		\end{aligned}
	\end{equation}
The target of \eqref{desired-surjection} is discrete, and the source is compact. Thus the index of $N_x$ in $G(\bbQ_p)$ is finite. 

Recall that $\Gder=\Gsc$. 
Since $G^{\der}$ only has isotropic $\bbQ_p$-simple factors, and $N_x\cap G^\der(\bbQ_p)$ has finite index in $G^\der(\bbQ_p)$, we have $N_x\cap G^\der(\bbQ_p)=G^\der(\bbQ_p)$. 
Indeed, it is a standard fact that $\Gder(\bbQ_p)$ has no open subgroups of finite index \cite[Chapter II, Theorem 5.1]{Marg}, thus we are done.   
\end{proof}
\begin{lemma}\label{stab-equal-Gder}
	Assume hypothesis (4) in \Cref{secondthmmain} for $\breve{\bbQ}_p\subseteq K$. 
	Let $K\subseteq K'$ be any finite Galois extension of $K$. 
	Pick any $x\in \pi_0(\Sht_{(G,b,\mu,\infty)}\times \Spd K')$.
Then, $G^\der_x=G^\der(\bbQ_p)$.
\end{lemma}
\begin{proof}
	Without loss of generality we may assume $K'=K$.
	Indeed, given a crystalline representation $\xi:\Gamma_K\to G(\bbQ_p)$ satisfying the conditions of hypothesis (4) in \Cref{secondthmmain}, its restriction $\xi_{\Gamma_{K'}}$ is also a crystalline representation satisfying the same hypothesis.
By \Cref{open-stab-lem} and \Cref{normalizer-fin-index},  $G^\der_x\subseteq \Gder(\bbQ_p)$ is open and normal. 
This already implies $G^\der_x= \Gder(\bbQ_p)$, since $\Gder(\bbQ_p)$ does not have open normal subgroups (see \Cref{lemma-G--hasno-nrom}). 
\end{proof}

\begin{lemma}
	\label{lemma-G--hasno-nrom}
If $G=\Gder=\Gsc$ is a reductive group with isotropic $\bbQ_p$-simple factors.	
Then $G(\bbQ_p)$ has no open normal subgroup.
\end{lemma}
\begin{proof}
We can argue on $\bbQ_p$-simple factors and assume without loss of generality that $G$ is $\bbQ_p$-simple.
If $K\subseteq \Gder(\bbQ_p)$ is an arbitrary open and normal subgroup, it follows from \cite[Proposition 2.2.16]{KP23} that $K$ must be unbounded since its normalizer is unbounded. 
Let $G(\bbQ_p)^+\subseteq G(\bbQ_p)$ be the normal subgroup generated by the $\bbQ_p$-rational elements of the unipotent radicals of parabolic $\bbQ_p$-subgroups of $G$.
By \cite[Proposition 2.2.14]{KP23}, $K$ contains $G(\bbQ_p)^+$. Since $G$ is simply connected, by the positive solution of the Kneser--Tits problem over local fields, $G(\bbQ_p)^+=G(\bbQ_p)$ (see for example \cite[\S 2]{MR787664}).
\end{proof}
This finishes the argument for $(4)\implies (5)$.

\begin{proof}[Proof of \Cref{secondthmmain}]
	We have now justified the circle of implications 
	\[(1)\implies (2)\overset{\on{q.split}}{\implies} (3) \implies (4) \overset{\on{iso.fact.}}{\implies} (5)\implies (1),\]
	 i.e.~\Cref{secondthmmain} holds in the case $\Gder=\Gsc$. But by \Cref{reductionsofsecondthm} the general case follows.   
\end{proof}

	\begin{proof}[Proof of \Cref{mainthmmain}] \textit{(i.e. also Proof of \Cref{mainthm})} 
Using z-extension ad-isomorphisms and decomposition into products  (\Cref{checkafteradiso}, \Cref{checkafterprod}, \Cref{adisoforadlv} and \Cref{productsopen}), we may assume without loss of generality that $\Gder=\Gsc$ and that $G^{\on{ad}}$ is $\bbQ_p$-simple.
We split our analysis into two cases: (1) when $G^{\on{ad}}$ is isotropic, and (2) when $G^{\on{ad}}$ is anisotropic. 
The first case holds from the implication $(3)\implies (2)$ of \Cref{secondthmmain}.(b). 

We now consider the case where $G^{\on{ad}}$ is anisotropic. Recall that
\begin{equation}\Gr^b_\mu\times \Spd \bbC_p=\Sht_{(G,b,\mu,\infty)}\times \Spd \bbC_p/\underline{G(\bbQ_p)}.
\end{equation}
When $G^{\on{ad}}$ is $\bbQ_p$-simple and anisotropic, its Bruhat--Tits building consists of one point \cite[Proposition 9.3.9]{KP23}. 
In particular, there is a unique Iwahori $\calI(\bbZ_p)\subseteq G(\bbQ_p)$. 
Recall from Bruhat--Tits theory, that the set of Iwahori subgroups forms the set of minimal parabolic subgroups for a saturated Tits system over the component group $G(\bbQ_p)^0$ defined as in \cite[Definition 2.6.23]{KP23} (see \cite[Axiom 4.1.9]{KP23}). 
Uniqueness of the Iwahori implies that $\calI(\bbZ_p)=G(\bbQ_p)^0$.
By \cite[Remark 4.1.13]{KP23}, it also follows that $\Gder(\bbQ_p)=\Gsc(\bbQ_p)\subseteq \calI(\bbZ_p)$.
Moreover, it follows from \cite[Corollary 11.7.6]{KP23} that $\calI(\bbZ_p)\subseteq G(\bbQ_p)$ is normal and that the Kottwitz map gives us identifications
\[G(\bbQ_p)/{\calI(\bbZ_p)}=\pi_1(G)_I^\varphi=G^{\on{ab}}(\bbQ_p)/\calG^{\on{ab}}(\bbZ_p)\] (see $\mathsection$ \ref{tori-case-section} for the last identification).
Since ${\calI(\bbZ_p)}$ is normal in $G(\bbQ_p)$, $\Sht_{(G,b,\mu,{\calI(\bbZ_p)})}\times \Spd \bbC_p$ becomes a $\pi_1(G)_I^\varphi$-torsor over $\Gr^b_\mu\times \Spd \bbC_p$. Moreover, the map 
\begin{equation}
\on{det}:\Sht_{(G,b,\mu,{\calI(\bbZ_p)})}\times \Spd \bbC_p \to \Sht_{(G^{\on{ab}},b^{\on{ab}},\mu^{\on{ab}},\calG^{\on{ab}}(\bbZ_p))} \times \Spd \bbC_p
\end{equation}
is $\pi_1(G)^\varphi_I$-equivariant. Since $\Sht_{(G,b^{\on{ab}},\mu^{\on{ab}},\calG^{\on{ab}}(\bbZ_p))} \times \Spd \bbC_p$ is a $\pi_1(G)^\varphi_I$-torsor over $\Spd \bbC_p$, 
$\Sht_{(G,b,\mu,{\calI(\bbZ_p)})}\times \Spd \bbC_p$ is the trivial $\pi_1(G)_I^\varphi$-torsor over $\Gr_\mu^b\times \Spd \bbC_p$. That is
\begin{equation}\label{Gcirc-copies-of-Grbmu}
\Sht_{(G,b,\mu,{\calI(\bbZ_p)})}\times \Spd \bbC_p\cong (\Gr^b_\mu\times \Spd \bbC_p)\times \underline{\pi_1(G)_I^\varphi}.
\end{equation}

Taking $\pi_0$ in \eqref{Gcirc-copies-of-Grbmu}, we have 
\begin{equation}
	\label{equation-aniso}
\pi_0(\Sht_{(G,b,\mu,{\calI(\bbZ_p)})}\times \Spd\bbC_p)\cong \pi_0(\Gr^b_\mu\times \Spd \bbC_p)\times \pi_1(G)_I^\varphi.
\end{equation}
By \Cref{badmconn}, $\pi_0(\Sht_{(G,b,\mu,{\calI(\bbZ_p)})}\times \Spd\bbC_p)\cong \pi_1(G)_I^\varphi$ and the map 
\[\omega_G\circ \pi_0(\on{sp}):\pi_0(\Sht_{(G,b,\mu,{\calI(\bbZ_p)})})\times \Spd \bbC_p) \to c_{b,\mu}\pi_1(G)^\varphi_I\] is an isomorphism as we needed to show.  
We can finish by recalling that the map of \eqref{bijectivityofspec} is bijective. 
\end{proof}

\bibliography{biblio.bib}
\bibliographystyle{alpha}
\end{document}